\documentclass[a4paper,11pt,reqno]{amsart}

\usepackage[english]{babel}
\usepackage[body={16cm,23cm},centering]{geometry} 
\usepackage{enumitem}
\usepackage[colorlinks=true, allcolors=blue]{hyperref} % For clickable links
\usepackage{mathrsfs}                             % For \mathscr font
\usepackage{bm}                                     %bold math symbols

\usepackage{amssymb}
\usepackage{esint} % for the average integral \fint
\usepackage{amsmath}
\usepackage{mathtools} % For \coloneqq

\usepackage{graphicx}
\graphicspath{{figures/}} % Set the path to figures directory
\usepackage{pgfplots}
\pgfplotsset{compat=1.10}
\usepgfplotslibrary{fillbetween}
\usepgfplotslibrary{polar}

\usepackage{tikz}
\usepackage{tikz-3dplot}
\usetikzlibrary{decorations.pathreplacing}
\usetikzlibrary{patterns}

\newtheorem{theorem}{Theorem}
\newtheorem{lemma}{Lemma}
\newtheorem{proposition}{Proposition}
\newtheorem{corollary}{Corollary}
\newtheorem{example}{Example}

\theoremstyle{remark}
\newtheorem{remark}{Remark}

\numberwithin{equation}{section}

\def\R{\mathbb{R}}
\def\Ss{\mathbb{S}}
\def\e{\varepsilon}

\newcommand{\Ar}{\mathbf{A}}
\newcommand{\supp}{\operatorname{supp}}
\newcommand{\diver}{\operatorname{div}} 

\newcommand{\tr}{\operatorname{tr}} % Trace operator
\newcommand{\Hh}{\mathscr{H}} % Mean curvature
\newcommand{\mres}{\mathbin{\vrule height 1.6ex depth 0pt width 0.13ex\vrule height 0.13ex depth 0pt width 1.3ex}}   
\mathchardef\emptyset="001F
\title[]{Double-arc rearrangement and the symmetric isoperimetric problem in Gauss space}
\author[M. Barchiesi, F. Cagnetti and V. Julin]
{M. Barchiesi, F. Cagnetti and V. Julin}
\address[M. Barchiesi]{
Dipartimento di Matematica, Informatica e Geoscienze, Università degli Studi di Trieste,
Via Weiss 2 - 34128 Trieste, Italy.}
\email{barchies@gmail.com}
\address[F. Cagnetti]{Università degli Studi di Parma, Dipartimento di Scienze Matematiche, Fisiche e Informatiche,
Viale Parco Area delle Scienze 53/A
I-43124 Parma,  Italy.}
\email{filippo.cagnetti@unipr.it}
\address[V. Julin]{University of Jyv\"{a}skyl\"{a},
Department of Mathematics and Statistics,
P.O.Box 35 (MaD) FI-40014, Finland}
\email{vesa.julin@jyu.fi}
\date{October 10, 2026}
\begin{document}
\maketitle

\begin{center}
\begin{minipage}{12.3cm}
\small{
\noindent {\bf Abstract.} 
We provide a new rearrangement in Gauss space transforming symmetric sets while decreasing their Gaussian perimeter.
We use this rearrangement to analyze the minimizers of the Gaussian perimeter among symmetric sets. 

\medskip
\noindent {\bf 2020 Mathematics Subject Classification:} 
49Q20, %Variational problems in a geometric measure-theoretic setting
60E15. %Inequalities; stochastic orderings
}
\end{minipage}
\end{center}

\tableofcontents

\section{Introduction}

\noindent
The isoperimetric inequality in Gauss space asserts that among all sets with a given Gaussian measure 
the half-space has the smallest Gaussian perimeter.
Since the half-space is not symmetric, a natural question arises:
``What is the object minimizing the Gaussian perimeter among symmetric sets?''
The question was initially raised in \cite{Barthe} for \emph{coordinate-wise symmetric} sets (also called \emph{unconditional} sets), i.e., 
invariant under reflections with respect to the coordinate hyperplanes. 
We shall use the term \emph{$n$-symmetric} for such sets.

It has been conjectured for the larger class of \emph{centrally symmetric} sets (see \cite[Conjecture 1.3]{He1})
that the answer is a cylinder $B_r^k \times \R^{n-k}$, or its complement, for some $k$ depending on the volume 
and on the dimension. Here $B_r^k$ denotes the $k$-dimensional ball centered at the origin with radius $r$. 
In particular, when the volume is one half the conjecture states that the minimizer is the $n$-dimensional ball $B_r^n$, 
while when the volume is close to one the minimizer is the symmetric slab $(-r,r) \times \R^{n-1}$. 
In dimension one the solution of the conjecture is elementary (Proposition \ref{prop:line_sym}).
The part of the conjecture asserting that the symmetric slab is the minimizer for volumes close to one has been proved in \cite{BJ}.
In \cite{He2}, adapting ideas of Colding and Minicozzi from the study of the mean curvature flow, it has been 
proved that if a minimizer has the form $\Omega\times\R$, then either $\Omega$ or its complement is convex.

For the analogous problem on the hypersphere, the corresponding classification has been
established by Viana~\cite{Viana} (see also Appendix~\ref{app B}). 
The conjecture also resembles the one for the relative isoperimetric problem in the hypercube; 
this one, however, was recently shown to be false in high dimension by Milman~\cite{Milman2025}.

The difficulties in proving the entire conjecture are due to the fact that the approaches developed for the 
minimization of the Gaussian perimeter cannot be applied. For instance, Ehrhard's rearrangement, which is commonly
used in Gauss space, fails to preserve the symmetry constraint. 
This calls for a new kind of rearrangement, one that decreases the Gaussian perimeter while preserving the symmetry constraint.
A possible idea would be to rearrange an $n$-symmetric set by substituting its one-dimensional sections with the minimal
one-dimensional configuration. However, there is an obstruction: the topology of the optimal symmetric set in one dimension 
depends discontinuously on its mass, switching from two symmetric tails to a centered interval as the Gaussian measure crosses $1/2$. 
Consequently, applying this rearrangement to a set whose slice measures cross this threshold induces a topological 
discontinuity, turning the set ``inside out'' at the critical level. Introducing these additional boundaries orthogonal to the 
slicing direction strictly increases the total Gaussian perimeter (see Example~\ref{ex:counterexample}).

We provide a different kind of rearrangement, by adapting a rearrangement introduced by Bonnesen \cite{Bon} in the Euclidean 
setting for convex sets in the plane (see also \cite{CL,Gam}).

To precisely describe this rearrangement we first recall some notation.
Given a Borel set $E\subset\R^n$, $\gamma(E)$ denotes its $n$-dimensional \emph{Gaussian measure}, i.e.,
\begin{equation*}
\gamma(E)\coloneqq\frac{1}{(2\pi)^\frac{n}{2}}\int_E e^{-\frac{|x|^2}{2}}dx,
\end{equation*}
while $\mathcal{H}^k_\gamma(E)$,  $1 \leq k \leq n$, denotes its $k$-dimensional Hausdorff measure with Gaussian weight, i.e., 
\begin{equation*}
\mathcal{H}^k_\gamma(E)\coloneqq\frac{1}{(2\pi)^\frac{k}{2}}\int_{E}e^{-\frac{|x|^2}{2}}d\mathcal{H}^k(x).
\end{equation*}
$P_\gamma (E)$ denotes the \emph{Gaussian perimeter} of $E$, i.e.,
\begin{equation*}
P_\gamma (E) \coloneqq \sqrt{2 \pi} \sup \left\{
\int_E \left( \diver \varphi -  \varphi \cdot (x, y) \right) d \gamma (x, y) : \varphi \in C^1_c (\R^{n}; \R^n) \text{ and } | \varphi | \leq 1
\right\}.
\end{equation*}
If $E$ is a set of finite Gaussian perimeter, one can define the reduced boundary $\partial^* E$ of $E$, which is such that $\partial^* E \subset \partial E$ and $P_\gamma(E)=\mathcal{H}^{n-1}_\gamma(\partial^* E)$ (see \cite{AFP} for more details). 
If $E$ is an open set with Lipschitz boundary, we also have  $P_\gamma(E)=\mathcal{H}^{n-1}_\gamma(\partial E)$.

For points in $\R^n$,  $n \geq 2$, we use two notations: the usual $(x_1,\ldots,x_n)$ for a generic point in $\R^n$,
but also $(x, y)$, with $x = (x_1, x_2) \in \R^2$ and $y = (y_1, \ldots, y_{n-2}) \in \R^{n-2}$, 
in order to focus on the first two components. The latter will be our main notation.
We denote by $e_1, \ldots, e_n$ the vectors of the canonical basis of $\R^n$, 
and $| \cdot |$ stands for the norm in $\R, \R^{n-2}$, or $\R^n$, depending on the context. 
For every $r > 0$, we set \(\partial B (r) \coloneqq \{ x \in \R^2_0 : |x| = r \}\), where
$\R^2_0\coloneqq \R^2 \setminus \{ (0, 0)\}$. 

Given a Borel set $E$, for every $(r, y) \in (0, \infty) \times \R^{n-2}$ we define the 
circular slice $E_{(r, y)} $ as the subset of $\R^2_0$ given by
\begin{equation*}
E_{(r, y)} \coloneqq \{ x \in \R^2_0: x \in \partial B (r) \text{ and } (x, y) \in E\},
\end{equation*}
and define the \emph{circular (Gaussian) distribution} of $E$ as the function
\begin{equation} \label{def:mu}
\mu (r, y)\coloneqq \mathcal{H}^1_{\gamma} ( E_{(r, y)}), \qquad \text{ for every } (r, y) \in (0, \infty) \times \R^{n-2},
\end{equation}
i.e., $\mu (r, y)$ is the $1$-dimensional Gaussian length of the slice $E_{(r, y)}$.
Additionally, we set 
\begin{equation} \label{def:beta}
\xi_\mu (r, y)  \coloneqq \frac{\sqrt{2 \pi}}{e^{- \frac{r^2}{2}}} \frac{\mu (r, y)}{r}, \quad \text{ and } \quad
\beta_\mu (r, y) \coloneqq \frac{\xi_\mu (r, y)}{4}, \quad \text{ for every } (r, y) \in (0, \infty) \times \R^{n-2}.
\end{equation}
Thus, $\xi_\mu (r, y) \in [0, 2 \pi]$ represents the size of an angle corresponding to a connected arc of $\partial B (r)$ whose
$1$-dimensional Gaussian length is $\mu (r, y)$. 
The \emph{double-arc rearrangement} of $E$, denoted $F_\mu$, is defined as 
\begin{equation} \label{def:F}
F_{\mu} \coloneqq \left\{ (x, y) \in \R^2_0 \times \R^{n-2} : \arccos ( | \hat{x} \cdot e_1 | ) <  \beta_\mu (|x|, y) \right\},
\end{equation}
where $\hat{x}\coloneqq x/|x|$ is the radial versor of $x$. 
See Figures \ref{rearrangement 2D} and \ref{rearrangement 3D} for examples with $n=2$ and $n=3$.

\begin{figure}[ht]
\centering
 \begin{tikzpicture}[>=latex, scale=1.6] 
    \draw[fill=gray, style=thick]
(0,0) -- +(90:1) arc(90:0:1) -- cycle;
    \draw[fill=gray, style=thick]
(0,0) -- +(225:1) arc(225:270:1) -- cycle;
    \draw(.8,.8)node[right]{$E$};
  \draw[->, line width = .5pt] (-1.3,0)--(1.3,0); 
  \draw(1.5,0)node[below]{$x_1$};
 \draw[->, line width = .5pt] (0,-1.3)--(0,1.3); 
  \draw(.1,1.3)node[right]{$x_2$};

 \draw[->, line width = .5pt] (3.5-1.3,0)--(3.5+1.3,0); 
  \draw(3.5+1.5,0)node[below]{$x_1$};
 \draw[->, line width = .5pt] (3.5,-1.3)--(3.5,1.3); 
  \draw(.1+3.5,1.3)node[right]{$x_2$};

\draw[black,thick] (0,0) circle (0.6);
\draw[black, dashed, line width = .5pt] (0,0)--(0.6/1.4,-0.6/1.4); 
\draw(0.3/1.4,-0.25/1.4)node[right]{$r$};
\draw [blue,thick,domain=0:90] plot ({0.6*cos(\x)}, {0.6*sin(\x)});
\draw [blue,thick,domain=225:270] plot ({0.6*cos(\x)}, {0.6*sin(\x)});

   \draw[fill=gray, style=thick]
(3.5,0) -- +(-33.75:1) arc(-33.75:33.75:1) -- cycle;

   \draw[fill=gray, style=thick]
(3.5,0) -- +(180-33.75:1) arc(180-33.75:180+33.75:1) -- cycle;

    \draw(4.3,.8)node[right]{$F_\mu$};
\draw [blue,thick,domain=-33.75:33.75] plot ({3.5+ 0.6*cos(\x)}, {0.6*sin(\x)});
\draw [blue,thick,domain=180-33.75:180+33.75] plot ({3.5+ 0.6*cos(\x)}, {0.6*sin(\x)});

\draw [thick,domain=0:33.75] plot ({3.5+ 0.3*cos(\x)}, {0.3*sin(\x)});
    \draw(3.5+0.25,0.13)node[right]{\tiny{$\beta_\mu$}};
 \draw[->, line width = .5pt] (3.5-1.3,0)--(3.5+1.3,0);  
\end{tikzpicture}
\caption{The double-arc rearrangement, with $n=2$.}
\label{rearrangement 2D}
\end{figure}

\begin{figure}%[hbt!]
\begin{center}
\includegraphics[width=13cm]{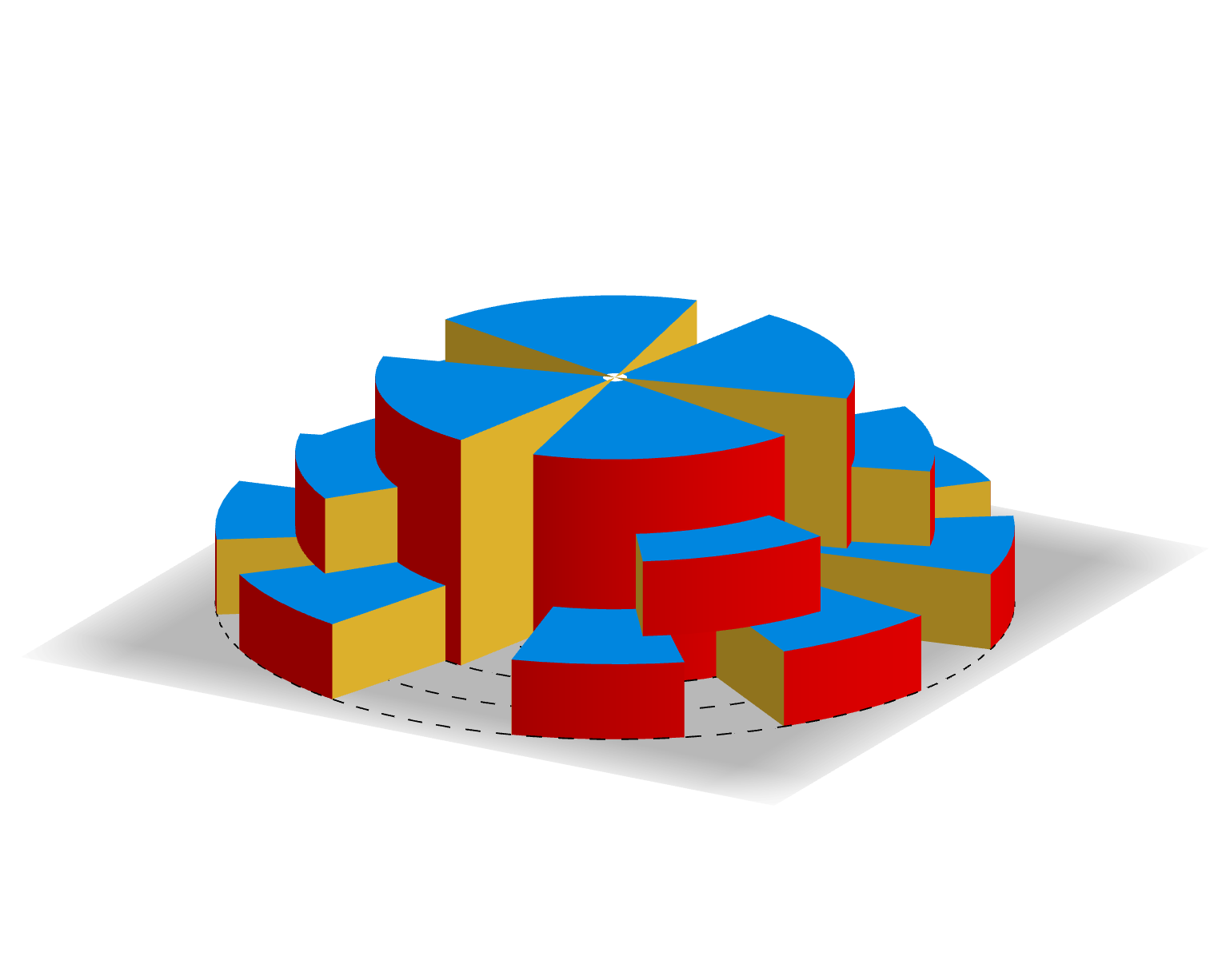}\\[-1.8cm]
\includegraphics[width=13cm]{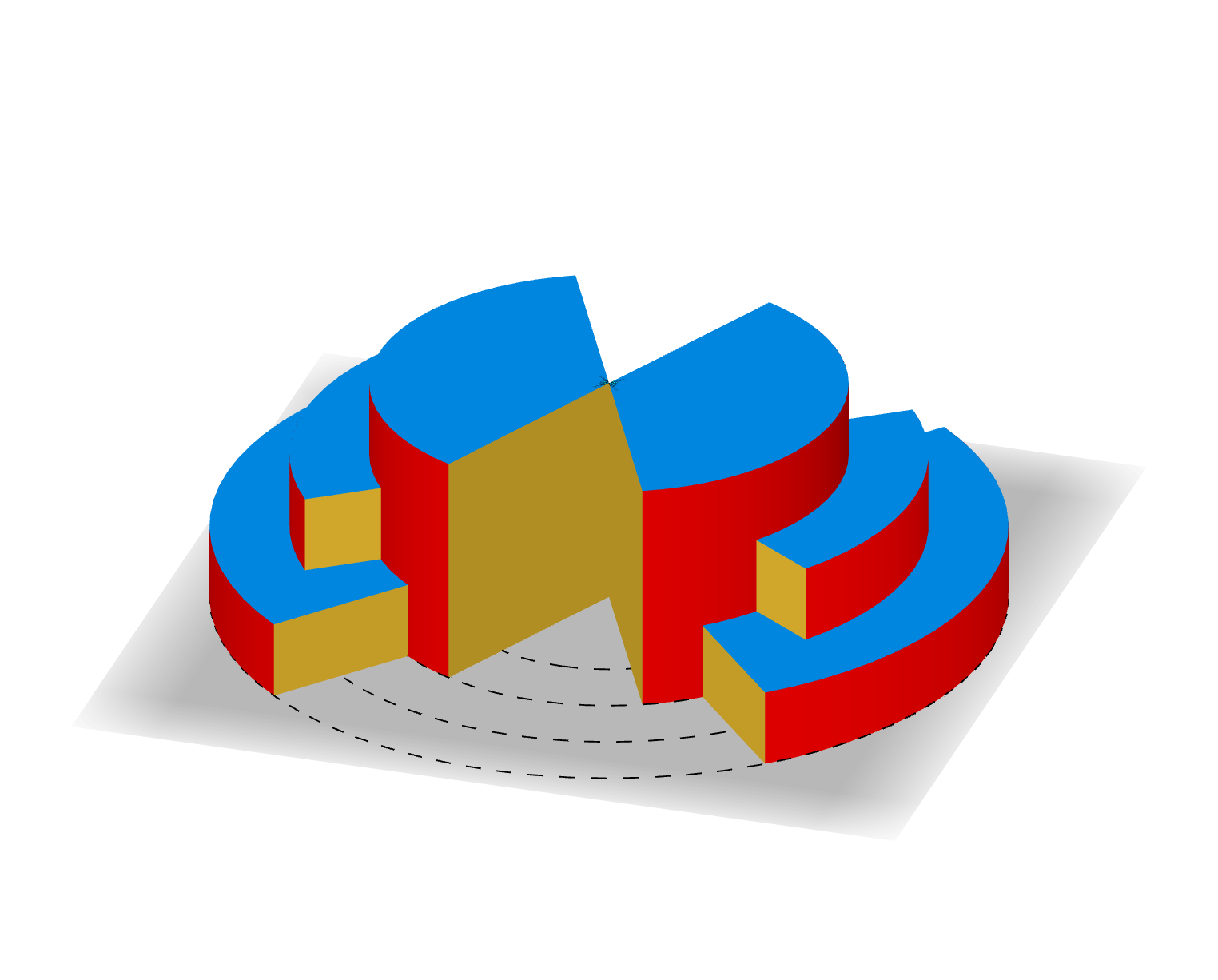}
\end{center}
\vspace*{-1cm}
\caption{A set $E$ and its rearrangement $F_\mu$, with $n=3$.}
\label{rearrangement 3D}
\end{figure}
We say that a Borel set $E \subset \R^n$ is $\mu$-distributed if \eqref{def:mu} holds. By construction, $F_\mu$ is $\mu$-distributed.
As we said, we are interested in sets that are symmetric with respect to all the coordinate hyperplanes.
More precisely, we say that $E$ is \emph{$n$-symmetric} if
\begin{equation*}
 (x, y) \in E \quad \Longleftrightarrow \quad A_j (x, y) \in E \quad \text{ for every } j =1, \ldots, n,
\end{equation*}
where $A_j$ is the $n \times n$ diagonal matrix in which the $j$-th element of the diagonal is $-1$
and all the other ones are $1$, i.e., $A_j$ is the reflection across the coordinate hyperplane 
$\{x\in\R^n \colon x_j=0\}$.

Our first result is that \emph{the double-arc rearrangement decreases the Gaussian perimeter of $n$-symmetric sets}. 
However, we are also interested in what additional properties the initial set $E$ has when $P_\gamma(E)=P_\gamma(F_\mu)$. 
We stress the fact that the double-arc rearrangement does not necessarily  decrease the Gaussian perimeter if the set is not $n$-symmetric.  

In order to state our result in full, we need some other notation. 
For every $x \in \R^2_0$, we define the tangential versor $x_{\parallel}$ as 
\begin{equation*}
x_{\parallel} = \left( - \frac{x_2}{|x|}, \frac{x_1}{|x|} \right).
\end{equation*}

For every $(x, y) \in \partial^* E$, we denote by $\nu^E (x, y)$ the measure-theoretic exterior unit normal 
to $E$ at $(x, y)$, characterized by
\begin{equation*}
D\chi_E=-\nu^E\,\mathcal{H}^{n-1}\mres\partial^*E.
\end{equation*}
We decompose $\nu^E (x, y)$ as $\nu^E (x, y) = (\nu^E_x (x, y), \nu^E_y (x, y))$, 
with $\nu^E_x (x, y) \in \R^2$ and $\nu^E_y (x, y) \in \R^{n-2}$.
If $x \neq 0$, we further decompose $\nu^E_x (x, y) \in \R^2$ as
\begin{equation*}
\nu^E_x (x, y) = \nu^E_{x \perp} (x, y) + \nu^E_{x \parallel} (x, y), 
\end{equation*}
where 
\begin{equation*}
\nu^E_{x \perp} (x, y) \coloneqq (\nu^E_x (x, y) \cdot \hat{x} ) \hat{x}, \quad \text{ and } \quad
\nu^E_{x \parallel} (x, y) \coloneqq \nu^E_x (x, y) - \nu^E_{x \perp} (x, y).
\end{equation*}

By symmetry, it follows that for every $(r, y) \in (0, \infty) \times \R^{n-2}$
the functions 
\begin{equation*}
x \mapsto \hat{x} \cdot \nu^{F_\mu}_x (x, y) \quad \text{ and } \quad 
x \mapsto | \nu_{x \|}^{F_\mu}(x, y)| 
\end{equation*}
are constant in $(\partial^* F_{\mu})_{(r, y)}$. 

Let $r > 0$, $p \in \Ss^{1}$ and $\alpha \in [0, \pi]$ be fixed, where $\Ss^1$ denotes the unit circle in $\R^2$.
The \textit{arc} of centre $r p$ and radius $\alpha$ is the subset of $\partial B (r)$ given by 
\begin{equation*}
\Ar_{\alpha} (r p) \coloneqq \{ x \in \partial B (r) : \arccos ( \hat{x} \cdot p  ) < \alpha \}.
\end{equation*}
Accordingly, if $\alpha \in (0, \pi)$, its \textit{endpoints} are 
\begin{equation*}
\mathbf{S}_{\alpha} (r p) \coloneqq \{ x \in \partial B (r) : \arccos ( \hat{x} \cdot p ) = \alpha \}.
\end{equation*} 
For what follows, it is useful to define the diffeomorphism \( \Phi: (0, \infty) \times \R^{n-2} \times \Ss^1
\to \R^{2}_0 \times \R^{n-2} \) given by
\begin{equation}\label{eq:diffeo}
\Phi (r, y, \omega) \coloneqq (r \omega, y), \quad \text{ for every } (r, y, \omega) \in (0, \infty) \times \R^{n-2} \times \Ss^1.
\end{equation}

\begin{figure}[ht]
\centering
\tdplotsetmaincoords{70}{40}
\begin{tikzpicture}[tdplot_main_coords, scale=2]

% Define colors
\definecolor{surfaceblue}{RGB}{70,130,180}
\definecolor{borelcolor}{RGB}{255,140,0}
\definecolor{circlegreen}{RGB}{34,139,34}
\definecolor{darkblue}{RGB}{0,0,139}

% --- Coordinate axes ---
\draw[->, thick] (0,0,0) -- (2.8,0,0) node[anchor=north east]{$x_1$};
\draw[->, thick] (0,0,0) -- (0,2.8,0) node[anchor=north west]{$x_2$};
\draw[->, thick] (0,0,0) -- (0,0,3.2) node[anchor=south]{$y$};

% --- Draw Phi(B x S^1) as circles at various heights ---
% Bottom bump circles (y from 0.6 to 1.4)
\foreach \y/\r in {0.6/0.7, 0.75/1.1, 0.9/1.4, 1.0/1.55, 1.1/1.4, 1.25/1.1, 1.4/0.7} {
    \draw[darkblue, opacity=0.4, thin] 
        plot[domain=0:360, samples=60] ({\r*cos(\x)}, {\r*sin(\x)}, {\y});
}

% Top bump circles (y from 1.4 to 2.2)
\foreach \y/\r in {1.4/0.7, 1.55/1.0, 1.7/1.3, 1.8/1.45, 1.9/1.3, 2.05/1.0, 2.2/0.7} {
    \draw[darkblue, opacity=0.4, thin] 
        plot[domain=0:360, samples=60] ({\r*cos(\x)}, {\r*sin(\x)}, {\y});
}

% --- Draw shaded cross-sections to suggest the surface ---
% Bottom torus cross-section (back half, at max radius)
\fill[surfaceblue, opacity=0.15] 
    plot[domain=180:360, samples=40] ({1.55*cos(\x)}, {1.55*sin(\x)}, {1.0})
    -- plot[domain=360:180, samples=40] ({0.7*cos(\x)}, {0.7*sin(\x)}, {1.0})
    -- cycle;

% Top torus cross-section (back half, at max radius)
\fill[surfaceblue, opacity=0.15] 
    plot[domain=180:360, samples=40] ({1.45*cos(\x)}, {1.45*sin(\x)}, {1.8})
    -- plot[domain=360:180, samples=40] ({0.7*cos(\x)}, {0.7*sin(\x)}, {1.8})
    -- cycle;

% Vertical surface strips (to show the swept surface)
\fill[surfaceblue, opacity=0.12]
    (0.7, 0, 0.6) -- (0.7, 0, 2.2) 
    -- plot[domain=0:180, samples=30] ({0.7*cos(\x)}, {0.7*sin(\x)}, {2.2})
    -- (0.7, 0, 2.2) -- cycle;

% --- Draw the B-shaped region in the (x1, y) plane (x2 = 0) ---

% Fill the B shape
\fill[borelcolor, opacity=0.5] 
    (0.7, 0, 0.6) -- (0.7, 0, 2.2)
    % Top bump (semicircle-ish going right then back)
    .. controls (0.7, 0, 2.25) and (1.55, 0, 2.1) .. (1.5, 0, 1.8)
    .. controls (1.45, 0, 1.5) and (0.75, 0, 1.45) .. (0.7, 0, 1.4)
    % Bottom bump (semicircle-ish going right then back)
    .. controls (0.75, 0, 1.35) and (1.65, 0, 1.3) .. (1.6, 0, 1.0)
    .. controls (1.55, 0, 0.7) and (0.75, 0, 0.55) .. (0.7, 0, 0.6)
    -- cycle;

% Draw the B boundary
\draw[borelcolor, line width=1.2pt] 
    (0.7, 0, 0.6) -- (0.7, 0, 2.2)
    % Top bump
    .. controls (0.7, 0, 2.25) and (1.55, 0, 2.1) .. (1.5, 0, 1.8)
    .. controls (1.45, 0, 1.5) and (0.75, 0, 1.45) .. (0.7, 0, 1.4)
    % Bottom bump
    .. controls (0.75, 0, 1.35) and (1.65, 0, 1.3) .. (1.6, 0, 1.0)
    .. controls (1.55, 0, 0.7) and (0.75, 0, 0.55) .. (0.7, 0, 0.6);

% --- Draw a representative circle (green) ---
\draw[circlegreen, line width=1.4pt] 
    plot[domain=0:360, samples=72] ({1.1*cos(\x)}, {1.1*sin(\x)}, {1.0});

% --- Mark a point (r,y) in B ---
\fill[red!80!black] (1.1, 0, 1.0) circle (1.5pt);
\node[anchor=west, text=red!70!black, font=\small] at (0.8, 0.1, 0.75) {$(r,y)$};

% --- Dashed radial lines from point to circle ---
\foreach \angle in {0, 45, 90, 135, 180, 225, 270, 315} {
    \draw[gray, dashed, thin, opacity=0.5] 
        (1.1, 0, 1.0) -- ({1.1*cos(\angle)}, {1.1*sin(\angle)}, 1.0);
}

% --- Labels ---
\node[anchor=west, text=borelcolor, font=\bfseries\large] at (1.5, 0, 1.8) {$B$};
\node[anchor=south east, text=surfaceblue] at (-0.3, 0.2, 2.3) {$\Phi(B \times \Ss^1)$};
\node[anchor=north, text=circlegreen, font=\small] at (-1.8, 1.3, 0.5) {$\partial B(r)$};

\end{tikzpicture}
\caption{The region $\Phi(B \times \Ss^1)$ for $n=3$. The Borel set 
$B \subset (0,\infty) \times \R$ (orange) is displayed in the 
$(x_1, y)$ half-plane. Under the diffeomorphism $\Phi(r,y,\omega) = (r\omega, y)$, 
each point $(r,y) \in B$ generates a circle $\partial B(r)$ at height $y$, 
and the union of these circles   gives the region 
$\Phi(B \times \Ss^1)$ (in blue).}
%\label{fig:phi-B-S1}
\end{figure} 
The first result we prove is that the double-arc Gaussian rearrangement does not increase the Gaussian perimeter. 
Here and in the following, for $1 \leq k \leq n$ and $A, B \subset \R^n$, we write $A =_{\mathcal{H}^k} B$ whenever $\mathcal{H}^k (A \Delta B) = 0$. 
\begin{theorem}[Perimeter inequality] \label{theorem ineq}
Let $E \subset \R^n$ be an $n$-symmetric 
set of finite Gaussian perimeter, and let $\mu$ be the distribution function of $E$.
Then, $\mu \in BV_{\textnormal{loc}} ((0, \infty) \times \R^{n-2})$. 
Moreover, $F_{\mu}$ is a set of finite Gaussian perimeter, and 
\begin{equation} \label{per ineq}
P_{\gamma} (F_{\mu}; \Phi (B \times \Ss^{1})) \leq P_{\gamma} (E; \Phi  (B \times \Ss^{1})),
\end{equation}
for every Borel set $B \subset (0,\infty) \times \R^{n-2}$. 

Moreover, if equality in \eqref{per ineq} holds, then for $\mathcal{H}^{n-1}$-a.e. $(r, y) \in B$ 
the following two conditions are satisfied:
\begin{enumerate}
\item[(i)] $E_{(r, y)} =_{\mathcal{H}^1} \Ar_{\beta_\mu (r, y)} (r p (r, y)) \cup  \Ar_{\beta_\mu (r, y)} (- r p (r, y))$ for some $p(r, y) \in \{(1,0), (0,1)\}$;

\vspace{.2cm}

\item[(ii)] the functions 
\begin{equation*}
x \mapsto \hat{x} \cdot \nu^{E}_x (x, y),  \quad 
x \mapsto | \nu_{x \|}^E(x, y)|, \quad
x \mapsto  \nu_{y}^E(x, y),
\end{equation*}
are constant in $(\partial^*E )_{(r, y)}$. 
\end{enumerate}
\end{theorem}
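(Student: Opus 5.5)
We follow the scheme of the perimeter inequality for Steiner/spherical symmetrization (Chlebík--Cianchi--Fusco, Barchiesi--Cagnetti--Fusco), working in the cylindrical-type coordinates given by $\Phi$. We first treat the case where $E$ is smooth, or at least where $\partial^* E$ is, up to $\mathcal H^{n-1}$-null sets, transversal to the circles $\partial B(r)\times\{y\}$. The general case is then reached by approximation, using lower semicontinuity of $P_\gamma$ and the coarea formula.

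\textbf{Step 1: regularity of $\mu$ and the coarea formula for the derivatives.} Write $\mu(r,y)=\frac{1}{\sqrt{2\pi}}e^{-r^2/2}\,r\int_{\Ss^1}\chi_E(r\omega,y)\,d\mathcal H^1(\omega)$. Test $D\mu$ against $\psi\in C^1_c((0,\infty)\times\R^{n-2})$ by choosing $\varphi(x,y)=\psi(|x|,y)$ times the radial and $y$-directions in the definition of $D\chi_E$. This gives $|D\mu|(B)\le C\,P_\gamma(E;\Phi(B\times\Ss^1))$ locally, so $\mu\in BV_{\rm loc}$. It also gives the representation
\[
\partial_r\mu^a(r,y)=\int_{(\partial^*E)_{(r,y)}}\frac{-\hat x\cdot\nu^E_x}{|\nu^E_{x\parallel}|}\,d\mathcal H^0_\gamma\ (\text{suitably weighted}) -r\mu+\frac{\mu}{r},
\]
with an analogous formula for $\nabla_y\mu^a$, valid for a.e.\ $(r,y)$ where $\nu^E_{x\parallel}\ne0$ $\mathcal H^{n-2}$-a.e.\ on the slice. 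This is the Vol'pert-type coarea argument with respect to the map $(x,y)\mapsto(|x|,y)$. The set where $\nu^E_{x\parallel}=0$ on a positive-measure part contributes to the singular part of $D\mu$, handled as in the Steiner case.

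\textbf{Step 2: slice-wise inequality.} By the coarea formula along the fibres of $(x,y)\mapsto(|x|,y)$,
\[
P_\gamma(E;\Phi(B\times\Ss^1))\ge\int_B\sum_{(x,y)\in(\partial^*E)_{(r,y)}}\frac{e^{-(r^2+|y|^2)/2}}{|\nu^E_{x\parallel}|}\,(\ldots)\,d(r,y)+|D^s\mu|(B)\text{-term},
\]
and the integrand equals $\sum\sqrt{1+ (\text{tangential slope})^{-2}}$-type quantities. The key point is $n$-symmetry. Every nonempty slice that is not the full circle has at least four boundary points, since the slice is invariant under $x_1\mapsto -x_1$ and $x_2\mapsto -x_2$. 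The rearranged slice, two arcs centred on the $x_1$-axis, has exactly four. Applying Jensen/Cauchy--Schwarz to the convex function $(a,b)\mapsto\sqrt{1+|a|^2+|b|^2}$-type integrand, with the constraint that $\sum$ of normal components reproduces $\partial_r\mu$ and $\nabla_y\mu$ from Step 1, shows that the contribution of $E$ on each slice is at least the contribution of $F_\mu$. For $F_\mu$, by symmetry, all four boundary points carry equal values, and these are determined by $D\mu$. This is exactly the computation in the Steiner proof, with $\mathcal H^0$ counting $\ge4$ points instead of $\ge2$. The singular part is treated via $|D^s\mu|$, and $F_\mu$ attains it exactly. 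I expect the main obstacle to lie here: carefully handling the jump/Cantor parts of $\mu$, and the points where $\mu(r,y)\in\{0,\mathcal H^1_\gamma(\partial B(r))\}$, where slices have no boundary. The first thing to try is the Chlebík--Cianchi--Fusco strategy of splitting $B$ into the sets where $\mu$ is extremal or not.

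\textbf{Step 3: equality.} If equality holds, then on a.e.\ slice two things are forced. First, the number of boundary points must be exactly four, which together with $n$-symmetry forces $E_{(r,y)}$ to be two antipodal arcs centred at $\pm r e_1$ or $\pm r e_2$; this is (i). Second, strict convexity in the Jensen step forces the vectors $(\hat x\cdot\nu^E_x,|\nu^E_{x\parallel}|,\nu^E_y)$, normalized, to coincide at the four points; this is (ii). Finite perimeter of $F_\mu$ follows from taking $B=(0,\infty)\times\R^{n-2}$, together with the observation that the axis $\{x=0\}$ is $\mathcal H^{n-1}$-negligible in $\R^n$ for $n\ge2$ (dimension $n-2$).
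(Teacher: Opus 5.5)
Your outline follows the paper's structure: the representation of $r\,D\xi_\mu$ through $\hat x\cdot\nu^E_x/|\nu^E_{x\parallel}|$ and $\nu^E_y/|\nu^E_{x\parallel}|$, the splitting of $B$ into $\{\beta_\mu=0\}\cup\{\beta_\mu=\pi/2\}$, a null set and the good fibres, the four-boundary-point bound combined with Jensen on good fibres, and the equality analysis. There is, however, a genuine gap on the side of $F_\mu$.

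Everything you assert about $F_\mu$ presupposes that $F_\mu$ is already known to have locally finite perimeter. This includes the claim that over good fibres its perimeter is given by the coarea formula with four equal contributions fixed by $r\nabla\xi_\mu$, and the claim that ``$F_\mu$ attains the singular part exactly''. The second claim is moreover an \emph{upper} bound on $P_\gamma(F_\mu;\cdot)$. Coarea, Vol'pert's theorem and the slicing formulas for $D\xi_\mu$ only represent, or bound from below, the perimeter of a set of finite perimeter. None of them shows that $\int_{F_\mu}(\diver\varphi-\varphi\cdot(x,y))\,d\gamma$ is bounded. Your closing deduction of $P_\gamma(F_\mu)<\infty$ from the inequality with $B=(0,\infty)\times\R^{n-2}$ is therefore circular.

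The announced reduction to smooth or transversal $E$ does not help either. For a centred ball, $\nu^E_{x\parallel}\equiv0$ and $\xi_\mu$ jumps across a sphere in the $(r,y)$-variables. So $\mu$ is only $BV$ even for smooth $E$, and any approximation scheme still needs an upper bound for the perimeter of the rearranged approximants.

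What is missing is an independent upper estimate for $F_\mu$, valid for an arbitrary $BV$ distribution; this is Proposition~\ref{prop:bound P F_mu}. There one approximates $\xi_\mu$ by $\xi_j\in C^1_c$ with $|D\xi_j|\overset{*}{\rightharpoonup}|D\xi_\mu|$. One then computes $\int_{F_{\mu_j}}(\diver\varphi-\varphi\cdot(x,y))\,d\gamma$ explicitly, using the divergence theorem in the $(r,y)$-variables for an auxiliary Lipschitz field $V_j$ and the divergence theorem on the arcs. Passing to the limit shows that $F_\mu$ has finite Gaussian perimeter, together with
\[
(2\pi)^{\frac{n-1}{2}}P_\gamma(F_\mu;\Phi(B\times\Ss^1))\le |re^{-\frac{r^2}{2}}e^{-\frac{|y|^2}{2}}D\xi_\mu|(B)+|e^{-\frac{|x|^2}{2}}e^{-\frac{|y|^2}{2}}D_{x\parallel}\chi_{F_\mu}|(\Phi(B\times\Ss^1)).
\]
Only after this can Vol'pert's theorem be applied to $F_\mu$. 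It shows that the last term vanishes over $\{\beta_\mu=0\}\cup\{\beta_\mu=\pi/2\}\cup\mathcal N$, where $\mathcal N$ is the null set of bad fibres. The bound $|re^{-\frac{r^2}{2}}e^{-\frac{|y|^2}{2}}D\xi_\mu|(B)\le|e^{-\frac{|x|^2}{2}}e^{-\frac{|y|^2}{2}}D\chi_E|(\Phi(B\times\Ss^1))$ of Lemma~\ref{lem:mu BV} then gives the inequality on that set.

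Having the inequality separately on the bad part and on the good part of $B$ is also what lets you pass from equality on $B$ to fibrewise equality, as needed for (i) and (ii). An alternative would be to prove directly a perimeter formula for the angular subgraph $\{\theta<\beta_\mu(r,y)\}$ of the $BV$ function $\beta_\mu$, checking that no perimeter is created on the axes. Either way, some such argument has to be supplied.
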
 

Condition (i) means that if the rearrangement does not strictly decrease the Gaussian perimeter,
then the initial set $E$ already has circular slices constituted by two symmetric arcs. 
This is intuitively clear: by looking at Figure \ref{rearrangement 3D}, if a circular slice consists of
more than two arcs, after the rearrangement the yellow part of the boundary is strictly reduced. 

It is important to note that even if $P_\gamma(E)=P_\gamma(F_\mu)$, the sets $E$ and $F_\mu$ are not necessarily congruent. Consider the following example: let $E$ be the union of the four cylinders 
(see Figure~\ref{no rigidity})
\begin{equation*}\begin{split}
&\{x\in\R^3 \colon x_2^2+x_3^2\leq1\}, \qquad
\{x\in\R^3 \colon x_1^2 +(x_3-3)^2\leq1\},\\
&\{x\in\R^3 \colon x_1^2 +(x_3+3)^2\leq1\}, \qquad
\{x\in\R^3 \colon x_1^2+x_2^2\leq1\}.
\end{split}\end{equation*}
The rearrangement reorients the two orange cylinders making them parallel to the blue one. 
The point is that the set 
\begin{equation} \label{def:omega-nu}
\Omega_\mu \coloneqq \{ (r, y) \in (0, \infty) \times \R^{n-2} : 0 < \beta_\mu(r, y) < \pi/2 \}
\end{equation}
is not connected (see Figure \ref{figure distribution}), and this allows us to rotate each cylinder independently, without creating an additional perimeter. 

\begin{figure}
\begin{center}
\includegraphics[width=10cm]{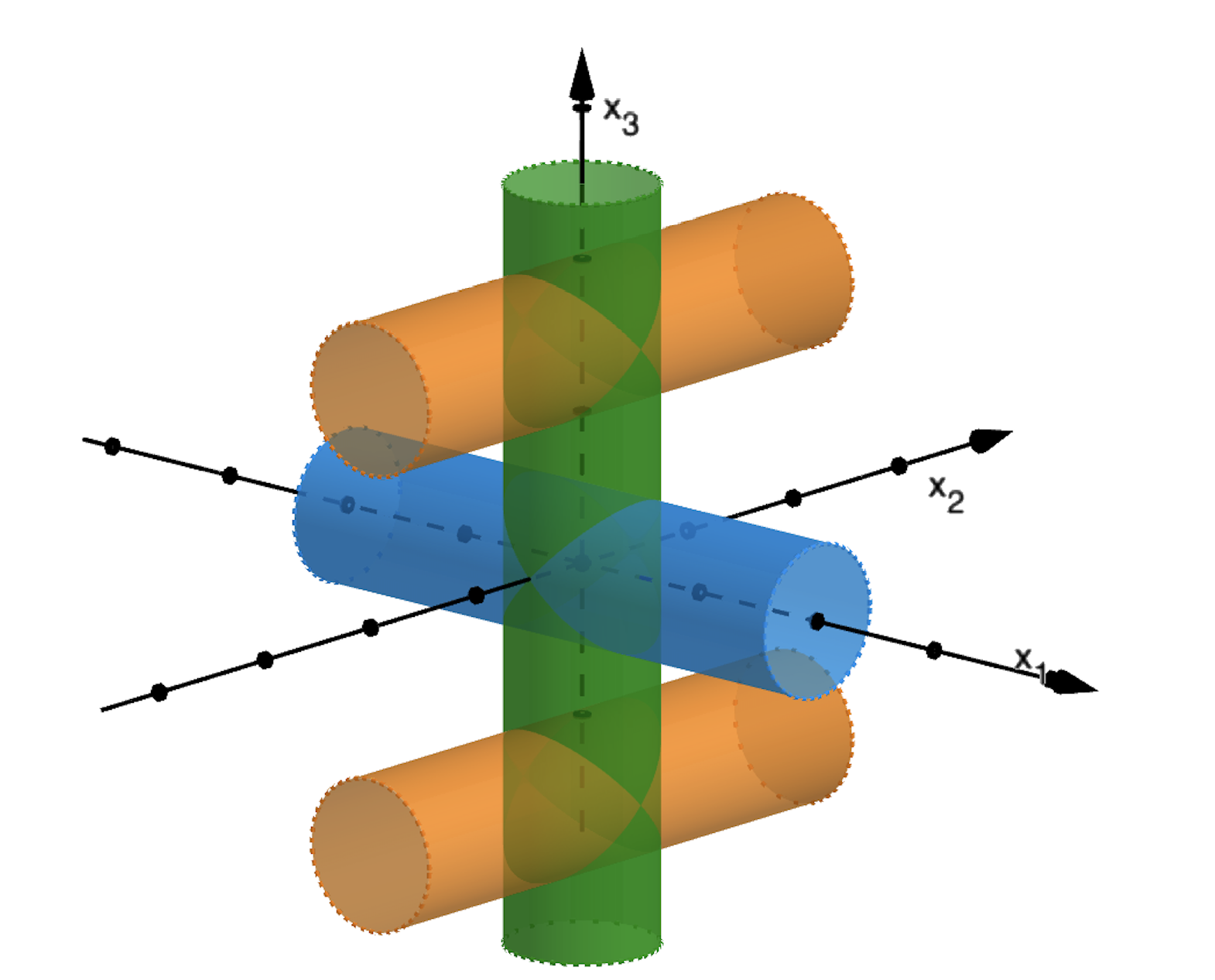}
\end{center}
\caption{The set $E$ and its rearrangement $F_\mu$ have the same perimeter, but they are not congruent}
\label{no rigidity}
\end{figure}

\begin{figure}[!htb]
\centering
  \begin{tikzpicture}[>=latex, scale=0.6]
 \draw[fill=gray, style=thick](0,-6)--(1,-6)--(1,6)--(0,6)--(0,-6);
 \draw[fill=cyan, style=thick](1,3)--(9.5,3)--(9.5,5)--(1,5)--(1,3);
\draw(4.2,4)node{$ 0 < \beta_{\mu} < \pi/2$};

\begin{scope}[shift={(0,-4)}]
 \draw[fill=cyan, style=thick](1,3)--(9.5,3)--(9.5,5)--(1,5)--(1,3);
\draw(4.2,4)node{$ 0 < \beta_{\mu} < \pi/2$};
\end{scope}

\begin{scope}[shift={(0,-8)}]
 \draw[fill=cyan, style=thick](1,3)--(9.5,3)--(9.5,5)--(1,5)--(1,3);
\draw(4.2,4)node{$ 0 < \beta_{\mu} < \pi/2$};
\end{scope}

\draw(0.5,0)node[rotate=90]{$ \beta_{\mu} = \pi/2$};
\draw(4,2)node{$\beta_{\mu} = 0$};
\draw(4,-2)node{$\beta_{\mu} = 0$};

   \draw[line width = .5pt] (-.05,0)--(.05,0); 
  \draw(0,0)node[left]{$0$};
  \draw[->, line width = .5pt] (9.5,0)--(10,0); 
  \draw(10,0)node[below]{$r$};
 \draw[->, line width = .5pt] (0,-6.3)--(0,6.3); 
  \draw(0,6.3)node[left]{$y_1$};
\end{tikzpicture}
\caption{Rigidity fails. In this case the set 
$\{ \beta_{\mu} = 0 \} \cup \{ \beta_{\mu} = \pi/2 \}$ disconnects the set $\{ 0 < \beta_{\mu} < \pi/2 \}$.}
       \label{figure distribution}
\end{figure}

\smallskip

We now  turn our attention to minimizers of the Gaussian isoperimetric problem for $n$-symmetric sets, and study their regularity and the behavior under the double-arc Gaussian  rearrangement. We recall that, for $t \in [0,1]$, the set $E^{(t)}$ of points of density $t$ of $E$ is given by 
\[
E^{(t)}\coloneqq  \left\{ x \in \R^n: u_E(x) \coloneqq  \lim_{r \to 0^+} \frac{|E \cap B_r(x)|}{|B_r(x)|} = t \right\}. 
\]
In what follows, we identify $E$ with its precise representative $E^{(1)}$.
We first prove the optimal regularity. 

\begin{theorem}[Regularity]\label{theorem reg}
Let $E\subset\R^n$ be a minimizer of the Gaussian isoperimetric problem among $n$-symmetric sets 
with prescribed Gaussian measure. Then $E$ is open, and $\partial E$ is a real analytic hypersurface
outside the origin, i.e., $\Sigma_E\coloneqq \partial E\setminus\partial^*E\subset\{0\}$.
\end{theorem}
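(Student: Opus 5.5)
The plan is to reduce to the classical regularity theory for almost-minimizers of the (weighted) perimeter. We do this by showing that a minimizer under the symmetry constraint is in fact a local volume-constrained minimizer without any constraint, away from the coordinate hyperplanes. Near points of those hyperplanes (other than the origin) we exploit the reflection symmetry.

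\textbf{Step 1: points off the coordinate hyperplanes.} Let $x_0\in\partial E$ with all coordinates of $x_0$ nonzero. Choose a ball $B_\rho(x_0)$ so small that its $2^n$ reflected copies $\{A_{j_1}\cdots A_{j_k}B_\rho(x_0)\}$ are pairwise disjoint. Any compactly supported perturbation $G$ of $E$ inside $B_\rho(x_0)$ can be symmetrized by copying it into all reflected balls. This produces an $n$-symmetric competitor whose perimeter and measure changes are $2^n$ times those of $G$. Hence $E$ minimizes $P_\gamma$ in $B_\rho(x_0)$ among sets with the same Gaussian measure there.

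A standard volume-fixing variation (as in Maggi's book, using two disjoint small balls in $\partial^*E$ and symmetrizing them in the same way) converts the volume constraint into a $\Lambda$-term. So $E$ is a $(\Lambda,r_0)$-minimizer of the Gaussian perimeter in $B_\rho(x_0)$. Since the weight $e^{-|x|^2/2}$ is smooth and positive, this is an almost-minimizer of Euclidean perimeter with an $r^{n}$ error. By De Giorgi--Almgren--Tamanini regularity, $\partial^*E$ is $C^{1,\alpha}$ there and $\mathcal{H}^{s}(\Sigma_E)=0$ for $s>n-8$. The Euler--Lagrange equation $H_E-x\cdot\nu^E=\lambda$ is elliptic, so Schauder estimates and analytic elliptic regularity give that $\partial^*E$ is analytic.

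\textbf{Step 2: removing the singular set.} This is the main obstacle. The dimension bound alone does not give $\Sigma_E=\emptyset$ for $n\ge 8$, so one needs stability plus symmetry. The plan is to take a tangent cone $C$ at a singular point $x_0\neq0$. Symmetry passes to $C$ along every direction $e_j$ with $(x_0)_j=0$, and $C$ is area-minimizing under the volume constraint (which disappears at blow-up scale). We then use the structure of the constrained problem via Theorem~\ref{theorem ineq}: a minimizer must achieve equality in \eqref{per ineq}, since $F_\mu$ is $n$-symmetric with the same measure. The equality conditions (i) and (ii), applied in every pair of coordinate planes (by relabeling coordinates), say that circular slices are two symmetric arcs and normals are constant on slices. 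This rigidity forces $E$ to be locally a set with strong monotone/graphical structure: each slice is a double arc centered on an axis, so $\partial E$ is locally a graph over angular variables. Graphical almost-minimizers have no singularities. I would try first to show that near any $x_0\neq0$, in some pair of coordinates $(x_i,x_j)$ with $(x_0)_i^2+(x_0)_j^2>0$, the boundary is a Lipschitz graph $\theta=\beta_\mu(r,y)$. Lipschitz graphs that are almost-minimal are $C^{1,\alpha}$ by De Giorgi, so $\Sigma_E\cap\{x\neq0\}=\emptyset$.

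\textbf{Step 3: points on the coordinate hyperplanes and openness.} For $x_0\neq 0$ lying on some hyperplane $\{x_j=0\}$, competitors near $x_0$ need only be symmetric under the reflections fixing $x_0$. Reflection-symmetric local minimality is equivalent to unconstrained local minimality: given a nonsymmetric competitor, compare $E$ with the half-pieces on each side and reflect the better half, which is the standard argument that symmetric minimizers minimize among all sets. So Steps 1–2 apply verbatim. Finally, density estimates for almost-minimizers give that $E^{(1)}$ is open and $\partial E=\overline{\partial^*E}$, completing the statement with $\Sigma_E\subset\{0\}$.
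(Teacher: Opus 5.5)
Your Steps 1 and 3 are sound. They essentially reproduce the paper's partial-regularity argument (Lemma~\ref{lem:quasi_control} and Proposition~\ref{prop:partial-reg}). One precision: if you reflect the ``better half'', choose the sector minimizing perimeter \emph{plus} the penalized volume term, so that both quantities average correctly. These steps give openness, analyticity of $\partial^*E$ and $\dim_{\mathcal H}\Sigma_E\le n-8$, hence the theorem for $n\le 7$.

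The gap is Step 2, which carries the whole content of the statement for $n\ge 8$. The equality conditions of Theorem~\ref{theorem ineq} are measure-theoretic: they hold for $\mathcal H^{n-1}$-a.e.\ $(r,y)$, and the orientation $p(r,y)$ may switch. What they give is only \emph{monotonicity} in the angular variable. In each quadrant $E$ is a generalized subgraph $\{\theta<\beta_\mu(r,y)\}$ with $\beta_\mu$ merely $BV$, possibly jumping and attaining the values $0$ and $\pi/2$.

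Monotonicity does not exclude singularities. Over the jump set of $\beta_\mu$, $\partial E$ contains pieces foliated by circular arcs, on which $\nu^E_{x\parallel}=0$. These play the role of the vertical walls in Miranda's theory of generalized graphs. Their cross-sections in the $(r,y)$ variables could a priori be singular: think of $C\times\R$ with $C$ the Simons cone, which is monotone in the $\R$-direction and singular along a line.

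So ``graphical almost-minimizers have no singularities'' holds only for graphs of locally bounded slope. A local Lipschitz bound for $\theta=\beta_\mu(r,y)$ is precisely what has to be proved, and your sketch gives no mechanism for it. The tangent-cone remark does not help either: off the coordinate hyperplanes no symmetry survives the blow-up, and a singular minimizing cone can be monotone in a direction.

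The missing ingredient is a quantitative lower bound on a component of the normal near a putative singular point, and this requires the second variation. In the paper, stability is tested with $n$-symmetric functions and $n$-symmetric cutoffs, which are available because $\mathcal H^{n-3}(\Sigma_E)=0$. This gives the dichotomy of Lemma~\ref{lem:graph}: either $E$ is a cylinder in the direction $e_i$, or $\nu_i$ has a strict sign on $\partial^*E\cap\{x_i>0\}$. The cylindrical case is handled by reducing the dimension and adapting Heilman's convexity argument.

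In the non-cylindrical case the paper argues as follows:
\begin{itemize}
\item The Jacobi equation $\Delta_M\nu_n-x\cdot\nabla_M\nu_n+|B_E|^2\nu_n=0$ yields a Bombieri--De Giorgi--Miranda type energy estimate and mean-value inequality for $w=\log\frac{1+\e}{\nu_n+\e}$.
\item These show that a singular point $x$ with $x_i\neq 0$ drags the whole line $x+\R e_i$ into $\Sigma_E$.
\item Such lines are then excluded by transporting $\int w$ along curves on $\partial E$. These curves increase simultaneously a coordinate with $\nu_i<0$ and one with $\nu_j>0$, both vanishing at the singular point, and carry the integral into a compact part of $\partial^*E$ where $\nu_n$ is bounded below. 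The weighted bound $\int x_n^2|\nabla_M w|^2\,d\mathcal H^{n-1}_\gamma\le 4P_\gamma(E)$ controls the transport error.
\end{itemize}
This produces $\nu_n\ge c>0$ near the singular point, contradicting its singularity. Nothing in your proposal replaces these steps.
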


When $E$ is a minimizer, the set $\Omega_\mu $ defined in \eqref{def:omega-nu}  is connected and we have the  
rigidity in the  double-arc rearrangement.

\begin{theorem}[Rigidity] \label{theorem rig}
Let $E \subset \R^n$ be a minimizer of the Gaussian isoperimetric problem among $n$-symmetric 
sets with prescribed Gaussian measure, identified with its precise representative $E^{(1)}$, 
let $\mu$ be the circular distribution of $E$ defined in \eqref{def:mu}, and set $F\coloneqq F_\mu^{(1)}$. 
Then the set $\Omega_\mu$ defined in \eqref{def:omega-nu} is open and connected (possibly empty), and 
\[
E = F \qquad\text{or}\qquad E = R(F),
\]
where $R$ is the clockwise rotation of $\pi/2$ in the plane $(x_1, x_2)$. 
Moreover, exactly one of the following alternatives holds:
\begin{enumerate}[label=(\roman*)]
\item $E$ is rotationally invariant in the $(x_1, x_2)$-plane, and $\Omega_\mu=\emptyset$;
\item $\Omega_\mu\neq\emptyset$, 
$\Pi\big(\partial E\cap(\R^2_+\times\R^{n-2})\big)=\Omega_\mu$, and $\nu^{E}_{x\parallel}(x,y)\neq 0$
for every $(x,y)\in \partial E\cap(\R^2_+\times\R^{n-2})$.
\end{enumerate}
Here $\Pi(x,y)\coloneqq(|x|,y)$ and $\R^2_+\coloneqq\{x\in\R^2\colon x_1>0,\ x_2>0\}$.
\end{theorem}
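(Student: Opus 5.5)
We argue from minimality. Since $E$ minimizes among $n$-symmetric sets and $F_\mu$ is $n$-symmetric with $\gamma(F_\mu)=\gamma(E)$ (both are $\mu$-distributed), Theorem~\ref{theorem ineq} applied with $B=(0,\infty)\times\R^{n-2}$ forces $P_\gamma(F_\mu)=P_\gamma(E)$. Since the plane $\{x=0\}$ has codimension two and carries no perimeter, the same holds on every Borel $B$. Hence $F_\mu$ is also a minimizer, and Theorem~\ref{theorem reg} applies to both $E$ and $F$. The equality case of Theorem~\ref{theorem ineq} then gives, for $\mathcal{H}^{n-1}$-a.e.\ $(r,y)$, that $E_{(r,y)}$ is a double arc centred on the $x_1$- or the $x_2$-axis, with the direction $p(r,y)\in\{e_1,e_2\}$. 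When $\beta_\mu\in\{0,\pi/2\}$ the choice of $p$ is irrelevant. The whole proof therefore reduces to showing that $p$ is a.e.\ constant on $\Omega_\mu$, which gives $E=F$ or $E=R(F)$ up to null sets, and hence exactly by taking precise representatives.

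\textbf{Openness of $\Omega_\mu$ and continuity of $p$.} By Theorem~\ref{theorem reg}, $\partial E$ is analytic away from the origin. At a point $(x,y)\in\partial E$ with $x\neq0$ and $\nu^E_{x\parallel}\neq0$, the boundary is transversal to the circles $\partial B(r)\times\{y\}$. By the implicit function theorem, the endpoints of the arcs of $E_{(r,y)}$ then depend analytically on $(r,y)$ near $\Pi(x,y)$. Consequently $\beta_\mu$ is analytic there, and $p$ is locally constant, since a double arc centred at $e_1$ cannot deform continuously into one centred at $e_2$ unless $\beta\in\{0,\pi/2\}$. I would then show that at points of $\Omega_\mu$ tangency $\nu^E_{x\parallel}=0$ cannot occur. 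The idea is that if the slice is a genuine double arc with $0<\beta<\pi/2$, its endpoints lie strictly inside the open quadrants, i.e.\ in $\R^2_+$ up to symmetry. If $\nu_{x\parallel}$ vanished at one endpoint, it would vanish at all four endpoints by condition (ii), which says $|\nu_{x\parallel}|$ is constant on the slice. Then $\partial E$ would be tangent to the whole circle orbit, and an analytic continuation / unique continuation argument for the minimal-surface-type Euler--Lagrange equation $H=\langle x,\nu\rangle+\lambda$ would force $E$ to be rotationally invariant near that point, and then globally. This would contradict $0<\beta<\pi/2$ being non-generic. This yields alternative (ii): $\Pi(\partial E\cap(\R^2_+\times\R^{n-2}))=\Omega_\mu$, with transversality everywhere. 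Openness of $\Omega_\mu$ follows, as does the fact that $p$ is locally constant on it.

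\textbf{Connectedness of $\Omega_\mu$: the main obstacle.} Suppose $\Omega_\mu$ had two components, or more generally that $p$ took both values. Following the counterexample of Figure~\ref{no rigidity}, we could rotate $E$ by $R$ only over $\Phi(U\times\Ss^1)$ for one component $U$, obtaining a competitor $E'$. This $E'$ is $n$-symmetric, has the same measure, and by the computation in Theorem~\ref{theorem ineq} has the same perimeter, so it is again a minimizer. I would then derive a contradiction with Theorem~\ref{theorem reg} or with analyticity. Along $\partial U$ we have $\beta_\mu\in\{0,\pi/2\}$, so $\partial E'$ contains pieces of the rotationally invariant boundary meeting a non-invariant one. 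Analytic continuation across $\Pi^{-1}(\partial U)$ fails for either $E$ or $E'$ unless $\partial U$ carries no boundary. The delicate part is justifying that $\partial U\cap\Pi(\partial E)\neq\emptyset$, using the connectedness of $(0,\infty)\times\R^{n-2}$ and the fact that $\beta_\mu$ jumping between $0$ and $\pi/2$ would create extra perimeter, i.e.\ horizontal boundary. Alternatively, one can use that if $\Omega_\mu=\emptyset$, then $\beta_\mu\in\{0,\pi/2\}$ a.e., so $E$ is a union of full circles, which is alternative (i). Finally, alternatives (i) and (ii) are mutually exclusive by the definition of $\Omega_\mu$.
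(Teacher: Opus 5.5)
Your reduction is fine: equality in Theorem~\ref{theorem ineq}, $F_\mu$ is again a minimizer, and a.e.\ slice is a double arc with a direction $p(r,y)$. The step that is supposed to give transversality on $\Omega_\mu$, however, does not work.

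\textbf{The transversality step.} Tangency $\nu^E_{x\parallel}=0$ at the four endpoints of a slice is a condition at finitely many points. An analytic hypersurface can be tangent to a circle $\partial B(r)\times\{y\}$ at isolated points without any rotational symmetry. Unique continuation needs vanishing on an open set or to infinite order, so nothing forces $E$ to be rotationally invariant.

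The missing idea is a sign plus a strong maximum principle for the rotational Jacobi field. Take $X=(-x_2,x_1,0)$ and $u=X\cdot\nu$. Rotations in the $(x_1,x_2)$-plane preserve the Gaussian weight, hence the equation $\Hh-z\cdot\nu=\lambda$, so $u$ solves $\Delta_M u-z\cdot\nabla_M u+(|B|^2+1)u=0$.

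The paper runs this on $M=\partial F$, not on $\partial E$. Since $\chi_{F_\mu}$ is nonincreasing in the angle on $\R^2_+\times\R^{n-2}$, one gets $u\ge0$ on $M^+=M\cap(\R^2_+\times\R^{n-2})$. On $\partial E$, by contrast, the sign of $X\cdot\nu^E$ is unknown until you already know that $p$ is constant, so your route is circular there.

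After disposing of the slab case, $M^+$ is connected by Proposition~\ref{prop:components_structure} and the manifold-with-corners argument. The strong minimum principle then gives either $u\equiv0$, which is rotational invariance and alternative (i), or $u>0$ on all of $M^+$.

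\textbf{The inclusion that connectedness needs.} You never prove $\Pi\big(\partial E\cap(\R^2_+\times\R^{n-2})\big)\subset\Omega_\mu$; you only look at endpoints of arcs on fibers already in $\Omega_\mu$. Yet this inclusion is exactly what connectedness hinges on, and its absence is your ``delicate part''.

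Points of $\partial E$ over $\partial U$ lie on empty or full fibers, so they are tangency points. Near such a point, for instance at a fold, the boundary may stay on one side of $\Pi^{-1}(\partial U)$. Then the analytic continuation of your rotated competitor $E'$ has nothing to continue across. You would also still have to show that $E'$ creates no perimeter on $\Pi^{-1}(\partial U)$.

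\textbf{How the paper concludes once $u>0$.} Every fiber through $M^+$ is transversal, so $\Pi(M^+)\subset\Omega_\mu$. Conversely, Lemma~\ref{lemma:connected-slices} puts the endpoint of every quadrant slice over $\Omega_\mu$ in $M^+$; this is a pointwise statement, which you need in place of the a.e.\ conditions of Theorem~\ref{theorem ineq}. Hence $\Omega_\mu=\Pi(M^+)$ is connected, as a continuous image of $M^+$, and open, since $\Pi|_{M^+}$ is a local diffeomorphism.

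Finally, $E=F$ or $E=R(F)$ follows from Lemma~\ref{lemma:connected-slices} applied to $E$. The open sets $\{(r,y):(r,0,y)\in E\}$ and $\{(r,y):(0,r,y)\in E\}$ split $\Omega_\mu$ into two disjoint relatively open pieces, so one is empty. Your route via analytic dependence of the endpoints would instead need transversality of $\partial E$ itself, which is not available at that stage.
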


\smallskip
We believe that the equality $E = F_\mu$ (up to a rotation) should hold for general $n$-symmetric 
sets of finite Gaussian perimeter satisfying $P_\gamma(E)=P_\gamma(F_\mu)$, 
provided that $\Omega_\mu$ is ``connected''. The assumption that $E$ is a minimizer guarantees
that $\Omega_\mu$ is an open set and provides the regularity needed for a topological argument.
In the general case the connectedness assumption on $\Omega_\mu$ should be replaced by a 
suitable notion of \emph{essential connectedness}. The appropriate framework 
is developed in \cite{CCDPM, CCDPM2},  where essential connectedness is used to establish rigidity 
for Steiner and  Gaussian symmetrization. Adapting their approach to the double-arc 
rearrangement would require formulating essential connectedness for the set 
$\Omega_\mu$ in $(0,\infty)\times\R^{n-2}$ and replacing the topological 
covering argument with a measure-theoretic one. We do not pursue this 
generalization here.

\smallskip
An obstruction prevents the generalization of the double-arc rearrangement to higher dimensions, where one might 
naturally attempt to foliate $\mathbb{R}^n$ ($n \ge 3$) with spheres instead  of circles. 
Indeed, see Proposition \ref{prop:sphere_sym}, the solution to the isoperimetric problem on the sphere $\Ss^2$ among symmetric sets undergoes 
a topological phase transition depending on the mass: it switches from the union of two antipodal caps to an equatorial belt as 
the surface area crosses half the total measure. Consequently, rearranging a set by substituting its spherical slices with the optimal 
configuration could introduce topological discontinuities, increasing the global 
Gaussian perimeter rather than decreasing it (see Example \ref{ex:counterexample2}).

\medskip
The paper is organized as follows. Section \ref{sect prelimin} collects preliminary results including the coarea formula 
and properties of the distribution function. Section \ref{sect perim ineq} proves the perimeter inequality (Theorem \ref{theorem ineq}). 
Section \ref{sect reg} establishes the regularity theory (Theorem \ref{theorem reg}). 
Section~\ref{sect rig} proves the rigidity result (Theorem \ref{theorem rig}). 
Appendices treat the one-dimensional problem (\ref{app A}), the spherical obstruction (\ref{app B}), and the planar conjecture (\ref{app C}).

%%%%%%%%%%% Section 2
\section{Preliminary results}\label{sect prelimin}

To prove Theorem~\ref{theorem ineq} we need some technical results that are stated below. The first two are results of geometric measure theory.

\begin{proposition}[Coarea Formula] \label{prop:coarea}
Let $E$ be a set of finite Gaussian perimeter in $\R^n$, 
and let $g:\R^n \rightarrow [0,\infty]$ be a Borel function. Then, 
\begin{align*}
&\int_{\partial^* E} g(x, y) |\nu^E_{x \parallel}(x ,y)| d\mathcal{H}^{n-1}_{\gamma} (x, y) \\ 
&= \int_{(0, + \infty) \times \R^{n-2}} \frac{e^{- \frac{r^2 + |y|^2}{2}}}{(2 \pi)^{(n-1)/2}} \left( \int_{(\partial^* E)_{(r, y)}} g(x, y) \, d\mathcal{H}^{0}(x) \right) dr dy.
\end{align*}
\end{proposition}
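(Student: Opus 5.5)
This coarea formula is a direct consequence of the general coarea formula for rectifiable sets, applied on $\partial^* E$ to the Lipschitz map $f(x,y) \coloneqq (|x|, y)$. The map is defined on $(\R^2_0 \times \R^{n-2}) \cap \partial^* E$ and takes values in $(0,\infty)\times\R^{n-2}$. Two remarks dispose of the degenerate sets. The set $\{x=0\}$ is an $(n-2)$-dimensional plane, so it is $\mathcal{H}^{n-1}$-negligible and can be discarded. Also, $\partial^*E$ is countably $(n-1)$-rectifiable and $f$ maps into an $(n-1)$-dimensional space, so the rectifiable coarea formula (e.g.\ \cite[Theorem 2.93]{AFP}) gives
\[
\int_{\partial^*E} h\, J^{\partial^*E} f \, d\mathcal{H}^{n-1} = \int_{(0,\infty)\times\R^{n-2}} \int_{\partial^* E\cap f^{-1}(r,y)} h \, d\mathcal{H}^0 \, dr\,dy
\]
for every Borel $h\ge 0$. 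Moreover, $\partial^*E\cap f^{-1}(r,y)$ is by definition the set $\{(x,y): x\in (\partial^*E)_{(r,y)}\}$, so the inner integral is exactly the one in the statement.

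The main step is computing the tangential Jacobian $J^{\partial^*E} f$ at $\mathcal{H}^{n-1}$-a.e.\ point, where the approximate tangent space is $T=\nu^\perp$ with $\nu=\nu^E(x,y)$. The differential of $f$ sends $(v_x, v_y)\mapsto (\hat x\cdot v_x, v_y)$. Its kernel in $\R^n$ is the line spanned by $(x_\parallel, 0)$. Restricting $df$ to the hyperplane $T$, the standard identity for a linear map $L:\R^n\to\R^{n-1}$ with one-dimensional kernel spanned by a unit vector $k$ gives
\[
J_T L = J L \cdot |k\cdot \nu|.
\]
To see this, decompose $\R^n = T\oplus \R\nu$ and compare volumes of parallelepipeds, or write $J_T L = |\det(L|_T)|$ using an orthonormal basis adapted to $\ker L$. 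Here $JL = \sqrt{\det(LL^T)}=1$, since the rows $(\hat x,0)$ and $(0,e_j)$ are orthonormal. Hence
\[
J^{\partial^*E} f = |(x_\parallel,0)\cdot \nu| = |\nu_{x\parallel}^E(x,y)|.
\]
Note $|\nu_x\cdot x_\parallel|$ is precisely the length of the tangential component $\nu^E_{x\parallel}$. I expect this linear-algebra identity to be the only point requiring care.

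Finally, I choose
\[
h(x,y)= g(x,y)\,\frac{e^{-(|x|^2+|y|^2)/2}}{(2\pi)^{(n-1)/2}},
\]
so that $\int h\,J\,d\mathcal{H}^{n-1}$ equals the left-hand side with the Gaussian weight $d\mathcal{H}^{n-1}_\gamma$. On the fiber over $(r,y)$ we have $|x|=r$, so the weight is constant there and factors out of the $\mathcal{H}^0$ integral, giving exactly the right-hand side. Since $g\ge0$, the result holds in $[0,\infty]$ by monotone convergence, without any finiteness assumption on $g$.
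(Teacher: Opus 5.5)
Your proposal is correct and follows the same route as the paper, which applies the rectifiable coarea formula \cite[Remark~2.94]{AFP} to $f(x,y)=(|x|,y)$ on $\partial^*E$ with $N=k=n-1$, $M=n$, and inserts the Gaussian weight, which is constant on each fiber. Your explicit computation of the tangential Jacobian $J^{\partial^*E}f=|\nu^E_{x\parallel}|$, via the unit kernel direction $(x_\parallel,0)$, spells out the step the paper leaves implicit; the weighted form you use is the one stated in Remark~2.94 rather than in Theorem~2.93 itself.
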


\begin{proof}
We have 
\begin{align*}
&\int_{\partial^* E} g(x, y) |\nu^E_{x \parallel}(x ,y)| d\mathcal{H}^{n-1}_{\gamma} (x, y) \\
&= \int_{\partial^* E} g(x, y) \frac{e^{- \frac{|x|^2 + |y|^2}{2}}}{(2 \pi)^{(n-1)/2}} |\nu^E_{x \parallel}(x ,y)| d\mathcal{H}^{n-1} (x, y) \\
&= \int_{(0, + \infty) \times \R^{n-2}} \frac{e^{- \frac{r^2 + |y|^2}{2}}}{(2 \pi)^{(n-1)/2}} \left( \int_{(\partial^* E)_{(r, y)}} g(x, y) \, d\mathcal{H}^{0}(x) \right) dr dy,
\end{align*}
where the last equality follows by applying coarea formula \cite[Remark~2.94]{AFP} 
with $N = n-1$, $M = n$, $k = n-1$, and $f (x, y) = (|x|, y)$.
\end{proof}

\begin{remark} \label{Z has zero measure}
Let $E$ be a set of finite Gaussian perimeter in $\R^n$. Then, setting $Z = \partial^*E \cap \{\nu^E_{x \parallel} = 0\}$ and $\Pi (x, y) = (|x|, y)$, from the Coarea formula it follows that
\[
\mathcal{H}^{n-1}(\Pi (Z)) = 0.
\]
Indeed, 
\[
0 = \int_{Z} |\nu^E_{x \parallel}| \, d \mathcal{H}^{n-1}
= \int_{\Pi (Z)} \mathcal{H}^0 (Z_{(r, y)}) \, dr \, dy \geq \mathcal{H}^{n-1}(\Pi (Z)).
\]

\end{remark}

The following result will also be  useful, see \cite[Theorem~6.2]{CPS}.  \begin{proposition}[Vol'pert] \label{volpert theorem sets}
Let $E \subset \R^n$ be an $n$-symmetric 
set of finite Gaussian perimeter and let $\mu$ be the distribution function of $E$.
Then, for $\mathcal{H}^{n-1}$-a.e. $(r,y)\in(0,\infty) \times \R^{n-2}$ the following properties hold:
  \begin{enumerate}[label=(\roman*)]
  \item[(i)] $E_{(r,y)}$ is a set of finite perimeter in $\partial B (r)$ and $\partial^* \left( E_{(r,y)} \right) = ( \partial^{*} E)_{(r,y)}$;
  \item[(ii)] for every $\omega \in \Ss^{1}$ such that $r \omega \in (\partial^{*} E)_{(r,y)} \cap \partial^* (E_{(r,y)})$, 
  $$| \nu^E_{x\parallel} (r \omega , y) | > 0 \,\,\text{ and }\,\, 
  \nu^{E_{(r,y)}} (r \omega) = \frac{\nu^E_{x\parallel} (r \omega,y)}{| \nu^E_{x\parallel} (r \omega,y)|}.$$
   \end{enumerate}
Here $\nu^{E_{(r,y)}}$ denotes the measure-theoretic exterior unit normal to the slice $E_{(r,y)}$ within $\partial B(r)$.
In particular, there exists a Borel set $G_E\subset \{ 0<\beta_\mu <\pi/2 \}$ with
$\mathcal{H}^{n-1} (\{0<\beta_\mu <\pi/2\}\setminus G_{E}) = 0$ such that (i)-(ii) are fulfilled for all $(r,y)\in G_E$.
\end{proposition}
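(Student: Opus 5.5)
This is a slicing statement of Vol'pert type, transported through the polar diffeomorphism $\Phi$ of \eqref{eq:diffeo}; we plan to reduce it to the classical Euclidean slicing theorem for sets of finite perimeter (as in \cite[Theorem~6.2]{CPS}). Since the Gaussian weight is smooth and bounded above and below on compact sets, $E$ has locally finite Euclidean perimeter in $\R^2_0\times\R^{n-2}$, and every statement is local in $(r,y)$. So we may work on compact subsets of $(0,\infty)\times\R^{n-2}$ and ignore the weight. The $n$-symmetry plays no role in (i)--(ii) themselves; it matters only for the final assertion about $G_E$, through the structure of the slices.

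The first step is to pull back by $\Phi$. The set $\tilde E\coloneqq\Phi^{-1}(E)\subset(0,\infty)\times\R^{n-2}\times\Ss^1$ has locally finite perimeter, because $\Phi$ is a smooth diffeomorphism that is bi-Lipschitz on compacts. Moreover $\partial^*\tilde E=\Phi^{-1}(\partial^*E)$, and the normals transform by the inverse transpose of $D\Phi$. Working locally in an angular chart $\omega=(\cos\theta,\sin\theta)$, $\tilde E$ becomes a set of locally finite perimeter in an open subset of $\R^{n-1}_{(r,y)}\times\R_\theta$. Its slices in the $\theta$-direction, namely $\{\theta:(r,y,\theta)\in\tilde E\}$, correspond to $E_{(r,y)}$ under $\theta\mapsto r\omega(\theta)$.

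The second step is to apply the classical one-dimensional slicing theorem for sets of finite perimeter in the direction $\theta$ \cite[Theorem~3.108 and Remark~3.109]{AFP}. For $\mathcal H^{n-1}$-a.e.\ $(r,y)$ it gives three things: the $\theta$-slice has finite perimeter; its reduced boundary equals the slice of $\partial^*\tilde E$; and at each such point the $\theta$-component of $\nu^{\tilde E}$ is nonzero, with sign equal to the one-dimensional normal. The nonvanishing also follows from Remark~\ref{Z has zero measure}. Undoing the chart, the $\theta$-component of the pulled-back normal is a positive multiple of $\nu^E_x\cdot x_\parallel$. Hence $|\nu^E_{x\parallel}|>0$ on the slice, and the exterior normal of $E_{(r,y)}$ within $\partial B(r)$ is $\nu^E_{x\parallel}/|\nu^E_{x\parallel}|$. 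This gives (i) and (ii). Finitely many charts cover $\Ss^1$, and a countable exhaustion of $(0,\infty)\times\R^{n-2}$ by compacts gives the a.e.\ statement globally.

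For the last assertion, let $N$ be the $\mathcal H^{n-1}$-null set where (i) or (ii) fails. We then set $G_E\coloneqq\{0<\beta_\mu<\pi/2\}\setminus N$, possibly also removing a null set so that it is Borel, since $N$ can be enclosed in a Borel null set. We need $\{0<\beta_\mu<\pi/2\}$ to be Borel; this holds because $\mu$ is Borel measurable by Fubini in polar coordinates.

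The main obstacle is the bookkeeping of the normal transformation under $\Phi$. We must check that $\mathcal H^{n-1}$-null sets in $(r,y)$ are preserved, which holds because $\Phi$ acts only through the angular chart. We must also check that the orientation convention, with $\nu^E$ exterior and $D\chi_E=-\nu^E\mathcal H^{n-1}$, matches the one-dimensional convention on $\partial B(r)$. We would verify the sign on a model set $E=\{\theta<\theta_0\}$ to fix it.
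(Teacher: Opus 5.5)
Your proposal is correct. The paper gives no argument of its own for this statement. It cites \cite[Theorem~6.2]{CPS} and only remarks that the result, stated there for sets of finite Euclidean perimeter, still holds under local finiteness.

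Your route is a self-contained proof of what that citation supplies. You localize to compact subsets of $(0,\infty)\times\R^{n-2}$, pull back by $\Phi$, and slice in the $\theta$ direction of an angular chart with the Cartesian Vol'pert theorem. You then observe that the $\theta$-component of the pulled-back normal is a positive multiple of $\nu^E_x\cdot x_\parallel$. This works directly with the circles $\partial B(r)\times\{y\}$ used here and makes the locally-finite-perimeter remark automatic, at the price of bookkeeping the citation hides.

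Two steps deserve a line of justification:
\begin{itemize}
\item The identity $\partial^*\tilde E=\Phi^{-1}(\partial^*E)$ is true for smooth diffeomorphisms, via the transformation rule for $D\chi$ and the density estimates at reduced-boundary points, but it is not a tautology.
\item The slicing theorem in \cite{AFP} is phrased with the jump set $J_{\chi_{\tilde E}}$, which agrees with $\partial^*\tilde E$ only up to an $\mathcal H^{n-1}$-null set.
\end{itemize}
In both cases equality up to $\mathcal H^{n-1}$-null sets is enough. By Eilenberg's inequality applied to $(x,y)\mapsto(|x|,y)$, an $\mathcal H^{n-1}$-null set has empty circular slices for $\mathcal H^{n-1}$-a.e.\ $(r,y)$.

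Finally, your remark that $n$-symmetry matters for the assertion on $G_E$ is inaccurate. Symmetry is used nowhere in the statement. $G_E$ is obtained exactly as you do, by removing a Borel null set from the Borel set $\{0<\beta_\mu<\pi/2\}$.
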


\begin{remark}
We observe that, although \cite[Theorem~6.2]{CPS} was formulated for sets of finite Euclidean perimeter, it still holds for sets of \textit{locally} finite Euclidean perimeter.  Therefore the result can be applied to sets of finite Gaussian perimeter.    
\end{remark}

\medskip
We also need some technical results about the radial and tangential components of functions and measures.
If $v: \R^n \to \R^n$, we decompose $v$ as $v(x, y) = (v_{x}(x, y), v_y(x, y))$ with $v_x(x, y) \in \R^2$ and $v_y(x, y) \in \R^{n - 2}$.
Moreover, for every $(x, y) \in \R^2_0 \times \R^{n-2}$, we decompose $v_x(x, y) \in \R^2$ as follows:
\begin{equation*}
\begin{aligned}
v_x(x, y) &= v_{x \perp}(x, y) + v_{x \parallel}(x, y), \\
\text{where } v_{x \perp}(x, y) &\coloneqq  (v_x(x, y) \cdot \hat{x})\hat{x}, \\
\text{and } v_{x \parallel}(x, y) &\coloneqq  v_x(x, y) - v_{x \perp}(x, y).
\end{aligned}
\end{equation*}

If $\varphi \in C^1_c(\R^2 \times \R^{n-2}; \R^n)$, we decompose the divergence of $\varphi$ as
\[
\diver \varphi(x, y) = \diver_x \varphi_x(x, y) + \diver_y \varphi_y(x, y), 
\]
where $\diver_x$ and $\diver_y$ denote the divergence with respect to the $x$ and $y$ variables, respectively.

If $\varphi \in C^1_c(\R^2_0 \times \R^{n-2}; \R^n)$, for every $(x, y) \in \R^2_0 \times \R^{n-2}$
we define the parallel divergence (with respect to $x$)  as:
\begin{equation}\label{eq:defin_div_parallel}
\diver_{x \parallel} \varphi_x(x, y) \coloneqq  \diver_x \varphi_x(x, y) - \big(\nabla_x \varphi_x(x, y)\hat{x}\big) \cdot \hat{x},
\end{equation}
(note that $\nabla_x \varphi_x(x, y)$ is a $2 \times 2$ matrix and $\hat{x} \in \R^2$, 
so $\nabla_x \varphi_x(x, y)\hat{x} \in \R^2$).

The next lemma is a minor variation of \cite[Lemma~3.2]{CPS}.
\begin{lemma}\label{lem:divergence_components}
Let $\varphi \in C^1_c(\R^2_0 \times \R^{n-2}; \R^n)$. Then, for every $(x, y) \in \R^2_0 \times \R^{n-2}$
\begin{align} 
(\diver_x \varphi_{x \perp})(x, y) &= \big(\nabla_x \varphi_x(x, y)\hat{x}\big) \cdot \hat{x} 
+ \frac{1}{|x|}\left(\varphi_x(x, y) \cdot \hat{x}\right), \label{eq:n-1_term} \\ 
(\diver_x \varphi_{x \parallel})(x, y) &= (\diver_{x \parallel} \varphi_{x \parallel})(x, y).  \label{eq:identity_1}
\end{align}
\end{lemma}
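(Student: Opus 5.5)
The plan is a direct computation in polar coordinates for the $x$-variables, with $y$ frozen, since both identities concern only $\diver_x$ and every object involved is smooth on $\R^2_0$. Fix $y$ and work in $x\in\R^2_0$, where $\hat x$ is smooth. The facts we need are
\[
\nabla_x \hat x=\frac{1}{|x|}\big(I-\hat x\otimes\hat x\big), \qquad \diver_x \hat x=\frac{1}{|x|}.
\]

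For \eqref{eq:n-1_term}, write $\varphi_{x\perp}=f\,\hat x$ with $f\coloneqq\varphi_x\cdot\hat x$. The product rule gives $\diver_x(f\hat x)=\nabla_x f\cdot\hat x+f\,\diver_x\hat x$, and $\diver_x\hat x=1/|x|$ yields the second term. For the first term, $\nabla_x f=(\nabla_x\varphi_x)^T\hat x+(\nabla_x\hat x)^T\varphi_x$. Dotting with $\hat x$, the second piece vanishes because $(I-\hat x\otimes\hat x)\hat x=0$ and the matrix is symmetric. What remains is $\hat x\cdot(\nabla_x\varphi_x)^T\hat x=(\nabla_x\varphi_x\hat x)\cdot\hat x$, which is the first term.

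For \eqref{eq:identity_1}, apply definition \eqref{eq:defin_div_parallel} to the field $\varphi_{x\parallel}$:
\[
\diver_{x\parallel}\varphi_{x\parallel}=\diver_x\varphi_{x\parallel}-(\nabla_x\varphi_{x\parallel}\hat x)\cdot\hat x .
\]
It therefore suffices to show $(\nabla_x\varphi_{x\parallel}\hat x)\cdot\hat x=0$, i.e.\ that the radial derivative of $\varphi_{x\parallel}$ has no radial component. Set $g(x)\coloneqq\varphi_{x\parallel}(x,y)\cdot\hat x$, which is identically zero. Then
\[
0=\partial_{\hat x}g=(\nabla_x\varphi_{x\parallel}\hat x)\cdot\hat x+\varphi_{x\parallel}\cdot(\nabla_x\hat x\,\hat x).
\]
Since $\nabla_x\hat x\,\hat x=0$ (as $\hat x$ is constant along rays), the first term must vanish, which proves the claim.

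Nothing here is a genuine obstacle; the argument is a short calculation. The only points needing care are the transpose convention for $\nabla_x\varphi_x$ and the observation that $\hat x$ is constant along rays. Both are harmless because $\nabla_x\hat x$ is symmetric and annihilates $\hat x$, so the result does not depend on the convention chosen.
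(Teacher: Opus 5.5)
Your proposal is correct. For \eqref{eq:n-1_term} you argue essentially as the paper does: you apply the product rule to $(\varphi_x\cdot\hat x)\hat x$ and kill the term $(\nabla_x\hat x)^T\varphi_x\cdot\hat x$. The paper writes that term as $\frac{1}{|x|}\varphi_{x\parallel}\cdot\hat x=0$.

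For \eqref{eq:identity_1} your route differs. The paper never examines $(\nabla_x\varphi_{x\parallel}\hat x)\cdot\hat x$ directly. It writes $\diver_x\varphi_x$ in two ways: once through the definition \eqref{eq:defin_div_parallel}, and once as $\diver_x\varphi_{x\parallel}+\diver_x\varphi_{x\perp}$ using \eqref{eq:n-1_term}. Comparing the two gives, for every admissible $\varphi$,
\[
\diver_{x\parallel}\varphi_x=\diver_x\varphi_{x\parallel}+\frac{1}{|x|}\,(\varphi_x\cdot\hat x).
\]
The paper then applies this to the field $\varphi_{x\parallel}$, whose radial component vanishes.

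You instead isolate the single geometric fact behind the identity. The radial derivative of a purely tangential field has no radial component, because $\hat x$ is constant along rays. You obtain this by differentiating $\varphi_{x\parallel}\cdot\hat x\equiv0$ in the direction $\hat x$.

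Your argument is shorter and does not depend on \eqref{eq:n-1_term}, and it makes the mechanism transparent. The paper's detour produces, as a by-product, the general relation above between $\diver_{x\parallel}\varphi_x$ and $\diver_x\varphi_{x\parallel}$, valid also for fields with nonzero radial part.
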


\begin{remark}\label{rem:divergence}
Let $\varphi \in C^1_c(\R^2_0 \times \R^{n-2}; \R^n)$. Recalling that $\varphi_x = \varphi_{x \perp} + \varphi_{x \parallel}$, combining \eqref{eq:n-1_term} and \eqref{eq:identity_1} we have that for every $(x, y) \in \R^2_0 \times \R^{n-2}$
\[
(\diver_x \varphi_x)(x, y) = \big(\nabla_x \varphi_x(x, y)\hat{x}\big) \cdot \hat{x} 
+ \frac{1}{|x|}\left(\varphi_x(x, y) \cdot \hat{x}\right) + \diver_{x \parallel} \varphi_{x \parallel}(x, y).
\]
\end{remark}

\begin{proof}[Proof of Lemma \ref{lem:divergence_components}]
First of all, note that 
\begin{equation}\label{eq:nabla_phi}
\nabla_x \left(\varphi_x(x, y) \cdot \hat{x}\right) = (\nabla_x \varphi_x(x, y))^T \hat{x} + \frac{1}{|x|}\varphi_{x \parallel}(x, y).
\end{equation}

Indeed, 
\begin{align} 
&\nabla_x \left(\varphi_x(x, y) \cdot \hat{x}\right)
= (\nabla_x \varphi_x(x, y))^T \hat{x} 
+ (\nabla_x \hat{x})^T \varphi_x(x, y) \nonumber \\
&\hspace{.2cm}= (\nabla_x \varphi_x(x, y))^T \hat{x} 
+ \frac{I_2 - \hat{x} \otimes \hat{x}}{|x|}\varphi_x(x, y)  
= (\nabla_x \varphi_x(x, y))^T \hat{x} + \frac{1}{|x|}\varphi_{x \parallel}(x, y), \label{eq:gh}
\end{align}
where $I_2$ represents the identity map in $\R^2$, and $\hat{x} \otimes \hat{x}$
is the usual tensor product of $\hat{x}$ with itself (so that $I_2 - \hat{x} \otimes \hat{x}$
is the orthogonal projection on the tangent line to $\Ss^{1}$ at $\hat{x}$).

Thanks to \eqref{eq:nabla_phi} and \eqref{eq:gh}, we have
\begin{align*}
\diver_x \varphi_{x \perp}(x, y) 
&= \diver_x \left((\varphi_x(x, y) \cdot \hat{x})\hat{x}\right) \\
&= \nabla_x \left(\varphi_x(x, y) \cdot \hat{x}\right) \cdot \hat{x} + \left(\varphi_x(x, y) \cdot \hat{x}\right) \diver_x \hat{x} \\
&= \left[(\nabla_x \varphi_x(x, y))^T \hat{x} + \frac{1}{|x|}\varphi_{x \parallel}(x, y)\right] \cdot \hat{x} + \left(\varphi_x(x, y) \cdot \hat{x}\right)\frac{1}{|x|} \\
&= \big(\nabla_x \varphi_x(x, y)\hat{x}\big) \cdot \hat{x} + \left(\varphi_x(x, y) \cdot \hat{x}\right)\frac{1}{|x|}, 
\end{align*}
which proves \eqref{eq:n-1_term}.

By \eqref{eq:defin_div_parallel}, it follows that 
\begin{align} \label{eq:intermediate}
\diver_x \varphi_x(x, y) 
= \diver_{x \parallel} \varphi_x(x, y) + \big(\nabla_x \varphi_x(x, y)\hat{x}\big) \cdot \hat{x}.
\end{align}

On the other hand, recalling that $\varphi_x(x, y) = \varphi_{x \parallel}(x, y) + \varphi_{x \perp}(x, y)$
and using \eqref{eq:n-1_term} 
\begin{align*}
\diver_x \varphi_x(x, y) &= \diver_x \varphi_{x \parallel}(x, y) + \diver_x \varphi_{x \perp}(x, y) \\
&= \diver_x \varphi_{x \parallel}(x, y) + \big(\nabla_x \varphi_x(x, y)\hat{x}\big) \cdot \hat{x} 
+ \frac{1}{|x|}\left(\varphi_x(x, y) \cdot \hat{x}\right).
\end{align*}

Comparing the last identity with \eqref{eq:intermediate} we obtain that for every $\varphi \in C^1_c(\R^2_0 \times \R^{n-2}; \R^n)$
\begin{align*}
\diver_{x \parallel} \varphi_x(x, y) + \big(\nabla_x \varphi_x(x, y)\hat{x}\big) \cdot \hat{x}
&= \diver_x \varphi_{x \parallel}(x, y) + \big(\nabla_x \varphi_x(x, y)\hat{x}\big) \cdot \hat{x} \\
&\quad + \frac{1}{|x|}\left(\varphi_x(x, y) \cdot \hat{x}\right), 
\end{align*}
from which it follows that
\begin{equation*}
\diver_{x \parallel} \varphi_x(x, y) 
= \diver_x \varphi_{x \parallel}(x, y) 
+ \frac{1}{|x|}\left(\varphi_x(x, y) \cdot \hat{x}\right).
\end{equation*}
Applying the last identity to the function $\varphi_{x \parallel}$ we obtain \eqref{eq:identity_1}. 
\end{proof}

If $\mu$ is an $\R^n$-valued Radon measure on $\R^2_0 \times \R^{n-2}$, 
we decompose $\mu$ as $\mu = (\mu_x, \mu_y)$, where 
$\mu_x$ and $\mu_y$ are $\R^{2}$-valued and $\R^{n-2}$-valued, respectively.
Moreover, we decompose $\mu_x$ as $\mu_x = \mu_{x \perp} + \mu_{x \parallel}$, where $\mu_{x \perp}$ and $\mu_{x \parallel}$
are the $\R^2$-valued Radon measures on $\R^2_0 \times \R^{n-2}$ such that
\begin{align*}
\int_{\R^2_0 \times \R^{n-2}} \varphi(x, y) \cdot d\mu_{x \perp}(x, y)
&= \int_{\R^2_0 \times \R^{n-2}} \varphi_{x \perp}(x, y) \cdot d\mu_x(x, y), \\
\int_{\R^2_0 \times \R^{n-2}} \varphi(x, y) \cdot d\mu_{x \parallel}(x, y) 
&= \int_{\R^2_0 \times \R^{n-2}} \varphi_{x \parallel}(x, y) \cdot d\mu_x(x, y),
\end{align*}
for every $\varphi \in C_c(\R^2_0 \times \R^{n-2}; \R^n)$.

Note that $\mu_{x \perp}$ and $\mu_{x \parallel}$ are well defined by Riesz representation Theorem (see, for instance, \cite[Theorem 1.54]{AFP}).
In the special case $\mu = Df$, with $f \in BV_{\textnormal{loc}}(\R^2_0 \times \R^{n-2})$, we shorten  the notation writing
$D_{x \parallel}f$ and $D_{x \perp}f$ in place of $(Df)_{x \parallel}$ and $(Df)_{x \perp}$, respectively.
In particular, if $f = \chi_E$ and $E \subset \R^n$ is a set of finite perimeter, by De Giorgi's structure theorem 
we have 
\begin{equation}\label{eq:de_giorgi_structure}
\begin{aligned}
D_{x \perp}\chi_E &= -\nu^E_{x \perp} \mathcal{H}^{n-1} \mres \partial^* E, \\
D_{x \parallel}\chi_E &= -\nu^E_{x \parallel} \mathcal{H}^{n-1} \mres \partial^* E, \\
D_y\chi_E &= -\nu^E_y \mathcal{H}^{n-1} \mres \partial^* E.
\end{aligned}
\end{equation}

The next lemma is a variant of \cite[Lemma~3.4]{CPS}.
\begin{lemma}
Let $f \in BV_{\textnormal{loc}}(\R^2_0 \times \R^{n-2})$. Then, for every $\varphi \in C^1_c(\R^2_0 \times \R^{n-2}; \R^n)$
\begin{align}
\int_{\R^2_0 \times \R^{n-2}} \varphi(x, y) \cdot dD_{x \parallel}f 
&= -\int_{\R^2_0 \times \R^{n-2}} f(x, y) \, \diver_{x \parallel}\varphi_{x \parallel}(x, y) \, dx \, dy, \label{eq:4_3dominik} 
\end{align}
and 
\begin{align}
\int_{\R^2_0 \times \R^{n-2}} \varphi(x, y) \cdot dD_{x \perp}f 
&= -\int_{\R^2_0 \times \R^{n-2}} f(x, y) \big(\nabla_x\varphi_x(x, y)\hat{x}\big) \cdot \hat{x} \, dx \,dy \nonumber \\
&\quad -\int_{\R^2_0 \times \R^{n-2}} f(x, y) \frac{1}{|x|}\left(\varphi_x(x, y) \cdot \hat{x}\right) \, dx \, dy. \label{eq:4_4dominik}
\end{align}
\end{lemma}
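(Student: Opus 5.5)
The plan is to derive both identities directly from the definition of the distributional gradient $Df$, combined with the definitions of $\mu_{x\parallel}$, $\mu_{x\perp}$ and the pointwise identities of Lemma \ref{lem:divergence_components}.

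For a fixed $\varphi \in C^1_c(\R^2_0 \times \R^{n-2}; \R^n)$, the definition of $D_{x\parallel} f$ gives
\[
\int \varphi \cdot dD_{x \parallel} f = \int \psi \cdot dD f, \qquad \psi \coloneqq (\varphi_{x\parallel}, 0) .
\]
Since $\hat{x}$ is smooth on $\R^2_0$, the field $\psi$ lies in $C^1_c(\R^2_0 \times \R^{n-2}; \R^n)$ and has the same support as $\varphi$. The definition of the distributional derivative for $f \in BV_{\textnormal{loc}}$ then yields
\[
\int \psi \cdot dDf = -\int f \diver \psi \, dx\,dy = -\int f \diver_x \varphi_{x\parallel} \, dx\,dy .
\]
By \eqref{eq:identity_1}, $\diver_x \varphi_{x\parallel} = \diver_{x\parallel}\varphi_{x\parallel}$, and this is exactly \eqref{eq:4_3dominik}.

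For \eqref{eq:4_4dominik} the argument is the same with $\psi \coloneqq (\varphi_{x\perp},0)$, which is again $C^1$ with compact support in $\R^2_0\times\R^{n-2}$. This gives
\[
\int \varphi\cdot dD_{x\perp}f = -\int f\,\diver_x\varphi_{x\perp}\,dx\,dy .
\]
Substituting \eqref{eq:n-1_term} splits the integrand into the term $(\nabla_x\varphi_x\hat{x})\cdot\hat{x}$ and the term $|x|^{-1}(\varphi_x\cdot\hat{x})$, which are the two integrals on the right-hand side of \eqref{eq:4_4dominik}.

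The main point that needs care is that the definition of $\mu_{x\perp}$ and $\mu_{x\parallel}$ is stated for continuous test fields, whereas the integration by parts needs $C^1$ fields. Here $\varphi$ is $C^1$, and the projections $\varphi\mapsto\varphi_{x\perp},\varphi_{x\parallel}$ involve only multiplication by smooth functions of $x$ away from the origin, so regularity is preserved. Local integrability of $f$ and $|Df|$ on the compact support makes every integral finite, and the factor $1/|x|$ is bounded on that support because it stays away from $\{x=0\}$. Beyond this there is no real obstacle: the lemma is essentially a restatement of Lemma \ref{lem:divergence_components} in weak form.
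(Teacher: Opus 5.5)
Your proposal is correct and matches the paper's proof essentially step for step. Both arguments use the definition of $D_{x\parallel}f$ (resp.\ $D_{x\perp}f$) to rewrite the left-hand side as $\int \varphi_{x\parallel}\cdot dD_xf$ (resp.\ $\int \varphi_{x\perp}\cdot dD_xf$), integrate by parts, and conclude with \eqref{eq:identity_1} and \eqref{eq:n-1_term}; your remark that the projected fields remain $C^1_c$ in $\R^2_0\times\R^{n-2}$ is a detail the paper leaves implicit.
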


\begin{proof}
Let $\varphi \in C^1_c(\R^2_0 \times \R^{n-2}; \R^n)$.
By definition of $D_{x \parallel}f$ and thanks to \eqref{eq:identity_1} we have 
\begin{align*}
\int_{\R^2_0 \times \R^{n-2}} \varphi(x, y) \cdot dD_{x \parallel}f(x, y)
&= \int_{\R^2_0 \times \R^{n-2}} \varphi_{x \parallel}(x, y) \cdot dD_x f(x, y)\\
&= -\int_{\R^2_0 \times \R^{n-2}} \diver_x\varphi_{x \parallel}(x, y)f(x, y) \, dx \, dy\\
&= -\int_{\R^2_0 \times \R^{n-2}} \diver_{x \parallel}\varphi_{x \parallel}(x, y)f(x, y) \, dx \, dy, 
\end{align*}
and this shows \eqref{eq:4_3dominik}.

Similarly, by definition of $D_{x \perp}f$
\begin{align*}
\int_{\R^2_0 \times \R^{n-2}} \varphi(x, y) \cdot dD_{x \perp}f(x, y)
&= \int_{\R^2_0 \times \R^{n-2}} \varphi_{x \perp}(x, y) \cdot dD_x f(x, y) \\
&= -\int_{\R^2_0 \times \R^{n-2}} \diver_x\varphi_{x \perp}(x, y)f(x, y) \, dx \, dy.
\end{align*}
Thanks to \eqref{eq:n-1_term}, identity \eqref{eq:4_4dominik} follows.
\end{proof}

An immediate consequence of identity \eqref{eq:4_3dominik} is the following.
\begin{corollary}\label{cor:parallel_total_variation}
Let $f \in BV_{\textnormal{loc}}(\R^2_0 \times \R^{n-2})$ and let $\Omega \Subset \R^2_0 \times \R^{n-2}$ be open and bounded.
Then,
\begin{equation*}
\left|D_{x \parallel}f \right|(\Omega)= \sup \left\{\int_{\R^2_0 \times \R^{n-2}}f(x, y) \, \diver_{x \parallel}\varphi_{x \parallel}(x, y) dx \, dy:
\; \varphi \in C^1_{c}(\Omega;\R^n),\, \|\varphi \|_{L^\infty(\Omega; \R^n)} \leq 1\right\}.
\end{equation*}
\end{corollary}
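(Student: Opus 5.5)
The statement is Corollary~\ref{cor:parallel_total_variation}. I would prove it directly from the definition of total variation of a vector measure, combined with \eqref{eq:4_3dominik}.

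\textbf{Step 1: the measure side.} Since $\Omega \Subset \R^2_0\times\R^{n-2}$ is open, the Riesz representation gives
\[
|D_{x\parallel}f|(\Omega)=\sup\Big\{\int \varphi\cdot dD_{x\parallel}f:\ \varphi\in C_c(\Omega;\R^n),\ \|\varphi\|_\infty\le1\Big\}.
\]
By the standard density argument (mollify and rescale slightly), the supremum may be taken over $\varphi\in C^1_c(\Omega;\R^n)$ with $\|\varphi\|_\infty\le 1$. Here one uses uniform convergence of the mollifications on a fixed compact subset of $\Omega$, together with the finiteness of $|D f|$ there.

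\textbf{Step 2: the divergence side.} For each such $\varphi$, identity \eqref{eq:4_3dominik} gives
\[
\int\varphi\cdot dD_{x\parallel}f=-\int f\,\diver_{x\parallel}\varphi_{x\parallel}\,dx\,dy .
\]
Replacing $\varphi$ by $-\varphi$, which is admissible because the class is symmetric, removes the sign. This yields exactly the claimed supremum.

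\textbf{Step 3: no loss from the tangential projection.} One might worry that only the tangential part $\varphi_{x\parallel}$ enters the right-hand side. This causes no difficulty, because $\varphi\cdot dD_{x\parallel}f=\varphi_{x\parallel}\cdot dD_x f$ by definition. So the pairing already depends only on $\varphi_{x\parallel}$, and both sides are the same supremum over the same class.

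The only mildly delicate point is the density step in Step 1. It is routine since $\Omega$ is bounded and stays away from $\{x=0\}$, so the parallel projection is smooth there. I therefore expect no real obstacle.
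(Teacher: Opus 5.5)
Your proof is correct and follows the paper's route. The paper states the corollary as an immediate consequence of \eqref{eq:4_3dominik}, combined with the dual (Riesz) characterization of the total variation of a vector measure; your mollification step for passing from $C_c$ to $C^1_c$ test fields is exactly the routine detail left implicit there, and no rescaling is needed since convolution with a probability mollifier already preserves $|\varphi|\le 1$.
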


\begin{lemma}\label{lem:mu BV}
Let $E \subset \R^n$ be an $n$-symmetric 
set of finite Gaussian perimeter, let $\mu$ be the distribution function of $E$, 
and let $\xi_\mu$ be defined by \eqref{def:beta}.
Then:
\begin{enumerate}[label=(\roman*)]
\item $\mu, \xi_\mu \in BV_{\textnormal{loc}}((0, \infty) \times \R^{n-2})$;

\vspace{.1cm}

\item For bounded Borel functions with compact support, the following holds:
\begin{align} 
\int_{(0, \infty)\times \R^{n-2}} \psi(r, y) \,
r \, dD_r\xi_\mu(r, y)
&= \int_{\R^2_0 \times \R^{n-2}} \psi(|x|, y) \, \hat{x} \cdot dD_{x \perp}\chi_E(x, y), \label{eq:formula_for_D_xi} \\
\int_{(0, \infty)\times \R^{n-2}} r \, \phi(r, y) \,
\cdot dD_y\xi_\mu(r, y)
&= \int_{\R^2_0 \times \R^{n-2}} \phi(|x|, y) \cdot dD_{y}\chi_E(x, y),  \label{eq:formula_2_for_D_xi}
\end{align}
where $\psi: (0, \infty) \times \R^{n-2} \to \R$ and $\phi: (0, \infty) \times \R^{n-2} \to \R^{n-2}$.
\item As a consequence:
\begin{equation}\label{eq:bound_for_D_xi}
|r e^{-\frac{r^2}{2}} e^{-\frac{|y|^2}{2}} \, D\xi_\mu|(B) 
\leq |e^{-\frac{|x|^2}{2}} e^{-\frac{|y|^2}{2}} D\chi_E|(\Phi(B \times \Ss^{1})), 
\end{equation}
for every Borel set $B \Subset (0, \infty) \times \R^{n-2}$. 
\end{enumerate}
\end{lemma}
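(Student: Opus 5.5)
The plan is to prove (ii) first, as an identity between distributions; (i) and (iii) then follow. Write $\xi_\mu(r,y) = \frac{1}{r}\int_{\partial B(r)} \chi_E(x,y)\, d\mathcal{H}^1(x)$. This agrees with \eqref{def:beta}: $\mu(r,y)=\frac{e^{-r^2/2}}{\sqrt{2\pi}}\mathcal{H}^1(E_{(r,y)})$, so $\xi_\mu = \mathcal{H}^1(E_{(r,y)})/r$ is the angular measure of the slice. Equivalently,
\[
\xi_\mu(r,y)=\int_{\Ss^1}\chi_E(r\omega,y)\,d\mathcal{H}^1(\omega),
\]
which lies in $L^\infty$ and is therefore locally integrable. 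The function $\mu$ is $\xi_\mu$ multiplied by the smooth positive function $r e^{-r^2/2}/\sqrt{2\pi}$ on $(0,\infty)\times\R^{n-2}$. So it suffices to show $\xi_\mu\in BV_{\rm loc}$, and this will come out of the representation formulas.

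To prove \eqref{eq:formula_for_D_xi}, take first $\psi\in C^1_c((0,\infty)\times\R^{n-2})$. We compute $\int r\psi\, dD_r\xi_\mu$ in the distributional sense:
\[
\int r\psi\, dD_r\xi_\mu = -\int \xi_\mu\,\partial_r(r\psi)\,dr\,dy .
\]
Writing $\xi_\mu$ through $\chi_E$ and passing to polar coordinates ($dx = r\,dr\,d\omega$) gives
\[
-\int_{\R^2_0\times\R^{n-2}} \chi_E(x,y)\,\frac{\partial_r(r\psi)(|x|,y)}{|x|}\,dx\,dy .
\]
Now apply \eqref{eq:4_4dominik} with the test field $\varphi(x,y) = (\psi(|x|,y)\hat x, 0)$, which lies in $C^1_c(\R^2_0\times\R^{n-2};\R^n)$. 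For this field,
\[
(\nabla_x\varphi_x\,\hat x)\cdot\hat x = \partial_r\psi, \qquad \frac{\varphi_x\cdot\hat x}{|x|}=\frac{\psi}{|x|},
\]
and their sum is exactly $\partial_r(r\psi)/r$. Since $\varphi\cdot dD_{x\perp}\chi_E = \psi\,\hat x\cdot dD_{x\perp}\chi_E$, the identity holds for smooth $\psi$.

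For \eqref{eq:formula_2_for_D_xi}, take $\varphi=(0,\phi(|x|,y))$. Then
\[
-\int \xi_\mu\, r\,\diver_y\phi\, dr\,dy = -\int_{\R^2_0\times\R^{n-2}}\chi_E\,\diver_y\phi(|x|,y)\,dx\,dy = \int \phi(|x|,y)\cdot dD_y\chi_E ,
\]
which is the claim for smooth $\phi$.

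These identities give $|\int r\psi\, dD\xi_\mu|\le \|\psi\|_\infty\, |D\chi_E|(\Phi(K\times\Ss^1))$ on compacts $K$. Since $E$ has finite Gaussian perimeter, $|D\chi_E|$ is locally finite on $\R^2_0\times\R^{n-2}$, and $r$ is bounded below on $K$. Hence $D\xi_\mu$ is a locally finite Radon measure, which proves (i). The identities hold for $C^1_c$ test functions, and both sides are integrals against Radon measures, so they extend to bounded Borel compactly supported $\psi,\phi$. This extension goes by density in $L^1$ of the total variation measures, or by a monotone class argument. Continuous functions are handled by uniform approximation, and Borel ones by Lusin's theorem or monotone class, using that the pushforward of $|D\chi_E|$ under $\Pi$ is a Radon measure.

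For (iii), we apply the formulas with the weight built in. Replace $\psi$ by $\psi\,e^{-(r^2+|y|^2)/2}$, and similarly for $\phi$. Then
\[
\int \Psi\cdot d\bigl(re^{-r^2/2}e^{-|y|^2/2}D\xi_\mu\bigr) = \int \bigl(\Psi_r(|x|,y)\hat x,\ \Psi_y(|x|,y)\bigr)\cdot e^{-(|x|^2+|y|^2)/2}\,dD_{(x\perp,y)}\chi_E .
\]
The vector $(\Psi_r\hat x,\Psi_y)$ has norm $|\Psi|\le1$ whenever $\Psi$ is supported in $B$ with $|\Psi|\le1$. Moreover $|D_{(x\perp,y)}\chi_E|\le|D\chi_E|$ pointwise, since we are only dropping the tangential component of the density $\nu^E$. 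Taking the supremum over $\Psi$, first for open $B$ and then by outer regularity, gives \eqref{eq:bound_for_D_xi}.

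The main obstacle is justifying the Borel extension of the identities and the total-variation supremum via polar-type test functions. Fields of the form $\Psi(|x|,y)$ are not dense among all test fields, but they only need to test measures pushed forward by $\Pi$, and for that purpose they suffice.
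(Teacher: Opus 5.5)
Your argument is correct, and it reaches the result more directly than the paper does.

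\textbf{The paper's route.} The paper first proves $\mu\in BV_{\textnormal{loc}}$ by testing $\chi_E$ against the Gaussian-weighted field $e^{-|x|^2/2}\psi(|x|,y)\hat x$. This produces an extra absolutely continuous term $\frac{1-r^2}{r}\mu$. It then obtains $\xi_\mu\in BV_{\textnormal{loc}}$ from the product rule applied to $\mu=\frac{re^{-r^2/2}}{\sqrt{2\pi}}\xi_\mu$. Next it derives \eqref{eq:formula_for_D_xi} by comparing the two resulting expressions for $D_r\mu$, so that the extra term cancels. Finally it removes the weight by replacing $\psi$ with $e^{r^2/2}\psi$.

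\textbf{Your route.} You work from the outset with the unweighted angular measure $\xi_\mu(r,y)=\int_{\Ss^1}\chi_E(r\omega,y)\,d\mathcal{H}^1(\omega)$. For it, polar coordinates and the identity $\diver_x\bigl(\psi(|x|,y)\hat x\bigr)=\partial_r(r\psi)/r$ give both representation formulas for smooth test functions in one step. Local finiteness of $D\xi_\mu$ then follows from these identities, since $r$ is bounded below on compacts. This gives (i), with $\mu$ handled by multiplying by a smooth function. The underlying divergence computation is the same as the paper's; you simply avoid the weighted detour and the cancellation step.

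\textbf{Part (iii).} You also give an actual argument for (iii), which the paper only asserts as a consequence. Your identities say that $re^{-\frac{r^2}{2}}e^{-\frac{|y|^2}{2}}D\xi_\mu$ is the push-forward under $(x,y)\mapsto(|x|,y)$ of the vector measure $e^{-\frac{|x|^2+|y|^2}{2}}\bigl(\hat x\cdot D_{x\perp}\chi_E,\,D_y\chi_E\bigr)$. The density of this measure with respect to $e^{-\frac{|x|^2+|y|^2}{2}}|D\chi_E|$ is $-(\hat x\cdot\nu^E_x,\nu^E_y)$, which has norm at most $1$. This is exactly why test fields of the form $\Psi(|x|,y)$ suffice for the total-variation bound, as you observe.

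\textbf{What the paper's route adds.} It records explicit formulas and Gaussian-weighted bounds for $D_r\mu$ and $D_y\mu$ themselves. However, elsewhere the paper only uses the statements about $\xi_\mu$.
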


\begin{proof}
We divide the proof into several steps.

\medskip
\noindent
\textbf{Step 1:} We show that $\mu \in BV_{\textnormal{loc}}((0, \infty) \times \R^{n-2})$.

Let $A \Subset (0, \infty) \times \R^{n-2}$ be an open set, and let us show that 
$\mu \in BV(A)$.
Let $r_A, R_A > 0$ be such that 
\begin{equation*}
r \geq r_A \, \text{ and } \,
r^2 + |y|^2 \leq R^2_A
\end{equation*}
for every $(r, y) \in A$.

\medskip
\noindent
\textbf{Step 1a:} We show that $\mu, \xi_\mu \in L^1(A)$.

We have
\begin{align*}
1 &\geq \gamma(E \cap \Phi(A \times \Ss^{1}))
= \int_{\Phi(A \times \Ss^{1})} \chi_E(x, y)\, d\gamma(x, y) \\
&= \frac{1}{(2 \pi)^{\frac{n}{2}}} \int_{\Phi(A \times \Ss^{1})} e^{-\frac{|x|^2+|y|^2}{2}} \chi_E(x, y) \, dx dy \\
&= \frac{1}{(2 \pi)^{\frac{n-1}{2}}} \int_A  e^{-\frac{|y|^2}{2}} \left[ \frac{e^{-\frac{r^2}{2}}}{\sqrt{2 \pi}} \int_{\partial B(r)} \chi_E(x, y)\, d\mathcal{H}^1(x) \right] \, dr dy \\
&= \frac{1}{(2 \pi)^{\frac{n-1}{2}}} \int_A  e^{-\frac{|y|^2}{2}} \mu(r, y) \, dr dy  
\geq \frac{e^{-\frac{R_A^2}{2}}}{(2 \pi)^{\frac{n-1}{2}}} \|\mu\|_{L^1(A)},  
\end{align*}
so that $\|\mu\|_{L^1(A)} < \infty$. Moreover, 
\begin{equation*}
\|\xi_\mu\|_{L^1(A)}
= \sqrt{2 \pi} \int_A \frac{e^{\frac{r^2}{2}}}{r} \mu(r, y) \, dr dy 
\leq \sqrt{2 \pi} \frac{e^{\frac{R_A^2}{2}}}{r_A} \|\mu\|_{L^1(A)} < \infty,
\end{equation*}
which shows that $\|\xi_\mu\|_{L^1(A)} < \infty$.

\medskip
\noindent
\textbf{Step 1b:} We show that $|D_r\mu|(A) < \infty$.
Let $\psi \in C^1_c (A)$ with $|\psi| \leq 1$. 
Applying formula \eqref{eq:n-1_term} to the function $\psi(|x|, y) \hat{x}$
and observing that $(\psi(|x|, y) \hat{x})_{x \perp} = \psi(|x|, y) \hat{x}$,
we obtain that for every $(x, y) \in \R^2_0 \times \R^{n-2}$ 
\begin{align*}
\diver_x \left(\psi(|x|, y) \hat{x}\right)
&= \left[\nabla_x \left(\psi(|x|, y) \hat{x}\right) \hat{x}\right] \cdot \hat{x} 
+  \frac{1}{|x|} \left[\psi(|x|, y) \hat{x} \cdot \hat{x}\right]  \nonumber \\
&= \left[\left(\frac{\partial\psi}{\partial r}(|x|, y) \, \hat{x} \otimes \hat{x} 
+ \psi(|x|, y) \frac{I_2 - \hat{x} \otimes \hat{x}}{|x|}\right) \hat{x}\right] \cdot \hat{x}  
+ \frac{1}{|x|} \psi(|x|, y)  \nonumber \\
&= \frac{\partial\psi}{\partial r}(|x|, y) + \frac{1}{|x|} \psi(|x|, y), 
\end{align*}
where $I_2$ denotes the identity matrix in $\R^2$.

Thus, 
\begin{align*}
\diver_x \Big(e^{-\frac{|x|^2}{2}} \psi(|x|, y) \hat{x}\Big) 
&= \nabla_x \Big(e^{-\frac{|x|^2}{2}}\Big) \cdot \psi(|x|, y) \hat{x}  
+ e^{-\frac{|x|^2}{2}} \diver_x \left(\psi(|x|, y) \hat{x}\right) \\
&= e^{-\frac{|x|^2}{2}} \left[
\frac{\partial\psi}{\partial r}(|x|, y) + \left(\frac{1-|x|^2}{|x|}\right) \psi(|x|, y)
\right]. 
\end{align*}

Therefore, 
\begin{align} 
\frac{1}{(2 \pi)^{\frac{n-1}{2}}} \int_{(0, \infty)\times \R^{n-2}} 
&\psi(r, y) \, dD_r\mu(r, y) \nonumber \\
&= \frac{1}{(2 \pi)^{\frac{n}{2}}} \int_{\R^2_0 \times \R^{n-2}} 
\left(\frac{1-|x|^2}{|x|}\right)
e^{-\frac{|x|^2}{2}} \psi(|x|, y) \chi_E(x, y) \, dx \, dy \nonumber \\
&- \frac{1}{(2 \pi)^{\frac{n}{2}}} \int_{\partial^* E} e^{-\frac{|x|^2}{2}} \psi(|x|, y) \, \hat{x} \cdot 
\nu^E_{x \perp} d\mathcal{H}^{n-1}(x, y)
\label{eq:distr_derivative_useful}  
\\
&\leq \int_{\R^2_0 \times \R^{n-2}} e^{\frac{|y|^2}{2}}
\frac{|1-|x|^2|}{|x|} \, |\psi(|x|, y)| \, \chi_E(x, y) \, d\gamma(x, y) \nonumber \\
&+ \frac{1}{\sqrt{2 \pi}} \int_{\partial^* E} e^{\frac{|y|^2}{2}} \, |\psi(|x|, y)| \, d\mathcal{H}^{n-1}_\gamma(x, y) \nonumber \\
&\leq e^{\frac{R_A^2}{2}} \left[M_A + \frac{1}{\sqrt{2 \pi}} P_\gamma(E)\right] < \infty,  \nonumber 
\end{align}
where we set 
\begin{equation*}
M_A = \max_{s \in [r_A, R_A]} \frac{|1-s^2|}{s}.
\end{equation*}

Taking the supremum over $\psi$ we conclude. 

\medskip
\noindent
\textbf{Step 1c:} We conclude the proof that $\mu \in BV(A)$, showing that
\begin{equation*}
|D_y\mu|(A) < \infty.
\end{equation*}

Let $i \in \{1, \ldots, n-2\}$ and let $\psi \in C^1_c (A)$ with $|\psi| \leq 1$. We have
\begin{align}
\frac{1}{(2 \pi)^{\frac{n-1}{2}}} \int_{(0, \infty)\times \R^{n-2}} 
&\psi(r, y)\, dD_{y_i}\mu(r, y) \nonumber \\
&= -\frac{1}{(2 \pi)^{\frac{n-1}{2}}} \int_{(0, \infty)\times \R^{n-2}} 
\frac{\partial\psi}{\partial y_i}(r, y) \, \mu(r, y) \, dr \, dy \nonumber \\
&= -\frac{1}{(2 \pi)^{\frac{n-1}{2}}} \int_{(0, \infty)\times \R^{n-2}} 
\frac{\partial\psi}{\partial y_i}(r, y) \, \frac{e^{-\frac{r^2}{2}}}{\sqrt{2 \pi}} 
\left(\int_{\partial B(r)} \chi_E(x, y) \, d\mathcal{H}^{1}(x)\right) \, dr \, dy \nonumber \\
&= -\frac{1}{(2 \pi)^{\frac{n}{2}}} \int_{\R^2_0 \times \R^{n-2}}  
e^{-\frac{|x|^2}{2}} \frac{\partial\psi}{\partial y_i}(|x|, y) \chi_E(x, y) \, dx \, dy \nonumber \\
&= \frac{1}{(2 \pi)^{\frac{n}{2}}} \int_{\R^2_0 \times \R^{n-2}}  
e^{-\frac{|x|^2}{2}} \psi(|x|, y) \, dD_{y_i}\chi_E(x, y) \label{eq:pol} \\
&= -\frac{1}{(2 \pi)^{\frac{n}{2}}} \int_{\partial^* E}  
e^{-\frac{|x|^2}{2}} \psi(|x|, y) \, \nu^E_{y_i}(x, y) \, d\mathcal{H}^{n-1}(x, y) \nonumber\\
&= -\frac{1}{\sqrt{2 \pi}} \int_{\partial^* E}  
e^{\frac{|y|^2}{2}} \psi(|x|, y) \, \nu^E_{y_i}(x, y) \, d\mathcal{H}^{n-1}_{\gamma}(x, y),\nonumber
\end{align}
and therefore
\begin{align*}
\frac{1}{(2 \pi)^{\frac{n-1}{2}}} \Bigg|\int_{(0, \infty)\times \R^{n-2}} 
&\psi(r, y)\, dD_{y_i}\mu(r, y)\Bigg|\nonumber \\
&=
\frac{1}{\sqrt{2\pi}}
\left|
\int_{\partial^*E}
e^{\frac{|y|^2}{2}}
\psi(|x|,y)\,\nu^E_{y_i}(x,y)\,
d\mathcal H^{n-1}_\gamma(x,y)
\right|
\nonumber\\
&\le
\frac{1}{\sqrt{2\pi}}
\int_{\partial^*E}
e^{\frac{|y|^2}{2}}
|\psi(|x|,y)|\,|\nu^E_{y_i}(x,y)|\,
d\mathcal H^{n-1}_\gamma(x,y)
\nonumber\\
&\le
\frac{e^{\frac{R_A^2}{2}}}{\sqrt{2\pi}}
P_\gamma(E)
<\infty .
\end{align*}

Taking the supremum over $\psi$ we conclude. 

\medskip
\noindent
\textbf{Step 2:} We show that $\xi_\mu \in BV_{\textnormal{loc}}((0, \infty) \times \R^{n-2})$.

We already proved that $\xi_\mu \in L^1_{\textnormal{loc}}((0, \infty) \times \R^{n-2})$.
Let us now show that $D\xi_\mu$ is a locally bounded Radon measure in $(0, \infty) \times \R^{n-2}$.
We recall that 
\begin{equation*}
\mu(r, y) = \frac{r e^{-\frac{r^2}{2}}}{\sqrt{2 \pi}} \xi_\mu(r, y).
\end{equation*}

Since the function $(r, y) \mapsto 1/(r e^{-\frac{r^2}{2}})$ is smooth and locally bounded in $(0, \infty) \times \R^{n-2}$,
by the chain rule in $BV$ (see \cite[Example~3.97]{AFP})
we have that $\xi_\mu \in BV_{\text{loc}}((0, \infty)\times \R^{n-2})$, with 
\begin{align} 
D_r\mu &= \frac{e^{-\frac{r^2}{2}}}{\sqrt{2 \pi}}
\left[(1-r^2)\xi_\mu \, dr \, dy + rD_r\xi_\mu\right] \nonumber \\
&= \frac{1-r^2}{r}\mu(r, y) \, dr \, dy + \frac{e^{-\frac{r^2}{2}}}{\sqrt{2 \pi}} rD_r\xi_\mu, \label{eq:Dv} \\
D_y\mu &= \frac{r e^{-\frac{r^2}{2}}}{\sqrt{2 \pi}} D_y\xi_\mu(r, y). \label{eq:Dy_mu}
\end{align} 

\medskip
\noindent
\textbf{Step 3:} We show formulas \eqref{eq:formula_for_D_xi} and \eqref{eq:formula_2_for_D_xi}.

Let $\psi \in C^1_c((0, \infty) \times \R^{n-2})$. From \eqref{eq:Dv} it follows that 
\begin{align} 
\int_{(0, \infty)\times \R^{n-2}} 
\psi(r, y) \, dD_r\mu(r, y)   
&= \int_{(0, \infty)\times \R^{n-2}} \psi(r, y)
\frac{1-r^2}{r}\mu(r, y) \, dr \, dy \nonumber \\
&+ \int_{(0, \infty)\times \R^{n-2}} \psi(r, y)
\frac{e^{-\frac{r^2}{2}}}{\sqrt{2 \pi}} \,
r \, dD_r\xi_\mu(r, y). \label{eq:this_one}
\end{align}

On the other hand, from \eqref{eq:distr_derivative_useful} we have
\begin{align*} 
 \int_{(0, \infty)\times \R^{n-2}} 
&\psi(r, y) \, dD_r\mu(r, y)    
= \frac{1}{\sqrt{2 \pi}} \int_{\R^2_0 \times \R^{n-2}} 
\Big(\frac{1-|x|^2}{|x|}\Big)
e^{-\frac{|x|^2}{2}} \psi(|x|, y) \chi_E(x, y) \, dx \, dy  \\
&+ \frac{1}{\sqrt{2 \pi}} \int_{\R^2_0 \times \R^{n-2}} e^{-\frac{|x|^2}{2}} \psi(|x|, y) \, \hat{x} \cdot dD_{x \perp}\chi_E(x, y) \\
=&  \int_{(0, \infty) \times \R^{n-2}} 
\psi(r, y) \frac{1-r^2}{r} 
\frac{e^{-\frac{r^2}{2}}}{\sqrt{2 \pi}} \int_{\partial B(r)} \chi_E(x, y) d\mathcal{H}^1(x)\, dr \, dy  \\
&+ \frac{1}{\sqrt{2 \pi}} \int_{\R^2_0 \times \R^{n-2}} e^{-\frac{|x|^2}{2}} \psi(|x|, y) \, \hat{x} \cdot dD_{x \perp}\chi_E(x, y) \\
=&  \int_{(0, \infty) \times \R^{n-2}} 
\psi(r, y) \frac{1-r^2}{r}\mu(r, y) \, dr \, dy  \\
&+ \frac{1}{\sqrt{2 \pi}} \int_{\R^2_0 \times \R^{n-2}} e^{-\frac{|x|^2}{2}} \psi(|x|, y) \, \hat{x} \cdot dD_{x \perp}\chi_E(x, y).
\end{align*}

Combining the previous identity and \eqref{eq:this_one},
\begin{align*}
\int_{(0, \infty)\times \R^{n-2}} e^{-\frac{r^2}{2}} \psi(r, y)
\,
r \, dD_r\xi_\mu(r, y)
= \int_{\R^2_0 \times \R^{n-2}} e^{-\frac{|x|^2}{2}} \psi(|x|, y) \, \hat{x} \cdot dD_{x \perp}\chi_E(x, y),
\end{align*}
for every $\psi \in C^1_c((0, \infty) \times \R^{n-2})$.
Applying the identity above to the function $e^{\frac{r^2}{2}}\psi(r, y)$ we obtain that
\begin{align*}
\int_{(0, \infty)\times \R^{n-2}} \psi(r, y) \,
r \, dD_r\xi_\mu(r, y)
= \int_{\R^2_0 \times \R^{n-2}} \psi(|x|, y) \, \hat{x} \cdot dD_{x \perp}\chi_E(x, y),
\end{align*}
for every $\psi \in C^1_c((0, \infty) \times \R^{n-2})$.

By approximation, the identity above is true also when $\psi$ is a bounded Borel function, 
and this gives \eqref{eq:formula_for_D_xi}. 

Let us now show \eqref{eq:formula_2_for_D_xi}.
Let $\phi \in C^1_c((0, \infty) \times \R^{n-2}, \R^{n-2})$.
Thanks to \eqref{eq:pol} and \eqref{eq:Dy_mu}, 
\begin{align*}
\int_{(0, \infty)\times \R^{n-2}} r \, \phi(r, y) \cdot dD_y\xi_\mu(r, y) 
&= \sqrt{2 \pi} \int_{(0, \infty)\times \R^{n-2}} e^{\frac{r^2}{2}} \phi(r, y) \,
\cdot dD_y\mu(r, y)\\
&= \int_{\R^2_0 \times \R^{n-2}}  
\phi(|x|, y) \cdot dD_{y}\chi_E(x, y).
\end{align*}
By approximation, the formula also holds whenever $\phi$ is a bounded Borel function. 
\end{proof}

The next result follows by combining Lemma~\ref{lem:mu BV}, Coarea Formula, and Remark~\ref{Z has zero measure}.
\begin{lemma}\label{lem:nabla_xi}
Let $E \subset \R^n$ be an $n$-symmetric 
set of finite Gaussian perimeter, let $\mu$ be the distribution function of $E$, 
and let $\xi_\mu$ be defined by \eqref{def:beta}.
Then
\begin{align*}
(r \, D_r\xi_\mu)(B) 
&=-\int_{\partial^* E \cap \Phi(B \times \Ss^{1})\cap\{\nu^{E}_{x \|}=0 \}}
\hat{x} \cdot \nu^E_x(x, y) \, d\mathcal{H}^{n-1}(x, y)  \\
&\quad-\int_{B} \int_{(\partial^* E)_{(r, y)} \cap \{\nu_{x \|}^E \neq 0 \}} 
\frac{\hat{x} \cdot \nu^E_x(x, y)}{|\nu_{x \|}^E(x, y)|}d\mathcal{H}^{0}(x) \, dr \, dy,
\end{align*}
for every Borel set $B \Subset (0, \infty)\times \R^{n-2}$. 
Moreover, 
\begin{equation*}
r \partial_r \xi_\mu(r, y) = -\int_{(\partial^* E)_{(r, y)} \cap \{\nu^{E}_{x \|} \neq 0 \}} 
\frac{\hat{x} \cdot \nu^E_x(x, y)}{|\nu_{x \|}^E(x, y)|}d\mathcal{H}^{0}(x),
\end{equation*}
for $\mathcal{H}^{n-1}$-a.e. $(r, y) \in (0, \infty)\times \R^{n-2}$, 
where $\partial_r \xi_\mu$ denotes the approximate differential 
of $\xi_\mu$ with respect to $r$. Similarly, 
\begin{align*}
(rD_{y}\xi_\mu)(B) 
&= -\int_{\partial^* E \cap \Phi(B \times \Ss^{1})\cap\{\nu^{E}_{x \|}=0 \}}
\nu^E_y(x, y) \, d\mathcal{H}^{n-1}(x, y) 
\\
&\quad-\int_{B} \int_{(\partial^* E)_{(r, y)} \cap \{\nu^{E}_{x \|} \neq 0 \}} 
\frac{\nu^E_y(x, y)}{|\nu_{x \|}^E(x, y)|}d\mathcal{H}^{0}(x) \, dr \, dy,
\end{align*}
for every Borel set $B \Subset (0, \infty)\times \R^{n-2}$, and 
\begin{equation*}
r \nabla_{y}\xi_\mu(r, y) = -\int_{(\partial^* E)_{(r, y)} \cap \{\nu^{E}_{x \|} \neq 0 \}} 
\frac{\nu^E_y(x, y)}{|\nu_{x \|}^E(x, y)|}d\mathcal{H}^{0}(x),
\end{equation*}
for $\mathcal{H}^{n-1}$-a.e. $(r, y) \in (0, \infty)\times \R^{n-2}$, 
where $\nabla_{y}\xi_\mu$ denotes the approximate gradient 
of $\xi_\mu$ with respect to $y$.
\end{lemma}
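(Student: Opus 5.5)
The plan is to combine the representation formulas \eqref{eq:formula_for_D_xi} and \eqref{eq:formula_2_for_D_xi} of Lemma~\ref{lem:mu BV} with the De Giorgi structure \eqref{eq:de_giorgi_structure}, and then split $\partial^*E$ according to whether $\nu^E_{x\parallel}$ vanishes. By \eqref{eq:de_giorgi_structure}, $\hat{x}\cdot dD_{x\perp}\chi_E = -\hat{x}\cdot\nu^E_x\,\mathcal{H}^{n-1}\mres\partial^*E$ (since $\hat{x}\cdot\nu^E_{x\perp}=\hat{x}\cdot\nu^E_x$). Choosing $\psi=\chi_B$, which is bounded Borel with compact support because $B\Subset(0,\infty)\times\R^{n-2}$, we get
\[
(r\,D_r\xi_\mu)(B) = -\int_{\partial^*E\cap\Phi(B\times\Ss^1)} \hat{x}\cdot\nu^E_x\, d\mathcal{H}^{n-1}.
\]
We then write $\partial^*E\cap\Phi(B\times\Ss^1)$ as the disjoint union of the part where $\nu^E_{x\parallel}=0$ and the part where $\nu^E_{x\parallel}\neq 0$. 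The first piece is the first term of the claimed formula.

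On the second piece we insert $1=|\nu^E_{x\parallel}|/|\nu^E_{x\parallel}|$. We then apply the Coarea Formula (Proposition~\ref{prop:coarea}), in its unweighted form (the Gaussian factors cancel, or equivalently we apply \cite[Remark~2.94]{AFP} directly), with
\[
g=\chi_{\Phi(B\times\Ss^1)}\chi_{\{\nu^E_{x\parallel}\neq0\}}\,\frac{\hat{x}\cdot\nu^E_x}{|\nu^E_{x\parallel}|}.
\]
Here $g$ is not of one sign, so we split it into its positive and negative parts. Each part has finite integral since $|g\,\nu^E_{x\parallel}|\le 1$ and $\mathcal{H}^{n-1}(\partial^*E\cap\Phi(B\times\Ss^1))<\infty$, because $B$ is relatively compact and the Gaussian weight is bounded below there. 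This gives the double integral over $B$ and slices $(\partial^*E)_{(r,y)}\cap\{\nu^E_{x\parallel}\neq0\}$. The $y$-formula follows identically from \eqref{eq:formula_2_for_D_xi} with $\phi=\chi_B e_i$ and $D_y\chi_E=-\nu^E_y\,\mathcal{H}^{n-1}\mres\partial^*E$, componentwise.

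For the pointwise statements, we interpret the two formulas as a Lebesgue decomposition of the measure $rD_r\xi_\mu$ (respectively $rD_y\xi_\mu$). The second term is absolutely continuous with respect to $\mathcal{L}^{n-1}$, with density
\[
h(r,y)=-\int_{(\partial^*E)_{(r,y)}\cap\{\nu^E_{x\parallel}\neq0\}}\frac{\hat{x}\cdot\nu^E_x}{|\nu^E_{x\parallel}|}\,d\mathcal{H}^0 .
\]
This density lies in $L^1_{\rm loc}$ by the integrability just noted. The first term is the measure
\[
B\mapsto -\int_{Z\cap\Phi(B\times\Ss^1)}\hat{x}\cdot\nu^E_x\,d\mathcal{H}^{n-1},
\qquad Z=\partial^*E\cap\{\nu^E_{x\parallel}=0\},
\]
which is concentrated on $\Pi(Z)$. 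By Remark~\ref{Z has zero measure}, $\mathcal{H}^{n-1}(\Pi(Z))=0$, so this term is singular. Since the absolutely continuous part of $D\xi_\mu$ is $\nabla\xi_\mu\,\mathcal{L}^{n-1}$, with $\nabla\xi_\mu$ the approximate differential (Calder\'on--Zygmund theorem for $BV$, \cite{AFP}), uniqueness of the Lebesgue decomposition yields $r\partial_r\xi_\mu=h$ a.e., and analogously for $r\nabla_y\xi_\mu$.

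The main delicate point I expect is measurability of $\Pi(Z)$ in this last step, needed to identify the first term as a singular measure. Here $\Pi(Z)$ is the image of a Borel set, hence analytic and thus $\mathcal{L}^{n-1}$-measurable (or one may simply use that it is contained in an $\mathcal{H}^{n-1}$-null Borel set). A secondary point is justifying the coarea formula for a signed, possibly unbounded $g$, which the splitting into positive and negative parts handles.
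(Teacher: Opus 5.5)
Your proposal is correct and follows the same route as the paper. The paper proves this lemma only by the one-line remark that it follows from combining the representation formulas \eqref{eq:formula_for_D_xi}--\eqref{eq:formula_2_for_D_xi} of Lemma~\ref{lem:mu BV} (read through De Giorgi's structure theorem), the Coarea Formula on $\{\nu^E_{x\parallel}\neq 0\}$, and Remark~\ref{Z has zero measure}, which identifies the contribution of $\{\nu^E_{x\parallel}=0\}$ as the singular part. Your write-up supplies the details the paper leaves implicit: the integrability of the signed, possibly unbounded integrand; the measurability of $\Pi(Z)$; and the uniqueness of the Lebesgue decomposition, combined with the Calder\'on--Zygmund identification $D^a\xi_\mu=\nabla\xi_\mu\,\mathcal{L}^{n-1}$.
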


\begin{remark}\label{rem:constant}
Applying Lemma~\ref{lem:nabla_xi} to $F_\mu$ it follows that for every $(r, y) \in G_{F_\mu}$
\begin{align*}
r \partial_r\xi_\mu(r, y) &= -4\frac{\hat{x} \cdot \nu^{F_\mu}_x(x, y)}{|\nu_{x \|}^{F_{\mu}}(x, y)|} \\
r \nabla_{y}\xi_\mu(r, y) &= -4\frac{\nu^{F_\mu}_y(x, y)}{|\nu_{x \|}^{F_{\mu}}(x, y)|}
\end{align*}
for every $x$ such that $(x, y) \in \partial^* F_\mu$, 
where we have used the fact that $\mathcal{H}^{0}((\partial^* F_\mu)_{(r, y)}) = 4$.
\end{remark}

\begin{proposition}\label{prop:bound P F_mu}
Let $E \subset \R^n$ be an $n$-symmetric 
set of finite Gaussian perimeter, and let $\mu$ be the distribution function of $E$.
Then, $F_\mu$ is a set of finite Gaussian perimeter in $\R^n$. 
Moreover, for every Borel set $B \Subset (0, \infty) \times \R^{n-2}$ 
\begin{equation}\label{eq:smhts}
(2 \pi)^{\frac{n-1}{2}}P_\gamma(F_\mu;\Phi(B\times \Ss^{1}))
\leq \left|r e^{-\frac{r^2}{2}} e^{-\frac{|y|^2}{2}} D\xi_{\mu}\right|(B) 
+ \left|e^{-\frac{|x|^2}{2}} e^{-\frac{|y|^2}{2}} D_{x \parallel}\chi_{F_\mu}\right|
(\Phi(B\times \Ss^{1})).
\end{equation}
\end{proposition}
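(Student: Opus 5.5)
We bound the total variation of $e^{-\frac{|x|^2+|y|^2}{2}}D\chi_{F_\mu}$ on $\Phi(B\times\Ss^1)$ by splitting $D\chi_{F_\mu}$ into its radial/$y$ part $(D_{x\perp}\chi_{F_\mu},D_y\chi_{F_\mu})$ and its tangential part $D_{x\parallel}\chi_{F_\mu}$. Since $F_\mu$ is given only through a BV function, we do not use the structure theorem; we work with test fields directly.

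First we show that $\chi_{F_\mu}\in BV_{\rm loc}(\R^2_0\times\R^{n-2})$. In polar coordinates $F_\mu$ is the subgraph-type set $\{(r,y,\theta): \theta\in I(\beta_\mu(r,y))\}$, where $I(\beta)$ is the union of the four angular intervals around $0$ and $\pi$ of half-width $\beta$. We mollify $\xi_\mu$ to smooth $\xi_k\to\xi_\mu$ in $L^1_{\rm loc}$ with $|D\xi_k|(A)\to|D\xi_\mu|(A)$ (strict convergence), truncated to $[0,2\pi]$. The sets $F_k:=F_{\mu_k}$ have piecewise smooth boundary and $\chi_{F_k}\to\chi_{F_\mu}$ in $L^1_{\rm loc}$. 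Lower semicontinuity of weighted total variation on open sets then reduces the claim to smooth $\xi$, after which we pass to general Borel $B$ by outer regularity of the Radon measures involved.

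For smooth $\xi$ we estimate directly with a test field $\varphi\in C^1_c(\Phi(A\times\Ss^1);\R^n)$ with $|\varphi|\le 1$, $A$ open. We write $\int_{F}\diver(e^{-|\cdot|^2/2}\varphi)$ and split $\varphi=\varphi_{x\perp}+\varphi_{x\parallel}+\varphi_y$. The $\varphi_{x\parallel}$ contribution is by definition bounded by $|e^{-\frac{|x|^2+|y|^2}{2}}D_{x\parallel}\chi_F|(\Phi(A\times\Ss^1))$, which gives the second term in \eqref{eq:smhts}. For the $\varphi_{x\perp}+\varphi_y$ contribution we integrate first over the angular variable. By Remark~\ref{rem:divergence} and Lemma~\ref{lem:divergence_components}, the weighted divergence of $(\varphi_{x\perp},\varphi_y)$ in coordinates $(r,y,\theta)$ is $r^{-1}e^{r^2/2}\,\diver_{(r,y)}\big(r e^{-r^2/2}\,\cdot\,\big)$ applied to the field $(\varphi_x\cdot\hat x,\varphi_y)$. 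Integrating over $\theta\in I(\beta(r,y))$ and using Leibniz's rule, the $(r,y)$-divergence can be moved outside the $\theta$-integral. The only remaining terms are boundary contributions from the endpoints $\theta=\pm\beta,\pi\pm\beta$. There are four endpoints, each weighted by $\nabla_{(r,y)}\beta=\nabla\xi/4$, so these terms total $\int_A r e^{-r^2/2}\,\Psi\cdot\nabla\xi\,dr\,dy$, where $\Psi$ is an average of $(\varphi_x\cdot\hat x,\varphi_y)$ at the four endpoints and hence $|\Psi|\le1$. The $(r,y)$-divergence term integrates to zero because of compact support. This gives the bound $|re^{-r^2/2}e^{-|y|^2/2}D\xi|(A)$, up to the normalising constant $(2\pi)^{\frac{n-1}{2}}$, which we track from the definitions of $\gamma$ and $\mathcal H^{n-1}_\gamma$.

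Splitting the test field and taking separate suprema yields the sum of the two bounds. This sum controls the total variation, since the supremum of a sum is at most the sum of the suprema. We then pass to the limit $k\to\infty$. The first term converges by strict convergence, because the weight is continuous and positive on $A$. The tangential term needs separate care, since it is not a priori continuous under the approximation. The safest route is to keep the tangential part for the limit set itself: the splitting argument applies verbatim to $F_\mu$, provided that in the radial/$y$ part we can write $\int_{F_\mu}$ via the $\theta$-integration formula for BV $\xi_\mu$. We obtain this by approximating only inside that term, where only $\xi$ appears.

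Finiteness of $P_\gamma(F_\mu)$ then follows from \eqref{eq:bound_for_D_xi}, which bounds the first term by $P_\gamma(E)$. For the tangential part, $|D_{x\parallel}\chi_{F_\mu}|$ restricted to a circle is the Gaussian length of four endpoints per slice. By the coarea formula (Proposition~\ref{prop:coarea}) this is at most $\int 4e^{-\frac{r^2+|y|^2}{2}}dr\,dy<\infty$. The axis $\{x=0\}$ has codimension two, so it carries no perimeter, and $F_\mu$ has finite Gaussian perimeter in all of $\R^n$.

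\textbf{Main obstacle.} The hard step is justifying the $\theta$-integration identity (the Leibniz/endpoint computation) when $\xi_\mu$ is merely BV with jumps. For this I would use strict approximation together with the fact that the left-hand side is linear in $\chi_F$ and continuous under $L^1$ convergence.
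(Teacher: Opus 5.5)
Your proposal is correct and follows essentially the paper's route. The shared steps are: smooth approximation of $\xi_\mu$; the Leibniz/endpoint computation in the $(r,y)$ variables, closed by the divergence theorem (the paper's auxiliary field $V_j$), with the four endpoint terms bounded by $|r e^{-r^2/2}e^{-|y|^2/2}D\xi_\mu|$; passing to the limit for a fixed test field only in the radial/$y$ part, while the tangential part is kept for $F_\mu$ itself and bounded through Corollary~\ref{cor:parallel_total_variation} (the paper's Step~7); and a cutoff near $\{x=0\}$. Your move away from the initial lower-semicontinuity reduction is exactly what is needed; note that Proposition~\ref{prop:coarea} presupposes finite perimeter, so for the tangential finiteness either lean on your local $BV$ step or, as the paper does, integrate $\diver_{x\parallel}\varphi_{x\parallel}$ directly along the arcs of each circle (at most four endpoint contributions per slice) for the approximating sets and pass to the limit with $\varphi$ fixed.
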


\begin{proof}
The proof is based on the arguments of \cite[Lemma 3.5]{CCF} and \cite[Lemma 3.3]{BCF}.
Thanks to Lemma~\ref{lem:mu BV}, $\mu, \xi_\mu \in BV_{\text{loc}}((0, \infty) \times \R^{n-2})$.
Let $\{\xi_j\}_{j \in \mathbb{N}} \subset C^1_c((0, \infty) \times \R^{n-2})$ be a sequence of non-negative functions 
with $0 \leq \xi_j(r, y) \leq 2 \pi$
such that $\xi_j \to \xi_\mu$ $\mathcal{H}^{n-1}$-a.e. in $(0, \infty) \times \R^{n-2}$ 
and $|D\xi_j| \overset{*}{\rightharpoonup} |D\xi_\mu|$.
For every $j \in \mathbb{N}$, set
\[
\mu_j (r, y) \coloneqq \frac{r e^{- \frac{r^2}{2}}}{\sqrt{2 \pi}} \xi_j (r, y), \quad \text{ and } \quad
\beta_j (r, y) \coloneqq \frac{\xi_j (r, y)}{4}, \quad \text{ for every } (r, y) \in (0, \infty) \times \R^{n-2},
\]
and let us denote by $F_{\mu_j} \subset \R^n$
the set defined by \eqref{def:F}, with $\mu_j$ in place of $\mu$. Note that, according to the notation given in \eqref{def:beta}, for every $j \in \mathbb{N}$ we have $\xi_j = \xi_{\mu_j}$ and $\beta_j = \beta_{\mu_j}$.

Let $A \Subset (0,\infty) \times \R^{n-2}$ be open, 
and let $\varphi \in C^{1}_{c}(\Phi(A\times \Ss^{1});\R^n)$ with 
$\|\varphi\|_{L^{\infty}(\Phi(A\times \Ss^{1});\R^n)} \leq 1$. 
We have
\begin{align}
\int_{F_{\mu_j}} &\left((\diver \varphi)(x, y) - \varphi(x, y) \cdot (x, y) \right) e^{-\frac{|x|^2}{2}} e^{-\frac{|y|^2}{2}} \, dx \, dy \nonumber \\
&= \int_{\Phi(A\times \Ss^{1})} \chi_{F_{\mu_j}}(x, y)
e^{-\frac{|x|^2}{2}} e^{-\frac{|y|^2}{2}} \, \Big(\diver \varphi(x, y) - x \cdot \varphi_x(x, y) - y \cdot \varphi_y(x, y)\Big) 
dx \, dy \nonumber \\
&=  \int_{\Phi(A\times \Ss^{1})} \chi_{F_{\mu_j}}(x, y) 
e^{-\frac{|x|^2}{2}} e^{-\frac{|y|^2}{2}} \, \Big(\diver_y \varphi_y(x, y) - y \cdot \varphi_y(x, y)\Big)  
dx \, dy \nonumber \\
&+  \int_{\Phi(A\times \Ss^{1})} \chi_{F_{\mu_j}}(x, y)
e^{-\frac{|x|^2}{2}} e^{-\frac{|y|^2}{2}} \, \, \Big(\diver_x \varphi_x(x, y) - x \cdot \varphi_x(x, y)\Big) 
dx \, dy. \nonumber 
\end{align} 

Let us now focus on the last integral.
Thanks to Remark~\ref{rem:divergence}, this can be written as
\begin{align}
& \int_{\Phi(A\times \Ss^{1})} \chi_{F_{\mu_j}}(x, y)
e^{-\frac{|x|^2}{2}} e^{-\frac{|y|^2}{2}} \, \, \Big(\diver_x \varphi_x(x, y) - x \cdot \varphi_x(x, y)\Big) 
dx \, dy \nonumber \\
&= \int_{\Phi(A\times \Ss^{1})} \chi_{F_{\mu_j}}(x, y)
e^{-\frac{|x|^2}{2}} e^{-\frac{|y|^2}{2}} \, \diver_{x \parallel} \varphi_{x \parallel}(x, y) 
dx \, dy \nonumber \\
&+  \int_{\Phi(A\times \Ss^{1})} \chi_{F_{\mu_j}}(x, y)
e^{-\frac{|x|^2}{2}} e^{-\frac{|y|^2}{2}} \, \big(\nabla_x \varphi_x(x, y)\hat{x}\big) \cdot \hat{x} \, 
dx \, dy \label{eq:equality} \\
&+ \int_{\Phi(A\times \Ss^{1})} \chi_{F_{\mu_j}}(x, y)
e^{-\frac{|x|^2}{2}} e^{-\frac{|y|^2}{2}} \, \varphi_x(x, y) \cdot \left(\frac{\hat{x}}{|x|} - x\right) dx \, dy. \nonumber
\end{align} 

In the following, it is convenient to introduce the function $V_j : A \to \R^{n-1}$
given by
\begin{align*}
V_j(r, y) &\coloneqq  e^{-\frac{r^2}{2}} e^{-\frac{|y|^2}{2}} \int_{\Ar_{\beta_{\mu_j}(r, y)}(r e_1) \cup \Ar_{\beta_{\mu_j}(r, y)}(-r e_1)} 
\big(\varphi_x(x, y) \cdot \hat{x}, \varphi_y(x, y)\big) \, d\mathcal{H}^{1}(x) \\
&= r e^{-\frac{r^2}{2}} e^{-\frac{|y|^2}{2}} \int_{\Ar_{\beta_{\mu_j}(r, y)}(e_1) \cup \Ar_{\beta_{\mu_j}(r, y)}(-e_1)} 
\big(\varphi_x(r \omega, y) \cdot \omega, \varphi_y(r \omega, y)\big) \, d\mathcal{H}^{1}(\omega).
\end{align*}

We denote the components of $V_j$ by 
\begin{align*}
(V_j)_r(r, y) &= r e^{-\frac{r^2}{2}} e^{-\frac{|y|^2}{2}} \int_{\Ar_{\beta_{\mu_j}(r, y)}(e_1) \cup \Ar_{\beta_{\mu_j}(r, y)}(-e_1)} 
\varphi_x(r \omega, y) \cdot \omega \, d\mathcal{H}^{1}(\omega) \\
(V_j)_y(r, y) &= r e^{-\frac{r^2}{2}} e^{-\frac{|y|^2}{2}} \int_{\Ar_{\beta_{\mu_j}(r, y)}(e_1) \cup \Ar_{\beta_{\mu_j}(r, y)}(-e_1)} 
\varphi_y(r \omega, y) \, d\mathcal{H}^{1}(\omega).
\end{align*}

We divide the proof into several steps.

\medskip
\noindent
\textbf{Step 1:} We show that $V_j$ is Lipschitz continuous with compact support in $A$.

The fact that $V_j$ has compact support in $A$ follows from the inclusion $\text{supp} \, V_j \subset \Lambda(\supp \, \varphi)$, where we set
\begin{equation*}
\Lambda(\supp \, \varphi) 
\coloneqq\Pi(\supp\varphi)=\{(|x|,y):(x,y)\in\supp\varphi\}.
\end{equation*}

Moreover, for every $(r_1, y_1), (r_2, y_2) \in \Lambda(\supp \, \varphi)$ 
(here we set $\Ar_i = \Ar_{\beta_{\mu_j}(r_i, y_i)}(e_1) \cup \Ar_{\beta_{\mu_j}(r_i, y_i)}(-e_1)$
for $i = 1, 2$)
\begin{align*}
&|V_j(r_1, y_1) - V_j(r_2, y_2)| \\
&= \Big|r_1 e^{-\frac{r_1^2}{2}} e^{-\frac{|y_1|^2}{2}} 
\int_{\Ar_1} 
\big(\varphi_x(r_1 \omega, y_1) \cdot \omega, \varphi_y(r_1 \omega, y_1)\big) \, d\mathcal{H}^{1}(\omega) \\
&- r_2 e^{-\frac{r_2^2}{2}} e^{-\frac{|y_2|^2}{2}} 
\int_{\Ar_2} 
\big(\varphi_x(r_2 \omega, y_2) \cdot \omega, \varphi_y(r_2 \omega, y_2)\big) \, d\mathcal{H}^{1}(\omega) \Big| \\
&\leq \int_{\Ar_1} 
\Big|r_1 e^{-\frac{r_1^2}{2}} e^{-\frac{|y_1|^2}{2}} \big(\varphi_x(r_1 \omega, y_1) \cdot \omega, \varphi_y(r_1 \omega, y_1)\big) \\
&- r_2 e^{-\frac{r_2^2}{2}} e^{-\frac{|y_2|^2}{2}} \big(\varphi_x(r_2 \omega, y_2) \cdot \omega, \varphi_y(r_2 \omega, y_2)\big) 
\Big| \, d\mathcal{H}^{1}(\omega) \\
&+ r_2 e^{-\frac{r_2^2}{2}} e^{-\frac{|y_2|^2}{2}} 
\int_{\Ar_1 \Delta \Ar_2} 
|\big(\varphi_x(r_2 \omega, y_2) \cdot \omega, \varphi_y(r_2 \omega, y_2)\big)| \, d\mathcal{H}^{1}(\omega) \\
&\leq C|(r_1, y_1) - (r_2, y_2)| + r_2 e^{-\frac{r_2^2}{2}} e^{-\frac{|y_2|^2}{2}} \mathcal{H}^1(\Ar_1 \Delta \Ar_2) \\
&= C|(r_1, y_1) - (r_2, y_2)| +  r_2 e^{-\frac{r_2^2}{2}} e^{-\frac{|y_2|^2}{2}} |\xi_{\mu_j}(r_1, y_1) - \xi_{\mu_j}(r_2, y_2)| \\
&\leq C'|(r_1, y_1) - (r_2, y_2)|,
\end{align*}
for some $C, C' > 0$, where we used the fact that the function 
\begin{equation*}
(r, y) \mapsto r e^{-\frac{r^2}{2}} e^{-\frac{|y|^2}{2}} \big(\varphi_x(r \omega, y) \cdot \omega, \varphi_y(r \omega, y)\big)
\end{equation*}
is locally Lipschitz continuous.

\medskip
\noindent
\textbf{Step 2:} We show that $\beta_{\mu_j}$ is $\mathcal{H}^{n-1}$-a.e. differentiable and 
that 
\begin{align}
\partial_r(V_j)_r(r, y) &= 
e^{-\frac{|y|^2}{2}} e^{-\frac{r^2}{2}}(1 - r^2) \int_{\Ar_{\beta_{\mu_j}(r, y)}(e_1) \cup \Ar_{\beta_{\mu_j}(r, y)}(-e_1)} 
\varphi_x(r \omega, y) \cdot \omega \, d\mathcal{H}^{1}(\omega) \nonumber \\
&+ r e^{-\frac{r^2}{2}} e^{-\frac{|y|^2}{2}} \frac{\partial \beta_{\mu_j}}{\partial r}(r, y) 
\bigg(\int_{\mathbf{S}_{\beta_{\mu_j}(r, y)}(e_1) \cup \mathbf{S}_{\beta_{\mu_j}(r, y)}(-e_1)} 
\varphi_x(r \omega, y) \cdot \omega \, d\mathcal{H}^{0}(\omega) \bigg)  \label{eq:formula_V_prime} \\
&+ r e^{-\frac{r^2}{2}} e^{-\frac{|y|^2}{2}} 
\int_{\Ar_{\beta_{\mu_j}(r, y)}(e_1) \cup \Ar_{\beta_{\mu_j}(r, y)}(-e_1)} 
\Big(\left(\nabla_x \varphi_x(r \omega, y)\right) \, \omega \Big) 
\cdot \omega \, d\mathcal{H}^{1}(\omega),\nonumber
\end{align}
for $\mathcal{H}^{n-1}$-a.e. $(r, y) \in A$.

Let us set $A_j\coloneqq  \{0 < \beta_{\mu_j} < \pi/2\}$.
Since $\mu_j \in C^1_c((0, \infty) \times \R^{n-2})$, we have that $\beta_{\mu_j} \in C^1(A_j)$. Moreover, for every $(r, y) \in A_j$
\begin{align*}
\frac{\partial (V_j)_r}{\partial r}(r, y)
&= e^{-\frac{|y|^2}{2}} \frac{\partial}{\partial r} \bigg(r e^{-\frac{r^2}{2}} \int_{0}^{
\beta_{\mu_j}(r, y)}d \alpha 
\int_{\mathbf{S}_\alpha(e_1) \cup \mathbf{S}_\alpha(-e_1)} \varphi_x(r \omega, y) \cdot \omega \, \, d\mathcal{H}^{0}(\omega) \bigg) \\
&= e^{-\frac{|y|^2}{2}} e^{-\frac{r^2}{2}}(1 - r^2) \int_{\Ar_{\beta_{\mu_j}(r, y)}(e_1) \cup \Ar_{\beta_{\mu_j}(r, y)}(-e_1)} 
\varphi_x(r \omega, y) \cdot \omega \, d\mathcal{H}^{1}(\omega) \\
&+ r e^{-\frac{r^2}{2}} e^{-\frac{|y|^2}{2}} \frac{\partial \beta_{\mu_j}}{\partial r}(r, y) 
\bigg(\int_{\mathbf{S}_{\beta_{\mu_j}(r, y)}(e_1) \cup \mathbf{S}_{\beta_{\mu_j}(r, y)}(-e_1)} 
\varphi_x(r \omega, y) \cdot \omega \, d\mathcal{H}^{0}(\omega) \bigg) \\
&+ r e^{-\frac{r^2}{2}} e^{-\frac{|y|^2}{2}} \int_{0}^{
\beta_{\mu_j}(r, y)}d \alpha 
\int_{\mathbf{S}_\alpha(e_1) \cup \mathbf{S}_\alpha(-e_1)} \Big(\left(\nabla_x \varphi_x(r \omega, y)\right) \, \omega \Big) 
\cdot \omega \, d\mathcal{H}^{0}(\omega) \\
&= e^{-\frac{|y|^2}{2}} e^{-\frac{r^2}{2}}(1 - r^2) \int_{\Ar_{\beta_{\mu_j}(r, y)}(e_1) \cup \Ar_{\beta_{\mu_j}(r, y)}(-e_1)}  
\varphi_x(r \omega, y) \cdot \omega \, d\mathcal{H}^{1}(\omega) \\
&+ r e^{-\frac{r^2}{2}} e^{-\frac{|y|^2}{2}} \frac{\partial \beta_{\mu_j}}{\partial r}(r, y) 
\bigg(\int_{\mathbf{S}_{\beta_{\mu_j}(r, y)}(e_1) \cup \mathbf{S}_{\beta_{\mu_j}(r, y)}(-e_1)} 
\varphi_x(r \omega, y) \cdot \omega \, d\mathcal{H}^{0}(\omega) \bigg) \\
&+ r e^{-\frac{r^2}{2}} e^{-\frac{|y|^2}{2}} 
\int_{\Ar_{\beta_{\mu_j}(r, y)}(e_1) \cup \Ar_{\beta_{\mu_j}(r, y)}(-e_1)} 
\Big(\left(\nabla_x \varphi_x(r \omega, y)\right) \, \omega \Big) 
\cdot \omega \, d\mathcal{H}^{1}(\omega).
\end{align*} 

This shows \eqref{eq:formula_V_prime} whenever $(r, y) \in A_j$.
Note now that 
\begin{align*}
(V_j)_r(r, y) &= 0 \quad \text{ for every } (r,y) \in \text{Int}(\{\beta_{\mu_j} = 0\}), \\
(V_j)_r(r, y) &=
r e^{-\frac{r^2}{2}} e^{-\frac{|y|^2}{2}} \int_{\Ss^1} 
\varphi_x(r \omega, y) \cdot \omega \, d\mathcal{H}^{1}(\omega)
\quad \text{for every } (r,y) \in \text{Int}(\{\beta_{\mu_j} = \pi/2\}),
\end{align*}
where $\text{Int}(\cdot)$ stands for the interior of a set.
Since 
\begin{equation*}
\partial_r \beta_{\mu_j}(r, y)= 0 \text{ for every } (r, y) \in \{\beta_{\mu_j} = 0\} \cup \{\beta_{\mu_j} = \pi/2\},
\end{equation*}
using the identities above one can see that \eqref{eq:formula_V_prime} holds true 
for $\mathcal{H}^{n-1}$-a.e. $(r, y) \in (0, \infty) \times \R^{n-2}$.

\medskip
\noindent
\textbf{Step 3:} We show that  
\begin{align*}
\big(\diver_y (V_j)_y\big)(r, y) 
&= e^{-\frac{|y|^2}{2}} e^{-\frac{r^2}{2}} 
\int_{\Ar_{\beta_{\mu_j}(r, y)}(r e_1) \cup \Ar_{\beta_{\mu_j}(r, y)}(-r e_1)} 
\, \Big(\diver_y \varphi_y(x, y) - y \cdot \varphi_y(x, y)\Big) \, d\mathcal{H}^{1}(x) \\
&+ r e^{-\frac{r^2}{2}} e^{-\frac{|y|^2}{2}}  
\bigg(\int_{\mathbf{S}_{\beta_{\mu_j}(r, y)}(e_1) \cup \mathbf{S}_{\beta_{\mu_j}(r, y)}(-e_1)} 
\nabla_y \beta_{\mu_j}(r, y) \cdot \varphi_y(r \omega, y) \, d\mathcal{H}^{0}(\omega) \bigg),
\end{align*}
for $\mathcal{H}^{n-1}$-a.e. $(r, y) \in A$.

The proof can be done by adapting the arguments used in the previous step. 

\medskip
\noindent
\textbf{Step 4:} We show that
\begin{align*} 
&\int_{\Phi(A\times \Ss^{1})} \chi_{F_{\mu_j}}(x, y)
e^{-\frac{|x|^2}{2}} e^{-\frac{|y|^2}{2}} \, \big(\nabla_x \varphi_x(x, y)\hat{x}\big) \cdot \hat{x} \, 
dx \, dy \\
&+ \int_{\Phi(A\times \Ss^{1})} \chi_{F_{\mu_j}}(x, y)
e^{-\frac{|x|^2}{2}} e^{-\frac{|y|^2}{2}} \, \varphi_x(x, y) \cdot \left(\frac{\hat{x}}{|x|} - x\right) dx \, dy \\
&+ \int_{\Phi(A\times \Ss^{1})} \chi_{F_{\mu_j}}(x, y)
e^{-\frac{|x|^2}{2}} e^{-\frac{|y|^2}{2}} \, \Big(\diver_y \varphi_y(x, y) - y \cdot \varphi_y(x, y)\Big) dx \, dy \\
&= -\int_A r e^{-\frac{r^2}{2}} e^{-\frac{|y|^2}{2}}  
\bigg(\int_{\mathbf{S}_{\beta_{\mu_j}(r, y)}(e_1) \cup \mathbf{S}_{\beta_{\mu_j}(r, y)}(-e_1)} 
\nabla_{(r, y)} \beta_{\mu_j}(r, y) \cdot \big(\varphi_x(r \omega, y) \cdot \omega, \varphi_y(r \omega, y)\big) \, d\mathcal{H}^{0}(\omega) \bigg) \, dr \, dy.
\end{align*}
 
Integrating \eqref{eq:formula_V_prime}, thanks to the classical divergence theorem applied in $A$, 
and recalling that $V_j$ has compact support in $A$, we obtain 
\begin{align*}
0 &= \int_A \diver_{(r, y)} V_j(r, y) \, dr \, dy \\
&= \int_A \left(\partial_r(V_j)_r(r, y) + \diver_{y}(V_j)_y(r, y)\right) \, dr \, dy  \\
&= \int_A \left(e^{-\frac{|y|^2}{2}} e^{-\frac{r^2}{2}}(1 - r^2) 
\int_{\Ar_{\beta_{\mu_j}(r, y)}(e_1) \cup \Ar_{\beta_{\mu_j}(r, y)}(-e_1)}  
\varphi_x(r \omega, y) \cdot \omega \, d\mathcal{H}^{1}(\omega) \right) \, dr \, dy \\
&+ \int_A r e^{-\frac{r^2}{2}} e^{-\frac{|y|^2}{2}}  
\bigg(\int_{\mathbf{S}_{\beta_{\mu_j}(r, y)}(e_1) \cup \mathbf{S}_{\beta_{\mu_j}(r, y)}(-e_1)} 
\nabla_{(r, y)} \beta_{\mu_j}(r, y) \cdot \big(\varphi_x(r \omega, y) \cdot \omega, \varphi_y(r \omega, y)\big) \, d\mathcal{H}^{0}(\omega) \bigg) \, dr \, dy \\
&+ \int_A r e^{-\frac{r^2}{2}} e^{-\frac{|y|^2}{2}} \bigg(
\int_{\Ar_{\beta_{\mu_j}(r, y)}(e_1) \cup \Ar_{\beta_{\mu_j}(r, y)}(-e_1)} 
\left(\nabla_x \varphi_x(r \omega, y) \, \omega\right) 
\cdot \omega \, d\mathcal{H}^{1}(\omega) \bigg) \, dr \, dy \\
&+ \int_A e^{-\frac{|y|^2}{2}} e^{-\frac{r^2}{2}} 
\int_{\Ar_{\beta_{\mu_j}(r, y)}(r e_1) \cup \Ar_{\beta_{\mu_j}(r, y)}(-r e_1)} 
\, \Big(\diver_y \varphi_y(x, y) - y \cdot \varphi_y(x, y)\Big) \, d\mathcal{H}^{1}(x) \, dr \, dy \\
&= \int_{\Phi(A\times \Ss^{1})} \chi_{F_{\mu_j}}(x, y)
e^{-\frac{|x|^2}{2}} e^{-\frac{|y|^2}{2}} \, \varphi_x(x, y) \cdot \left(\frac{\hat{x}}{|x|} - x\right) dx \, dy \\
&+ \int_A r e^{-\frac{r^2}{2}} e^{-\frac{|y|^2}{2}}  
\bigg(\int_{\mathbf{S}_{\beta_{\mu_j}(r, y)}(e_1) \cup \mathbf{S}_{\beta_{\mu_j}(r, y)}(-e_1)} 
\nabla_{(r, y)} \beta_{\mu_j}(r, y) \cdot \big(\varphi_x(r \omega, y) \cdot \omega, \varphi_y(r \omega, y)\big) \, d\mathcal{H}^{0}(\omega) \bigg) \, dr \, dy \\
&+ \int_{\Phi(A\times \Ss^{1})} \chi_{F_{\mu_j}}(x, y)
e^{-\frac{|x|^2}{2}} e^{-\frac{|y|^2}{2}} \, \big(\nabla_x \varphi_x(x, y)\hat{x}\big) \cdot \hat{x} \, 
dx \, dy \\
&+ \int_{\Phi(A\times \Ss^{1})} \chi_{F_{\mu_j}}(x, y)
e^{-\frac{|x|^2}{2}} e^{-\frac{|y|^2}{2}} \, \Big(\diver_y \varphi_y(x, y) - y \cdot \varphi_y(x, y)\Big) dx \, dy, 
\end{align*}
which gives the claim.

\medskip
\noindent
\textbf{Step 5:} We prove that
\begin{align}
 &\int_{\Phi(A\times \Ss^{1})} \chi_{F_{\mu_j}}(x, y)((\diver \varphi)(x, y) 
-  \varphi(x, y) \cdot (x, y)) e^{-\frac{|x|^2}{2}} e^{-\frac{|y|^2}{2}} \, dx \, dy \nonumber \\
&\leq \left|r e^{-\frac{r^2}{2}} e^{-\frac{|y|^2}{2}} D\xi_{\mu_j}\right|(\Lambda(\supp \, \varphi)) \label{eq:qwedfg33}  \\
&+ \int_A e^{-\frac{r^2}{2}} e^{-\frac{|y|^2}{2}} 
\mathcal{H}^{0}(\mathbf{S}_{\beta_{\mu_j}(r, y)}(r e_1) \cup \mathbf{S}_{\beta_{\mu_j}(r, y)}(-r e_1)) \, dr \, dy, \nonumber
\end{align}
where $\Lambda(\supp \, \varphi) \subset (0, \infty) \times \R^{n-2}$ is the compact set defined in Step 1.

Thanks to \eqref{eq:equality} and Step~4
\begin{equation}\label{eq:2_4} 
\begin{split} 
& \int_{\Phi(A\times \Ss^{1})} \chi_{F_{\mu_j}}(x, y)((\diver \varphi)(x, y) 
-  \varphi(x, y) \cdot (x, y)) e^{-\frac{|x|^2}{2}} e^{-\frac{|y|^2}{2}} \, dx \, dy \\
&=  \int_{\Phi(A\times \Ss^{1})} \chi_{F_{\mu_j}}(x, y)
e^{-\frac{|x|^2}{2}} e^{-\frac{|y|^2}{2}} \, \diver_{x \parallel} \varphi_{x \parallel}(x, y) 
dx \, dy \\
&- \int_A r e^{-\frac{r^2}{2}} e^{-\frac{|y|^2}{2}}
\bigg(\int_{\mathbf{S}_{\beta_{\mu_j}(r, y)}(e_1) \cup \mathbf{S}_{\beta_{\mu_j}(r, y)}(-e_1)} 
\nabla_{(r, y)} \beta_{\mu_j}(r, y) \cdot \big(\varphi_x(r \omega, y) \cdot \omega, \varphi_y(r \omega, y)\big) \, d\mathcal{H}^{0}(\omega) \bigg) \, dr \, dy. 
\end{split}
\end{equation}

We now estimate the right hand side of the expression above.
Thanks to \eqref{def:beta} and arguing as in Step 2 we have that 
\begin{equation*}
\nabla_{(r, y)} \xi_{\mu_j}(r, y) 
= 4 \nabla_{(r, y)} \beta_{\mu_j}(r, y)
= \nabla_{(r, y)} \beta_{\mu_j}(r, y) \mathcal{H}^{0}(\mathbf{S}_{\beta_{\mu_j}(r,y)}(e_1)
\cup \mathbf{S}_{\beta_{\mu_j}(r,y)}(-e_1)),
\end{equation*}
for $\mathcal{H}^{n-1}$-a.e. $(r, y) \in (0, \infty) \times \R^{n-2}$.

Therefore, 
\begin{align}
-&\int_A r e^{-\frac{r^2}{2}} e^{-\frac{|y|^2}{2}} 
\bigg(\int_{\mathbf{S}_{\beta_{\mu_j}(r, y)}(e_1) \cup \mathbf{S}_{\beta_{\mu_j}(r, y)}(-e_1)} 
\nabla_{(r, y)} \beta_{\mu_j}(r, y) \cdot \big(\varphi_x(r \omega, y) \cdot \omega, \varphi_y(r \omega, y)\big) \, d\mathcal{H}^{0}(\omega) \bigg) \, dr \, dy \nonumber \\
&\leq  \int_{\Lambda(\text{supp} \, \varphi)} r e^{-\frac{r^2}{2}} e^{-\frac{|y|^2}{2}} 
|\nabla_{(r, y)} \beta_{\mu_j}(r, y)|
\mathcal{H}^0\left(\mathbf{S}_{\beta_{\mu_j}(r, y)}(e_1) \cup \mathbf{S}_{\beta_{\mu_j}(r, y)}(-e_1)\right)
\, dr \, dy  \nonumber \\
&=  \int_{\Lambda(\text{supp} \, \varphi)} r e^{-\frac{r^2}{2}} e^{-\frac{|y|^2}{2}} 
|\nabla_{(r, y)} \xi_{\mu_j}(r, y)| \, dr \, dy  \nonumber \\
&=  \left|r e^{-\frac{r^2}{2}} e^{-\frac{|y|^2}{2}} D\xi_{\mu_j}\right|(\Lambda(\supp \, \varphi)). 
\label{eq:first_piece}
\end{align}

Let us now focus on the term in \eqref{eq:2_4}.
Applying the divergence theorem in the circle
and denoting by $\nu_*(x, y)$ the exterior unit conormal in $\partial B(r)$ to
$\Ar_{\beta_{\mu_j}(r, y)}(r e_1) \cup \Ar_{\beta_{\mu_j}(r, y)}(-r e_1)$ at its endpoints, we have 

\begin{align}
&\int_{\Phi(A\times \Ss^{1})} \chi_{F_{\mu_j}}(x, y)
e^{-\frac{|x|^2}{2}} e^{-\frac{|y|^2}{2}} \, \diver_{x \parallel} \varphi_{x \parallel}(x, y) 
dx \, dy \nonumber \\
&=  \int_A e^{-\frac{r^2}{2}} e^{-\frac{|y|^2}{2}} 
\left[\int_{\Ar_{\beta_{\mu_j}(r, y)}(r e_1) \cup \Ar_{\beta_{\mu_j}(r, y)}(-r e_1)} 
\diver_{x \parallel} \varphi_{x \parallel}(x, y) \, d\mathcal{H}^1(x) \right] \, dr \, dy \nonumber \\
&=  \int_A e^{-\frac{r^2}{2}} e^{-\frac{|y|^2}{2}} 
\left[\int_{\mathbf{S}_{\beta_{\mu_j}(r, y)}(r e_1) \cup \mathbf{S}_{\beta_{\mu_j}(r, y)}(-r e_1)} 
\varphi_{x \parallel}(x, y) \cdot \nu_{*}(x, y) \, d\mathcal{H}^0(x) \right] \, dr \, dy \nonumber \\
&\leq  \int_A e^{-\frac{r^2}{2}} e^{-\frac{|y|^2}{2}} 
\mathcal{H}^{0}(\mathbf{S}_{\beta_{\mu_j}(r, y)}(r e_1) \cup \mathbf{S}_{\beta_{\mu_j}(r, y)}(-r e_1)) \, dr \, dy. \label{eq:second_piece}
\end{align}

Combining \eqref{eq:2_4}, \eqref{eq:first_piece}, and \eqref{eq:second_piece}, we obtain \eqref{eq:qwedfg33}.

\medskip
\noindent
\textbf{Step 6:} We show that $F_\mu$ is a set of finite Gaussian perimeter.

Note that $\chi_{F_{\mu_j}} \to \chi_{F_{\mu}}$ $\mathcal{H}^n$-a.e. in $\R^n$, and 
$\beta_{\mu_j} \to \beta_{\mu}$ $\mathcal{H}^{n-1}$-a.e. in $(0, \infty) \times \R^{n-2}$. 
Note also that, from our choice of the sequence $\{\xi_{\mu_j}\}_{j \in \mathbb{N}}$, and since the function 
\[ (r, y) \mapsto r e^{-\frac{r^2}{2}} e^{-\frac{|y|^2}{2}}
\]
is continuous and bounded, we have 
\begin{equation*}
\left|r e^{-\frac{r^2}{2}} e^{-\frac{|y|^2}{2}} D\xi_{\mu_j}\right|
\overset{*}{\rightharpoonup} \left|r e^{-\frac{r^2}{2}} e^{-\frac{|y|^2}{2}} D\xi_{\mu}\right| \quad \text{as } j \to \infty.
\end{equation*}

Therefore, taking the limsup as $j \to \infty$ in \eqref{eq:qwedfg33}, and using the fact that 
$\Lambda(\supp \, \varphi)$ is compact,
\begin{align*}
& \int_{\Phi(A\times \Ss^{1})} \chi_{F_{\mu}} (x, y)
\left( (\diver \varphi) (x, y) -  \varphi (x, y) \cdot (x, y) \right) e^{- \frac{|x|^2}{2}} e^{- \frac{|y|^2}{2}} 
dx \, dy  \\
&= \limsup_{j \to \infty}
\int_{\Phi(A\times \Ss^{1})} \chi_{F_{\mu_j}} (x, y)
\left( (\diver \varphi) (x, y) -  \varphi (x, y) \cdot (x, y) \right) e^{- \frac{|x|^2}{2}} e^{- \frac{|y|^2}{2}} 
dx \, dy  \\
&\leq \limsup_{j \to \infty}
 \left| r e^{- \frac{r^2}{2}} e^{- \frac{|y|^2}{2}} D \xi_{\mu_j} \right|(\Lambda(\supp \, \varphi))   
 + 4 
 \int_A e^{- \frac{r^2}{2}} e^{- \frac{|y|^2}{2}}   \, dr \, dy \\
&\leq   \left| r e^{- \frac{r^2}{2}} e^{- \frac{|y|^2}{2}} D \xi_{\mu} \right|(\Lambda(\supp \, \varphi)) 
+ 2 (2 \pi)^{\frac{n-1}{2}} \\
&\leq  \left| r e^{- \frac{r^2}{2}} e^{- \frac{|y|^2}{2}} D \xi_{\mu} \right|(A)  
+  2 (2 \pi)^{\frac{n-1}{2}}.
\end{align*}
Taking the supremum of the above inequality 
over all functions $ \varphi \in C^{1}_{c} (\Phi(A\times \Ss^{1});\R^n)$ with 
$\| \varphi \|_{L^{\infty} (\Phi(A\times \Ss^{1});\R^n)} \leq 1$, we obtain 
$$
P_{\gamma} (F_\mu;\Phi(A\times \Ss^{1}))\leq \frac{1}{(2 \pi)^{\frac{n-1}{2}}} \left| r e^{- \frac{r^2}{2}} e^{- \frac{|y|^2}{2}} D \xi_{\mu} \right|(A)  
 + 2.
$$
Thanks to \eqref{eq:bound_for_D_xi}, using the fact that $E$ is a set of finite Gaussian perimeter, we have
\begin{equation} \label{bound on Pfmu}
P_{\gamma} (F_\mu;\Phi(A\times \Ss^{1})) 
\leq  P_{\gamma}(E) + 2 < \infty.
\end{equation}
Since $A$ was arbitrary, this shows that $F_\mu$ is a set of finite Gaussian perimeter in $\R^2_0 \times \R^{n-2}$. 
 
Let us show that, in fact, $F_\mu$ is a set of finite Gaussian perimeter in $\R^{n}$.  
Suppose that $A' \Subset [0, \infty) \times \R^{n-2}$, and let $\psi \in C^{1}_{c}(\Phi(A'\times \Ss^{1});\R^n)$ with 
$\|\psi\|_{L^{\infty}(\Phi(A'\times \Ss^{1});\R^n)} \leq 1$. 
Let now $\varepsilon > 0$, and let $h_{\varepsilon} : [ 0, \infty) \to [0, 1]$ be a $C^1$ function such that $h_{\varepsilon} (s) = 0$ for $0\leq s\leq \varepsilon$ and $h_{\varepsilon} (s) =1$ for $s \geq 2 \varepsilon$, with $h'_\varepsilon \leq \frac{C}{\varepsilon}$ for some $C > 0$. Setting $\psi_{\varepsilon}\coloneqq h_{\varepsilon} \psi$, we have
\begin{align*}
& \int_{\Phi(A'\times \Ss^{1})} h_\varepsilon (|x|) \chi_{F_{\mu}} (x, y)
\left( \diver \psi (x, y) -  \psi (x, y) \cdot (x, y) \right) e^{- \frac{|x|^2}{2}} e^{- \frac{|y|^2}{2}} 
dx \, dy  \\
&= \int_{\Phi(A'\times \Ss^{1})} \chi_{F_{\mu}} (x, y)
\left( \diver \psi_\varepsilon (x, y) -  \psi_\varepsilon (x, y) \cdot (x, y) \right) e^{- \frac{|x|^2}{2}} e^{- \frac{|y|^2}{2}} 
dx \, dy  
+ I_\varepsilon \\
&\leq P_{\gamma}(E) + 2 
+ I_\varepsilon,  
\end{align*}
where we used \eqref{bound on Pfmu}, and we set
\[
I_\varepsilon
=  \int_{\Phi(A'\times \Ss^{1})}  \chi_{F_{\mu}} (x, y)
\left( h'_\varepsilon (|x|) \frac{x}{|x|} \cdot \psi_x (x, y)  \right) e^{- \frac{|x|^2}{2}} e^{- \frac{|y|^2}{2}} 
dx \, dy.
\]
Note now that 
\[
|I_\varepsilon |
\leq C_{A'} \varepsilon, 
\]
for some $C_{A'} >0$ depending on $A$.
Therefore, passing first to the limit as $\varepsilon \to 0$, and then taking the supremum over $\psi$ we obtain that 
\[
P_{\gamma} (F_\mu;\Phi(A'\times \Ss^{1})) 
\leq  P_{\gamma}(E) + 2 < \infty.
\]
This shows that $F_\mu$ is a set of finite Gaussian perimeter in $\R^n$.

\vspace{.2cm}

\noindent
\textbf{Step 7:} We conclude. Let $A \Subset (0,\infty) \times \R^{n-2}$ be open,
and let $\varphi \in C^{1}_{c} (\Phi(A\times \Ss^{1});\R^n)$ with 
$\| \varphi \|_{L^{\infty} (\Phi(A\times \Ss^{1});\R^n)} \leq 1$.
Combining \eqref{eq:2_4} and \eqref{eq:first_piece}, 
we have that for every $j \in \mathbb{N}$
\begin{align*}
&\int_{\Phi(A\times \Ss^{1})} \chi_{F_{\mu_j}} (x, y)
e^{- \frac{|x|^2}{2}} e^{- \frac{|y|^2}{2}}  \, \, \Big( \diver \varphi (x, y) -  (x, y) \cdot \varphi (x, y)  \Big) 
dx \, dy  \\
&\leq  \left| r e^{- \frac{r^2}{2}} e^{- \frac{|y|^2}{2}} D \xi_{\mu_j} \right|(\Lambda(\supp \, \varphi)) \\
&+  \int_{\Phi(A\times \Ss^{1})} \chi_{F_{\mu_j}} (x, y)
e^{- \frac{|x|^2}{2}} e^{- \frac{|y|^2}{2}}  \, \diver_{x \parallel} \varphi_{x \parallel} (x, y) 
dx \, dy.
\end{align*}
Taking the limsup as $j \to \infty$, since $\Lambda(\supp \, \varphi)$ is compact, and thanks to Corollary~\ref{cor:parallel_total_variation}, 
we have
\begin{align*}
&\int_{\Phi(A\times \Ss^{1})} \chi_{F_{\mu}} (x, y)
e^{- \frac{|x|^2}{2}} e^{- \frac{|y|^2}{2}}  \, \, \Big( \diver \varphi (x, y) -  (x, y) \cdot \varphi (x, y)  \Big) 
dx \, dy  \\
&\leq  \left| r e^{- \frac{r^2}{2}} e^{- \frac{|y|^2}{2}} D \xi_{\mu} \right|(\Lambda(\supp \, \varphi)) \\
&+  \int_{\Phi(A\times \Ss^{1})} \chi_{F_{\mu}} (x, y)
e^{- \frac{|x|^2}{2}} e^{- \frac{|y|^2}{2}}  \, \diver_{x \parallel} \varphi_{x \parallel} (x, y) 
dx \, dy \\
&\leq  \left| r e^{- \frac{r^2}{2}} e^{- \frac{|y|^2}{2}} D \xi_{\mu} \right|(\Lambda(\supp \, \varphi)) 
+  | D_{x \parallel} (e^{- \frac{|x|^2}{2}} e^{- \frac{|y|^2}{2}}  \chi_{F_\mu}) | (\Phi(A\times \Ss^{1})) \\
&=  \left| r e^{- \frac{r^2}{2}} e^{- \frac{|y|^2}{2}} D \xi_{\mu} \right|(\Lambda(\supp \, \varphi)) 
+  |  e^{- \frac{|x|^2}{2}} e^{- \frac{|y|^2}{2}} D_{x \parallel} \chi_{F_\mu} | (\Phi(A\times \Ss^{1})),
\end{align*}
where we also used the fact that $D_{x \parallel} (e^{- \frac{|x|^2}{2}} e^{- \frac{|y|^2}{2}}  \chi_{F_\mu}) 
= e^{- \frac{|x|^2}{2}} e^{- \frac{|y|^2}{2}}  D_{x \parallel}  \chi_{F_\mu}$.
Taking the supremum over all $\varphi \in C^{1}_{c} (\Phi(A\times \Ss^{1});\R^n)$ with 
$\| \varphi \|_{L^{\infty} (\Phi(A\times \Ss^{1});\R^n)} \leq 1$, 
\[
(2 \pi)^{\frac{n-1}{2}} P_\gamma(F_\mu;\Phi(A\times \Ss^{1}))
\leq  \left| r e^{- \frac{r^2}{2}} e^{- \frac{|y|^2}{2}} D \xi_{\mu} \right| (A) 
+ \left| e^{- \frac{|x|^2}{2}} e^{- \frac{|y|^2}{2}} D_{x \parallel} \chi_{F_\mu} \right|
 (\Phi(A\times \Ss^{1})),
\]
which shows \eqref{eq:smhts} when $B \Subset (0, \infty) \times \R^{n-2}$ is open.
The general case with $B$ a Borel set follows by approximation. 
 \end{proof}

%%%%%%%%%%%%%% Section 3
\section{Perimeter inequality}\label{sect perim ineq}

The following easy result is the key in the proof of Theorem~\ref{theorem ineq}.

\begin{lemma}[Isoperimetric inequality for $2$-symmetric sets in the circle] \label{lem:isop 2 symm}
Let $r > 0$ be fixed, let $0 < \alpha < \pi/2$, and let $D \subset \partial B (r)$ be a $2$-symmetric set of finite perimeter in $\partial B (r)$ with $\mathcal{H}^1 (D) = 4 \alpha r$. Then,
\[
4 = \mathcal{H}^0 \left( \partial^* \Big( \Ar_{\alpha} (r e_1) \cup  \Ar_{\alpha} (- r e_1) \Big) \right)
\leq \mathcal{H}^0 (\partial^* D).
\]
Moreover, equality holds if and only if
\[
D =_{\mathcal{H}^1} \Ar_{\alpha} (r p) \cup  \Ar_{\alpha} (- r p),
\]
for some $p \in \{(1,0), (0,1)\}$.
\end{lemma}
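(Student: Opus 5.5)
A set of finite perimeter in the one-dimensional manifold $\partial B(r)$ coincides, up to $\mathcal{H}^1$-null sets, with a finite union of pairwise disjoint open arcs whose closures are disjoint. If there are $k$ such arcs, then $\mathcal{H}^0(\partial^* D)=2k$, unless $D$ is $\mathcal{H}^1$-null or $\mathcal{H}^1$-full. Both of these are excluded here, because $0<\mathcal{H}^1(D)=4\alpha r<2\pi r$. The plan is therefore to show $k\geq 2$ by a symmetry count, and then to characterize the case $k=2$.

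For the inequality, suppose $k=1$, so that $D$ is a single arc $I$. The set $D$ is invariant under the two reflections $A_1$ and $A_2$ (restricted to the plane), hence $A_1(I)=I$ and $A_2(I)=I$ up to null sets, since $A_j$ maps arcs to arcs. Consequently $I$ is invariant under $A_1A_2=-\mathrm{Id}$, the antipodal map. An arc of length less than $2\pi r$ cannot be invariant under the antipodal map: its midpoint $m$ would be sent to $-m$, which must again be the midpoint, so $m=-m$, a contradiction. Hence $k\geq 2$, and so $\mathcal{H}^0(\partial^*D)\geq 4$.

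The value $4$ for the double arc is immediate, since the two arcs $\Ar_\alpha(\pm re_1)$ have disjoint closures when $\alpha<\pi/2$, giving four endpoints.

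For the equality case, let $k=2$ with arcs $I_1,I_2$. The group $\{\mathrm{Id},A_1,A_2,-\mathrm{Id}\}$ permutes $\{I_1,I_2\}$, and by the argument above $-\mathrm{Id}$ cannot fix an arc. Therefore $-\mathrm{Id}$ swaps the two arcs, so $I_2=-I_1$ and each arc has length $2\alpha r$.

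Next, $A_1$ either fixes $I_1$ or sends it to $I_2=-I_1$.
\begin{itemize}
\item If $A_1(I_1)=I_1$, then the midpoint of $I_1$ lies on the fixed set of $A_1$, namely $\{x_1=0\}$, so the midpoint is $\pm re_2$ and $p=(0,1)$.
\item If $A_1(I_1)=-I_1$, then $A_2(I_1)=(-A_1)(I_1)=I_1$, so the midpoint lies on $\{x_2=0\}$ and $p=(1,0)$.
\end{itemize}
In either case $D=_{\mathcal{H}^1}\Ar_\alpha(rp)\cup\Ar_\alpha(-rp)$.

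The converse direction is trivial: both configurations are $2$-symmetric and have exactly four boundary points.

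The only delicate step is the structure result for one-dimensional sets of finite perimeter on the circle, which is needed to reduce the problem to the combinatorics of arcs. It follows from the real-line case via a local chart. One must also note that the reflections act compatibly on the precise representatives, which holds because they are isometries.
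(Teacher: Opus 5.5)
Your proof is correct, but it takes a different route from the paper's.

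\textbf{The paper's route.} The paper counts quadrant by quadrant. By $2$-symmetry, each open quarter-circle carries $\mathcal H^1$-measure $\alpha r\in(0,\pi r/2)$ of $D$, so $D$ is neither null nor full there. It therefore has a reduced-boundary point inside that open quarter-circle, which gives four points. In the equality case there is exactly one such point per open quadrant and none on the axes. Hence $D$ meets each quadrant in an arc attached to one axis endpoint, and reflection produces the double arc. The paper only sketches this last step.

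\textbf{Your route.} You decompose $D$ into $k$ arcs with $\mathcal H^0(\partial^*D)=2k$. You rule out $k=1$ because $A_1A_2=-\mathrm{Id}$ cannot fix a proper arc. For $k=2$ you let $\{\mathrm{Id},A_1,A_2,-\mathrm{Id}\}$ act on the two arcs, which forces their midpoints onto a single coordinate axis.

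\textbf{Comparison.} Your argument shows that the inequality needs only central symmetry, with the reflections used only to place $p$ on an axis in the equality case. The quadrant count, by contrast, genuinely uses both reflections to put mass in every quadrant. Your argument also gives a complete proof of the equality characterization. The paper's count is more local and directly yields the picture ``one boundary point per open quadrant, quadrant section attached to an axis endpoint.'' That is the form in which the equality case is later used, in Lemma~\ref{lemma:connected-slices} and in the proof of Theorem~\ref{theorem rig}.
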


\begin{proof}
Since $0 < \alpha < \pi/2$, $D$ cannot be empty, nor the full circle. By $2$-symmetry, $D$ must have at least one portion in each quadrant.
In each quadrant, there is at least one point of the boundary (indeed, two, unless $D$ crosses the axes). 
A minimizer is a union of two arcs centered on an axis (crossing the same coordinate axis at two points), yielding exactly $4$ boundary points.
 \end{proof}

\begin{proof}[Proof of Theorem~\ref{theorem ineq}]
We adapt the arguments of the proof of \cite[Theorem 1.1]{BCF}.
We can assume that $B \Subset (0,\infty) \times \R^{n-2}$, since 
the general case $B \subset (0,\infty) \times \R^{n-2}$ can be obtained by approximation. 
Let $G_E$ and $G_{F_\mu}$ be the sets given by Proposition~\ref{volpert theorem sets}.

We start by proving \eqref{per ineq}. 
We first prove the inequality when $B \subset ((0,\infty) \times \R^{n-2}) \setminus (G_{F_\mu} \cap G_E)$, and then 
in the case $B \subset G_{F_\mu} \cap G_E$. 
The case of a general Borel set $B \Subset (0,\infty) \times \R^{n-2}$  
then follows by decomposing $B$ as $B = (B \setminus (G_{F_\mu} \cap G_E)) \cup (B \cap G_{F_\mu} \cap G_E)$.

\vspace{.2cm}

\noindent
\textbf{Step 1:} We show \eqref{per ineq} when $B \subset ((0,\infty) \times \R^{n-2}) \setminus (G_{F_\mu} \cap G_E)$.

\vspace{.2cm}

The set $(0,\infty) \times \R^{n-2} \setminus (G_{F_\mu} \cap G_E)$ can be decomposed as
\begin{equation} \label{decomposition of bad set}
\{ \beta_\mu = 0 \} \cup \{ \beta_\mu = \pi/2 \} \cup 
\underbrace{\left( \{ 0 < \beta_\mu < \pi/2 \} \setminus (G_{F_\mu} \cap G_E) \right)}_{=: \mathcal{N}},
\end{equation}
where $\mathcal{H}^{n-1}(\mathcal{N}) = 0$ by the definition of $G_{F_\mu}$ and $G_E$ 
in Proposition~\ref{volpert theorem sets}.
We treat each piece separately, establishing the following auxiliary results first.

\vspace{.2cm}

\noindent
\textbf{Step 1a:} We show that for every $\mu$-distributed set $W \subset \R^{n}$ of finite Gaussian perimeter we have 
\[
\mathcal{H}^{n-1} \left(  \partial^* W \cap \Phi (\mathcal{N} \times \Ss^{1}) \cap \{ \nu^W_{x \parallel} \neq 0\}  \right) = 0
\]
for every Borel set $\mathcal{N}\Subset (0,\infty) \times \R^{n-2}$ such that $\mathcal{H}^{n-1} (\mathcal{N}) = 0$.

Indeed, by Proposition~\ref{prop:coarea}, 
\begin{align*}
&\mathcal{H}^{n-1} \left(  \partial^* W \cap \Phi (\mathcal{N} \times \Ss^{1}) \cap \{ \nu^W_{x \parallel} \neq 0\} \right) \\
&= \int_{\mathcal{N}}  
\int_{( \partial^{*} W)_{(r,y)}}
  \frac{\chi_{\{ \nu^W_{x \parallel}  \neq 0 \}} (x, y)}{ |  \nu^W_{x \parallel} | }   \, d \mathcal{H}^0 (x)  \, dr \, dy  = 0,
\end{align*}
where in the last equality we used the fact that $\mathcal{H}^{n-1} (\mathcal{N}) = 0$.

\vspace{.2cm}

\noindent
\textbf{Step 1b:} We show that for every $\mu$-distributed set $W \subset \R^{n}$ of finite Gaussian perimeter  we have 
\[
\mathcal{H}^{n-1} \left(  \partial^* W \cap \Phi (\{ \beta_\mu = \pi/2 \} \times \Ss^{1}) \cap \{ \nu^W_{x \parallel} \neq 0\}  \right) = 0.
\]
Indeed, by Proposition~\ref{prop:coarea}, 
\begin{align*}
&\mathcal{H}^{n-1} \left(  \partial^* W \cap \Phi (\{ \beta_\mu = \pi/2 \} \times \Ss^{1}) \cap \{ \nu^W_{x \parallel} \neq 0\}  \right) \\
&= \int_{\{ \beta_\mu = \pi/2 \}}  
\int_{( \partial^{*} W)_{(r,y)}}
  \frac{\chi_{\{ \nu^W_{x \parallel} \neq 0 \}} (x, y)}{ | \nu^W_{x \parallel}  (x, y)|}  \, d \mathcal{H}^0 (x)  \, dr \, dy \\
 &= \int_{\{ \beta_\mu = \pi/2 \}}  
\int_{ \partial^* ( W_{(r,y)} )}
  \frac{\chi_{\{ \nu^W_{x \parallel} \neq 0 \}} (x, y)}{ | \nu^W_{x \parallel}  (x, y)|}  \, d \mathcal{H}^0 (x)  \, dr \, dy  = 0,
 \end{align*}
where in the last equality we used the fact that, 
thanks to Vol'pert Theorem, for $\mathcal{H}^{n-1}$-a.e. $(r, y) \in \{ \beta_{\mu} = \pi/2 \}$ we have 
$( \partial^{*} W)_{(r,y)} = \partial^* ( W_{(r,y)} ) = \partial^* (\partial B (r)) = \emptyset$.
Similarly,
\begin{equation*}
\mathcal{H}^{n-1} \left(  \partial^* W \cap \Phi (\{ \beta_\mu = 0 \} \times \Ss^{1}) \cap \{ \nu^W_{x \parallel} \neq 0\}  \right) = 0.
\end{equation*}

\vspace{.2cm}

\noindent
\textbf{Step 1c:} Using the decomposition \eqref{decomposition of bad set}, we can write 
$B = (B \cap \{ \beta_\mu = 0 \}) \cup (B \cap \{ \beta_\mu = \pi/2 \}) \cup (B \cap \mathcal{N})$
and treat each piece separately. Therefore, by applying \textbf{Step 1a} and \textbf{Step 1b} to $W = F_\mu$, we have
\[
P_\gamma(F_\mu; \Phi(B \times \Ss^1) \cap \{ \nu^{F_\mu}_{x \parallel} \neq 0 \}) = 0.
\]
In particular, by \eqref{eq:de_giorgi_structure},
\begin{equation*}
\left| e^{- \frac{|x|^2}{2}} e^{- \frac{|y|^2}{2}} D_{x \parallel} \chi_{F_\mu} \right|(\Phi(B \times \Ss^1))
= \int_{\partial^* F_\mu \cap \Phi(B \times \Ss^1)} 
e^{- \frac{|x|^2}{2}} e^{- \frac{|y|^2}{2}} |\nu^{F_\mu}_{x \parallel}(x)| \, d\mathcal{H}^{n-1}(x)=0.
\end{equation*}
Then, by Proposition~\ref{prop:bound P F_mu},
\begin{align*}
(2\pi)^{\frac{n-1}{2}} P_\gamma(F_\mu; \Phi(B \times \Ss^1)) 
&\leq \left| r e^{- \frac{r^2}{2}} e^{- \frac{|y|^2}{2}}  D\xi_\mu \right|(B)
+ \left| e^{- \frac{|x|^2}{2}} e^{- \frac{|y|^2}{2}}  D_{x \parallel} \chi_{F_\mu} \right|
(\Phi(B \times \Ss^1)) \\
&= \left| r e^{- \frac{r^2}{2}} e^{- \frac{|y|^2}{2}}  D\xi_\mu \right|(B)
\leq (2\pi)^{\frac{n-1}{2}}P_\gamma(E; \Phi(B \times \Ss^1)),
\end{align*}
where in the last inequality we used \eqref{eq:bound_for_D_xi}.
Notice moreover that for the set $F_\mu$ and a Borel set 
$B \subset ((0,\infty) \times \R^{n-2}) \setminus G_{F_\mu}$, inequality \eqref{eq:bound_for_D_xi} holds as an equality.

%%%%%%%%%%%%%%%%%%%%%%%%%%%%%%%%%%%%

\vspace{.2cm}

\noindent
\textbf{Step 2:} We prove inequality \eqref{per ineq} when $B \subset G_{F_\mu} \cap G_E$. 
In this step, we set
\[
p_{E} (r, y) = \mathcal{H}^0 (\partial^* (E_{(r, y)})), 
\]
and we use a similar notation for the set $F_\mu$. 

First of all note that, thanks to Proposition~\ref{volpert theorem sets}, 
we have $\nu^{F_\mu}_{x \parallel} (x, y) \neq 0$ whenever
$(x, y) \in \partial^* F_\mu$ and $(|x|, y) \in G_{F_{\mu}}$.
Therefore, 
\begin{align*}
&P_\gamma(F_{\mu};\Phi(B\times \Ss^{1})) 
= P_\gamma(F_{\mu};\Phi(B\times \Ss^{1})\cap \{\nu^{F_\mu}_{x \parallel} \neq 0  \}) \\
&= \int_{\partial^* F_\mu \cap \Phi(B\times \Ss^{1})\cap \{\nu^{F_\mu}_{x \parallel}\neq0  \}}d\mathcal{H}^{n-1}_{\gamma}(x, y) \\
&= \int_{B}  \frac{e^{- \frac{r^2 + |y|^2}{2}}}{(2 \pi)^{(n-1)/2}}
\int_{(\partial^*F_\mu)_{(r, y)}} \frac{1}{|\nu^{F_\mu}_{x \parallel}(x, y)|}d\mathcal{H}^{0}(x) \, dr \, dy. 
\end{align*}
Now, using the fact that 
$$
1 = | \nu^{F_\mu}_{x \perp}(x, y)|^2 + | \nu^{F_\mu}_{x \parallel} (x, y) |^2 + | \nu_{y}^{F_{\mu}}(x, y) |^2
= ( \hat{x} \cdot \nu^{F_{\mu}}_x  (x, y) )^2 + | \nu^{F_\mu}_{x \parallel}(x, y) |^2 + | \nu^{F_\mu}_{y}(x, y) |^2,
$$
thanks to Remark~\ref{rem:constant}, for $\mathcal{H}^{n-1}$-a.e. $(r, y) \in B$ we have 
\begin{equation}\begin{split}\label{ineq isop 2 dim}
&\int_{(\partial^*F_\mu)_{(r, y)}} \frac{1}{|\nu^{F_\mu}_{x \parallel}(x, y)|}d\mathcal{H}^{0}(x) \\
&=   \int_{(\partial^*F_\mu)_{(r, y)}}
\sqrt{1+ \frac{(\hat{x} \cdot \nu^{F_{\mu}}_x (x, y))^2}{| \nu^{F_\mu}_{x \parallel}(x, y)|^2} 
+ \frac{| \nu_{y}^{F_{\mu}} (x, y) |^2} {| \nu^{F_\mu}_{x \parallel}(x, y)|^2} } d\mathcal{H}^{0}(x) \\
&= p_{F_\mu} (r, y) \, 
\sqrt{1+  \frac{|r \nabla_{(r, y)} \xi_\mu (r, y)|^2}{p^2_{F_\mu} (r, y)} }   
=   \sqrt{p^2_{F_\mu} (r, y) + |r \nabla_{(r, y)} \xi_\mu (r, y)|^2}  \\
&\leq  
\sqrt{p^2_{E} (r, y)+|r \nabla_{(r, y)} \xi_\mu (r, y)|^2}  
= p_{E} (r, y) \, 
\sqrt{1+  \frac{|r \nabla_{(r, y)} \xi_\mu (r, y)|^2}{p^2_{E} (r, y)} }    \\
&= p_{E} (r, y) \, \sqrt{1 + \left(  \fint_{(\partial^*E)_{(r, y)}}  \frac{\hat{x} \cdot \nu^{E}_x(x, y)}{| \nu^E_{x \parallel}(x, y)|} \, d\mathcal{H}^{0}(x) \right)^2 
+  \left| \fint_{(\partial^*E)_{(r, y)}} \frac{ \nu_{y}^{E} (x, y) } {| \nu^E_{x \parallel}(x, y)|} \, d\mathcal{H}^{0}(x)  \right|^2},    
\end{split}\end{equation}
thanks to Lemma~\ref{lem:isop 2 symm} and Lemma~\ref{lem:nabla_xi}.

Observe now that the function $f: \R \times \R^{n-2} \to [0, \infty)$ given by
$$ 
f(t, z)\coloneqq  \sqrt{1 + t^{2} + |z|^2},
$$
is strictly convex.
Therefore, for $\mathcal{H}^{n-1}$-a.e. $(r, y) \in B$ we can apply Jensen's inequality to obtain
\begin{equation} \label{jensen}
\begin{split}
&\int_{(\partial^*F_\mu)_{(r, y)}} \frac{1}{|\nu^{F_\mu}_{x \parallel}(x, y)|}d\mathcal{H}^{0}(x)  \\
&\leq \int_{(\partial^*E)_{(r, y)}}
\sqrt{1+ \frac{(\hat{x} \cdot \nu^{E}_x (x, y))^2}{| \nu^E_{x \parallel}(x, y)|^2} 
+ \frac{| \nu_{y}^{E} (x, y) |^2} {| \nu^E_{x \parallel}(x, y)|^2} } d\mathcal{H}^{0}(x)  \\
&= \int_{(\partial^*E)_{(r, y)}} \frac{1}{|\nu^E_{x \parallel}(x, y)|}d\mathcal{H}^{0}(x). 
\end{split}
\end{equation}
From this it follows that
\begin{align*}
&(2 \pi)^{(n-1)/2} P_\gamma(F_{\mu};\Phi(B\times \Ss^{1})) 
= \int_{B}  e^{- \frac{r^2 + |y|^2}{2}}
\int_{(\partial^*F_\mu)_{(r, y)}} \frac{1}{|\nu^{F_\mu}_{x \parallel}(x, y)|}d\mathcal{H}^{0}(x) \, dr \, dy \\
&\leq \int_{B} e^{- \frac{r^2 + |y|^2}{2}} 
\int_{(\partial^*E)_{(r, y)}} \frac{1}{|\nu^E_{x \parallel}(x, y)|}d\mathcal{H}^{0}(x) \, dr \, dy 
= (2 \pi)^{(n-1)/2} P_\gamma(E;\Phi(B\times \Ss^{1})).
\end{align*}

\vspace{.2cm}

\noindent
\textbf{Step 3:} We conclude the proof of the theorem.
Suppose equality holds in \eqref{per ineq}. Then, equality holds in \eqref{ineq isop 2 dim} and in \eqref{jensen} for $\mathcal{H}^{n-1}$-a.e. $(r, y) \in B$.
From the equality in \eqref{ineq isop 2 dim}, thanks to Lemma~\ref{lem:isop 2 symm}, we have (i).
Instead, the equality in \eqref{jensen} gives (ii).
\end{proof}

%%%%%%%%% Section 4
\section{Regularity}\label{sect reg}

In this section we prove Theorem~\ref{theorem reg}. We will proceed in two steps, where we first prove 
the regularity up to a small singular set (Proposition \ref{prop:partial-reg}). The singular set is small enough for us to use the 
second variation and to deduce that the boundary  $\partial E \cap \{x_i >0\}$ for every $i = 1, \dots, n$ is a graph of a BV-function. 
We then apply an argument similar to the one used in \cite{BDGM},  to deduce that the singular set is empty outside the origin.

The strategy for the first step is to remove the symmetry constraint at small
scales and reduce the problem to the standard regularity theory for almost-minimizers of the
perimeter. More precisely, starting from penalized minimality with respect to $n$-symmetric
competitors, Lemma~\ref{lem:quasi_control} yields an unconstrained almost-minimality
property by replicating a local perturbation along the orbit of the reflection group $\mathcal{G}$.
This argument is close in spirit to standard symmetry-reduction principles for invariant
variational problems; for related ideas see 
\cite{Morgan,HPRR}. Since we did not find the precise local estimate needed here in the
literature, we include a self-contained proof.

Once the symmetry constraint has been removed, the
classical theory of almost-minimizers yields the $C^{1,\alpha}$ regularity of $\partial^* E$
and the estimate $\dim_{\mathcal H}(\Sigma_E)\le n-8$, where $\dim_{\mathcal H}$ denotes the Hausdorff dimension. We  then use symmetric criticality (see \cite{Palais}) 
to derive the Euler--Lagrange equation away from the coordinate hyperplanes, and finally extend
the analyticity across the hyperplanes by a reflection argument.

\smallskip
The first lemma deals with the symmetry of the problem.  
\begin{lemma}\label{lem:quasi_control}
Let $E\subset\R^n$ be an $n$-symmetric set of finite Gaussian perimeter.
Let $p \in \partial E$. Define the index set $I \coloneqq \{ i \in \{1, \ldots, n\} : p \in \Pi_i \}$,
where $\Pi_i \coloneqq \{x \in \R^n : x_i = 0\}$ are the coordinate hyperplanes. Let $\mathcal{G}$ be the reflection group 
of order $2^n$ generated by the reflections $\{A_1, \ldots, A_n\}$ across the coordinate hyperplanes.
Let $r> 0$ be small enough such that  $B_r(p) \cap \Pi_j = \emptyset$ for all $j \notin I$.

Assume that there exists \(\Lambda>0\) such that, for every \(n\)-symmetric set \(W\) satisfying $W \triangle E \Subset \bigcup_{g\in \mathcal{G}} g(B_r(p))$ the following inequality holds: 
\begin{equation} \label{eq:quasi_control sym}
P_{\gamma}(E;B_{r}(p))\le P_{\gamma}(W;B_{r}(p))+\Lambda\gamma(W\triangle E).
\end{equation}
Then, for every perturbation $F$ of $E$ in $B_r(p)$, i.e., another set such that $F \triangle E \Subset B_r(p)$, one has
\begin{equation} \label{eq:quasi_control}
P_\gamma(E;B_r(p)) \leq P_\gamma(F;B_r(p)) + 2^n\Lambda \, \gamma(F \triangle E).
\end{equation}
\end{lemma}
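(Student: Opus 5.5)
Given $F$ with $F\triangle E\Subset B_r(p)$, we will build an $n$-symmetric competitor by symmetrizing the perturbation inside $B_r(p)$ and copying it along the orbit of $\mathcal{G}$. Let $\mathcal{G}_p\coloneqq\{g\in\mathcal{G}: g(p)=p\}$. This is the subgroup generated by $\{A_i\}_{i\in I}$, it has order $2^{|I|}$, and each of its elements maps $B_r(p)$ onto itself. By the choice of $r$, the balls $g(B_r(p))$ for $g$ ranging over coset representatives of $\mathcal{G}/\mathcal{G}_p$ are pairwise disjoint. This holds because two such images differ by the sign of some coordinate $x_j$ with $j\notin I$, and $x_j$ has a constant sign on $B_r(p)$.

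The symmetrization inside the ball is a single reflection per step. Since $E$ is $\mathcal{G}_p$-invariant, the perimeter $P_\gamma(\cdot;B_r(p))$ and the Gaussian weight are both invariant under $\mathcal{G}_p$. We symmetrize one reflection at a time, as in the standard cut-and-reflect argument. For $i\in I$, the hyperplane $\Pi_i$ splits $B_r(p)$ into two halves $H^\pm$. We compare $F$ with the two sets obtained by keeping $F$ on one half and reflecting it onto the other.

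To keep the interface term harmless, we do not cut $F$ itself. Instead we take the set $F_\pm$ equal to $F$ on $H^\pm$ and to $A_i(F\cap H^\pm)$ on $H^\mp$. The trace of $F$ on $\Pi_i$ is the same from both sides, so reflecting produces no jump across $\Pi_i$. Hence
\[
P_\gamma(F_+;B_r(p))+P_\gamma(F_-;B_r(p))=2P_\gamma(F;B_r(p)),
\]
and
\[
\gamma(F_+\triangle E)+\gamma(F_-\triangle E)=2\gamma(F\triangle E).
\]
We choose whichever of $F_\pm$ has the smaller value of $P_\gamma(\cdot;B_r(p))-P_\gamma(E;B_r(p))$. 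If $F$ is a perimeter-decreasing competitor this choice loses nothing in perimeter and costs at most a factor $2$ in volume. The volume bound uses only that $\gamma(F_\pm\triangle E)\le 2\gamma(F\triangle E)$.

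Iterating over $i\in I$ gives a $\mathcal{G}_p$-invariant set $F'$ with $F'\triangle E\Subset B_r(p)$ and the two bounds
\[
P_\gamma(F';B_r(p))-P_\gamma(E;B_r(p))\le P_\gamma(F;B_r(p))-P_\gamma(E;B_r(p))
\]
and
\[
\gamma(F'\triangle E)\le 2^{|I|}\gamma(F\triangle E).
\]
In the perimeter bound the left side is at most $\max$ over the choices, hence at most the original difference when that difference is negative. In the trivial case the difference is nonnegative and \eqref{eq:quasi_control} holds directly.

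Next we set
\[
W\coloneqq \Bigl(E\setminus \textstyle\bigcup_g g(B_r(p))\Bigr)\cup\bigcup_{g} g\bigl(F'\cap B_r(p)\bigr).
\]
This set is $n$-symmetric, and $W\triangle E\Subset\bigcup_g g(B_r(p))$. Because the orbit balls are disjoint and the measures are invariant,
\[
\gamma(W\triangle E)=[\mathcal{G}:\mathcal{G}_p]\,\gamma(F'\triangle E)\le 2^{n-|I|}2^{|I|}\gamma(F\triangle E)=2^n\gamma(F\triangle E).
\]
Finally we apply \eqref{eq:quasi_control sym}. There $W$ coincides with $F'$ in $B_r(p)$, so we obtain
\[
P_\gamma(E;B_r(p))\le P_\gamma(F';B_r(p))+\Lambda 2^n\gamma(F\triangle E)\le P_\gamma(F;B_r(p))+2^n\Lambda\gamma(F\triangle E).
\]

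The main obstacle is justifying the reflection identity. We need that the traces of $F$ from $H^+$ and $H^-$ on $\Pi_i\cap B_r(p)$ give no extra jump. This requires the BV trace theorem together with the fact that $\chi_F$ has no jump part concentrated on $\Pi_i$, or else accounting for that part, which only helps the inequality. We also need that $|\mathcal{G}_p|=2^{|I|}$ reflections can be handled sequentially while preserving invariance already obtained, which holds because the reflections commute.
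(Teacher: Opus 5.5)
Your proof is correct, but it is organized differently from the paper's.

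\textbf{The paper's route.} The paper cuts $B=B_r(p)$ at once by all the hyperplanes $\Pi_i$, $i\in I$, into $2^{|I|}$ sectors $S$. For each sector it builds $F_S=\bigcup_{h\in\mathcal{G}_{\mathrm{stab}}}h(F\cap S)$ and replicates it along the orbit to get an $n$-symmetric set $W_S$. It then applies the hypothesis to \emph{every} $W_S$ and sums over $S$.

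\textbf{Your route.} You reflect across one hyperplane at a time and keep only the half with smaller perimeter. Your $2^{|I|}$ possible end products, one for each sequence of choices $\pm$, are exactly the paper's sets $F_S$. So the two proofs use the same competitors and differ only in how they combine them: selection versus averaging.

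\textbf{What each buys.} Averaging gives a sharper constant. Since $\gamma(W_S\triangle E)=2^{n-|I|}\,2^{|I|}\,\gamma((F\triangle E)\cap S)$ and the pieces $(F\triangle E)\cap S$ add up to $F\triangle E$, the paper in fact obtains the constant $2^{n-|I|}\Lambda$. Selecting by perimeter alone forces the crude bound $\gamma(F_\pm\triangle E)\le 2\gamma(F\triangle E)$ at each step, so you land exactly on the stated $2^n\Lambda$.

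In exchange, your route invokes the hypothesis only once. It also needs the no-jump reflection identity only for a half-ball with a flat interface, which is the cleanest place to justify it. The paper's bound $\sum_S P_\gamma(F_S;B)\le 2^{|I|}P_\gamma(F;B)$ rests on the same trace fact across the walls of the sectors (with the codimension-two edges where walls meet being $\mathcal{H}^{n-1}$-null). The paper leaves this implicit under ``invariance and sub-additivity''.

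\textbf{Two wording fixes.}
First, the set with matching one-sided traces on $\Pi_i\cap B$ is $F_\pm$, not $F$. Its trace from $H^\mp$ is the mirror image of its trace from $H^\pm$, and the reflection fixes $\Pi_i$ pointwise. Hence $P_\gamma(F_\pm;B)=2P_\gamma(F;H^\pm)$. Your displayed identity is really $P_\gamma(F_+;B)+P_\gamma(F_-;B)=2P_\gamma(F;B)-2P_\gamma(F;\Pi_i\cap B)\le 2P_\gamma(F;B)$, as you yourself note at the end.

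Second, no sign discussion is needed. The retained set has perimeter at most $P_\gamma(F;B)$ because a minimum never exceeds an average, whatever the sign of $P_\gamma(F;B)-P_\gamma(E;B)$. The sentence about the ``max over the choices'' should be dropped.
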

\begin{proof}
Let $B=B_r(p)$ and let $F$ be any set of finite Gaussian  perimeter with $F \triangle E \Subset B$. 
We have to construct a globally $n$-symmetric variant $W$ of $F$, keeping a certain control inside $B$. 

If $I = \emptyset$, i.e., if $p$ does not belong to any coordinate hyperplane, $B$ is disjoint from its reflections.
The modification can be easily done by substituting $F$ with its symmetral inside the reflections of $B$.

We assume henceforth that $I \neq \emptyset$. We follow an orbit averaging strategy.
We decompose $\mathcal{G}$ based on the local geometry at $p$:
\begin{enumerate}[label=(\roman*)]
\item \emph{The Stabilizer:} $\mathcal{G}_{\mathrm{stab}} \coloneqq \{g \in \mathcal{G} : g(p) = p\}$. 
This subgroup is generated by $\{A_i\}_{i \in I}$ and has order $|\mathcal{G}_{\mathrm{stab}}| = 2^{|I|}$.
\item \emph{The Orbit Representatives:} Let $\{g_1, \ldots, g_N\}$ be a set of representatives 
for the cosets $\mathcal{G}/\mathcal{G}_{\mathrm{stab}}$, where $N = 2^{n - |I|}$.
\end{enumerate}
Note that the images $\{g_m(B)\}_{m=1}^{N}$ are pairwise disjoint, 
as they lie in distinct orthants not separated by any $\Pi_j$ with $j \in I$.

\medskip
\noindent\textbf{Step 1: Local Symmetrization.}
The hyperplanes $\{\Pi_i\}_{i \in I}$ partition the ball $B$ into $|\mathcal{G}_{\mathrm{stab}}|$ 
disjoint sectors. Let $\mathcal{S}$ denote this collection of sectors. 
For each sector $S \in \mathcal{S}$, we construct  a locally symmetric candidate $F_S$ on $B$ 
by reflecting the restriction of $F$ to $S$:
\[
F_S \coloneqq \bigcup_{h \in \mathcal{G}_{\mathrm{stab}}} h(F \cap S).
\]
By construction, $F_S$ is symmetric with respect to the hyperplanes $\{\Pi_i\}_{i \in I}$.
Using the $\mathcal{G}_{\mathrm{stab}}$-invariance of the Gaussian weight and the sub-additivity of the
perimeter, we  obtain:
\begin{equation} \label{eq:local_sym_ineq}
\sum_{S \in \mathcal{S}} P_\gamma(F_S; B) \leq |\mathcal{G}_{\mathrm{stab}}| \, P_\gamma(F; B).
\end{equation}

For each sector $S$, we define a set $W_S$ by replacing $E$ with $F_S$ inside $B$, 
and replicating this modification to all other balls in the orbit:
\[
W_S \coloneqq \left( E \setminus \bigcup_{m=1}^{N} g_m(B) \right) \cup \bigcup_{m=1}^{N} g_m(F_S).
\]
Since $E$ is globally $n$-symmetric and $F_S$ is symmetric under $\mathcal{G}_{\mathrm{stab}}$, the set $W_S$ 
is invariant under the full group $\mathcal{G}$, i.e., it is an $n$-symmetric set.

\smallskip
\textbf{Step 2: Perimeter control.} 
By the hypothesis \eqref{eq:quasi_control sym}, applied to the competitor $W_S$, we have
\begin{equation}\label{eq:local_S_est}
P_{\gamma}(E;B)\le P_{\gamma}(W_{S};B)+\Lambda\gamma(W_{S}\triangle E).
\end{equation}
Note that $P_{\gamma}(W_{S};B) = P_{\gamma}(F_{S};B)$. We now  explicitly compute the symmetric difference term. Since the modification is replicated $N$ times on pairwise disjoint balls and $E$ is $\mathcal{G}$-invariant,
$$
\gamma(W_{S}\triangle E) = 
\sum_{m=1}^N
\gamma\bigl(g_m(F_S)\triangle(E\cap g_m(B))\bigr) = N \gamma(F_S \triangle (E \cap B)).
$$
Inside $B$, $F_S$ consists of $|\mathcal{G}_{\mathrm{stab}}|$ copies of $F \cap S$ reflected by elements of $\mathcal{G}_{\mathrm{stab}}$. Since $E \cap B$ is also symmetric with respect to $\mathcal{G}_{\mathrm{stab}}$, the symmetric difference on each reflected sector is identical to the difference on the generating sector $S$. Thus:
$$
\gamma(F_S \triangle (E \cap B)) = |\mathcal{G}_{\mathrm{stab}}| \gamma((F \triangle E) \cap S).
$$
Inserting this into \eqref{eq:local_S_est}, we obtain the local estimate for the sector $S$:
$$
P_{\gamma}(E;B)\le P_{\gamma}(F_{S};B) + N\Lambda|\mathcal{G}_{\mathrm{stab}}|\gamma((F \triangle E) \cap S).
$$
Summing this inequality over all sectors $S\in\mathcal{S}$:
\begin{align*}
|\mathcal{G}_{\mathrm{stab}}| \, P_\gamma(E; B) 
&\leq \sum_{S \in \mathcal{S}} P_\gamma(F_S; B) + N\Lambda \sum_{S \in \mathcal{S}} |\mathcal{G}_{\mathrm{stab}}| \, \gamma\big((F \triangle E) \cap S\big) \\
&\leq |\mathcal{G}_{\mathrm{stab}}| \, P_\gamma(F; B) + |\mathcal{G}_{\mathrm{stab}}| \, N\Lambda \, \gamma(F \triangle E),
\end{align*}
where we used \eqref{eq:local_sym_ineq} and the fact that 
$\sum_{S \in \mathcal{S}} \gamma((F \triangle E) \cap S) = \gamma(F \triangle E)$.
Dividing by $|\mathcal{G}_{\mathrm{stab}}|$ we get \eqref{eq:quasi_control}.
\end{proof}

\medskip

We may proceed to prove the partial regularity. 
\begin{proposition}
\label{prop:partial-reg}
Let $E$ be a minimizer of the Gaussian  isoperimetric problem  among $n$-symmetric sets 
with prescribed Gaussian measure. Then the reduced boundary $\partial^* E$ is a relatively open, real analytic hypersurface, and the singular set  
$\Sigma_E = \partial  E \setminus \partial^* E$  satisfies $\dim_{\mathcal H}(\Sigma_E) \leq n-8$. 
\end{proposition}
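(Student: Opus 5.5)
The plan is to reduce to the classical regularity theory for almost-minimizers of the (weighted) perimeter, using Lemma~\ref{lem:quasi_control} to remove the symmetry constraint.

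\textbf{Step 1: penalized minimality.} First I would show that the volume constraint can be replaced by a penalization: there exist $\Lambda>0$ and $r_0>0$ such that, for every $p\in\partial E$ and $r<r_0$, every $n$-symmetric $W$ with $W\triangle E\Subset\bigcup_{g\in\mathcal G}g(B_r(p))$ satisfies \eqref{eq:quasi_control sym}. The standard route is a volume-fixing variation. Pick two points $q_1,q_2\in\partial^*E$ away from the coordinate hyperplanes and far from $p$. Let $X$ be a smooth field supported in small balls around $q_1,q_2$, replicated $\mathcal G$-equivariantly so that the flow $\Phi_t$ preserves $n$-symmetry. Since $\partial^*E\neq\emptyset$ (the measure is in $(0,1)$), we can arrange $\frac{d}{dt}\gamma(\Phi_t(E))|_{t=0}\ne 0$. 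Then for a symmetric competitor $W$ whose volume defect $\gamma(W)-\gamma(E)$ is small, we choose $t$ with $|t|\lesssim|\gamma(W)-\gamma(E)|$ to restore the volume. The perimeter changes by at most $C|t|\le C\gamma(W\triangle E)$, and minimality of $E$ then yields \eqref{eq:quasi_control sym}. Here $r_0$ is chosen so that the orbit balls $g(B_r(p))$ stay away from the supports near $q_1,q_2$. When $p$ itself is near $q_i$, I would switch to a second pair of points.

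\textbf{Step 2: unconstrained almost-minimality and regularity.} Lemma~\ref{lem:quasi_control} then gives \eqref{eq:quasi_control} for arbitrary $F$ with $F\triangle E\Subset B_r(p)$. Since the Gaussian density is smooth and bounded above and below on $B_r(p)$, we have $\gamma(F\triangle E)\le C|F\triangle E|\le Cr^n$. Rewriting $P_\gamma$ as an elliptic integrand perimeter with smooth weight $e^{-|x|^2/2}$, $E$ becomes a $(\Lambda',r_0)$-minimizer in the sense of Tamanini (almost-minimality with error $Cr^{n}$, hence $O(r^{n-1+2\alpha})$ with $\alpha = 1/2$). Tamanini's theory (or Maggi's book, extended to smooth weights) then yields: $\partial^*E$ is a relatively open $C^{1,1/2}$ hypersurface, and $\dim_{\mathcal H}(\partial E\setminus\partial^*E)\le n-8$.

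\textbf{Step 3: analyticity.} I would derive the Euler--Lagrange equation: the Gaussian mean curvature $H-x\cdot\nu$ is constant on $\partial^*E$. Off the coordinate hyperplanes this follows from symmetric criticality (cf.~\cite{Palais}): an arbitrary variation supported in a ball avoiding the hyperplanes is replicated over the orbit, so first variations of perimeter and of volume are both multiplied by $2^n$, and a single Lagrange multiplier results. In the weak (distributional) sense this is $C^{1,1/2}$ graph equation, so Schauder theory gives $C^\infty$ regularity and analytic elliptic regularity (Morrey) gives real analyticity. The Lagrange multiplier is the same on each piece because the volume-fixing field in Step 1 is global.

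\textbf{Main obstacle.} The step I expect to be delicate is points of $\partial^*E$ lying on coordinate hyperplanes. There the equation is a priori known only away from $\Pi_i$. I would argue as follows. Locally $\partial^*E$ is a $C^{1,1/2}$ graph; the Euler--Lagrange equation holds weakly across $\Pi_i$ because test fields can be symmetrized via the stabilizer, exactly as in Lemma~\ref{lem:quasi_control}, and the hyperplane has zero $\mathcal H^{n-1}$-measure on the surface when transversal. Elliptic regularity then applies directly. If instead $\partial^*E$ is tangent to $\Pi_i$, reflection symmetry forces the graph over $\Pi_i$ to be even in $x_i$, and the weak equation still holds for even test functions, hence for all of them by splitting into even and odd parts.
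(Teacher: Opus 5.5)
Your proposal is correct and follows essentially the same route as the paper. Symmetric volume-fixing variations give penalized minimality, Lemma~\ref{lem:quasi_control} removes the symmetry constraint, almost-minimizer theory gives $C^{1,\alpha}$ regularity of $\partial^*E$ and $\dim_{\mathcal H}(\Sigma_E)\le n-8$, and symmetric criticality plus elliptic regularity gives analyticity; your group-averaging of test fields is, if anything, a more direct way than the paper's even-extension argument to see that the weak equation holds across the coordinate hyperplanes. The only correction is that your ``tangent'' case cannot occur: for $p\in\partial^*E\cap\Pi_i$, the identities $A_ip=p$ and $A_i(E)=E$ give $\nu^E(p)=A_i\nu^E(p)$, hence $\nu^E(p)\cdot e_i=0$, so $\partial^*E$ always meets $\Pi_i$ orthogonally and only your transversal case needs an argument.
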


\begin{proof}
We divide the proof into four main steps, and  we will use the notation introduced in Lemma~\ref{lem:quasi_control} (and in its proof) for the reflection group $\mathcal{G}$. 

\smallskip
\noindent \textbf{Step 1: $C^{1,\alpha}$ regularity and dimension of the singular set.}
Let $p \in \partial E$ and let $I \coloneqq  \{i \in \{1, \dots, n\} : p \in \Pi_i\}$. Set $r_p > 0$ as the distance between $p$ and $\bigcup_{j \notin I} \Pi_j$.

Since $E$ is a minimizer of the Gaussian perimeter among $n$-symmetric sets subject to a volume constraint, we first observe that $E$ satisfies 
a penalized minimality condition within the class of symmetric sets. By performing symmetric volume-fixing variations supported in 
$\bigcup_{g \in \mathcal{G}} g(B_r(p))$ (adapting standard arguments, see e.g. \cite[Part III]{Maggi}), we deduce that there exists a constant 
$\Lambda > 0$ such that for every $r < r_p$ and for every $n$-symmetric competitor $W$ with $W \triangle E \Subset \bigcup_{g \in \mathcal{G}} g(B_r(p))$,
\[
P_\gamma(E; B_r(p)) \le P_\gamma(W; B_r(p)) + \Lambda\gamma(W \triangle E).
\]
Applying Lemma~\ref{lem:quasi_control}, we remove the symmetry constraint: for any set $F$ (not necessarily symmetric) such that $F \Delta E \Subset B_r(p)$,
\begin{equation} \label{eq:penalized_local}
P_\gamma(E; B_r(p)) \le P_\gamma(F; B_r(p)) + 2^n \Lambda \gamma(F \triangle E).
\end{equation}
Following \cite[Proposition 2]{BBJ}, from \eqref{eq:penalized_local} we get that $E$ is an almost-minimizer of the \textit{Euclidean} perimeter:
there exists $\tilde{\Lambda} > 0$, depending on $|p|$, such that
\[
P(E; B_r(p)) \le P(F; B_r(p)) + \tilde{\Lambda} r^n.
\]
We can now apply the classical regularity theory for almost-minimizers (see \cite[Theorem 21.8 and Theorem 28.1]{Maggi}).
Let us define, for every $p 
\in \R^n$, 
\[
\overline{u}_E (p) \coloneqq  \limsup_{r \to 0} \frac{|E \cap B_r (p)|}{|B_r (p)|}, \quad \text{ and }
\underline{u}_E (p) \coloneqq  \liminf_{r \to 0} \frac{|E \cap B_r (p)|}{|B_r (p)|},
\]
and let us use the notation $u_E (p)$ if the liminf and the limsup above coincide.
We assume that $E$ coincides with the precise representative 
\begin{equation}\label{eq: precise representative}
\{p\in\R^n \colon u_E(p)
= 1\}.
\end{equation}
Then, thanks to the density estimate on $\partial E$ for almost-minimizers (see \cite[Theorem~21.11]{Maggi}), we have that the precise representative $E=\{u_E  =1\}$ and its essential complement $\{ u_E =0\}$ are both open, and $\partial E \subset \R^n \setminus (\{u_E=1\} \cup \{u_E=0\})$. 
The \emph{topological} boundary $\partial E$ coincides with the \emph{essential} boundary $\{p\in\R^n \colon u_E(p)\notin\{0,1\}\}$ 
and it is partitioned in two parts:
\begin{enumerate}[label=(\roman*)]
\item the \emph{reduced} boundary $\partial^* E$, that is a subset of $\{p\in\R^n \colon u_E(p)=1/2\}$, is  a $C^{1,\alpha}$ hypersurface  for every $\alpha<1/2$, and is relatively open in $\partial E$.
\item the singular set $\Sigma_E=\partial E \setminus \partial^* E$;  it satisfies $\dim_{\mathcal{H}}(\Sigma_E) \leq n - 8$.
\end{enumerate}

\smallskip
\noindent \textbf{Step 2: Lagrange multiplier for symmetric variations.}
Let $X\in C_c^\infty(\R^n;\R^n)$ be a vector field. Let $0< \delta \ll 1$, and let  $\Phi : \R^n \times(-\delta,\delta) \to \R^n$ be the flow associated with $X$, i.e.,   
\begin{equation*}
\frac{\partial}{\partial t} \Phi(x,t) = X( \Phi(x,t) ), \qquad \Phi(x,0) = x.
\end{equation*}
We perturb $E$ through the flow $\Phi$ by defining $E_t \coloneqq  \Phi(E,t)$ for $t\in(-\delta,\delta)$.  
The first variations of the perimeter and the volume are given by
\begin{equation*}\begin{split}
\frac{d}{dt}P_\gamma(E_t)\Big|_{t=0}&=\int_{\partial^* E} (\Hh_E - x \cdot \nu^E)\, (X \cdot \nu^E) \, d\mathcal{H}^{n-1}_\gamma\\
\frac{d}{dt}\gamma(E_t)\Big|_{t=0}&= \frac{1}{\sqrt{2\pi}}\int_{\partial^* E} (X \cdot \nu^E) \, d\mathcal{H}^{n-1}_\gamma.
\end{split}\end{equation*}

If $\Phi$ preserves $n$-symmetry, since $E$ minimizes the Gaussian perimeter among $n$-symmetric sets of finite perimeter 
subject to the volume constraint $\gamma(E) = m$, by the method of Lagrange multipliers there exists $\lambda \in \R$ such that
\begin{equation} \label{reg:lagrange symmetric}
\frac{d}{dt}P_\gamma(E_t)\Big|_{t=0} = \lambda\sqrt{2\pi}\, \frac{d}{dt}\gamma(E_t)\Big|_{t=0}.
\end{equation}

\smallskip
\noindent \textbf{Step 3: Extension to general variations via symmetric criticality.}
Recall that $|\mathcal{G}| = 2^n$. 
Both the Gaussian perimeter and the Gaussian volume are $\mathcal{G}$-invariant: for every $g \in \mathcal{G}$ and every set $F$ of finite perimeter,
\begin{equation*}
P_\gamma(g \cdot F) = P_\gamma(F), \qquad \gamma(g \cdot F) = \gamma(F).
\end{equation*}
Since $E$ and the Gaussian weight $e^{-|x|^2/2}$ are $\mathcal{G}$-invariant, a change of variables gives that the first
variation is invariant with respect to $\mathcal{G}$.

Now let $X \in C^\infty_c(\R^n; \R^n)$ be an arbitrary vector field supported away 
from the coordinate hyperplanes. Define the symmetrized field
\begin{equation*}
\hat{X}(x) \coloneqq \frac{1}{|\mathcal{G}|} \sum_{g \in \mathcal{G}} g^{-1} \cdot X(g \cdot x).
\end{equation*}
By construction, $\hat{X}$ is $\mathcal{G}$-equivariant and generates a flow preserving $n$-symmetry.
Let $\hat{E}_t$ be the perturbation of $E$ through the flow associated to $\hat{X}$.

Using the fact that $X$ is supported away from the coordinate hyperplanes  and the $\mathcal{G}$-invariance, 
\begin{equation*}
\frac{d}{dt}P_\gamma(\hat{E}_t)\Big|_{t=0}=\frac{d}{dt}P_\gamma(E_t)\Big|_{t=0},
\end{equation*}
and similarly for the volume term.
Since $\hat{X}$ preserves $n$-symmetry, equation \eqref{reg:lagrange symmetric} holds for $\hat{X}$ 
and therefore also for $X$. Since this is true for any $X$ supported away from the coordinate hyperplanes,
we conclude that pointwise 
\begin{equation} \label{eq:CGMC} 
\Hh_E - \nu^E \cdot x  = \lambda \qquad \text{on } \partial^* E \setminus \bigcup_{i=1}^n \Pi_i,
\end{equation}
where $\Hh_E$ denotes the mean curvature. 

\smallskip
\noindent \textbf{Step 4: Upgrade to analyticity.}
Away from the coordinate hyperplanes, since $\partial^* E$ satisfies the Euler--Lagrange equation \eqref{eq:CGMC}, 
standard elliptic regularity (see \cite[Chapter 6]{Morrey}) upgrades $C^{1,\alpha}$ regularity to real analyticity, i.e.,  
$\partial^* E \setminus \bigcup_{i=1}^n \Pi_i$ is a real analytic hypersurface.

We now  extend the analyticity across the coordinate hyperplanes by a standard
multi-fold reflection argument. Let $p \in \partial^* E \cap \bigcup_{i=1}^n \Pi_i$
and let $I = \{i : p \in \Pi_i\}$, $k \coloneqq |I|$. By $n$-symmetry, $\nu^E(p) \cdot e_i = 0$
for every $i \in I$, so $k \leq n-1$ and, after relabeling, rotating and translating  within
$\text{span}(e_{k+1},\ldots,e_n)$, we may write $\partial^* E$ near $p$ as the graph
$z_n = u(z')$ with $u \in C^{1,\alpha}(B'_\rho(0))$ even in each of $z_1,\ldots,z_k$, where $\rho$ is sufficiently small and $B'_\rho(0)$ denotes the open ball of $\R^{n-1}$ centered in $0$ with radius $\rho$.  
The even extension $u(z') \coloneqq u(|z_1|,\ldots,|z_k|,z_{k+1},\ldots,z_{n-1})$
belongs to $C^{1,\alpha}(B'_\rho(0))$, because evenness forces homogeneous Neumann
conditions $\partial_{z_i}u|_{z_i=0}=0$ for $i \leq k$. Therefore we deduce from \eqref{eq:CGMC}  that the function $u$ is a  $C^{1,\alpha}$-regular weak solution of the equation
\[
\diver\left(\frac{\nabla u}{\sqrt{1 + |\nabla u|^2}}\right) 
- \frac{(z', u) \cdot (\nabla u, -1)}{\sqrt{1 + |\nabla u|^2}} = \lambda
\]
in $B'_\rho(0)$ in the sense of integration by parts against a smooth test function. By standard elliptic regularity 
results  (see e.g. \cite[Theorem~6.7.6]{Morrey}) it is analytic. 
\end{proof}

\begin{remark} \label{E is open let's face it}
From now on, we will always identify a minimizer $E$ with the open set $\{u_E = 1 \}$. Therefore, we have that $\{u_E = 0 \} = \R^n \setminus \overline{E}$ is also open and 
\[
\partial E = \partial^* E \cup \Sigma_E,
\]
where $\partial^* E$ is (analytic and) relatively open in $\partial E$, while $\Sigma_E$ is relatively closed in $\partial E$, and thus also closed in $\R^n$. 
\end{remark}

\medskip

We introduce some notation for calculus on smooth hypersurfaces. 
Let $M$ be a $C^\infty$ hypersurface. We may extend a given  vector field $X \in C^\infty(M; \R^n)$ to $\R^n$ and denote by $D_M X$ the tangential differential of $X$ on $M$ and for a real valued function  $u \in C^\infty(M)$ we denote the 
tangential gradient as $\nabla_M u$.
The tangential divergence of $X$ on $M$ is defined by $\diver_M X=\tr(D_M X)$ and 
the Laplace--Beltrami operator of $u$ on $M$ is
\[
\Delta_M u \coloneqq  \diver_M(D_M u). 
\]
When $M$ is the boundary of a set $E$, the exterior unit normal $\nu^E$ fixed above is a smooth vector 
field in $C^\infty(M; \R^n)$. Then the
second fundamental form is  $B_E = D_{\partial E} \nu^E$ and the  mean curvature $\Hh_E$   is 
\[
\Hh_E = \diver_{\partial E}(\nu^E).
\]

Finally, the divergence theorem on hypersurfaces  in the Gaussian setting states that for every Lipschitz continuous 
vector field $X:\partial E \to \R^n$ with compact support, using the notation  $X_{\partial E} = X - (\nu^E \otimes \nu^E) X$,  it holds
\[
\int_{\partial E} (\diver_{\partial E} X- X_{\partial E} \cdot x)\, d \mathcal{H}^{n-1}_\gamma(x) 
= \int_{\partial E} \Hh_E X\cdot\nu^E \, d \mathcal{H}^{n-1}_\gamma(x).
\]

The drift Laplacian (or the surface Ornstein--Uhlenbeck operator) on $M$ associated with the Gaussian measure is defined as
\begin{equation*} 
 \Delta_M\varphi - \nabla_M \varphi \cdot x , \quad \varphi\in C^2(M).
\end{equation*} 
Integration by parts with respect to the Gaussian measure $\mathcal{H}^{n-1}_\gamma$ yields
\begin{equation}\label{eq:L_OU}
\int_M |D_M \varphi|^2 \,d\mathcal{H}^{n-1}_\gamma 
= - \int_M \varphi (  \Delta_M\varphi - \nabla_M \varphi \cdot x) \,d\mathcal{H}^{n-1}_\gamma, \quad \varphi\in C^2_c(M).
\end{equation}

\medskip

The following is the standard stability inequality for the Gaussian isoperimetric 
problem (see \cite[Proposition~3]{BBJ}), adapted to the  $n$-symmetric class via the principle of symmetric criticality.

\begin{proposition}\label{prop:second_var_geom}
Let $E\subset\R^n$ be a minimizer of the Gaussian 
isoperimetric problem  among $n$-symmetric sets with prescribed Gaussian measure. 
Set $M=\partial^*E$.
Define the quadratic form $Q:C^\infty_c(M)\to\R$ by
\begin{equation*}
Q(\varphi)\coloneqq\int_M\!\Big(|\nabla_M\varphi|^2
\;-\;\big(|B_E|^2+1\big)\varphi^2\Big)\,d\mathcal{H}^{n-1}_\gamma.
\end{equation*}
Then
\begin{equation}\label{eq:stability}
Q(\varphi)\ge 0
\end{equation}
for every  $n$-symmetric $\varphi\in C^\infty_c(M)$ 
satisfying $\int_M\varphi\,d\mathcal{H}^{n-1}_\gamma=0$. 
\end{proposition}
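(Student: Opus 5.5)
We derive the stability inequality from the second variation of the Gaussian perimeter along volume-preserving, $n$-symmetric normal deformations of $M=\partial^*E$. We then use the principle of symmetric criticality, as in Step~3 of the proof of Proposition~\ref{prop:partial-reg}, to see that $n$-symmetric test functions are exactly the admissible ones.

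Fix an $n$-symmetric $\varphi\in C^\infty_c(M)$ with $\int_M\varphi\,d\mathcal{H}^{n-1}_\gamma=0$. Since $\supp\varphi$ is compact in $M$ and $M$ is analytic and relatively open (Proposition~\ref{prop:partial-reg}), $\supp\varphi$ stays at positive distance from $\Sigma_E$. We extend $\varphi\nu^E$ to a smooth vector field $X$ supported in a tubular neighbourhood of $\supp\varphi$ that avoids $\Sigma_E$. The construction is as follows. Because $E$ is $\mathcal{G}$-invariant, $\nu^E(gx)=g\nu^E(x)$, and $\varphi(gx)=\varphi(x)$. Hence the field $x\mapsto\varphi(\pi(x))\nu^E(\pi(x))\eta(d(x))$ is $\mathcal{G}$-equivariant, where $\pi$ is the nearest-point projection and $\eta$ a cutoff in the signed distance $d$. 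Its flow $\Phi_t$ therefore preserves $n$-symmetry. If needed we can also symmetrize $X$ by $\hat X=|\mathcal{G}|^{-1}\sum_g g^{-1}X(g\cdot)$, which leaves $X$ unchanged on $M$.

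\textbf{Volume correction.} The flow $\Phi_t$ only fixes the volume to first order. To correct at second order, we pick a second $n$-symmetric function $\psi\in C^\infty_c(M)$ with $\int_M\psi\,d\mathcal{H}^{n-1}_\gamma\neq0$ and build the corresponding equivariant field $Y$. We consider the two-parameter family $E_{t,s}$ obtained by flowing along $tX+sY$. By the implicit function theorem there is $s(t)=O(t^2)$ with $\gamma(E_{t,s(t)})=\gamma(E)$. Each $E_{t,s(t)}$ is an $n$-symmetric competitor, so $t\mapsto P_\gamma(E_{t,s(t)})$ has a minimum at $t=0$.

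\textbf{Second-order expansion.} We expand this function to second order. We use the standard Gaussian second variation formulas (as in \cite[Proposition~3]{BBJ}), with $\Hh_E-x\cdot\nu^E=\lambda$ from \eqref{eq:CGMC}. That equation holds on all of $M$: it holds off the hyperplanes by Proposition~\ref{prop:partial-reg}, and $M$ is analytic, so it extends by continuity. The Lagrange multiplier term cancels the contribution of $s''(0)$ and of the tangential or acceleration parts of $X$. What remains is
\[
\frac{d^2}{dt^2}\Big(P_\gamma(E_{t,s(t)})\Big)\Big|_{t=0}=\int_M\Big(|\nabla_M\varphi|^2-(|B_E|^2+1)\varphi^2\Big)d\mathcal{H}^{n-1}_\gamma=Q(\varphi).
\]
The term $+1$ comes from differentiating $x\cdot\nu^E$ along the normal flow: the Hessian of $|x|^2/2$ contributes $|\nu^E|^2=1$. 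Minimality then gives $Q(\varphi)\ge0$.

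\textbf{Main obstacle.} The delicate point is the bookkeeping in the second-variation computation: one must verify that all terms involving $\lambda$ combine to exactly $\lambda\sqrt{2\pi}$ times the second variation of volume. This is what makes the volume-corrected second derivative independent of the choice of $Y$ and of the tangential components. We will import this identity from \cite{BBJ} rather than redo it. Two further points must be checked: that the equivariant construction near the coordinate hyperplanes is smooth, which holds because $\nu^E$ is tangent to $\Pi_i$ there, and that the compact support avoids $\Sigma_E$, so all integrals are over the smooth part only.
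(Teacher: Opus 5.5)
Your proposal is correct and takes the same route as the paper. The paper gives no separate argument: it invokes the standard Gaussian stability inequality of \cite[Proposition~3]{BBJ}, restricted to $n$-symmetric volume-preserving variations via symmetric criticality. Your equivariant extension of $\varphi\nu^E$, the second-order volume correction, and the use of \eqref{eq:CGMC} on all of $M$ are exactly the details that this citation leaves implicit.
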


The following result is a direct adaptation of
\cite[Lemma~1]{BBJ}. Indeed, the cutoff construction used there relies
only on the finiteness of \(P_\gamma(E)\), the local perimeter-growth
estimate, and
\[
\mathcal H^{n-3}(\partial E\setminus\partial^*E)=0,
\]
all of which hold in the present setting. Averaging over the finite reflection group \(\mathcal{G}\), 
in order to make the cutoffs \(n\)-symmetric, yields the following lemma.

\begin{lemma}\label{lem:gaussian_cutoff}
Let \(E\) be as in Proposition~\ref{prop:partial-reg}, and set
\(M=\partial^*E\). There exists a sequence of \(n\)-symmetric functions
\[
\zeta_h\in C_c^\infty(M),\qquad 0\leq\zeta_h\leq1,
\]
such that
\[
\int_M\left((1-\zeta_h)^2+
|\nabla_M\zeta_h|^2\right)\,
d\mathcal H^{n-1}_\gamma\longrightarrow0.
\]
\end{lemma}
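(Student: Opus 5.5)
I will take the cutoff sequence constructed in \cite[Lemma~1]{BBJ} for $M=\partial^*E$ and symmetrize it by averaging over $\mathcal{G}$. The first step is to check that the hypotheses used in \cite{BBJ} hold here. By Proposition~\ref{prop:partial-reg}, $M$ is a relatively open analytic hypersurface, $\Sigma_E$ is closed and $\dim_{\mathcal H}(\Sigma_E)\le n-8$, so in particular $\mathcal H^{n-3}(\Sigma_E)=0$. Moreover $P_\gamma(E)<\infty$. Finally, the local Euclidean almost-minimality obtained in Step~1 of that proof gives the upper perimeter-density bound $P(E;B_\rho(q))\le C\rho^{n-1}$, locally uniformly in $q$; this is the growth estimate the construction needs.

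The construction itself has two parts, one for the singular set and one for infinity. For the singular set, fix $\e>0$ and a compact $K\subset\R^n$. Since $\mathcal H^{n-3}(\Sigma_E\cap K)=0$, cover $\Sigma_E\cap K$ by finitely many balls $B_{\rho_i}(q_i)$ with $\sum_i\rho_i^{n-3}<\e$. Take cutoffs $\eta_i$ equal to $0$ on $B_{\rho_i}(q_i)$ and to $1$ outside $B_{2\rho_i}(q_i)$, with $|\nabla\eta_i|\le 2/\rho_i$, and set $\eta=\min_i\eta_i$ (mollified). The density bound then gives
\[
\int_M|\nabla_M\eta|^2\,d\mathcal H^{n-1}\le \sum_i C\rho_i^{-2}\rho_i^{n-1}=C\sum_i\rho_i^{n-3}<C\e ,
\]
and $\mathcal H^{n-1}(M\cap\{\eta<1\})\le C\sum_i\rho_i^{n-1}$ is also small. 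For infinity, take a radial cutoff $\theta_R(x)=\theta(|x|/R)$. Then $|\nabla\theta_R|\le C/R$, and because $P_\gamma(E)<\infty$ the tail $\int_M(1-\theta_R)^2\,d\mathcal H^{n-1}_\gamma$ tends to $0$ as $R\to\infty$. I then choose $K\supset B_{2R}$ and set $\tilde\zeta_h=\theta_{R_h}\eta_h$ along a diagonal sequence. Since the Gaussian weight is bounded, the estimates above show that $\int_M\big((1-\tilde\zeta_h)^2+|\nabla_M\tilde\zeta_h|^2\big)\,d\mathcal H^{n-1}_\gamma\to0$.

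To obtain $n$-symmetric functions, define
\[
\zeta_h(x)\coloneqq\frac{1}{|\mathcal G|}\sum_{g\in\mathcal G}\tilde\zeta_h(g x).
\]
Since $M$, $\Sigma_E$ and the Gaussian weight are all $\mathcal G$-invariant, $\zeta_h$ is again compactly supported in $M$ and smooth, it satisfies $0\le\zeta_h\le1$, and it is $n$-symmetric. Convexity of $t\mapsto t^2$ together with the invariance of $\mathcal H^{n-1}_\gamma\mres M$ gives
\[
\int_M\big((1-\zeta_h)^2+|\nabla_M\zeta_h|^2\big)\,d\mathcal H^{n-1}_\gamma\le\int_M\big((1-\tilde\zeta_h)^2+|\nabla_M\tilde\zeta_h|^2\big)\,d\mathcal H^{n-1}_\gamma\to 0 .
\]
Alternatively, one can avoid averaging by choosing the covering of $\Sigma_E$ itself $\mathcal G$-invariant.

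The main obstacle I expect is the perimeter-growth estimate near the singular set. It must hold with constants uniform on compact sets, including at points of $\Sigma_E$ that lie on coordinate hyperplanes. This is exactly where the unconstrained almost-minimality from Lemma~\ref{lem:quasi_control} is needed, together with the standard upper density estimate for almost-minimizers in \cite{Maggi}.
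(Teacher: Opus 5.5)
Your proposal is correct and follows the same route as the paper. The paper also observes that the cutoff construction of \cite[Lemma~1]{BBJ} needs only $P_\gamma(E)<\infty$, the local perimeter-growth bound and $\mathcal H^{n-3}(\Sigma_E)=0$, and then averages over $\mathcal G$ to make the cutoffs $n$-symmetric. You additionally write out the covering/radial cutoff construction and the convexity estimate for the averaged functions, which the paper leaves implicit.
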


\begin{remark}\label{rem:less regularity}
Let $H^1_\gamma(M)$ be the Sobolev space of functions which are square-integrable with respect to the measure 
$\mathcal{H}^{n-1}_\gamma$ in $M$, together with their weak first derivatives.  
Lemma~\ref{lem:gaussian_cutoff}, combined with truncation and local smooth approximation, implies that
\[
H_\gamma^1(M)
=
H_{\gamma,0}^1(M)
\coloneqq 
\overline{C_c^\infty(M)}^{\,H_\gamma^1(M)}.
\]
Moreover, Proposition \ref{prop:second_var_geom} can be extended to all $n$-symmetric $H^1_\gamma(M)$ functions with zero average.
\end{remark}

\medskip
We may use the second variation condition to deduce some information about the  global structure of the minimizers. 
The first result deals with the  connected components of $\partial^*E$.

\begin{proposition}\label{prop:components_structure}
Let $E \subset \R^n$ be a minimizer of the Gaussian 
isoperimetric problem  among $n$-symmetric sets with prescribed Gaussian measure. 
Then one of the following holds:
\begin{enumerate}[label=(\roman*)]
    \item $\partial^*E$ is connected.
    \item $\partial^*E$ consists of exactly two parallel hyperplanes, one being the reflection of the other one  across a coordinate hyperplane (in which case $E$ is a symmetric slab or its complement).
\end{enumerate}
\end{proposition}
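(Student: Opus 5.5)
The plan is to use the stability inequality of Proposition~\ref{prop:second_var_geom}, extended to $n$-symmetric $H^1_\gamma(M)$ functions by Remark~\ref{rem:less regularity}, with test functions that are locally constant on $M=\partial^*E$. First I will show that each connected component of $M$ is a $\mathcal G$-orbit-closed union. Let $M_1,\dots,M_k$ be the connected components of $M$; $\mathcal G$ permutes them. Group them into $\mathcal G$-orbits $O_1,\dots,O_m$, where each $O_j$ is a union of components, and write $a_j\coloneqq\mathcal{H}^{n-1}_\gamma(O_j)$. Finiteness of the number of components with positive measure follows since $P_\gamma(E)<\infty$ and each component has a lower perimeter bound from the density estimates. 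If there are infinitely many, I will work instead with any two disjoint $\mathcal G$-invariant unions of components. The characteristic function $\chi_{O_j}$ is $n$-symmetric. It lies in $H^1_\gamma(M)$ with zero gradient, because components are relatively open and closed in $M$ and Remark~\ref{rem:less regularity} gives $H^1_\gamma=H^1_{\gamma,0}$, so no boundary terms at $\Sigma_E$ arise.

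Next, suppose $M$ is disconnected. Then either there are at least two orbits, or a single orbit made of several components. In the first case take $\varphi=a_2\chi_{O_1}-a_1\chi_{O_2}$. This function has zero mean, is $n$-symmetric and has $\nabla_M\varphi=0$, so stability gives
\[
0\le Q(\varphi)=-\int_M (|B_E|^2+1)\varphi^2\,d\mathcal H^{n-1}_\gamma<0,
\]
which is a contradiction. Hence all components form a single $\mathcal G$-orbit $\{g M_1\}$. Here the symmetric test functions are constants, and constants violate the zero-mean condition, so this case needs a separate argument.

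The single-orbit case is the main obstacle. Let $H\subset\mathcal G$ be the stabilizer of $M_1$, with index at least $2$. I would pick a reflection $A_i\notin H$. Since $\mathcal G$ is abelian, $H$ contains a subgroup $K$ of index $2$ with $A_i\notin K$, and a character $\chi:\mathcal G\to\{\pm1\}$ with $\chi|_K=1$ and $\chi(A_i)=-1$. Put $\varphi=\chi(g)$ on $gM_1$. This function has zero mean by symmetry and zero gradient, but it is odd rather than $n$-symmetric, so Proposition~\ref{prop:second_var_geom} does not apply directly. I therefore plan to rederive the second variation for the volume-preserving, non-symmetric deformation $\varphi\nu^E$. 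Minimality only holds among symmetric sets, so instead I combine it with the local unconstrained almost-minimality of Lemma~\ref{lem:quasi_control}. Since the components $gM_1$ are separated, as they are closed in $\R^n\setminus\Sigma_E$ and $\Sigma_E$ has codimension at least $8$, a variation moving $M_1$ outward and $A_iM_1$ inward can be symmetrized componentwise. Concretely, I replace the competitor by its $\mathcal G$-average of the one-sided modifications. This yields $Q(\chi_{M_1}-c\,\chi_{\mathcal G M_1\setminus M_1})\ge0$ after averaging, which is impossible because the gradient term vanishes.

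If this averaging fails, the fallback is geometric. A component $M_1$ with $A_iM_1\neq M_1$ lies in the half-space $\{x_i>0\}$ up to closure, since it cannot cross $\Pi_i$ where $\nu^E\cdot e_i=0$ forces self-reflection. Using $\varphi=\nu^E\cdot e_i$ on $M_1$, extended evenly, and the Gaussian Jacobi identity $L(\nu\cdot e_i)=-(\nu\cdot e_i)$, one gets $Q<0$ unless $\nu^E\cdot e_i$ vanishes or $M_1$ is totally geodesic. Rigidity of the equality case then forces $|B_E|=0$, so $M_1$ is a hyperplane orthogonal to $e_i$. Its reflection gives the second component, and $E$ is a symmetric slab or its complement, which is alternative (ii).
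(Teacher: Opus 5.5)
Your first step, which rules out two or more $\mathcal G$-orbits of components using a locally constant, mean-zero, $n$-symmetric test function, is exactly the paper's argument. The gap is in the single-orbit case, and that is precisely where alternative (ii) lives.

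\textbf{The main plan cannot work.} Take the slab $\{|x_1|<a\}$, which is the $n$-symmetric minimizer for masses close to one \cite{BJ}. For it, the function equal to $1$ on $\{x_1=a\}$ and $-1$ on $\{x_1=-a\}$ is mean-zero with zero gradient, so $Q<0$. Hence no argument based on minimality in the symmetric class can give $Q\ge 0$ for your odd test functions; if it did, it would exclude (ii) altogether. The mechanism breaks in three places:
\begin{itemize}
\item The $\mathcal G$-average of an $A_i$-odd normal speed is zero.
\item Symmetrizing ``push $M_1$ out and $A_iM_1$ in'' gives a variation that changes the volume at first order.
\item Lemma~\ref{lem:quasi_control} only gives an error $2^n\Lambda\gamma(F\triangle E)$. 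This is first order in the size of the perturbation and localized in a small ball, so it carries no second-variation information and cannot see a deformation of two distant components.
\end{itemize}

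\textbf{The fallback is not a proof either.} The function $\varphi=\operatorname{sign}(x_i)\,\nu^E\cdot e_i$ is $n$-symmetric and solves $\Delta_M\varphi-x\cdot\nabla_M\varphi+|B_E|^2\varphi=0$ on $M$. However, it is not mean-zero (on the slab it is identically $1$), so Proposition~\ref{prop:second_var_geom} does not apply to it. Correcting the mean with constants $c_h$ and cutoffs $\zeta_h$ gives, with all integrals taken with respect to $\mathcal H^{n-1}_\gamma$,
\[
Q\bigl(\zeta_h(\varphi-c_h)\bigr)=\int_M(\varphi-c_h)^2|\nabla_M\zeta_h|^2-\int_M\zeta_h^2(\varphi-c_h)^2+c_h\int_M|B_E|^2\zeta_h^2(\varphi-c_h).
\]
The last term has no sign. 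Controlling it needs the identity $\int_M|B_E|^2\varphi\,d\mathcal H^{n-1}_\gamma=0$, obtained by integrating the equation against $1$. That identity is what formally produces your ``unless totally geodesic'' alternative. Justifying it requires $|\nabla_M\varphi|\in L^2_\gamma$, essentially $\int_M|B_E|^2\,d\mathcal H^{n-1}_\gamma<\infty$, and this is established neither in your proposal nor in the paper. The case $\nu^E\cdot e_i\equiv0$ and the chain ``totally geodesic, hence a full hyperplane orthogonal to $e_i$, hence a slab'' are also left unargued.

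\textbf{The missing idea is a global comparison instead of stability.}
\begin{itemize}
\item Once no component meets $\Pi_i$, the set $\Pi_i\setminus\Sigma_E$ is connected, because $\dim_{\mathcal H}\Sigma_E\le n-8$. It therefore lies entirely in $E$ or entirely in $\R^n\setminus\overline E$.
\item Then $S=E\cap\{x_i>0\}$, or its complementary analogue, carries exactly half of the Gaussian mass and half of the perimeter.
\item Compare with the $n$-symmetric competitor $H_a\cup A_i(H_a)$, where $H_a=\{x_i>a\}$ and $\gamma(H_a)=\gamma(S)$. The classical Gaussian isoperimetric inequality and its equality case force $S=H_a$, which is the slab.
\end{itemize}
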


\begin{proof}
Let $\mathcal{G}$ be the reflection group of order $2^n$ generated by the reflections $\{A_1, \ldots, A_n\}$ across the coordinate hyperplanes.
We first show that $\mathcal{G}$ acts transitively on the set of connected components $\mathscr{C}$ of $\partial^* E$.

Assume, by contradiction, that \(\mathcal{G}\) does not act transitively on the
set \(\mathscr C\) of connected components of \(M=\partial^*E\).
Then \(\mathscr C\) can be partitioned into two nonempty
\(\mathcal{G}\)-invariant subcollections, whose unions we denote by
\(\Gamma_A\) and \(\Gamma_B\). Define the test function
\[
\varphi\coloneqq \mathcal H^{n-1}_\gamma(\Gamma_B)\chi_{\Gamma_A}-\mathcal H^{n-1}_\gamma(\Gamma_A)\chi_{\Gamma_B}.
\]
We have $\varphi\in H^1_\gamma(M)$ and $\nabla_M\varphi=0$ a.e. on $M$.
Moreover, \(\varphi\) is \(n\)-symmetric and $\int_M\varphi\,d\mathcal H^{n-1}_\gamma=0$.
Thus Remark~\ref{rem:less regularity} allows us to use \(\varphi\) in
the stability inequality. We obtain
\[
\int_M (|B_E|^2+1)\varphi^2\,
d\mathcal H^{n-1}_\gamma
\leq
\int_M|\nabla_M\varphi|^2\,
d\mathcal H^{n-1}_\gamma
=0.
\]
On the other hand,
\[
\int_M (|B_E|^2+1)\varphi^2\,
d\mathcal H^{n-1}_\gamma
\geq
\int_M\varphi^2\,d\mathcal H^{n-1}_\gamma
>0,
\]
which is a contradiction. Hence \(\mathcal{G}\) acts transitively on \(\mathscr C\).

Let $N$ be the number of connected components. By transitivity, $N = 2^k$ for some $k \in \{0, \dots, n\}$. If $N=1$, we are in case (i).

Assume $N \ge 2$. The stabilizer $\mathcal{G}_\Gamma$ of any component $\Gamma \in \mathscr{C}$ is a proper subgroup of $\mathcal{G}$. Thus, there exists an index $j \in \{1, \dots, n\}$ such that the reflection $A_j \notin \mathcal{G}_\Gamma$. We can assume without loss of generality that $j=1$. This implies $A_1$ moves every component to a disjoint component, so no component can intersect its fixed-point space $\{x_1 = 0\}$. Consequently, $\partial^*E \cap \{x_1 = 0\} = \emptyset$. 
By Remark~\ref{E is open let's face it}, $E$ is open. Therefore,
recalling that $\partial E = \partial^* E \cup \Sigma_E$, we have
\[
\{x_1=0\} \setminus \Sigma_E \subset E \cup (\R^n \setminus \overline{E}).
\]
Since $\dim_{\mathcal{H}}(\Sigma_E) \le n-8$, the punctured hyperplane $\{x_1=0\} \setminus \Sigma_E$ is connected. 
Because $E$ and $\R^n \setminus \overline{E}$ are open in $\R^n$, their intersections with $\{x_1=0\}$ form a partition of a connected topological space by two disjoint, relatively open sets. Thus, 
\[
\text{either} \quad \{x_1=0\} \setminus \Sigma_E \subset E \quad \text{or} \quad \{x_1=0\} \setminus \Sigma_E \subset \R^n \setminus \overline{E}.
\]

Assume $\{x_1=0\} \setminus \Sigma_E \subset \R^n \setminus \overline{E}$ (the other case follows from similar arguments). Then $S \coloneqq E \cap \{x_1 > 0\}$ is a set of finite perimeter with no reduced boundary on $\{x_1=0\}$. By the $n$-symmetry of $E$ and the fact that $\partial^* E \cap \{x_1=0\} = \emptyset$, both the Gaussian measure and perimeter split evenly: $\gamma(S) = \frac{1}{2}\gamma(E)$ and $P_\gamma(S) = \frac{1}{2}P_\gamma(E)$.

Let $I(m)$ be the Gaussian perimeter of a half-space of Gaussian mass $m$.
By the standard Gaussian isoperimetric inequality, $P_\gamma(S) \ge I(\gamma(S))$, with equality if and only if $S$ is a half-space. 

We claim this must hold with equality. Suppose, for contradiction, that the inequality is strict: $P_\gamma(S) > I(\gamma(S))$.
Consider the half-space $H_a \coloneqq \{x \in \R^n \colon x_1 > a\}$, with $a>0$ chosen so that $\gamma(H_a) = \gamma(S)$. 
Define the competitor set $W \coloneqq H_a \cup A_1(H_a)$, which is the complement of a symmetric slab.
Since $a > 0$,  the half-spaces $H_a$ and $A_1(H_a)$ are essentially disjoint. Therefore, $W$ preserves the mass of $E$:
\[
\gamma(W) = 2\gamma(H_a) = 2\gamma(S) = \gamma(E).
\]
Since $W$ is $n$-symmetric, it is an admissible competitor. 
This contradicts the assumption that $E$ is a global minimizer of the Gaussian perimeter among $n$-symmetric sets, since
\[
P_\gamma(W) = 2P_\gamma(H_a) = 2I(\gamma(S)) < 2P_\gamma(S) = P_\gamma(E).
\]

Therefore, $P_\gamma(S) = I(\gamma(S))$, and $S$ must coincide  with $H_a$. By symmetry, $E$ coincides with $W$, 
and its reduced boundary consists of exactly two hyperplanes $\{x_1 = \pm a\}$, corresponding to case (ii).
\end{proof}

\medskip

The second result on the global structure of the minimizers is a dichotomy related to the components of 
the normal $\nu^E$.
For the rest of this section $x=(x_1,\ldots,x_n)$ is a generic point in $\R^n$ (we pause the notation $(x,y)$
with $x\in\R^2$ and $y\in\R^{n-2}$).

\begin{lemma}\label{lem:graph}
Let $E\subset\R^n$ be a minimizer of the Gaussian isoperimetric problem among 
$n$-symmetric sets with prescribed Gaussian measure. 
Given $i\in\{1,\dots,n\}$, set $\nu_i\coloneqq \nu^E\cdot e_i$, and $M^+_i \coloneqq \partial^*E\cap\{x\in\R^n \colon x_i >0\}$. 
Then   one of the following  holds:
\begin{enumerate}[label=(\roman*)]
\item $\nu_i\equiv 0$ on $\partial^*E$. In this case $E$ is a cylinder in the direction $e_i$:
there exists a Borel set $E'\subset\R^{n-1}$
such that $E=\{x\in\R^n \colon (x_1,\ldots,x_{i-1},x_{i+1},\ldots,x_n)\in E'\}$.
\item $\nu_i$ does not change sign on $M^+_i$ (either $\nu_i (x)> 0$ for every $x \in M^+_i$, or $\nu_i (x) < 0$ 
for every $x \in M^+_i$).  
\end{enumerate}
\end{lemma}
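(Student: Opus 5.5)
The plan is to use that $\nu_i$ is a Jacobi field for the Gaussian stability operator, and to feed its positive and negative parts into the stability inequality of Proposition~\ref{prop:second_var_geom}.

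\textbf{Step 1: the Jacobi equation.} Translations in the direction $e_i$ are symmetries of the Euclidean, but not the Gaussian, functional. Differentiating \eqref{eq:CGMC} along the translation flow $x\mapsto x+te_i$ gives
\[
L\nu_i\coloneqq \Delta_M\nu_i-\nabla_M\nu_i\cdot x+|B_E|^2\nu_i=-\nu_i \qquad\text{on } M=\partial^*E,
\]
that is, $L\nu_i+\nu_i=0$. This can be checked directly with $\nabla_M\nu_i=B_E e_i^T$ and Simons-type identities, exactly as in the standard Gaussian computation of \cite{BBJ}. Away from the hyperplanes the equation holds by analyticity. Across them it holds by the reflection argument in Step 4 of the proof of Proposition~\ref{prop:partial-reg}.

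\textbf{Step 2: symmetry of $\nu_i$ and the test function.} Since $E$ is $n$-symmetric, $\nu_i$ is odd under $A_i$ and even under $A_j$ for $j\neq i$. In particular $\nu_i=0$ on $M\cap\Pi_i$. Suppose (ii) fails, so $\nu_i$ takes both signs on $M_i^+$. Set $\varphi_+\coloneqq\nu_i^+\chi_{\{x_i>0\}}$ and $\varphi_-\coloneqq\nu_i^-\chi_{\{x_i>0\}}$, and extend each one to $M$ evenly in $x_i$. Both functions are $n$-symmetric, and since $\nu_i$ vanishes on $\Pi_i$ they lie in $H^1_\gamma(M)$ by Lemma~\ref{lem:gaussian_cutoff} and Remark~\ref{rem:less regularity}. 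Here $|\nu_i|\le1$ and $|\nabla_M\nu_i|\le|B_E|$. The one integrability point to verify is that $|B_E|\in L^2_\gamma(M)$, which should follow from stability applied with the cutoffs.

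Multiplying the Jacobi equation by $\varphi_\pm$ and integrating by parts via \eqref{eq:L_OU}, with the cutoffs $\zeta_h$ to justify it, gives $Q(\varphi_\pm)=-\int_M\varphi_\pm^2\,d\mathcal H^{n-1}_\gamma<0$ whenever $\varphi_\pm\not\equiv0$. Now choose $a,b>0$ so that $\varphi\coloneqq a\varphi_+-b\varphi_-$ has zero mean. The supports are disjoint, so $Q(\varphi)=a^2Q(\varphi_+)+b^2Q(\varphi_-)<0$, which contradicts \eqref{eq:stability}. Hence either $\nu_i$ has a strict sign on $M_i^+$, or $\nu_i^\pm\equiv0$ there.

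To pass from $\nu_i\ge0$ to strict positivity, I will use the strong maximum principle for $L+1$ on each connected component of $M_i^+$. Note that $M_i^+$ may be disconnected even when $M$ is connected. In that case I will apply the same positive/negative-part argument componentwise, using the characteristic function of a component times $\nu_i$ paired with another component, to exclude $\nu_i\equiv0$ on some components and $\nu_i>0$ on others. This yields: either $\nu_i\equiv0$ on $M_i^+$, hence on all of $M$ by symmetry, or $\nu_i$ has a strict sign on $M_i^+$.

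\textbf{Step 3: the cylinder case.} If $\nu_i\equiv0$, then $D_{x_i}\chi_E=-\nu_i\mathcal H^{n-1}\mres\partial^*E=0$. So $\chi_E$ is distributionally independent of $x_i$, and since $E$ is open (Remark~\ref{E is open let's face it}) it is a cylinder $E'\times\R$.

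\textbf{Main obstacle.} I expect the hardest step to be justifying the integration by parts with the truncated functions $\nu_i^\pm$ near the singular set, together with the integrability of $|B_E|^2$ against $\mathcal H^{n-1}_\gamma$. The componentwise maximum-principle argument on $M_i^+$ also needs care.
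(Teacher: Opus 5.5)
Your overall strategy matches the paper's: take the positive and negative parts of $\nu_i$, reflect them evenly, combine them with zero mean, insert the result in the symmetric stability inequality \eqref{eq:stability}, and finish with the strong maximum principle and De Giorgi's structure theorem. Two steps, however, fail as written.

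First, the Jacobi equation in Step 1 carries a spurious zeroth-order term. Differentiate $\Hh_E-x\cdot\nu^E=\lambda$ along $e_i^T$. This gives $\nabla_M\Hh_E\cdot e_i=B_E(x^T,e_i^T)=x\cdot\nabla_M\nu_i$. Combined with $\Delta_M\nu^E+|B_E|^2\nu^E=\nabla_M\Hh_E$, it yields
\[
\Delta_M\nu_i-x\cdot\nabla_M\nu_i+|B_E|^2\nu_i=0,
\]
which is \eqref{eq:calculation-nu-n}. The symmetric slab $\{|x_i|<a\}$, where $B_E=0$ and $\nu_i=\pm1$ on $M$, already violates your version. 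The discrepancy matters. With $L\nu_i+\nu_i=0$, testing against $\varphi_\pm$ gives $\int|\nabla_M\varphi_\pm|^2=\int(|B_E|^2+1)\varphi_\pm^2$, i.e. $Q(\varphi_\pm)=0$, and there is no contradiction with $Q\geq0$. Your equation is in fact the one satisfied by the rotational field in the proof of Theorem~\ref{theorem rig}, which is a genuine kernel element. The value $Q(\varphi_\pm)=-\int\varphi_\pm^2$ that you write down is what the correct equation produces.

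Relatedly, you do not need the integrability $|B_E|\in L^2_\gamma(M)$, which you leave unproved. Testing the correct equation with $\zeta_h^2u_\pm$ gives the exact identity
\[
Q\bigl(\zeta_h(u_+-c_hu_-)\bigr)=\int_M(u_+^2+c_h^2u_-^2)|\nabla_M\zeta_h|^2\,d\mathcal H^{n-1}_\gamma-\int_M\zeta_h^2(u_+^2+c_h^2u_-^2)\,d\mathcal H^{n-1}_\gamma .
\]
The right-hand side is negative for large $h$, using only $0\le u_\pm\le1$ and Lemma~\ref{lem:gaussian_cutoff}. This is how the paper avoids any integrability assumption on $B_E$.

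Second, the passage to a strict sign fails as proposed. On a component $C$ of $M_i^+$ where $\nu_i\equiv0$, the function $\nu_i\chi_C$ vanishes identically, so there is nothing to pair. The sign argument only sees the set $\{\nu_i\neq0\}$. It therefore cannot exclude $\nu_i\equiv0$ on some components and $\nu_i>0$ on others.

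What is missing is connectedness of $M_i^+$. By Proposition~\ref{prop:components_structure}, either $E$ is a symmetric slab, where the lemma can be checked by hand, or $M$ is connected. In the connected case, symmetry gives $\nu_i=0$ on $M\cap\Pi_i$, hence $\nabla_Mx_i=e_i\neq0$ there. So $M\cap\{x_i\geq0\}$ is a manifold with boundary. It is connected, being the image of $M$ under the folding map $x_i\mapsto|x_i|$, so its interior $M_i^+$ is connected. In particular, your remark that $M_i^+$ may be disconnected while $M$ is connected is false in this setting. Alternatively, unique continuation for the real-analytic $\nu_i$ on the connected hypersurface $M$ rules out $\nu_i\equiv0$ on an open piece unless $\nu_i\equiv0$ on all of $M$. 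Once $M_i^+$ is connected, apply the strong maximum principle to whichever of $\pm\nu_i$ is nonnegative. It gives the dichotomy directly, with no componentwise argument.
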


\begin{proof}
Without loss of generality, let us assume $i=n$. Set $M\coloneqq \partial^*E$ and $M^+ \coloneqq M^+_n$.

\smallskip
\noindent\textbf{Step 1: Slab case.}
By Proposition~\ref{prop:components_structure}, either $M$ is
connected or $E$ is a symmetric slab (or its complement). In the slab
case, $E=\{|x_j|<a\}$ for some $j\in\{1,\dots,n\}$ and some $a>0$. 
If $j\neq n$ one has the alternative (i), while if $j=n$ one has the alternative (ii). 
Henceforth for the remainder of the proof we can assume that $M$ is connected.

\smallskip
\noindent\textbf{Step 2: $M^+$ is connected.}
By Proposition~\ref{prop:partial-reg},  $M$ is real analytic and, by symmetry, it   meets
each coordinate hyperplane $\Pi_k$ orthogonally; hence
$M\cap\{x\in\R^n \colon x_n\geq 0\}$ is a topological manifold with corners.
The continuous folding map
$x\mapsto(x_1,\dots,x_{n-1},|x_n|)$ from $M$ onto
$M\cap\{x_n\geq 0\}$ is surjective, so the latter is
connected. 
The interior of a connected manifold
with corners is connected: every point has a corner chart whose interior is connected and non-
empty, so it is adjacent to exactly one component of the interior, and the closures of two distinct
components would disconnect the manifold. Hence $M^+= M \cap \{x_n > 0\}$ is connected.

\smallskip
\noindent\textbf{Step 3: \(\nu_n\) does not change sign on \(M^+\).}
Suppose, for contradiction, that both sets
\[
\{\nu_n>0\}\cap M^+,
\qquad
\{\nu_n<0\}\cap M^+
\]
are nonempty. Define on \(M^+\)
\[
u_+\coloneqq (\nu_n)^+,
\qquad
u_-\coloneqq (\nu_n)^-=\max\{-\nu_n,0\}.
\]
Since \(E\) is invariant under reflection across \(\{x_n=0\}\), and
\(M\) meets this hyperplane orthogonally, we have
\[
\nu_n=0
\qquad\text{on }M\cap\{x_n=0\}.
\]
We may therefore extend \(u_+\) and \(u_-\) evenly across
\(\{x_n=0\}\), retaining the same notation for the extended functions.
They are locally Lipschitz, \(n\)-symmetric, nonzero, and satisfy
\[
0\leq u_\pm\leq1,
\qquad
u_+u_-=0.
\]

Let
\[
e_n^T\coloneqq e_n-\nu_n\nu^E,
\qquad
x^T\coloneqq x-(x\cdot\nu^E)\nu^E
\]
denote the tangential projections of \(e_n\) and \(x\), respectively.
Differentiating the Euler--Lagrange equation in the direction \(e_n^T\)
gives
\[
\nabla_M\Hh_E\cdot e_n
=
D_{e_n^T}(x\cdot\nu^E)
=
x\cdot D_{e_n^T}\nu^E,
\]
because \(e_n^T\cdot\nu^E=0\). By the symmetry of the second
fundamental form,
\[
x\cdot D_{e_n^T}\nu^E
=
B_E(e_n^T,x^T)
=
B_E(x^T,e_n^T)
=
D_{x^T}\nu^E\cdot e_n
=
x\cdot\nabla_M\nu_n.
\]
Combining the well-known identity $\Delta_M\nu^E+|B_E|^2\nu^E=\nabla_M\Hh_E$
(see, e.g., \cite[formula~(10.17)]{Giusti}) with the previous ones, yields
\begin{equation}\label{eq:calculation-nu-n}
\Delta_M\nu_n-x\cdot\nabla_M\nu_n+|B_E|^2\nu_n=0 \,\text{ on }\, M.
\end{equation}

Let \(\zeta_h\) be given by Lemma~\ref{lem:gaussian_cutoff}.
Since \(u_\pm\in H^1_{\mathrm{loc}}(M^+)\) and have zero trace on
\(M\cap\{x_n=0\}\), the functions
\[
\zeta_h^2u_+,\qquad -\zeta_h^2u_-
\]
belong to \(H_0^1(M^+)\). Therefore, by density of
\(C_c^\infty(M^+)\) in \(H^1_0(M^+)\), the integration-by-parts
identity associated with \eqref{eq:L_OU} applies to these test
functions. Testing the weak formulation of
\eqref{eq:calculation-nu-n} with
\(\psi=\zeta_h^2u_+\) and \(\psi=-\zeta_h^2u_-\), respectively, we obtain
\[
\int_{M^+}\left(
 |\nabla_M(\zeta_hu_\pm)|^2
 -|B_E|^2\zeta_h^2u_\pm^2
\right)\,d\mathcal H^{n-1}_\gamma
=
\int_{M^+}u_\pm^2|\nabla_M\zeta_h|^2\,d\mathcal H^{n-1}_\gamma .
\]
All the quantities involved are supported in the compact set
\(\supp\zeta_h\subset M\), where the second fundamental form is
bounded. After even reflection across \(\{x_n=0\}\), both sides are
multiplied by \(2\), and hence
\begin{equation}\label{eq:nodal_cutoff_identity}
\int_M\left(
 |\nabla_M(\zeta_hu_\pm)|^2
 -|B_E|^2\zeta_h^2u_\pm^2
\right)\,d\mathcal H^{n-1}_\gamma
=
\int_Mu_\pm^2|\nabla_M\zeta_h|^2\,d\mathcal H^{n-1}_\gamma .
\end{equation}

Set
\[
a_h\coloneqq \int_M\zeta_hu_+\,d\mathcal H^{n-1}_\gamma,
\qquad
b_h\coloneqq \int_M\zeta_hu_-\,d\mathcal H^{n-1}_\gamma.
\]
Since \(u_+\) and \(u_-\) are nonzero, \(a_h,b_h>0\) for all
sufficiently large \(h\). Define
\[
c_h\coloneqq \frac{a_h}{b_h},
\qquad
\varphi_h\coloneqq \zeta_h(u_+-c_hu_-).
\]
Then \(\varphi_h\) is \(n\)-symmetric, has compact support in \(M\), and
\[
\int_M\varphi_h\,d\mathcal H^{n-1}_\gamma=0.
\]

By Remark \ref{rem:less regularity} we can apply the stability inequality \eqref{eq:stability} to \(\varphi_h\). 
By the Sobolev chain rule, on \(M^+\) one has
\[
\nabla_Mu_+
=
\chi_{\{\nu_n>0\}}\nabla_M\nu_n,
\qquad
\nabla_Mu_-
=
-\chi_{\{\nu_n<0\}}\nabla_M\nu_n
\quad\text{a.e.}
\]
The same orthogonality is preserved by even reflection. Consequently,
\[
u_+u_-=0,
\qquad
\nabla_Mu_+\cdot\nabla_Mu_-=0
\quad\text{a.e. on }M,
\]
and all the cross terms vanish. Using
\eqref{eq:nodal_cutoff_identity}, we obtain
\begin{equation}\label{eq:negative_nodal_test}
Q(\varphi_h)
=
\int_M\bigl(u_+^2+c_h^2u_-^2\bigr)|\nabla_M\zeta_h|^2\,d\mathcal H^{n-1}_\gamma
-\int_M\zeta_h^2\bigl(u_+^2+c_h^2u_-^2\bigr)\,d\mathcal H^{n-1}_\gamma .
\end{equation}

Set $c:=\lim c_h=\int u_+/\int u_-$.
Since \(\zeta_h\to1\) in \(L^2_\gamma(M)\) and the sequence \((c_h)\) is bounded, 
\[
\int_M
 \bigl(u_+^2+c_h^2u_-^2\bigr)|\nabla_M\zeta_h|^2\,d\mathcal H^{n-1}_\gamma
\leq
\bigl(1+c_h^2\bigr)
\int_M|\nabla_M\zeta_h|^2\,d\mathcal H^{n-1}_\gamma
\longrightarrow0,
\]
while
\[
\int_M
 \zeta_h^2\bigl(u_+^2+c_h^2u_-^2\bigr)\,d\mathcal H^{n-1}_\gamma
\longrightarrow
\int_M\bigl(u_+^2+c^2u_-^2\bigr)\,d\mathcal H^{n-1}_\gamma>0.
\]
It follows from \eqref{eq:negative_nodal_test} that
\(Q(\varphi_h)<0\) for all sufficiently large \(h\), contradicting
the stability inequality. Therefore, \(\nu_n\) cannot change sign on
\(M^+\).

\smallskip
\noindent\textbf{Step 4: Strong maximum principle.}
Let $\sigma\in\{-1,1\}$ be such that $v\coloneqq \sigma\nu_n\geq0$ on $M^+$.
Since $v$ satisfies the same linear equation
\[
\Delta_M v-x\cdot\nabla_M v+|B_E|^2v=0
\qquad\text{on }M^+,
\]
the strong maximum principle implies that either $v>0$ on $M^+$ or
$v\equiv0$ there. Thus either $\nu_n$ has a strict sign on $M^+$ or
$\nu_n\equiv0$ on $M^+$. 
De Giorgi's structure theorem (see \cite[Theorem 15.9]{Maggi}) yields
\begin{equation*}
D\chi_E=-\nu^E \mathcal{H}^{n-1}\mres M.
\end{equation*}
Therefore, in the case $\nu_n\equiv 0$, the distributional partial derivative $D_n\chi_E$ 
is zero and necessarily $E=E'\times\R$ for some set $E'$ of locally
finite perimeter in $\R^{n-1}$. 
\end{proof}

We may now proceed  to the proof of Theorem \ref{theorem reg}. 

\begin{proof}[\textbf{Proof of Theorem \ref{theorem reg}}] 
If $n \leq 7$, then Proposition~\ref{prop:partial-reg} implies that  $\partial E$ is analytic. 
So we can assume $n\geq 8$.
After relabeling the coordinates and iterating
the cylindrical alternative in Lemma~\ref{lem:graph}, exactly one of
the following occurs:
\begin{enumerate}[label=(\roman*)]
\item \emph{Cylindrical case.} There exist $k\in\{1,\ldots,n-1\}$ and
      $E'\subset\R^{n-k}$ such that
      \[
      E=E'\times\R^k;
      \]
\item \emph{Non-cylindrical case.} For every $i\in\{1,\ldots,n\}$ the component $\nu_i$ has a strict  sign on 
$\partial^*E\cap\{x\in\R^n \colon x_i >0\}$.
\end{enumerate}

We first consider the cylindrical case. The set $E'$ is a minimizer of the Gaussian isoperimetric problem 
among $(n-k)$-symmetric sets in $\R^{n-k}$. By adapting Heilman's strategy to the $n$-symmetric case, either 
$E'$ or $(E')^c$ is convex. 
Indeed, writing points of $\R^n$ as $(x',y)\in\R^{n-k}\times\R^k$,
the cylindrical construction in \cite[Lemma~5.4]{He2} uses test functions in the second variation of the form
\[
    \varphi(x',y)=f(x')(y_1^2-1),
    \qquad (x',y)\in\partial^*E'\times\R^k.
\]
The factor $y_1^2-1$ is even and has zero mean with respect to the one-dimensional Gaussian measure. 
Thus $\varphi$ satisfies the first-order volume constraint and is invariant under every coordinate
reflection whenever $f$ is $(n-k)$-symmetric. The geometric base functions used in Heilman's argument 
have precisely this invariance: the mean curvature is invariant under coordinate reflections, while
the second fundamental form transforms by orthogonal conjugation, so its eigenvalues and their regularized 
functions are invariant as well. The auxiliary nonnegative eigenfunctions, chosen on reflection-invariant domains, 
can likewise be averaged over the coordinate-reflection group without changing their eigenvalues, since the 
relevant operators commute with these reflections. The cutoff approximations can also be chosen
reflection-invariant, retaining the zero-mean condition in the cylindrical variable. 
Consequently, the destabilizing variations in \cite{He2} remain admissible for the minimizer $E$ in the
$n$-symmetric class, and the same argument yields the asserted convexity.

If an almost minimizer of the perimeter $E'$, or its complement, is convex then it is well known that the singular set 
is empty (see e.g. \cite[Lemma 3]{DelgadinoMaggi}). 

We henceforth assume that $E$ is not cylindrical in any coordinate
direction. Write $M=\partial^*E$, $\Sigma_E=\partial E\setminus M$, and
$\nu=\nu^E$ for short. Replacing $E$ by its open essential complement if necessary,
we may assume that $\nu_n>0$ on $M^+=M\cap\{x_n>0\}$.
We proceed in four steps.

\smallskip
\noindent\textbf{Step 1: Graph structure and regularity in open orthants.}
Let us define $U_n=\Pi_n(E)$, where $\Pi_n$ is the projection. Since $E$ is open, also $U_n$ is open. 
De Giorgi's structure theorem gives
\[
D_n\chi_E=-\nu_n\,\mathcal H^{n-1}\mres M\leq0
\qquad\text{in }\{x_n>0\}.
\]
This means that for a.e. $x'\in U_n$ the characteristic function $t \mapsto \chi_E(x',t)$ is nonincreasing on $[0, \infty)$.  Using the density estimates, given by the previous regularity result,  and the open precise representative, we deduce that the above monotonicity holds for every $x' \in U_n$. This implies that the set $E \cap \{ x_n>0\}$ can be written as a subgraph of a function. To be more precise,  for every $x' \in U_n$ the section $\{t\geq0:(x',t)\in E\}$ is of the form $[0,u(x'))$, where $0<u(x')\leq\infty$ (note that the case $u(x') = \infty$ is included).

In addition, we deduce from Lemma~\ref{lem:graph} that for every $i = 1, \dots, n-1$ it holds either $\nu_i <0$ or $\nu_i >0$ on $M\cap \{ x_i >0\}$. We may relabel the coordinates such that it holds $\nu_i <0$ for $i = 1, \dots, m$ and $\nu_i >0$ for $i = m+1, \dots, n-1$. Of course, if  $\nu_i >0$ for all $i$ then this relabeling is unnecessary.  Then by repeating the above argument we deduce that the height function $u$ is nondecreasing in $x_i$-directions for $i = 1, \dots, m$ for $x_i >0$ and nonincreasing in $x_i$-directions for $i >m$. 

We remark that on the part of $M^+$, where the height function $u$ has finite value, it holds 
\[
 \sqrt{1+|\nabla u|^2}=\frac1{\nu_n} \qquad \text{and} \qquad \nu_i = \frac{- \partial_{x_i} u}{\sqrt{1+|\nabla u|^2}}.
\]
Denote $O=(0,\infty)^n$ and choose a unit vector $e\in \R^n$ such that 
\[
e = \frac{1}{\sqrt{n}}\left( -\sum_{i = 1}^m e_i+ \sum_{i = m+1}^n  e_i\right)
\]
Then  on $M_O = M\cap O$ we have
\[
e\cdot\nu=\frac{1}{\sqrt n}\sum_{i=1}^n|\nu_i|\geq \frac{1}{\sqrt n} \left(\sum_{i=1}^n\nu_i^2\right)^{\frac12}
\geq\frac{1}{\sqrt n}.
\]
Using the previous argument, we deduce that we may write $\partial E \cap O$ as a graph of a height function $f$ defined on a subset of the hyperplane $e^\perp = \{ x : x \cdot e = 0 \}$. By the above inequality, and by the fact that at regular points it holds $\sqrt{1+|\nabla f|^2}=\frac1{\nu \cdot e}$, we deduce that $f$ is in fact Lipschitz continuous. Using the Euler--Lagrange equation and elliptic
regularity we deduce that in $O$ the boundary $\partial E$ is analytic, i.e., $M_O = \partial E \cap O$ is an analytic hypersurface. We therefore obtain that there is a single parametrization
\[
F:V\to M_O,\qquad F(z)=z+f(z)e,
\qquad V=\pi_e(M_O)\subset e^\perp,
\]
where $\pi_e(x)=x-(x\cdot e)e$  is the projection on $e^\perp$ and $V$ is open. We shall use the bounds
\[
|DF|_{op}\leq\sqrt{n},
\qquad 1\leq J_F=\sqrt{1+|\nabla f|^2}
=\frac1{e\cdot\nu}\leq\sqrt n,
\]
where $J_F$ denotes the Jacobian and $|\cdot|_{op}$ is the operator norm. We need to prove the regularity of $\partial E$ at the points $x \neq 0$ with $x_i \geq 0$ for all $i$ and  $x_j = 0$ at least for one index $j <n$.   
Reflection then gives regularity of $\partial E$ outside the origin. 

\smallskip
\noindent\textbf{Step 2: Energy and mean-value estimates.}
Since $\nu_n>0$ on $M^+$ we may define for $0<\varepsilon\leq1$ 
\[
w=\log\frac{1+\varepsilon}{\nu_n+\varepsilon}
\qquad\text{on }M^+.
\]
It holds $w\geq0$ and it is easy to see that we have the bound $0\leq\nu_nw\leq1$.
Applying the equation~\eqref{eq:calculation-nu-n} we obtain by a straightforward calculation that 
\begin{equation}\label{eq:thm2-21}
\Delta_{\partial E} w-x\cdot\nabla_{\partial E} w
=|B_E|^2\frac{\nu_n}{\nu_n+\varepsilon}+|\nabla_{\partial E} w|^2.
\end{equation}
By Cauchy--Schwarz inequality we obtain 
\begin{equation}\label{eq:thm2-2}
\Delta_{\partial E} w\geq-\frac{|x|^2}{2}+\frac{|\nabla_{\partial E} w|^2}{2}.
\end{equation}

Let us prove an energy estimate from \eqref{eq:thm2-21}, which will be useful later.  Let $\zeta_h$
be the cutoff function from Lemma~\ref{lem:gaussian_cutoff}, and, for $T>0$, set
$f_T=\min\{x_n,T\}$ on $M^+$. The function $\zeta_hf_T$ is supported
away from $\Sigma_E$ and vanishes  on $M\cap\{x_n=0\}$.
We multiply~\eqref{eq:thm2-21} by 
$(\zeta_hf_T)^2$   and integrate by parts to deduce 
\[
\begin{split}
\int_{M^+}(\zeta_hf_T)^2|\nabla_{\partial E}w|^2\,d\mathcal{H}_\gamma^{n-1}
&\leq-2\int_{M^+}\zeta_hf_T\,
\nabla_{\partial E}(\zeta_hf_T)\cdot\nabla_{\partial E} w\,d\mathcal{H}_\gamma^{n-1},
\end{split}
\]
which in turn implies 
\[
\int_{M^+}(\zeta_hf_T)^2|\nabla_{\partial E} w|^2\,d\mathcal{H}_\gamma^{n-1}
\leq4\int_{M^+}|\nabla_{\partial E}(\zeta_hf_T)|^2\,d\mathcal{H}_\gamma^{n-1}.
\]
For fixed $T$, the bounds $f_T\leq T$ and $|\nabla_{\partial E} f_T|\leq1$
imply $\zeta_hf_T\to f_T$ in $H^1_\gamma(M^+)$.
Using Fatou's lemma and  monotone convergence, we obtain
\begin{equation}\label{eq:thm2-23}
\int_{M^+}x_n^2|\nabla_{\partial E}w|^2\,d\mathcal{H}_\gamma^{n-1}
\leq 4 P_\gamma(E).
\end{equation}

We also obtain the following local energy estimate in a similar way. Arguing as above, we deduce that if $\psi$
is a bounded Lipschitz function with bounded support and which vanishes  on $M\cap\{x_n=0\}$, then it holds
\begin{equation}\label{eq:reg-local-energy}
\int_{M^+}\psi^2|\nabla_{\partial E} w|^2\,d\mathcal{H}^{n-1}
\leq2\int_{M^+}|x|^2\psi^2\,d\mathcal{H}^{n-1}
+16\int_{M^+}|\nabla_{\partial E}\psi|^2\,d\mathcal{H}^{n-1}.
\end{equation}
The proof of \eqref{eq:reg-local-energy} is similar to that of \eqref{eq:thm2-23}, with the difference that we multiply \eqref{eq:thm2-2} by $(\zeta_h\psi)^2$ and integrate by parts. 

\smallskip
\noindent\emph{A local mean-value estimate.}
For $y\in M^+$ and $0<r_0\leq\min\{1,y_n/4\}$, we claim that
\begin{equation}\label{eq:thm2-3}
w(y)\leq\frac{C}{r_0^{n-1}}
\int_{M\cap B_{r_0}(y)}(w+1)\,d\mathcal H^{n-1}.
\end{equation}
The constant is uniform when the balls lie in a fixed bounded subset
of $\{x_n>0\}$. Following the argument of
\cite[Theorem~13.2]{Giusti}, for $0<s<r\leq r_0$ define, with
$\rho=|x-y|$,
\[
\varphi_s(x)=
\begin{cases}
\displaystyle
\frac{s^{3-n}-r^{3-n}}{2(n-3)}
+\frac{r^{1-n}-s^{1-n}}{2(n-1)}\rho^2,
& 0\leq\rho\leq s,\\
\displaystyle
\frac{\rho^{3-n}}{(n-1)(n-3)}
+\frac{r^{1-n}\rho^2}{2(n-1)}
-\frac{r^{3-n}}{2(n-3)},
&s<\rho<r,\\
0,&\rho\geq r.
\end{cases}
\]
Since $n\geq8$, we have $0\leq\varphi_s\leq Cs^{3-n}$ and
$|\nabla\varphi_s|\leq Cs^{2-n}$. By  the Euler--Lagrange equation the mean curvature satisfies  
$\Hh_E=x\cdot\nu+\lambda$, and is locally bounded. The tangential
Laplacian therefore satisfies
\[
\Delta_{\partial E}\varphi_s
\leq r^{1-n}\chi_{B_r(y)}-s^{1-n}\chi_{B_s(y)}
+Cs^{2-n}\chi_{B_r(y)}.
\]
Using the cutoff function from Lemma~\ref{lem:gaussian_cutoff}, localized to
$B_r(y)$, and~\eqref{eq:reg-local-energy} to pass to the limit in
integration by parts, we deduce from  \eqref{eq:thm2-2} that 
\[
\int_M w\Delta_{\partial E}\varphi_s\,d\mathcal{H}^{n-1} = \int_M \Delta_{\partial E} w\varphi_s\,d\mathcal{H}^{n-1}
\geq-C\int_M\varphi_s\,d\mathcal{H}^{n-1}.
\]
The estimate for the perimeter $\mathcal{H}^{n-1}(M\cap B_r(y))\leq Cr^{n-1}$ then  gives
\begin{equation*}
\begin{split}
\frac1{s^{n-1}}\int_{M\cap B_s(y)}w\,d\mathcal{H}^{n-1}
&\leq\frac1{r^{n-1}}\int_{M\cap B_r(y)}w\,d\mathcal{H}^{n-1}\\
&\quad+\frac{C}{s^{n-2}}\int_{M\cap B_r(y)}w\,d\mathcal{H}^{n-1}
+C\frac{r^{n-1}}{s^{n-3}}.
\end{split}
\end{equation*}
For $r_j=2^{-j}r_0$ and
$A_j=r_j^{1-n}\int_{M\cap B_{r_j}(y)}w\,d\mathcal{H}^{n-1}$,
this implies
\[
A_{j+1}\leq(1+Cr_j)A_j+Cr_j^2.
\]
Then for $\tilde A_j = \max\{ A_j, 1\}$ it holds $\tilde A_{j+1}\leq(1+Cr_j)\tilde A_j$, after possibly increasing the constant. By  iterating this we obtain the bound 
\[
A_j \leq C(A_0 + 1)
\]
for all $j$. Letting $j\to\infty$ at the regular point $y$ proves
\eqref{eq:thm2-3}.

\smallskip
\noindent\textbf{Step 3: Propagation of singularities along coordinate lines.}
We prove the propagation property stated in~\eqref{eq:reg-singular-line} below. Roughly speaking it states that if a point $x$ is on the singular set then the whole line $\{x + te_n\}$ belongs to the singular set. 

For $x'\in\widetilde U_n=\Pi_n(\partial E)$ we define
\[
u_n(x')=\sup\{t\geq0:(x',t)\in\partial E\},
\qquad
v_n(x')=\inf\{t\geq0:(x',t)\in\partial E\}.
\]
We note that again we allow $u_n(x') = \infty$. As $\partial E$ is regular in $O$ and $\nu_n >0$ there, then it holds $v_n(x') = u_n(x')$ for $x' \in (0,\infty)^{n-1}\cap \widetilde U_n$. At other points it may happen that $v_n(x')< u_n(x')$ (although we are going to eventually prove that this is not the case) but in that case the whole interval $\{x'\} \times [v_n(x'), u_n(x')]$ belongs to the boundary $\partial E$.  Indeed, an interior point of either open phase between two boundary points would contradict the corresponding vertical monotonicity. Similarly we deduce that $u_n$ is upper semicontinuous and $v_n$ is lower
semicontinuous, relative to $\widetilde U_n$. 

We proceed by proving that a point $y=(y',y_n)\in\partial E$, with
$y_n>0$, is regular whenever $u_n(y')<\infty$ or $v_n(y')>0$.
Fix $y=(y',y_n)\in\partial E$ with $y_n>0$, and suppose that
$u_n(y')<\infty$ or $v_n(y')>0$. For $\eta>0$ set
\[
U_\eta=\{x'\in\widetilde U_n:u_n(x')<1/\eta\},
\qquad
U_\eta'=\{x'\in\widetilde U_n:v_n(x')>\eta\}.
\]
Choose $\eta<y_n$ so that $y'$ belongs to either  $U_\eta$ or $U_\eta'$.
By semicontinuity $U_\eta$ and $U_\eta'$ are relatively open sets.  Therefore we may choose $0<\delta<\min\{\eta/8,y_n/8,1/8\}$ such that
$B_\delta(y')\cap\widetilde U_n$ is contained in $U_\eta$ in the
first case, or in $U_\eta'$ in the second case.
Let us fix a cutoff function  $\zeta\in C_0^\infty(B_{\delta/2}(y'))$ such that 
$0\leq\zeta \leq 1$, $\zeta=1$ on $B_{\delta/4}(y')$, and
$|\nabla \zeta |\leq C/\delta$. We define also the cylinder
$C_t(y)=\{x:|x'-y'|<t\}$.

In the case $y' \in U_\eta$  we denote  $D=M^+$ and choose $h(x_n)=x_n$.
Note that then in the set  $x \in D \cap \supp\zeta$ it holds $x_n \leq \frac{1}{\eta}$.
In the case $y' \in U_\eta'$  we denote  $D=M^+\cap\{x_n<2y_n\}$ and choose  $h(x_n)=(2y_n-x_n)_+$.
In this case for  $x \in D \cap \supp\zeta$ it holds $\eta \leq x_n \leq 2y_n$. In both cases, we will prove
\begin{equation}\label{eq:thm2-4}
\int_{D\cap C_{\delta/4}(y)}w\,d\mathcal{H}^{n-1}\leq C.
\end{equation}

Consider first $h(x_n)=x_n$. Using the  cutoff functions  as above, we obtain by integration by parts
\[
\int_{M^+}\nabla_{\partial E}(h\zeta  w)\cdot e_n\,d\mathcal{H}^{n-1}
=\int_{M^+}\Hh_E h \zeta \,  w\nu_n\,d\mathcal{H}^{n-1}.
\]
Since $\nabla_{\partial E} x_n\cdot e_n=1-\nu_n^2$ and
$\nabla_{\partial E}\zeta\cdot e_n=-(\nabla\zeta \cdot\nu)\nu_n$, we obtain
\begin{equation}\label{eq:thm2-5}
\begin{split}
\int_D\zeta w\,d\mathcal{H}^{n-1}
&=-\int_D h\zeta\,\nabla_{\partial E} w\cdot e_n\,d\mathcal{H}^{n-1}\\
&\quad+\int_D h(\nabla\zeta \cdot\nu)w\nu_n\,d\mathcal{H}^{n-1}
+\int_D (\Hh_E h+\nu_n)\zeta w\nu_n\,d\mathcal{H}^{n-1}.
\end{split}
\end{equation}
Since  $\Hh_E$ is locally bounded, and recalling the bound $x_n \leq \frac{1}{\eta}$ on $x \in D \cap \supp\zeta$  and $\nu_nw\leq1$, we have 
\[
\left|\int_Dh(\nabla\zeta \cdot\nu)w\nu_n\,d\mathcal{H}^{n-1}\right|
+\left|\int_D(\Hh_E h+\nu_n)\zeta  w\nu_n\,d\mathcal{H}^{n-1}\right|
\leq C.
\]
On the other hand, choosing  $\psi=h\zeta $ in the estimate~\eqref{eq:reg-local-energy} yields 
\begin{equation}\label{eq:thm2-7}
\int_{M^+}\psi^2|\nabla_{\partial E} w|^2\,d\mathcal{H}^{n-1}
\leq C+16\int_{M^+}|\nabla_{\partial E}\psi|^2\,d\mathcal{H}^{n-1}
\leq C.
\end{equation}
By Cauchy--Schwarz we then deduce by the above inequalities that the left hand side of 
\eqref{eq:thm2-5} is bounded. This proves~\eqref{eq:thm2-4} in the case $y' \in U_\eta$.

In  the case  $y' \in U_\eta'$ the argument is similar.  In this case, since $h(x_n)=(2y_n-x_n)_+$, it holds  $h'=-1$ on $0<x_n<2y_n$. The same argument implies, with the signs reversed accordingly,
\[
\begin{split}
\int_D\zeta w\,d\mathcal{H}^{n-1}
&=\int_Dh\zeta \,\nabla_{\partial E} w\cdot e_n\,d\mathcal{H}^{n-1}\\
&\quad-\int_Dh(\nabla\zeta\cdot\nu)w\nu_n\,d\mathcal{H}^{n-1}
+\int_D(\nu_n-\Hh_E h)\zeta w\nu_n\,d\mathcal{H}^{n-1}.
\end{split}
\]
Again there is no contribution from $x_n=0$, since for 
$x \in D \cap \supp\zeta$ it holds  $x_n > \eta$, and $h$ vanishes at $x_n=2y_n$.
Using the same bounds as above, including~\eqref{eq:thm2-7}, we obtain  
\eqref{eq:thm2-4}.

We may apply~\eqref{eq:thm2-3} with radius $\delta/16$ at every regular point
$z\in M\cap B_{\delta/16}(y)$ and deduce that  $w(z)\leq C$
uniformly in $z$ and $\varepsilon$. Letting $\varepsilon\to 0$
implies 
\[
\nu_n(z)\geq c>0
\qquad z\in M\cap B_{\delta/16}(y).
\]
This estimate also controls the boundary at potentially singular
points. Indeed, for $v \in e_n^\perp$ with $|v|< c/2$
\[
D_{e_n +v} \chi_E=-(\nu_n + \nu \cdot v) \,\mathcal{H}^{n-1}\mres M\leq0
\qquad\text{in} \, \, B_{\delta/16}(y) .
\]

The two open phases consequently satisfy a uniform local cone condition.
 Thus $\partial E$ is a Lipschitz graph near $y$, and the Euler--Lagrange equation and elliptic regularity imply that  it is analytic.

It follows that at a singular point $y=(y',y_n)$ with $y_n>0$ it must hold both 
$v_n(y')=0$ and $u_n(y')=\infty$. Since the whole interval $y' \times [v_n(y'), u_n(y')]$ belongs to the boundary $\partial E$, then by symmetry  the entire line
$l_y = \{ y+ t e_n: t \in \R\}$ lies in $\partial E$. Moreover, every point with
nonzero $n$th coordinate on this line  is singular. Indeed, if such a point  $x$ were regular then it would hold   $\nu_n(x)=0$, which contradicts 
Lemma~\ref{lem:graph}. Also the point on the line with zero $n$th coordinate has to be singular since  $\Sigma_E$ is closed. Applying the argument in each coordinate direction, we may rephrase the previous observation in a compact way as follows 
\begin{equation}\label{eq:reg-singular-line}
x\in\Sigma_E,\quad x_i\neq0
\quad\Longrightarrow\quad l_x = \{ x+ t e_i: t \in \R \} \subset\Sigma_E.
\end{equation}

\smallskip
\noindent\textbf{Step 4: Exclusion of nonzero singular points.}
We need to prove that $\Sigma_E \cap \{x_n >0\}$ is empty.  Suppose, for contradiction  that it is not and choose $y \in\Sigma_E \cap \{x_n >0\}$ with the smallest possible number of
zero coordinates. Denote 
\[
J=\{i =1, \dots, n-1:y_i=0\},\qquad k=|J|,
\qquad H=\{x\in\R^n:x_i=0\text{ for every }i\in J  \}.
\]
Regularity in the set $O$ implies  $1\leq k\leq n-1$.
We may assume that $y_j>0$ for $j\notin J$.  Iterating~\eqref{eq:reg-singular-line} in the directions outside $J$ implies  $H\subset\Sigma_E$. Since the Hausdorff dimension of $\Sigma_E$ is small, we have   $k \geq 2$. 

Recall that we choose the coordinates such that either the height function $u$ is nondecreasing in $x_i$-directions in $\{x_i>0\}$  for $i = 1, \dots, m$ and nonincreasing for others (or it is nonincreasing  in every direction). Let us then define the set of indices 
\[
J_- = \{ i \in J : \nu_i <0 \quad \text{on }\, M_O\} \quad \text{and} \quad J_+ = \{ i \in J : \nu_i >0 \quad \text{on }\, M_O\}.
\]
We claim that in this case both $J_-$ and $J_+$ are nonempty.  Indeed, if $J_-$ is empty then the height function would be nonincreasing in every direction in $i \in J$. Then if $x \in O$ is  an interior point of $E$, the point $x - \sum_{i \in J} x_i e_i$ would also be an interior point of $E$, which contradicts  the fact that  $H\subset\partial E$. If $J_+$ were empty, then if  $x \in O$ is  an exterior point of $E$, the point $x - \sum_{i \in J} x_i e_i$ would also be an exterior point of $E$, which is again a contradiction. 

Choose
\[
0<r<\frac14\min\big\{1,\min_{j\notin J}y_j\big\}
\qquad \text{and} \qquad 
Q=\{x\in\R^n:x_j\geq y_j/2\text{ for every }j\notin J\}.
\]
Every point in $Q\setminus H$ has fewer than $k$ zero coordinates
and is nonzero. By the choice of $y$, it holds 
\[
\Sigma_E\cap Q=H\cap Q.
\]
Set $\Sigma_1=M_O\cap B_r(y)$ and $V_1=\pi_e(\Sigma_1)$.
We translate $V_1$ simultaneously in all directions $e_i$ for $i =1, \dots,  m$, i.e., in direction in which the height function $u$ is nondecreasing. To this aim we recall the choice of the unit vector $e$ from the beginning of the proof and  define
\[
b=\sum_{i=1}^m e_i \qquad \text{and}
\qquad \hat b= \pi_{e}(b) = b-(b\cdot e)e\in e^\perp.
\]

Let us fix  $z\in V_1$ and define the curve $X_z(s)=F(z+s\hat b)$, wherever it is defined, i.e., when $z+s\hat b \in V$. By differentiating we obtain for $\hat \sigma =  -\sum_{i=1}^m e_i + \sum_{i=m+1}^n e_i$ that 
\[
X_z'(s)
=b-\frac{\nu(X_z(s))\cdot b}{\nu(X_z(s))\cdot e}\,e
=b+\theta(s)\hat \sigma,
\qquad
\theta(s)=\frac{\sum_{i=1}^m|\nu_i(X_z(s))|}
{\sum_{i=1}^n|\nu_i(X_z(s))|}\in(0,1).
\]
In particular, by the choice of $b$ it holds 
\begin{equation}\label{eq:thm2-curve}
(X_z')_i=
\begin{cases}
1-\theta(s),&\text{for} \,\, i = 1, \dots, m,\\
\theta(s),&\text{for} \,\, i = m+1, \dots, n.
\end{cases}
\end{equation}
This means that every coordinate of  $X_z(s)$ is nondecreasing and $|X_z'|\leq\sqrt n$.
For each $z \in V_1$, it holds $X_z(s) \cdot e_i \geq X_z(0) \cdot e_i > 0  $ and therefore $X_z(s)  \in O$ for all $s>0$ when it is defined. The curve  $s \mapsto X_z(s)$ thus  stays in a compact subset of $M_O$ and  is smooth. Thus it is well defined for all  $0\leq s\leq1$ and  we have 
\[
V_2=V_1+\hat b\subset V \qquad \text{and} 
\qquad
\hat V= \{x + s \hat b : x \in V_1, s \in [0,1] \} \subset V.
\]

Let us define  $\Sigma_2=F(V_2)$ and $\hat\Sigma=F(\hat V)$. By the choice of  $V_1$ it holds $F(z)\cdot e_n \geq \frac{y_n}{2}$ for $z \in V_1$, and thus by the above monotonicity it holds $X_z(s)\cdot e_n \geq \frac{y_n}{2}$ for all $s \in [0,1]$. 
For  $R=|y|+r+\sqrt n$, these bounds imply
\[
\hat\Sigma\subset Q\cap B_R(0) \qquad \text{and} 
\qquad x_n\geq y_n/2\quad\text{on }\hat\Sigma.
\]

Fix $i\in J_-$ and $j\in J_+$. The  formula \eqref{eq:thm2-curve} for $(X_z')_i$  gives the
 identity
\[
\frac{d}{ds}\big((X_z)_{i}+(X_z)_{j}\big)=1.
\]
Since both initial coordinates $i$ and  $j$   are positive, we have 
\[
\text{dist}(X_z(1),H)\geq\frac1{\sqrt2}.
\]
Thus $\overline{\Sigma}_2$ is contained in the compact set
\[
K=\partial E\cap Q\cap\overline{B_R(0)}
\cap\{x:\text{dist}(x,H)\geq1/\sqrt2\}\subset M^+.
\]
Lemma~\ref{lem:graph} together with  compactness imply $\nu_n\geq c_K>0$ on $K$.
Since  $K$ is bounded, this yields
\[
\int_{\Sigma_2}w\,d\mathcal{H}^{n-1}\leq C,
\]
where $w$ is the function defined at the start of Step 2.

Write $W=w\circ F$ and recall that $F$ is Lipschitz regular with $|DF|_{op} \leq \sqrt{n}$. Since,  for each $s\in[0,1]$, the translated domain
$V_1+s\hat b$ is contained in $\hat V$, the fundamental theorem of calculus along the
translated segments implies 
\[
\int_{V_1}W\,dz
\leq\int_{V_2}W\,dz
+\sqrt n\int_{\hat V}|\nabla_{\partial E} w|(F(z))\,dz.
\]
The area formula and the bounds on the Jacobian  $J_F$
then imply
\[
\int_{\Sigma_1}w\,d\mathcal{H}^{n-1}
\leq C\int_{\Sigma_2}w\,d\mathcal{H}^{n-1}
+C\int_{\hat\Sigma}|\nabla_{\partial E} w|\,d\mathcal{H}^{n-1}.
\]
Note that the Gaussian and the Euclidean surface measures are comparable on $B_R(0)$.
Using also the estimate $x_n\geq y_n/2$ on $\hat\Sigma$ and~\eqref{eq:thm2-23}, we obtain
\[
\begin{split}
\int_{\hat\Sigma}|\nabla_{\partial E} w|\,d\mathcal{H}^{n-1}
&\leq\frac{2C_{n,R}}{y_n}
\int_{\hat\Sigma}x_n|\nabla_{\partial E} w|\,d\mathcal{H}_\gamma^{n-1}\\
&\leq\frac{2C_{n,R}}{y_n}
\left(\int_{\hat\Sigma}x_n^2|\nabla_{\partial E} w|^2
\,d\mathcal{H}_\gamma^{n-1}\right)^{1/2}P_\gamma(E)^{1/2}
\leq C.
\end{split}
\]
By recalling the definition of $\Sigma_1$, we have
\begin{equation}\label{thm2-st3-a}
\int_{M_O\cap B_r(y)}w\,d\mathcal{H}^{n-1}\leq C.
\end{equation}

Reflection in the coordinate hyperplanes indexed by $J$ preserves
$B_r(y)$, $\nu_n$ and $w$, and covers all neighboring orthants.
At a regular point with $x_i=0$, symmetry gives $\nu_i=0$. Therefore
$\partial^* E$ intersects  that coordinate hyperplane orthogonally  and the
intersection has zero $\mathcal{H}^{n-1}$-measure. Since the singular
set is also $\mathcal{H}^{n-1}$-negligible, reflection of
\eqref{thm2-st3-a} implies
\[
\int_{M \cap B_r(y)}w\,d\mathcal{H}^{n-1}\leq C.
\]
Applying~\eqref{eq:thm2-3} with radius $r/4$ at every
$x \in M \cap B_{r/2}(y)$ we obtain as before that $w(x)\leq C$ uniformly in
$x$ and $\varepsilon$, and therefore $\nu_n\geq c>0$ there. By the Euler--Lagrange equation and elliptic regularity the boundary is analytic in a neighborhood of $y$. This contradicts the fact that $y\in\Sigma_E$ and proves that $\Sigma_E \cap \{x_n>0\}$ is empty. Repeating the argument in every coordinate direction and using reflection yields  
$\Sigma_E\subset\{0\}$.
\end{proof}

%%%%%%%%%%%%%%%%%%%%%%%%%%%%%%%%%%%%%%%%%%%%%%%%%%%%%%%%%%%%%%%%%%%%%%%%%%%%%%%%%%%%%%%%%

\section{Rigidity}\label{sect rig}

In this section we resume the notation $(x,y)\in\R^2\times\R^{n-2}$, and $E$ is a minimizer of the 
Gaussian isoperimetric problem among $n$-symmetric sets with prescribed Gaussian measure. Recall that 
$E$ is identified with its precise representative \eqref{eq: precise representative}, which is an open set, 
and that by Theorem~\ref{theorem reg} the set $\partial E\setminus\{0\}=\partial^*E\setminus\{0\}$ is a 
real analytic hypersurface. Since $E^c$ is also a minimizer (for the complementary volume $1-\gamma(E)$), 
Theorem~\ref{theorem reg} applies to $E^c$ as well: its precise representative 
$\{u_{E^c}=1\}=\{u_E=0\}=\R^n\setminus\overline E$ is open.

Define the ``arc-projection'' $\Pi:\R^2\times\R^{n-2}\to[0,\infty)\times\R^{n-2}$ as $\Pi(x,y)\coloneqq(|x|,y)$. 
If $B\subset(0,\infty)\times\R^{n-2}$, then $\Pi^{-1}(B)=\Phi(B\times\Ss^1)$, where $\Phi$ is the 
diffeomorphism defined in \eqref{eq:diffeo}. Set $\R^2_+\coloneqq\{x\in\R^2\colon x_1>0,\ x_2>0\}$ and 
$\omega_\theta\coloneqq(\cos\theta,\sin\theta)$. Recalling \eqref{def:mu} and \eqref{def:beta}, for every 
$(r,y)\in(0,\infty)\times\R^{n-2}$
\begin{equation}\label{eq:beta-angular}
\beta_\mu(r,y)=\frac{1}{4}\int_0^{2\pi}\chi_E(r\omega_\theta,y)\,d\theta.
\end{equation}

\smallskip

\begin{lemma}\label{lemma:connected-slices}
Let $E\subset\R^n$ be a minimizer of the Gaussian isoperimetric problem among $n$-symmetric sets 
with prescribed Gaussian measure. 
Then, for every $(r,y)\in(0,\infty)\times\R^{n-2}$,
\[
\widetilde E_{(r,y)}\coloneqq E_{(r,y)}\cap\{x_1\geq0,\ x_2\geq0\}
\]
is connected. Moreover, if $\widetilde E_{(r,y)}$ is non-empty, then it contains at least one of the 
two points $(r,0)$ and $(0,r)$.
\end{lemma}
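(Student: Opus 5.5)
Our plan is to derive the claim from the monotonicity structure in Lemma~\ref{lem:graph}, applied in the directions $e_1$ and $e_2$, together with the regularity of Theorem~\ref{theorem reg}. We split according to the alternatives of Lemma~\ref{lem:graph} for $i=1,2$. If $\nu_1\equiv 0$, then $E$ is a cylinder in the direction $e_1$. Then $E\cap\{x_1\ge0,x_2\ge0\}$ at fixed $y$ is $\{(x_1,x_2): x_1\ge 0,\ x_2\in I_y\}$ with $I_y=\{t\ge0:(0,t,y)\in E\}$, and we treat this case together with the sign cases below. The case $\nu_2\equiv0$ is analogous. Otherwise $\nu_1$ has a strict sign on $\partial E\cap\{x_1>0\}$, and likewise $\nu_2$ on $\partial E\cap\{x_2>0\}$. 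By De Giorgi's structure theorem, $D_1\chi_E=-\nu_1\mathcal H^{n-1}\mres\partial E$ has a sign in $\{x_1>0\}$. Hence, for fixed $(x_2,y)$, the map $t\mapsto\chi_E(t,x_2,y)$ is monotone on $(0,\infty)$, in the same direction for all $(x_2,y)$. Since $E$ and $\R^n\setminus\overline E$ are open, this holds for every $(x_2,y)$ and not only almost everywhere, exactly as in Step~1 of the proof of Theorem~\ref{theorem reg}. Thus each horizontal line meets $E$ in the closed first quadrant in an interval that either starts at $x_1=0$ (when $\nu_1>0$) or is unbounded to the right (when $\nu_1<0$). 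The same holds vertically.

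Next we parametrize the quarter circle by $\theta\in[0,\pi/2]$, with $x=r\omega_\theta$. Along it, $x_1=r\cos\theta$ decreases and $x_2=r\sin\theta$ increases. We translate the monotonicity into a statement in $\theta$ through a comparison of points, which are generally not on a common horizontal line. The key claim is the following: if $\nu_1>0$ and $\nu_2<0$, then $(x_1,x_2)\in E_y$ implies $(x_1',x_2')\in E_y$ whenever $x_1'\le x_1$ and $x_2'\ge x_2$ in the closed quadrant. This follows by moving first horizontally and then vertically. Such points are exactly those with larger $\theta$ on the circle, so $\widetilde E_{(r,y)}$ is an arc $\{\theta\ge\theta_0\}$ containing $(0,r)$. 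The case $\nu_1<0$, $\nu_2>0$ gives an arc containing $(r,0)$.

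The main obstacle is the equal-sign case, say $\nu_1>0$ and $\nu_2>0$, where $E$ is decreasing in both coordinates. Then $E_y\cap\{x_1,x_2\ge0\}$ is a down-set in the quadrant, and a circle could a priori meet it in two disjoint arcs near each axis. This cannot happen: the down-set is a region bounded by a nonincreasing staircase curve $x_2=g(x_1)$. The function $|x|^2$ restricted to such a graph need not be monotone, so the intersection with the circle is not automatically connected. We therefore plan to exclude this case. Note that $\nu_1,\nu_2>0$ in the open quadrant means $\hat x\cdot\nu^E_x>0$, so $E$ restricted to each $y$-slice is star-shaped in the $x$-variables. This gives the first-variation sign needed for Proposition~\ref{prop:second_var_geom}. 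One also has to rule out that $E$ or its complement is star-shaped with both signs equal, using the isoperimetric comparison with the rearranged set $F_\mu$ from Theorem~\ref{theorem ineq}. The idea is that two disjoint arcs per quadrant would make the inequality $p_E\ge4$ in Lemma~\ref{lem:isop 2 symm} strict on a set of positive $\mathcal H^{n-1}$-measure. Since $\gamma(F_\mu)=\gamma(E)$ and $F_\mu$ is $n$-symmetric, minimality of $E$ forces equality in \eqref{per ineq}, which by Theorem~\ref{theorem ineq}(i) forces the two-arc structure almost everywhere. Analyticity of $\partial E$ and openness then upgrade this to every $(r,y)$: a disconnected slice would persist on an open set of $(r,y)$, since transversal crossings are stable. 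Non-transversal tangencies require a separate perturbation argument, which is where the difficulty concentrates. Finally, if the slice is nonempty and connected and does not touch either axis, it has four boundary points per quadrant, again contradicting Theorem~\ref{theorem ineq}(i) by the same stability argument. Hence the slice contains $(r,0)$ or $(0,r)$.
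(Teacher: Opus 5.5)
Your treatment of the cylindrical and opposite-sign cases via the monotonicity of Lemma~\ref{lem:graph} is correct and gives both claims at once. The gap is in the equal-sign case, which is where the content of the lemma lies.

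First, this case cannot be excluded. The centered ball has $\nu_1>0$ on $\{x_1>0\}$ and $\nu_2>0$ on $\{x_2>0\}$, and it is the conjectured minimizer at mass $1/2$. So the detour through star-shapedness and Proposition~\ref{prop:second_var_geom} leads nowhere. You then reach the right input: minimality forces equality in \eqref{per ineq}, so by Theorem~\ref{theorem ineq}(i) almost every fiber has the two-arc structure. But you leave open exactly the passage from a.e. to every fiber when the circle is tangent to $\partial E$.

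The missing idea is that no transversality is needed. Set $G\coloneqq\R^n\setminus\overline E$, which is open. The a.e. statement implies that no fiber contains three points, in increasing angular order in $[0,\pi/2]$, lying in $E,G,E$ or in $G,E,G$. Indeed, since $E$ and $G$ are open, such a configuration persists on an open set of fibers, with angular neighbourhoods of positive length.
\begin{itemize}
\item A disconnected quarter-section gives points in $E$, non-$E$, $E$.
\item If the middle point lies on $\partial E$ (your tangency case), replace it by a nearby point of $G$ on a nearby fiber. Such a point exists because $\partial E$ is $\mathcal H^n$-null and $E=E^{(1)}$, so a point of $\partial E$ with a neighbourhood inside $\overline E$ would belong to $E$.
\item The two flanking points of $E$ persist on that nearby fiber by openness.
\end{itemize}
This works uniformly, without any sign split.

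Second, for the endpoint assertion in the equal-sign case, ``the same stability argument'' does not suffice when $(r,0,y)$ or $(0,r,y)$ lies on $\partial E$. You need a single nearby fiber on which both axis points are in $G$ simultaneously. Approximating each of them separately by exterior points produces, in general, different fibers.

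The paper resolves this with the sign information. If $\sigma_1=\sigma_2=\sigma$, pass to the radius $r+\sigma t$:
\begin{itemize}
\item An axis point in $G$ stays in $G$ by openness.
\item An axis point $p_i\in\partial E$ belongs to $\partial^*E\cap\{x_i>0\}$ by Theorem~\ref{theorem reg}. There $\sigma\nu_i(p_i)>0$ by Lemma~\ref{lem:graph}, so $p_i+\sigma t e_i\in G$ by transversality.
\item The rescaled point $\frac{r+\sigma t}{r}q$ of the original arc stays in $E$.
\end{itemize}
This produces the forbidden configuration $G,E,G$ on the fiber $(r+\sigma t,y)$.

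Incidentally, an arc in the open quadrant gives two endpoints per quadrant, eight in total, not four per quadrant.
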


\begin{proof}
Set $G\coloneqq\R^n\setminus\overline E$, which is open. Every $p\in\partial E$ is a limit of points of $G$: otherwise some ball 
$B_\rho(p)$ would be contained in $E\cup\partial E$, hence in $E$ up to the $\mathcal{H}^n$-null set 
$\partial E$, and then in $E=E^{(1)}$, contradicting $p\notin E$.

\smallskip
\noindent\emph{Almost-everywhere structure of the fibers.}
By minimality, \eqref{per ineq} holds with equality for every Borel set $B\subset(0,\infty)\times\R^{n-2}$. 
By Theorem~\ref{theorem ineq}(i), for $\mathcal{H}^{n-1}$-a.e. $(\rho,\eta)\in(0,\infty)\times\R^{n-2}$ the angular section
\[
I(\rho,\eta)\coloneqq\left\{\theta\in[0,\pi/2]:(\rho\omega_\theta,\eta)\in E\right\}
\]
agrees, up to $\mathcal{H}^1$-null sets, with the empty set or with an interval attached to one of the 
endpoints of $[0,\pi/2]$.

Consequently, \emph{no circular fiber can contain three points, in strictly increasing angular order in 
$[0,\pi/2]$, belonging respectively to $E, G, E$ or to $G, E, G$}. Indeed, since 
$E$ and $G$ are open, either configuration persists for all parameters $(\rho,\eta)$ in a 
non-empty open neighborhood of the given fiber, and each of its three points has a relative angular 
neighborhood of positive length in the corresponding phase. This contradicts the preceding 
almost-everywhere description.

\smallskip
\noindent\emph{Connectedness.}
Suppose that $\widetilde E_{(r,y)}$ is disconnected. Since $E$ is open, there are angles
$0<\theta_1<\theta_2<\theta_3<\pi/2$ such that
\[
(r\omega_{\theta_1},y),\ (r\omega_{\theta_3},y)\in E,
\qquad
p\coloneqq(r\omega_{\theta_2},y)\notin E.
\]
If $p\in G$, this is already a forbidden configuration. Otherwise $p\in\partial E$, and $p$ can be 
approached by points of $ G$. Choose $(\rho\omega_\theta,\eta)\in G$ so close to $p$ that 
$\theta_1<\theta<\theta_3$ and $(\rho\omega_{\theta_1},\eta),(\rho\omega_{\theta_3},\eta)\in E$. 
This gives the forbidden configuration $E, G,E$ on the fiber $(\rho,\eta)$. 
Thus $\widetilde E_{(r,y)}$ is connected.

\smallskip
\noindent\emph{The endpoint assertion.}
Write $\nu_i\coloneqq\nu^E\cdot e_i$, $i=1,2$. By Lemma~\ref{lem:graph}, set $\sigma_i\coloneqq0$ in the 
cylindrical alternative, and otherwise let $\sigma_i\in\{-1,1\}$ be such that $\sigma_i\nu_i>0$ on 
$\partial^*E\cap\{x_i>0\}$. By \eqref{eq:de_giorgi_structure}, $D_i\chi_E=-\nu_i\,\mathcal{H}^{n-1}\mres\partial^*E$, 
so that
\[
\sigma_i D_i\chi_E\le0\ \text{ in }\{x_i>0\}\quad\text{if }\sigma_i\in\{-1,1\},
\qquad
D_i\chi_E=0\ \text{ in }\R^n\quad\text{if }\sigma_i=0 .
\]
By the one-dimensional slicing theorem for $BV$ functions (see \cite{AFP}), writing $x=(x',x_i)$ up to a 
permutation of the coordinates, for a.e. $x'\in\R^{n-1}$ the function $t\mapsto\chi_E(x',t)$ is a.e. equal on 
$(0,\infty)$ to a nonincreasing function if $\sigma_i=1$, to a nondecreasing function if $\sigma_i=-1$, 
and to a constant if $\sigma_i=0$. We claim that the corresponding statements hold pointwise for $E=E^{(1)}$. 
First, membership in $E$ is preserved when $|x_i|$ is decreased 
if $\sigma_i=1$, when $|x_i|$ is increased if $\sigma_i=-1$, and is independent of $x_i$ if $\sigma_i=0$. 
Indeed, let $q\in E$ with $q_i\geq0$ (the case $q_i<0$ follows by symmetry) and let 
$U'\times(q_i-\delta,q_i+\delta)\subset E$ be an open product neighborhood of $q$. If $\sigma_i=1$, 
the a.e. monotonicity of the sections gives $|(U'\times(0,q_i+\delta))\setminus E|=0$, and the 
reflection symmetry of $E$ gives $|(U'\times(-q_i-\delta,q_i+\delta))\setminus E|=0$. Every point of 
this open set has density one, so it is contained in $E=E^{(1)}$. The cases $\sigma_i=-1$ 
(yielding $U'\times(q_i-\delta,\infty)\subset E$) and $\sigma_i=0$ (yielding $U'\times\R\subset E$) 
are analogous.

Suppose now that $\widetilde E_{(r,y)}$ is non-empty but that
\[
p_1\coloneqq(r,0,y)\notin E,\qquad p_2\coloneqq(0,r,y)\notin E.
\]
Choose $q=(x_1,x_2,y)\in E$ on this fiber with $x_1,x_2>0$. If $\sigma_1\geq0\geq\sigma_2$, 
first decreasing $x_1$ to zero and then increasing $x_2$ to $r$ gives $p_2\in E$. If 
$\sigma_2\geq0\geq\sigma_1$, the analogous changes give $p_1\in E$. Both are contradictions, and these 
cases include all cylindrical alternatives and all opposite-sign alternatives. We are therefore left with 
$\sigma_1=\sigma_2=\sigma\in\{-1,1\}$.

For $i=1,2$, if $p_i\in G$, then $p_i+\sigma te_i\in G$ for all sufficiently small $t>0$, 
by openness. If instead $p_i\in\partial E$, then $p_i\neq0$ and $(p_i)_i=r>0$; hence 
$p_i\in\partial^*E\cap\{x_i>0\}$ by Theorem~\ref{theorem reg}, and Lemma~\ref{lem:graph}, whose 
strict-sign conclusion holds on the whole set $\partial^*E\cap\{x_i>0\}$ (including points where other 
coordinates vanish), gives
\[
\nu^E(p_i)\cdot(\sigma e_i)=\sigma\nu_i(p_i)>0 .
\]
Since $\partial E$ is a $C^1$ hypersurface near $p_i$, with $E$ on the side of $-\nu^E$, the same 
conclusion $p_i+\sigma te_i\in G$ follows by transversality. Consequently, for all sufficiently 
small $t>0$, with $r_t\coloneqq r+\sigma t>0$,
\[
(r_t,0,y),\ (0,r_t,y)\in G,
\qquad
q_t\coloneqq\Big(\frac{r_t}{r}x_1,\frac{r_t}{r}x_2,y\Big)\in E,
\]
where the last inclusion follows from the openness of $E$. These three points lie on the fiber $(r_t,y)$, 
at the angles $0<\arctan(x_2/x_1)<\pi/2$, and give the forbidden configuration $ G,E, G$. 
This contradiction proves the endpoint assertion.
\end{proof}

\begin{proof}[Proof of Theorem~\ref{theorem rig}]
Recall that $E=E^{(1)}$ and $F=F_\mu^{(1)}$. By Theorem~\ref{theorem ineq} and minimality, $F$ is also a minimizer. 
Hence Theorem~\ref{theorem reg}, Proposition~\ref{prop:components_structure}, Lemma~\ref{lem:graph} and 
Lemma~\ref{lemma:connected-slices} apply to both $E$ and $F$; in particular $F$ is open and 
$\partial F\setminus\partial^*F\subset\{0\}$. 
If $E$ is cylindrical in both directions $x_1$ and $x_2$ (Lemma~\ref{lem:graph}(i)), then $E=\R^2\times A$ 
for the precise representative $A\subset\R^{n-2}$ of the corresponding section, every circular fiber of $E$ 
is empty or full, $F=E$, $\Omega_\mu=\emptyset$, and (i) holds. We therefore assume henceforth that 
at most one of $x_1,x_2$ is a cylindrical direction for $E$.

\smallskip
\noindent\textbf{Step 1: the precise representative has the same pointwise circular distribution.}
We claim that
\begin{equation}\label{eq:rigidity-pointwise-distribution}
\mathcal H^1_\gamma(F_{(r,y)})=\mu(r,y)
\qquad\text{for every }(r,y)\in(0,\infty)\times\R^{n-2}.
\end{equation}
Fix $(r,y)$ and write $C_{r,y}\coloneqq\partial B(r)\times\{y\}$. The circle $C_{r,y}$ stays away from the 
origin, so $\partial E$ is analytic in a neighborhood of it by Theorem~\ref{theorem reg}; hence either 
$C_{r,y}\cap\partial E$ is finite or $C_{r,y}\subset\partial E$. Indeed, if $C_{r,y}\cap\partial E$ has an 
accumulation point $p$, a local analytic defining function of $\partial E$ near $p$ vanishes at infinitely 
many points of the analytic curve $\theta\mapsto(r\omega_\theta,y)$ accumulating at $p$, hence identically 
along an arc; therefore the set of $\theta\in\Ss^1$ such that an arc of $C_{r,y}$ around $(r\omega_\theta,y)$ 
is contained in $\partial E$ is non-empty, open, and closed by the same argument, so it is all of $\Ss^1$.

\smallskip
\noindent\emph{Case 1: $C_{r,y}\cap\partial E$ is finite.} 
Then, for all but finitely many $\theta\in[0,2\pi)$, the point $(r\omega_\theta,y)$ belongs to one of the two 
open phases $E$ and $\R^n\setminus\overline E$, so that $\chi_E(\rho\omega_\theta,\eta)\to\chi_E(r\omega_\theta,y)$ 
as $(\rho,\eta)\to(r,y)$. By \eqref{eq:beta-angular} and dominated convergence, $\beta_\mu$ is continuous 
at $(r,y)$. Therefore, for every $\theta\in(0,\pi/2)$ with $\theta\neq\beta_\mu(r,y)$, by \eqref{def:F} the 
point $(r\omega_\theta,y)$ has a neighborhood contained in $F_\mu$ if $\theta<\beta_\mu(r,y)$, and a 
neighborhood disjoint from $F_\mu$ if $\theta>\beta_\mu(r,y)$: in the first case it belongs to $F$, in the 
second it does not. By symmetry, $F_{(r,y)}$ and $(F_\mu)_{(r,y)}$ differ at most at the four angular 
endpoints $\pm\beta_\mu(r,y)$, $\pi\pm\beta_\mu(r,y)$ and at the four axis points, and 
\eqref{eq:rigidity-pointwise-distribution} follows since $F_\mu$ is $\mu$-distributed.

\smallskip
\noindent\emph{Case 2: $C_{r,y}\subset\partial E$.} 
Then $\mu(r,y)=0$, since $E$ is open. The circle is tangent to $\partial E$, so
\[
\nu_x^E(p)=a(p)\,\hat x,\qquad p=(x,y)\in C_{r,y},
\]
for the continuous function $a\coloneqq\nu^E_x\cdot\hat x$. Neither $x_1$ nor $x_2$ is a cylindrical 
direction for $E$: if, say, $\nu_1\equiv0$ on $\partial^*E$, then $a=0$ at every point of $C_{r,y}$ with 
$x_1\neq0$, hence $\nu_2=a\,\hat x_2=0$ at a point of $\partial^*E\cap\{x_2>0\}$, and 
Lemma~\ref{lem:graph} makes $x_2$ cylindrical too, contrary to our reduction. Let then 
$\sigma_1,\sigma_2\in\{-1,1\}$ be the strict signs of $\nu_1$ on $\partial^*E\cap\{x_1>0\}$ and of $\nu_2$ on 
$\partial^*E\cap\{x_2>0\}$ given by Lemma~\ref{lem:graph}. At an open-quadrant point of $C_{r,y}$ both 
$\nu_1=a\hat x_1$ and $\nu_2=a\hat x_2$ have the sign of $a\neq0$, so $\sigma_1=\sigma_2=:\sigma$; 
at the axis points $(\pm r,0,y)$, $(0,\pm r,y)$ one has $\sigma a>0$ by the same lemma; and 
$a(A_jp)=a(p)$ for $j=1,2$ by the reflection symmetry of $E$. Hence, by continuity and compactness,
\[
\inf_{p\in C_{r,y}}\sigma a(p)>0 .
\]
Since $\partial E$ is $C^1$ in a neighborhood of the compact set $C_{r,y}$, with $E$ on the side of $-\nu^E$, 
there exists $t_0>0$ such that for every $t\in(0,t_0)$
\[
C_{r-\sigma t,y}\subset E,
\qquad
C_{r+\sigma t,y}\subset\R^n\setminus\overline E .
\]
Each inclusion persists on a neighborhood $N^\mp_t$ of $(r\mp\sigma t,y)$ in $(0,\infty)\times\R^{n-2}$, 
on which $\beta_\mu$ is identically $\pi/2$, respectively $0$. Thus the open set $\Phi(N^-_t\times\Ss^1)$ is 
contained in $F_\mu$ up to an $\mathcal H^n$-null set, hence in $F$, while the open set 
$\Phi(N^+_t\times\Ss^1)$ is disjoint from $F_\mu$, hence from $\overline F$. Letting $t\downarrow0$, every 
point of $C_{r,y}$ is approached by both $F$ and $\R^n\setminus\overline F$, so $C_{r,y}\subset\partial F$. 
Since $F$ is open, $F_{(r,y)}=\emptyset$, proving \eqref{eq:rigidity-pointwise-distribution} also in this case.

In particular, $F$ is $\mu$-distributed, and $\Omega_\mu$ is the same set whether computed from $E$ 
or from~$F$.

\smallskip
\noindent\textbf{Step 2: the rotational Jacobi field.}
Set
\[
M\coloneqq\partial^*F,\qquad M^+\coloneqq M\cap(\R^2_+\times\R^{n-2}),\qquad X(x,y)\coloneqq(-x_2,x_1,0).
\]
Since $\partial F\setminus M\subset\{0\}$, we have $M^+=\partial F\cap(\R^2_+\times\R^{n-2})$.

\smallskip
\noindent\emph{Slab cases.} If $M$ is disconnected, Proposition~\ref{prop:components_structure} implies 
that $F=\{|z_j|<a\}$ or $F=\{|z_j|>a\}$ for some $a>0$ and $j\in\{1,\dots,n\}$, where $z=(x_1,x_2,y)$. 
If $j\geq3$, then $F$ is rotationally invariant in the $(x_1,x_2)$-plane, every fiber of $F$ is empty or 
full, and $\Omega_\mu=\emptyset$ by \eqref{eq:rigidity-pointwise-distribution}. If $j\in\{1,2\}$ and $F=\{|z_j|<a\}$,
the fiber $F_{(r,y)}$ is full (up to two points) for $r\leq a$ and a non-empty proper union of arcs for $r>a$, so 
\eqref{eq:rigidity-pointwise-distribution} gives $\Omega_\mu=(a,\infty)\times\R^{n-2}$, which is open and 
connected; moreover $M^+=\{z_j=a\}\cap(\R^2_+\times\R^{n-2})$, so that 
$\Pi(M^+)=(a,\infty)\times\R^{n-2}=\Omega_\mu$, and $\nu^F=\pm e_j$ gives 
$\nu^F_{x\parallel}=\pm\big(e_j-(e_j\cdot\hat x)\hat x\big)\neq0$ on $M^+$, since 
$\hat x\neq\pm e_j$ there. Similarly if $F=\{|z_j|>a\}$. Thus, all the conclusions of the theorem hold for $F$ 
in place of $E$.

\smallskip
\noindent\emph{Connectedness of $M^+$.} Assume henceforth that $M$ is connected. The continuous folding map 
$(x_1,x_2,y)\mapsto(|x_1|,|x_2|,y)$ maps $M$ onto $M\cap\{x_1\geq0,\,x_2\geq0\}$, so this latter set is 
connected. For $i=1,2$ and $p\in M\cap\{x_i=0\}$, the reflection symmetry $\nu^F(A_ip)=A_i\nu^F(p)$ gives 
$\nu_i^F(p)=0$, hence
\[
\nabla_Mx_i(p)=e_i-\nu_i^F(p)\,\nu^F(p)=e_i .
\]
Thus $x_1$ and $x_2$ are submersions on $M$ near $\{x_1=0\}$ and $\{x_2=0\}$ respectively, and their 
gradients are linearly independent at the points of $M\cap\{x_1=x_2=0\}$. Therefore 
$M\cap\{x_1\geq0,\,x_2\geq0\}$ is a manifold with corners, whose interior is $M^+$. As explained in Step~2 of Lemma~\ref{lem:graph}, the interior of a 
connected manifold with corners is connected. Hence $M^+$ is connected, and non-empty because $M\neq\emptyset$.

\smallskip
\noindent\emph{Sign of $u$.} Define
\[
u\coloneqq X\cdot\nu^F\quad\text{on }M .
\]
In angular coordinates $x=|x|\,\omega_\theta$, $\theta\in(0,\pi/2)$, on the open quadrant 
$\R^2_+\times\R^{n-2}$, one has $X=\partial_\theta$ and, by \eqref{def:F}, $\chi_{F_\mu}$ is nonincreasing 
in $\theta$. Since $\diver X=0$ and $F=_{\mathcal H^n}F_\mu$, for every $\varphi\in C^\infty_c(\R^2_+\times\R^{n-2})$ 
with $\varphi\geq0$
\[
\langle X\cdot D\chi_F,\varphi\rangle
=-\int_{\R^n}\chi_{F_\mu}\,X\cdot\nabla\varphi\,dz
=-\int_{(0,\infty)\times\R^{n-2}}\int_0^{\pi/2}\chi_{F_\mu}(r\omega_\theta,y)\,
\partial_\theta\big(\varphi(r\omega_\theta,y)\big)\,d\theta\; r\,dr\,dy\leq0 ,
\]
i.e. $X\cdot D\chi_F\leq0$ as a Radon measure in $\R^2_+\times\R^{n-2}$. On the other hand 
$X\cdot D\chi_F=-u\,\mathcal H^{n-1}\mres M$. Therefore $u\geq0$ $\mathcal H^{n-1}$-a.e. on $M^+$, and hence 
everywhere on $M^+$ by continuity.

\smallskip
\noindent\emph{The Jacobi equation.} Let $J$ be the skew-symmetric endomorphism of $\R^n$ with $Je_1=e_2$, 
$Je_2=-e_1$, $Je_k=0$ for $k\geq3$, so that $X(z)=Jz$, and for $z\in M$ write 
$z^T\coloneqq z-(z\cdot\nu^F)\nu^F$, $X^T\coloneqq X-u\,\nu^F$. By Proposition~\ref{prop:partial-reg}, 
$\Hh_F-z\cdot\nu^F=\lambda$ on $M$. For $\tau\in T_zM$, since $D_\tau X=J\tau$ and 
$D_\tau\nu^F=B_F(\tau,\cdot)$,
\[
D_\tau u=J\tau\cdot\nu^F+B_F(\tau,X^T),
\qquad
D_\tau\Hh_F=D_\tau(z\cdot\nu^F)=B_F(\tau,z^T).
\]
Choosing $\tau=z^T$ in the first identity, where 
$Jz^T\cdot\nu^F=Jz\cdot\nu^F-(z\cdot\nu^F)\,J\nu^F\cdot\nu^F=u$, and $\tau=X^T$ in the second, the symmetry 
of $B_F$ gives
\[
z\cdot\nabla_Mu=u+B_F(z^T,X^T)=u+X^T\cdot\nabla_M\Hh_F .
\]
Moreover $\Delta_M(Jz)=J\Delta_Mz=-\Hh_F\,J\nu^F$ is orthogonal to $\nu^F$, and, for an orthonormal tangent 
frame $\{\tau_i\}$, $\sum_iD_{\tau_i}(Jz)\cdot D_{\tau_i}\nu^F=\sum_{i,k}B_F(\tau_i,\tau_k)\,J\tau_i\cdot\tau_k=0$ 
by the symmetry of $B_F$ and the skew-symmetry of $J$. Hence, by the identity 
$\Delta_M\nu^F+|B_F|^2\nu^F=\nabla_M\Hh_F$ used in the proof of Lemma~\ref{lem:graph},
\[
\Delta_Mu=X\cdot\Delta_M\nu^F=X^T\cdot\nabla_M\Hh_F-|B_F|^2u
=z\cdot\nabla_Mu-\big(|B_F|^2+1\big)u,
\]
i.e.,
\begin{equation}\label{eq:rigidity-rotational-Jacobi}
\Delta_Mu-z\cdot\nabla_Mu+\bigl(|B_F|^2+1\bigr)u=0
\qquad\text{on }M .
\end{equation}
In other words, \eqref{eq:rigidity-rotational-Jacobi} is the linearization of the Gaussian 
constant-mean-curvature equation along the rotations $R_t\coloneqq e^{tJ}$, which preserve the Gaussian weight 
and hence the equation and its constant right-hand side; no admissibility of the rotated sets in the 
symmetric variational problem is used. The term $u$, absent in \eqref{eq:calculation-nu-n}, comes from 
$D_\tau X=J\tau\neq0$.

\smallskip
\noindent\emph{Minimum principle.} By \eqref{eq:rigidity-rotational-Jacobi} and $u\geq0$,
\[
\Delta_Mu-z\cdot\nabla_Mu=-\bigl(|B_F|^2+1\bigr)u\leq0
\qquad\text{on }M^+ .
\]
The operator $\Delta_M-z\cdot\nabla_M$ has no zeroth-order term, so the strong minimum principle applies 
locally to the nonnegative supersolution $u$: an interior zero forces $u$ to vanish in a neighborhood, and 
the connectedness of $M^+$ yields
\[
u\equiv0\quad\text{on }M^+
\qquad\text{or}\qquad
u>0\quad\text{on }M^+ .
\]

\smallskip
\noindent\textbf{Step 3: openness and connectedness of $\Omega_\mu$.}
Suppose first that $u\equiv0$ on $M^+$. Since $u(A_ip)=-u(p)$ for $i=1,2$, and $M\cap\{x_1x_2=0\}$ is 
$\mathcal H^{n-1}$-negligible (it is a proper submanifold of $M$ by the transversality proved in Step~2), 
we get $u=0$ $\mathcal H^{n-1}$-a.e. on $M$, hence $X\cdot D\chi_F=-u\,\mathcal H^{n-1}\mres M=0$ in $\R^n$. 
Let $R_t=e^{tJ}$ be the flow of $X$. For $\varphi\in C^\infty_c(\R^n)$, since $R_t$ commutes with $J$ 
and $\diver X=0$,
\[
\frac{d}{dt}\int_{\R^n}\chi_F(z)\,\varphi(R_tz)\,dz
=\int_{\R^n}\chi_F(z)\,X(z)\cdot\nabla(\varphi\circ R_t)(z)\,dz
=-\langle X\cdot D\chi_F,\varphi\circ R_t\rangle=0 .
\]
Hence $R_tF=_{\mathcal H^n}F$ for every $t$. Orthogonal maps commute with the passage to density-one 
representatives, and $F=F^{(1)}$; thus $R_tF=F$ pointwise for every $t$, i.e. $F$ is rotationally 
invariant in the $(x_1,x_2)$-plane. Every circular fiber of $F$ is then empty or full, and 
\eqref{eq:rigidity-pointwise-distribution} gives $\Omega_\mu=\emptyset$.

Suppose instead that $u>0$ on $M^+$. For every $p\in M^+$, the circular fiber through $p$ is tangent to 
$X(p)$, hence transversal to $M$ at $p$, and thus contains positive-length portions of both open phases $F$ 
and $\R^n\setminus\overline F$. By \eqref{eq:rigidity-pointwise-distribution}, 
$0<\mu(\Pi(p))<\mathcal H^1_\gamma(\partial B(|x|))$, i.e. $\Pi(M^+)\subset\Omega_\mu$.
Conversely, let $(r,y)\in\Omega_\mu$. By \eqref{eq:rigidity-pointwise-distribution}, 
$0<\mathcal H^1(F_{(r,y)})<2\pi r$; by Lemma~\ref{lemma:connected-slices} applied to the minimizer $F$, and 
by symmetry, the quadrant section $\widetilde F_{(r,y)}$ is a non-empty, connected, relatively open proper 
subset of the closed quarter-circle containing one of its endpoints, hence it is $\{\theta\in[0,\theta^*)\}$ 
or $\{\theta\in(\theta^*,\pi/2]\}$ for some $\theta^*\in(0,\pi/2)$. The point $(r\omega_{\theta^*},y)$ lies 
in $\overline F\setminus F=\partial F$ and in the open quadrant, hence in $M^+$ since $r>0$. Consequently
\begin{equation}\label{eq:rigidity-projection-identity}
\Omega_\mu=\Pi(M^+).
\end{equation}
In particular $\Omega_\mu\neq\emptyset$ is connected, as the continuous image of the connected set $M^+$. 
Moreover, for $p=(x,y)\in M^+$,
\[
X(p)\cdot\nu^F(p)=u(p)>0,
\]
so $\Pi|_{M^+}$ is a local diffeomorphism onto its image; therefore 
$\Omega_\mu$ is open. Finally, $u(p)=|x|\,\nu^F_x(p)\cdot x_\parallel$ and 
$\nu^F_{x\parallel}=(\nu^F_x\cdot x_\parallel)\,x_\parallel$ show that $\nu^F_{x\parallel}(p)\neq0$ for every 
$p\in M^+$.

Together with the slab cases, we have proved: either $F$ is rotationally invariant in the $(x_1,x_2)$-plane 
and $\Omega_\mu=\emptyset$, or $\Omega_\mu\neq\emptyset$, \eqref{eq:rigidity-projection-identity} holds, and 
$\nu^F_{x\parallel}\neq0$ on $M^+=\partial F\cap(\R^2_+\times\R^{n-2})$. In both cases $\Omega_\mu$ is open 
and connected. In the first case we clearly have also $E=F$ since the circular sections of $F$ are empty or full.

\smallskip
\noindent\textbf{Step 4: rigidity of the original minimizer} in the case $\Omega_\mu\neq\emptyset$.
Set
\[
\Omega_\mu^1\coloneqq\{(r,y)\in(0,\infty)\times\R^{n-2}:(r,0,y)\in E\},
\qquad
\Omega_\mu^2\coloneqq\{(r,y)\in(0,\infty)\times\R^{n-2}:(0,r,y)\in E\}.
\]
These sets are open, being inverse images of the open set $E$ under continuous maps. Let $(r,y)\in\Omega_\mu$. 
Then $\widetilde E_{(r,y)}$ is non-empty and is not the whole closed quarter-circle, so by 
Lemma~\ref{lemma:connected-slices} it contains exactly one of the two axis points (containing both would 
force, by connectedness, the whole quarter-circle). Therefore
\[
\Omega_\mu=(\Omega_\mu^1\cap\Omega_\mu)\,\dot\cup\,(\Omega_\mu^2\cap\Omega_\mu).
\]
This is a covering of the connected set $\Omega_\mu$ by two disjoint relatively open sets, so one of them 
is empty.
In the first case $E=F$, while in the second case $E=R(F)$.
\end{proof}

\begin{corollary}\label{cor:no radial}
Let $E\subset\R^n$ be a minimizer of the Gaussian perimeter among $n$-symmetric
sets with prescribed Gaussian measure. For $i\neq j$, let $\mathcal{R}_{i,j}$
denote the double-arc rearrangement in the coordinate plane $(x_i,x_j)$, with
arcs centered on the $x_i$-axis. 
When applied to a minimizer, $\mathcal{R}_{i,j}(E)$ is understood as the density-one representative 
of the corresponding double-arc rearrangement, as in Theorem~\ref{theorem rig}.
Then, after a permutation of the coordinate
axes, one has
\begin{equation}
\label{eq:all_pairwise_rearranged}
E=\mathcal{R}_{i,j}(E)
\qquad\text{for every } i<j .
\end{equation}
Moreover, if there exists a point $p\in\partial E\cap\R^n_+$ such that
\[
\nu^E(p)=\frac{p}{|p|}
\qquad
\left(\text{respectively, }\nu^E(p)=-\frac{p}{|p|}\right),
\]
then $E$ (respectively, $\R^n\setminus\overline E$) is the open
$n$-dimensional ball of radius $|p|$ centred at the origin.
\end{corollary}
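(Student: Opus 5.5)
The plan is to apply Theorem~\ref{theorem rig} once in every coordinate plane, then show the resulting choices of axis are consistent, and finally read off the ball statement from the rotational Jacobi fields.

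\textbf{Pairwise rearrangement.} For $i\neq j$, permute coordinates so that $(x_i,x_j)$ plays the role of $(x_1,x_2)$. Theorem~\ref{theorem rig} then gives that either $E=\mathcal R_{i,j}(E)$ or $E=\mathcal R_{j,i}(E)$. If $E$ is rotationally invariant in the $(x_i,x_j)$-plane, both hold. We write $i\prec j$ when $E=\mathcal R_{i,j}(E)$, that is, when the nontrivial quarter slices in that plane contain the point on the $x_i$-axis. This makes $\prec$ a complete relation on $\{1,\dots,n\}$: a tournament, possibly with ties. A permutation achieving \eqref{eq:all_pairwise_rearranged} exists exactly when $\prec$ has no strict $3$-cycle, since then it can be refined to a linear order and we relabel the axes accordingly. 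So the main step is to exclude a strict cycle $i\prec j\prec k\prec i$ among pairs that are not rotationally invariant.

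\textbf{Excluding cycles (expected main obstacle).} For each pair let $J_{ij}$ be the skew generator and set $u_{ij}=J_{ij}z\cdot\nu^E$. Step~2 of the proof of Theorem~\ref{theorem rig} shows that, after orienting $J_{ij}$ so that it rotates away from the preferred axis, $u_{ij}\ge0$ on $\partial E$ inside the open orthant $\R^n_+$. Moreover $u_{ij}>0$ there unless $E$ is rotationally invariant in that plane. For a strict cycle, the three oriented generators sum to $J=J_{ij}+J_{jk}+J_{ki}$, which is the infinitesimal rotation in the span of $e_i,e_j,e_k$ about the diagonal $e_i+e_j+e_k$. Along $\partial E\cap\R^n_+$ we then have $Jz\cdot\nu^E>0$.

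Orbits of this rotation that lie close to the diagonal are closed circles contained in $\R^n_+$. Such an orbit can cross $\partial E$ only in the direction from $E$ to its complement, so it cannot cross $\partial E$ at all. Hence every such orbit lies entirely in $E$ or entirely in $\R^n\setminus\overline E$.

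To reach a contradiction I would show that some near-diagonal orbit must meet $\partial E$. The idea is to use the nondegeneracy of the $(i,j)$ arcs: by Theorem~\ref{theorem rig}(ii), $\Pi(\partial E\cap\R^2_+\times\R^{n-2})=\Omega_\mu$ is nonempty, so there are fibres on which both phases appear. Combining this with the monotonicity of the slices in all three planes, one should be able to push a boundary point toward the diagonal region. This is the delicate point, and I expect it to need the most care.

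\textbf{Ball statement.} Suppose $p\in\partial E\cap\R^n_+$ and $\nu^E(p)=\pm p/|p|$. Then $u_{ij}(p)=\pm J_{ij}p\cdot p/|p|=0$ for every pair, because $J_{ij}$ is skew. By the dichotomy $u_{ij}\equiv0$ or $u_{ij}>0$ on the open orthant part of the boundary (Step~2 of the proof of Theorem~\ref{theorem rig}), each $u_{ij}$ vanishes identically. Step~3 of that proof then gives that $E$ is invariant under every rotation $e^{tJ_{ij}}$. These rotations generate $SO(n)$, so $E$ is radial, that is, a union of open shells.

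By Proposition~\ref{prop:components_structure}, $\partial E$ is connected, since a radial set cannot be a slab. Hence $\partial E$ is a single sphere, of radius $|p|$ because $p\in\partial E$. Finally, $\nu^E(p)=p/|p|$ means the outward normal points away from the origin, so $E$ is the ball. The opposite sign means the complement $\R^n\setminus\overline E$ is the ball.
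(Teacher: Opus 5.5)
Your argument for the ball statement is fine. The condition $u_{ij}(p)=0$ at a point of the open $(x_i,x_j)$-quadrant excludes alternative~(ii) of Theorem~\ref{theorem rig} in every coordinate plane. Rotational invariance in all coordinate planes then gives $SO(n)$-invariance, and connectedness of $\partial^*E$ (Proposition~\ref{prop:components_structure}) leaves a single sphere of radius $|p|$. This is a more explicit version of what the paper does with the Jacobi equation and the maximum principle.

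The gap is in the first assertion. Its whole content is the exclusion of cycles, and you leave that open. Your diagonal-rotation argument gives a contradiction only if some $J$-orbit \emph{contained in} $\R^n_+$ meets $\partial E$. Orbits that leave $\R^n_+$ can cross $\partial E$ back and forth, because $Jz\cdot\nu^E$ has no sign outside the orthant; for example, $u_{ij}(A_iz)=-u_{ij}(z)$. Nothing you establish forces $\partial E$ into the cone of such orbits around $\{z_i=z_j=z_k\}$. The sentence ``one should be able to push a boundary point toward the diagonal region'' is not an argument. Separately, your combinatorial reduction is wrong when ties occur. The strict relation $1\to2\to3\to4\to1$ with ties $1\sim3$, $2\sim4$ has no strict $3$-cycle, yet admits no compatible linear order. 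So you would also have to rule out longer cycles, or use the coordinate-swap invariance that a rotationally invariant plane provides.

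The missing idea is that no topology is needed: the unweighted sum $u_{ij}+u_{jk}+u_{ki}$ is simply the wrong combination. Fix one point $p\in\partial E\cap\R^n_+$. This set is nonempty, since otherwise $\R^n_+$ lies in a single open phase and symmetry forces $\gamma(E)\in\{0,1\}$. Set $\rho_l\coloneqq\nu^E_l(p)/p_l$. With $u_{ij}=x_i\nu^E_j-x_j\nu^E_i$ you get
\[
u_{ij}(p)=p_ip_j\,(\rho_j-\rho_i).
\]
The sign argument you already cite from Step~2 of the proof of Theorem~\ref{theorem rig} gives two implications on $\partial E\cap\{x_i>0,\,x_j>0\}$: $E=\mathcal R_{i,j}(E)$ implies $u_{ij}\ge0$, and $E=\mathcal R_{j,i}(E)$ implies $u_{ij}\le0$. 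Hence the strict sign of $u_{ij}(p)$ decides between $\mathcal R_{i,j}$ and $\mathcal R_{j,i}$. If $u_{ij}(p)=0$, alternative~(ii) is excluded, so $E$ is rotationally invariant in that plane. Relabeling the axes so that $\rho_1\le\rho_2\le\dots\le\rho_n$ then gives \eqref{eq:all_pairwise_rearranged} directly. Put differently, along any strict cycle the weighted sum $\sum u_{ab}(p)/(p_ap_b)$ telescopes to $0$ while every term is positive. This kills cycles of every length and handles ties automatically; it is exactly the paper's proof.
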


\begin{proof}

Let $M\coloneqq \partial^* E$ and $M^+\coloneqq M\cap \R^n_+$. 
For $i\neq j$, consider the infinitesimal rotation in the plane $(x_i,x_j)$,
\[
X_{i,j}(x)\coloneqq -x_j e_i+x_i e_j,
\]
and set
\[
u_{i,j}\coloneqq X_{i,j}\cdot\nu^E=x_i\nu^E_j-x_j\nu^E_i
\qquad\text{on } M .
\]

\smallskip
\noindent\textbf{Step 1.} 
By Theorem~\ref{theorem rig}, applied in the plane $(x_i,x_j)$, either $E=\mathcal{R}_{i,j}(E)$ or $E=\mathcal{R}_{j,i}(E)$. 
Moreover, the sign argument in the proof of Theorem~\ref{theorem rig} gives the following alternative on the positive
orthant of the plane $(x_i,x_j)$:
\[
E=\mathcal{R}_{i,j}(E) \quad\Longrightarrow\quad u_{i,j}\geq 0,
\]
whereas
\[
E=\mathcal{R}_{j,i}(E) \quad\Longrightarrow\quad u_{i,j}\leq 0.
\]
If $u_{i,j}\equiv0$, then $E$ is rotationally invariant in the plane
$(x_i,x_j)$, and the two choices are equivalent.

Given $p\in M^+$, for every $i$ define
\[
\rho_i\coloneqq \frac{\nu^E_i(p)}{p_i}.
\]
Since $p_i>0$ for all $i$, we have
\[
u_{i,j}(p)
=
p_i p_j\left(\rho_j-\rho_i\right).
\]

Hence the choice of rearrangement axis in the plane $(x_i,x_j)$ is
determined by the ordering of $\rho_i$ and $\rho_j$.
Indeed, if $\rho_i<\rho_j$, then $u_{i,j}(p)>0$, so the preceding
sign implications rule out $E=\mathcal{R}_{j,i}(E)$ and yield
$E=\mathcal{R}_{i,j}(E)$. Similarly, if $\rho_j<\rho_i$, then
$E=\mathcal{R}_{j,i}(E)$.

If $\rho_i=\rho_j$, then $u_{i,j}(p)=0$. Since $p_i,p_j>0$,
this means that the angular component of $\nu^E(p)$ in the
$(x_i,x_j)$-plane vanishes, contradicting alternative~(ii) of
Theorem~\ref{theorem rig}, applied in that plane.
Thus alternative~(i) holds: $E$ is rotationally invariant in the
$(x_i,x_j)$-plane, and consequently
\[
E=\mathcal{R}_{i,j}(E)=\mathcal{R}_{j,i}(E).
\]
We have therefore proved that
\[
\rho_i\leq\rho_j \ \Longrightarrow\ E=\mathcal{R}_{i,j}(E),
\qquad
\rho_j\leq\rho_i \ \Longrightarrow\ E=\mathcal{R}_{j,i}(E),
\]
with both rearrangements leaving $E$ unchanged when $\rho_i=\rho_j$.

After relabeling the coordinate axes so that
\[
\rho_1\leq \rho_2\leq\cdots\leq \rho_n,
\]
we obtain $E=\mathcal{R}_{i,j}(E)$ for every $i<j$, which proves
\eqref{eq:all_pairwise_rearranged}.

\smallskip
\noindent\textbf{Step 2.} 
If $M$ is not connected, then by Proposition~\ref{prop:components_structure}
the set $E$ is a symmetric slab or its complement. This is incompatible with
the existence of a point $p\in M^+$ such that $\nu^E(p)=p/|p|$. 
Hence we may assume that $M$ is connected. As in the proof of
Theorem~\ref{theorem rig}, the folded positive orthant part $M^+$ is connected.

By the previous step we can assume that $E=\mathcal{R}_{i,j}(E)$ for $i<j$.
Since $u_{i,j}(p)=0$, using the Jacobi equation and  the strong maximum principle as in the proof of Theorem \ref{theorem rig}, 
we get that $u_{i,j}\equiv 0$ on $M^+$.
By analyticity and symmetry, this identity extends to the whole of $M$.
This means that the infinitesimal rotation field $X_{i,j}$ is tangent to $M$. 

Since this holds true for any pair $\{i,j\}$ with $i< j$, $M$ has to be a hypersphere. 
\end{proof}

%%%%%%%%%%%%%%%%%% Appendix
\appendix

\section{The problem in one dimension}\label{app A}

The conjecture about the shape of the symmetric Gaussian isoperimetric problem can be easily verified in one dimension.

\begin{proposition}[Isoperimetric problem on the real line with symmetry] \label{prop:line_sym}
Let $m\in(0,1)$. On the real line $\R$, the Gaussian perimeter has a minimum among the sets of 
Gaussian length $m$, which are symmetric with respect to the origin. 
More precisely, 
\begin{itemize}
\item If $m < 1/2$, the unique perimeter minimizer is the symmetric tails $(-\infty,-a)\cup(a,\infty)$ for a suitable $a>0$.
\item If $m > 1/2$, the unique perimeter minimizer is the centered interval $(-a,a)$ for a suitable $a>0$.
\item If $m = 1/2$, both configurations have equal perimeter.
\end{itemize}
\end{proposition}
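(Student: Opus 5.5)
We treat this as a one-dimensional reduction: classify the possible symmetric sets and compare perimeters explicitly. Let $\varphi(t)=e^{-t^2/2}/\sqrt{2\pi}$ and $\Phi_1(s)=\int_{-\infty}^s\varphi$. In dimension one the Gaussian perimeter of a set $E\subset\R$ of finite perimeter is $P_\gamma(E)=\sqrt{2\pi}\sum_{t\in\partial^*E}\varphi(t)$, and $\partial^*E$ is a finite set whenever $P_\gamma(E)<\infty$ on bounded regions. Since $\varphi$ is bounded below on compact sets, a finite-perimeter set coincides (up to null sets) with a finite union of intervals. Symmetry makes $\partial^*E$ symmetric, so the boundary points are $0$ (possibly) together with pairs $\pm t_k$ for $0<t_1<\dots<t_N$.

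The first step is to show that a minimizer has at most one pair of boundary points, and no boundary at $0$. Up to complement, $E$ is determined by the sign pattern on the intervals between consecutive boundary points. Suppose we keep only the outermost or innermost structure, replacing $E$ by a set with a single pair $\pm a$ that is either $(-a,a)$ or its complement, with the same mass. Removing a boundary point at $0$ only lowers the perimeter. The claim to verify is then the following: for a symmetric set with $2N\ge 2$ nonzero boundary points and mass $m$, the perimeter is at least $2\sqrt{2\pi}\,\varphi(a)$, where $a$ is the threshold of the single-pair configuration of the same type.

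I would argue by half-line reduction. Restrict to $(0,\infty)$, where $E\cap(0,\infty)$ has Gaussian mass $m/2$ and perimeter $\frac12 P_\gamma(E)$ (minus any contribution at $0$). On the half-line, the relevant comparison is between the sets $(0,a)$ and $(a,\infty)$ of mass $m/2$. The key monotonicity fact is that $\varphi$ is decreasing on $(0,\infty)$. A set on $(0,\infty)$ with outermost boundary point $t_N$ satisfies one of two cases: either it contains $(t_N,\infty)$, so $m/2\ge 1-\Phi_1(t_N)$, or it is contained in $(0,t_N)$, so $m/2\le\Phi_1(t_N)-\tfrac12$. In the first case, the single tail $(a,\infty)$ with $1-\Phi_1(a)=m/2$ has $a\ge t_N$, hence $\varphi(a)\le\varphi(t_N)\le$ the half-perimeter. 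In the second case, the interval $(0,a)$ with $\Phi_1(a)-\frac12=m/2$ has $a\le t_N$. This inequality goes the wrong way, since $\varphi(a)\ge\varphi(t_N)$, so this case needs more care.

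This second case is the main obstacle. In it, the innermost point $t_1$ matters. If $(0,t_1)\subset E$ then $a\ge t_1$, giving $\varphi(a)\le\varphi(t_1)$, which suffices. Otherwise $E\cap(0,\infty)$ starts at $t_1$ and ends at $t_N$ with $N\ge 2$. Then I compare with the tail of the same mass, $(b,\infty)$ with $1-\Phi_1(b)=m/2$. Since $E\subset(t_1,\infty)$, we get $b\ge t_1$, so $\varphi(b)\le\varphi(t_1)$, which is strictly less than the half-perimeter $\varphi(t_1)+\varphi(t_N)$. In every case the optimum is therefore among $\{(-a,a),\ (-\infty,-b)\cup(b,\infty)\}$ with the same mass, and existence follows.

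The final step compares these two candidates. With $m/2=\Phi_1(a)-\tfrac12=1-\Phi_1(b)$, we have $\Phi_1(a)+\Phi_1(b)=\tfrac32-\dots$; more cleanly, $\Phi_1(b)=\Phi_1(-a)+\tfrac12\cdot$. One gets $b=a$ exactly when $m=\tfrac12$. For $m<\tfrac12$ we have $b>a$, hence $\varphi(b)<\varphi(a)$ and the tails win; for $m>\tfrac12$ the interval wins. Uniqueness follows from the strict inequalities in the reductions above.
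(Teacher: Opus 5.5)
Your first case (the positive part of $E$ contains $(t_N,\infty)$) is wrong as written. From $1-\Phi_1(a)=m/2\ge 1-\Phi_1(t_N)$ you get $\Phi_1(a)\le\Phi_1(t_N)$, i.e.\ $a\le t_N$. So this case has exactly the wrong-direction problem you flagged in the second case.

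In fact the inequality you want there is false, not just unproved. Write $a$ for the threshold of the centered interval and $b$ for that of the tails of mass $m$; for $m>\frac12$ one has $b<a$. Take $E_L=(-\infty,-L)\cup(-s_L,s_L)\cup(L,\infty)$ with $\gamma(E_L)=m$. Its positive part contains $(t_N,\infty)=(L,\infty)$. Yet as $L\to\infty$ we have $s_L\to a$ and $P_\gamma(E_L)\to 2e^{-a^2/2}<2e^{-b^2/2}$, so $E_L$ beats the tails. Hence no argument in that case can compare only with the tail.

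Relatedly, finite Gaussian perimeter gives only a locally finite boundary. For example, $\bigcup_{k\ge1}\pm(k,k+\frac12)$ has finite Gaussian perimeter. So $E$ need not be a finite union of intervals, and an outermost point $t_N$ need not exist.

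The repair is already in your second case: drop the split on $t_N$ and split only on the innermost point. By symmetry $0\notin\partial^*E$. Since $0<m<1$ and the boundary is locally finite, there is a smallest positive boundary point $t_1$. Hence, up to null sets, either $(0,t_1)\subset E$ or $E\cap(0,\infty)\subset(t_1,\infty)$.

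In the first alternative, $m/2\ge\Phi_1(t_1)-\frac12$ gives $a\ge t_1$. In the second, $m/2\le 1-\Phi_1(t_1)$ gives $b\ge t_1$. In both, $\frac12P_\gamma(E)\ge e^{-t_1^2/2}$, which is at least half the perimeter of the corresponding candidate. The inequality is strict unless $t_1$ is the only positive boundary point, and in that case $E$ is that candidate.

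This gives the lower bound for every symmetric competitor, hence existence and the reduction to two candidates. Your final comparison then finishes, including uniqueness; stated cleanly, $\Phi_1(a)+\Phi_1(b)=\frac32$, so $b>a$ if and only if $m<\frac12$.

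Once repaired, your proof matches the paper's structure. The paper's dichotomy (``the origin lies in an interval of $E$ or not'') is exactly your split on $(0,t_1)$. The paper then applies the one-dimensional Gaussian isoperimetric inequality to the two halves of $E$ (or of its complement), and takes existence from the direct method. Your monotonicity-of-$\varphi$ comparison instead proves the needed half-line bound by hand and gives existence for free.
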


\begin{proof}
The existence of minimizers is standard and follows by the direct method in the calculus of variations in the $BV$ setting.
Let $E\subset\R$ be a minimizer of the Gaussian perimeter. Since $\chi_E$ belongs to $SBV_{\textnormal{loc}}(\R)$, the set $E$ is equivalent,
up to a set of measure zero, to a countable and locally finite union of intervals $(a_h,b_h)$ with disjoint closures. We have two cases.

\smallskip
\noindent\textit{The origin does not belong to any of the intervals $(a_h,b_h)$}. If we write $E^-\coloneqq (-\infty,0)\cap E$ and $E^+\coloneqq (0,\infty)\cap E$,
we have $P_\gamma(E)=P_\gamma(E^-)+P_\gamma(E^+)$. Necessarily, $E^-$ must be the interval $(-\infty,-a)$, with $a>0$ such that 
$\gamma((-\infty,-a))=m/2$, because this is the minimal configuration of the Gaussian isoperimetric problem. By symmetry, $E^+$ must be the interval $(a,\infty)$.

\smallskip
\noindent\textit{The origin belongs to one of the intervals $(a_h,b_h)$}. Instead of $E$ we consider its complement, reducing to the previous case (with mass $(1-m)$).

\smallskip
Having established that the only candidates are the symmetric tails and the centered interval, a direct computation completes the classification.
\end{proof}

Because of this dichotomy, we cannot rearrange an $n$-symmetric set $E$ by substituting its one-dimensional slices with the minimal one-dimensional configuration. 
The following example in dimension 2 illustrates this failure.

\begin{example} \label{ex:counterexample}
Let $a > 0$ be the threshold such that the centered interval $(-a,a)$ has
Gaussian measure $1/2$. Consider the $2$-symmetric set $E \subset \R^2$
defined by
\begin{equation*}
E \coloneqq
\left((-a-\e,a+\e)\times[-1,1]\right)
\cup
\left((-a+\e,a-\e)\times
\bigl((-\infty,-1)\cup(1,\infty)\bigr)\right),
\end{equation*}
where $\e>0$ is sufficiently small.

For $|y|\leq1$, the horizontal slice is
$E_y=(-a-\e,a+\e)$, which has measure $m_+>1/2$.
According to Proposition \ref{prop:line_sym}, this is the unique minimizer.

For $|y|>1$, the horizontal slice is
$E_y=(-a+\e,a-\e)$, which has measure $m_-<1/2$.
According to Proposition \ref{prop:line_sym}, the unique minimizer for this
mass is the union of symmetric tails
$(-\infty,-b)\cup(b,\infty)$, with $b>a+\e$.

If we construct a rearranged set $F$ by replacing each slice with its unique
$1$-dimensional minimizer, we obtain
\begin{equation*}
F_y=
\begin{cases}
(-a-\e,a+\e) & \text{for } |y|\leq1,\\
(-\infty,-b)\cup(b,\infty) & \text{for } |y|>1.
\end{cases}
\end{equation*}
At the interfaces $y=\pm1$, the set $F$ undergoes a topological
change, effectively turning ``inside out'', see Figure
\ref{fig:obstruction}.
This discontinuity introduces horizontal boundary components with
significant Gaussian weight, whereas the original set $E$ has only small
horizontal shoulders of total length $4\e$ at each interface.
Moreover, as $\e\to0$, we have $b\to a$, so the difference between the
vertical contributions to the perimeters tends to zero, while the excess
horizontal contribution tends to $2e^{-1/2}$.
Consequently, $P_\gamma(F)>P_\gamma(E)$ for $\e>0$ sufficiently small,
showing that optimizing slices individually does not minimize the total
perimeter.
\end{example}

\begin{figure}[ht]
\centering
\begin{tikzpicture}[scale=0.92]
% Schematic values: a-e = 1.00, a+e = 1.45, b = 1.80,
% and the levels y=+/-1 are drawn at y=+/-0.85.

% Original set E
\begin{scope}[xshift=-4.15cm]
  \fill[blue!30] (-1.45,-0.85) rectangle (1.45,0.85);
  \fill[blue!30] (-1.00,0.85) rectangle (1.00,2.20);
  \fill[blue!30] (-1.00,-2.20) rectangle (1.00,-0.85);

  \draw[thick] (-1.45,-0.85) -- (-1.45,0.85);
  \draw[thick] ( 1.45,-0.85) -- ( 1.45,0.85);
  \draw[thick] (-1.00,0.85) -- (-1.00,2.20);
  \draw[thick] ( 1.00,0.85) -- ( 1.00,2.20);
  \draw[thick] (-1.00,-2.20) -- (-1.00,-0.85);
  \draw[thick] ( 1.00,-2.20) -- ( 1.00,-0.85);
  \draw[thick] (-1.45, 0.85) -- (-1.00, 0.85);
  \draw[thick] ( 1.00, 0.85) -- ( 1.45, 0.85);
  \draw[thick] (-1.45,-0.85) -- (-1.00,-0.85);
  \draw[thick] ( 1.00,-0.85) -- ( 1.45,-0.85);

  \draw[thick,dashed] (-1.00,2.20) -- (-1.00,2.48);
  \draw[thick,dashed] ( 1.00,2.20) -- ( 1.00,2.48);
  \draw[thick,dashed] (-1.00,-2.20) -- (-1.00,-2.48);
  \draw[thick,dashed] ( 1.00,-2.20) -- ( 1.00,-2.48);

  \draw[->] (-2.00,0) -- (2.05,0) node[right] {$x$};
  \draw[->] (0,-2.55) -- (0,2.55) node[above] {$y$};
  \node at (0,-2.90) {$E$};
  \node[above left] at (0,0.85) {\small $y=1$};
  \node[below left] at (0,-0.85) {\small $y=-1$};
  \node[right] at (1.00,1.48) {\small $a-\e$};
  \node[right] at (1.45,0.32) {\small $a+\e$};
\end{scope}

% Rearranged set F
\begin{scope}[xshift=4.15cm]
  \fill[red!30] (-1.45,-0.85) rectangle (1.45,0.85);
  \fill[red!30] (-3.10,0.85) rectangle (-1.80,2.20);
  \fill[red!30] ( 1.80,0.85) rectangle ( 3.10,2.20);
  \fill[red!30] (-3.10,-2.20) rectangle (-1.80,-0.85);
  \fill[red!30] ( 1.80,-2.20) rectangle ( 3.10,-0.85);

  \draw[thick] (-1.45,-0.85) -- (-1.45,0.85);
  \draw[thick] ( 1.45,-0.85) -- ( 1.45,0.85);
  \draw[thick] (-1.80,0.85) -- (-1.80,2.20);
  \draw[thick] ( 1.80,0.85) -- ( 1.80,2.20);
  \draw[thick] (-1.80,-2.20) -- (-1.80,-0.85);
  \draw[thick] ( 1.80,-2.20) -- ( 1.80,-0.85);
  \draw[thick,dashed] (-1.80,2.20) -- (-1.80,2.48);
  \draw[thick,dashed] ( 1.80,2.20) -- ( 1.80,2.48);
  \draw[thick,dashed] (-1.80,-2.20) -- (-1.80,-2.48);
  \draw[thick,dashed] ( 1.80,-2.20) -- ( 1.80,-2.48);

  \draw[thick] (-1.45, 0.85) -- ( 1.45, 0.85);
  \draw[thick] (-3.10, 0.85) -- (-1.80, 0.85);
  \draw[thick] ( 1.80, 0.85) -- ( 3.10, 0.85);
  \draw[thick] (-1.45,-0.85) -- ( 1.45,-0.85);
  \draw[thick] (-3.10,-0.85) -- (-1.80,-0.85);
  \draw[thick] ( 1.80,-0.85) -- ( 3.10,-0.85);

  \node at (-3.27,0.85) {$\cdots$};
  \node at ( 3.27,0.85) {$\cdots$};
  \node at (-3.27,-0.85) {$\cdots$};
  \node at ( 3.27,-0.85) {$\cdots$};

  \draw[->] (-3.45,0) -- (3.50,0) node[right] {$x$};
  \draw[->] (0,-2.55) -- (0,2.55) node[above] {$y$};
  \node at (0,-2.90) {$F$};
  \node[above left] at (0,0.85) {\small $y=1$};
  \node[below left] at (0,-0.85) {\small $y=-1$};
  \node[right] at (1.45,0.32) {\small $a+\e$};
  \node[right] at (1.80,1.48) {\small $b$};
\end{scope}
\end{tikzpicture}
\caption{The set $E$ (left) has only short horizontal boundary at $y=\pm1$. Slice-by-slice
minimization produces $F$ (right), which switches from a centered interval to symmetric tails 
at both interfaces and therefore creates horizontal boundary of order one in Gaussian weight.}
\label{fig:obstruction}
\end{figure}

\begin{remark}
This obstruction arises because the optimal configuration in 
Proposition~\ref{prop:line_sym} depends discontinuously on the mass $m$ at the 
threshold $m = 1/2$. In contrast, the double-arc rearrangement of 
Theorem~\ref{theorem ineq} uses circular slices, where the optimal symmetric 
configuration (two antipodal arcs) is independent of the mass, cf.\ 
Lemma~\ref{lem:isop 2 symm}.
\end{remark}

%%%%%%%%%%%%%%%%%%%%%%%%%%%%%%%%%%%%%%%%%%%%%%%%%%%%%%%%%%%%%%%%%%%%%%%%%%%%%%%%%%%%%%%%%%%%

\medskip

\section{Obstruction}\label{app B}

It would be useful to generalize the double-arc rearrangement, by foliating
an $n$-symmetric set $E$ using spheres, or more generally using $k$-dimensional spheres, instead of circles.
More precisely, for every point in $\R^n$,  $n \geq 3$, we write $(x, y)$, with 
$x = (x_1, x_2, x_3) \in \R^3$ and $y = (y_1, \ldots, y_{n-3}) \in \R^{n-3}$.
For every $r > 0$, we set \(\partial B (r) \coloneqq \{ x \in \R^3 \colon |x| = r \}\).

Given a Borel set $E$, for every $(r, y) \in (0, \infty) \times \R^{n-3}$ we define the 
spherical slice $E_{(r, y)} $ as the subset of $\R^3$ given by
\begin{equation*}
E_{(r, y)} \coloneqq \{ x \in \R^3: x \in \partial B (r) \text{ and } (x, y) \in E\},
\end{equation*}
and define the \emph{spherical distribution} of $E$ as the function
\begin{equation*} 
\mu (r, y)\coloneqq \mathcal{H}^2_{\gamma} ( E_{(r, y)}), \qquad \text{ for every } (r, y) \in (0, \infty) \times \R^{n-3}.
\end{equation*}
The idea would be to substitute $E_{(r,y)}$ with the minimizer of the symmetric isoperimetric problem on the sphere $\partial B(r)$, 
under the constraint of having $\mathcal{H}^2_{\gamma}$ measure equal to $\mu$.

Since the Gaussian density is constant on spheres centered at the origin, it is enough to
analyze the problem on the unit sphere $\Ss^2$, endowed with the standard Hausdorff measure.

The needed classification follows from the work of Viana~\cite{Viana} on the isoperimetric
problem in real projective spaces $\mathbb{RP}^{n+1}$. Equivalently, Viana's result implies
that, among antipodal-invariant regions of $\Ss^{n+1}$ with fixed volume, the least perimeter
is attained by sets of the form
\[
\left\{x\in \Ss^{n+1}:\sum_{i=1}^r x_i^2>a\right\},
\]
for some $r\in\{1,\ldots,n+1\}$ and $a\in\R_+$. The class of sets which are symmetric with
respect to the coordinate hyperplanes is contained in the class of antipodal-invariant sets. 
Moreover, the minimizers above are themselves symmetric with respect to all coordinate hyperplanes. 
Hence Viana's classification immediately yields the corresponding classification in the smaller
$n$-symmetric class.

We record the statement in the form needed below.

\smallskip

\begin{proposition}[Isoperimetric problem on the sphere with symmetry] \label{prop:sphere_sym}
Let $m\in(0,4\pi)$. On the sphere $\Ss^2$, the perimeter has a minimum among the sets of area $\mathcal{H}^2(E)=m$,
which are $3$-symmetric (symmetric with respect to the three coordinate planes). 
More precisely, 
\begin{itemize}
\item If $m < 2\pi$, the unique perimeter minimizer is the union of two antipodal geodesic caps.
\item If $m > 2\pi$, the unique perimeter minimizer is a symmetric equatorial belt (the complement of two antipodal caps).
\item If $m = 2\pi$, both configurations have equal perimeter.
\end{itemize}
\end{proposition}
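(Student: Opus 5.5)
The plan is to deduce the statement from Viana's classification, exactly as the preceding paragraph indicates, and then settle the comparison between the two candidate families by a direct computation on $\Ss^2$. First, existence: the class of $3$-symmetric sets of finite perimeter in $\Ss^2$ with $\mathcal{H}^2(E)=m$ is closed under $L^1$ convergence. Perimeter is lower semicontinuous, and compactness in $BV(\Ss^2)$ is standard. So the direct method gives a minimizer. Every $3$-symmetric set is antipodal-invariant, because $-\mathrm{Id}=A_1A_2A_3$. Hence the minimum over $3$-symmetric sets is at least the minimum over antipodal-invariant sets.

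Second, specialize Viana's result to $\Ss^2$ (his $n+1=2$). The antipodal-invariant minimizers are, up to rotation, the sets $\{x\in\Ss^2:\sum_{i=1}^r x_i^2>a\}$ with $r\in\{1,2\}$. For $r=1$ this set is $\{|x_1|>\sqrt a\}$, a pair of antipodal caps. For $r=2$ it is $\{x_1^2+x_2^2>a\}=\{|x_3|<\sqrt{1-a}\}$, an equatorial belt. Both families are $3$-symmetric once the axis is a coordinate axis. Therefore the two minima coincide, and any $3$-symmetric minimizer is an antipodal-invariant minimizer. Uniqueness in Viana's theorem, together with $3$-symmetry, then forces the axis of the cap or belt to be one of the coordinate axes. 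So the minimizer is, up to the coordinate permutation, one of the two configurations.

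Third, compare the two families by hand. Two caps of total area $m$ have half-angle $\theta$ with $2\cdot 2\pi(1-\cos\theta)=m$, and perimeter $P_{\mathrm{cap}}(m)=2\cdot 2\pi\sin\theta$. This gives $P_{\mathrm{cap}}(m)=4\pi\sqrt{1-(1-\tfrac{m}{4\pi})^2}$. The belt of area $m$ is the complement of two caps of area $4\pi-m$, so $P_{\mathrm{belt}}(m)=P_{\mathrm{cap}}(4\pi-m)$. Set $s=m/(4\pi)$. Then $P_{\mathrm{cap}}=4\pi\sqrt{s(2-s)}$ and $P_{\mathrm{belt}}=4\pi\sqrt{(1-s)(1+s)}$. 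Their squares differ by $s(2-s)-(1-s^2)=2s-1$, so caps are strictly better for $m<2\pi$, belts are strictly better for $m>2\pi$, and the two tie at $m=2\pi$. This gives the three bullets, with uniqueness in the strict cases.

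The main obstacle is the rigorous transfer of uniqueness from Viana's projective-space statement to the $3$-symmetric class. This includes checking that his result indeed covers $\mathbb{RP}^2$ with uniqueness up to isometries. It also requires ruling out non-coordinate axes, which I would handle by observing that a cap pair or belt is invariant under the three coordinate reflections only if its axis is fixed by them up to sign, i.e.\ is a coordinate axis. If uniqueness in Viana's theorem is only stated up to isometry, this axis argument closes the gap.
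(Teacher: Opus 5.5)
Your proposal is correct and follows the same route as the paper. The paper deduces the classification directly from Viana's theorem: $3$-symmetric sets are antipodal-invariant, and Viana's minimizers, with axis along a coordinate axis, are themselves $3$-symmetric. Your explicit comparison $P_{\mathrm{cap}}^2-P_{\mathrm{belt}}^2=16\pi^2(2s-1)$, with $s=m/(4\pi)$, and your argument that invariance under $A_1,A_2,A_3$ forces the axis to be a coordinate axis supply details the paper leaves implicit.
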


\medskip
The fact that the shape of the symmetric isoperimetric problem on the sphere depends on the mass provides an obstruction to our idea:
the rearrangement proposed above is not going to decrease the global Gaussian perimeter in general. 

\begin{example} \label{ex:counterexample2}
Let $E\coloneqq (-a,a)\times\R^2$ be a symmetric slab. Taking $a$ large enough, $E$ is the only minimizer of the $3$-symmetric Gaussian isoperimetric problem (see \cite[Corollary 1]{BJ}).
If we foliate $E$ by spheres $\partial B (r)$, the spherical slice $E_r$ is a full sphere for $r<a$
and an equatorial belt for $r>a$ (see Figure \ref{fig:obstruction2}). 
However, for $r$ large enough, the equatorial belt is no longer the minimizer on $\partial B (r)$ 
under the constraint to have measure $\mu(r)$. If we substitute the slice $E_r$ with the two antipodal caps (or any other shape), we change the global shape of $E$
and we strictly increase the Gaussian perimeter. 
\end{example}

\begin{figure}
\begin{center}
\includegraphics[width=12cm]{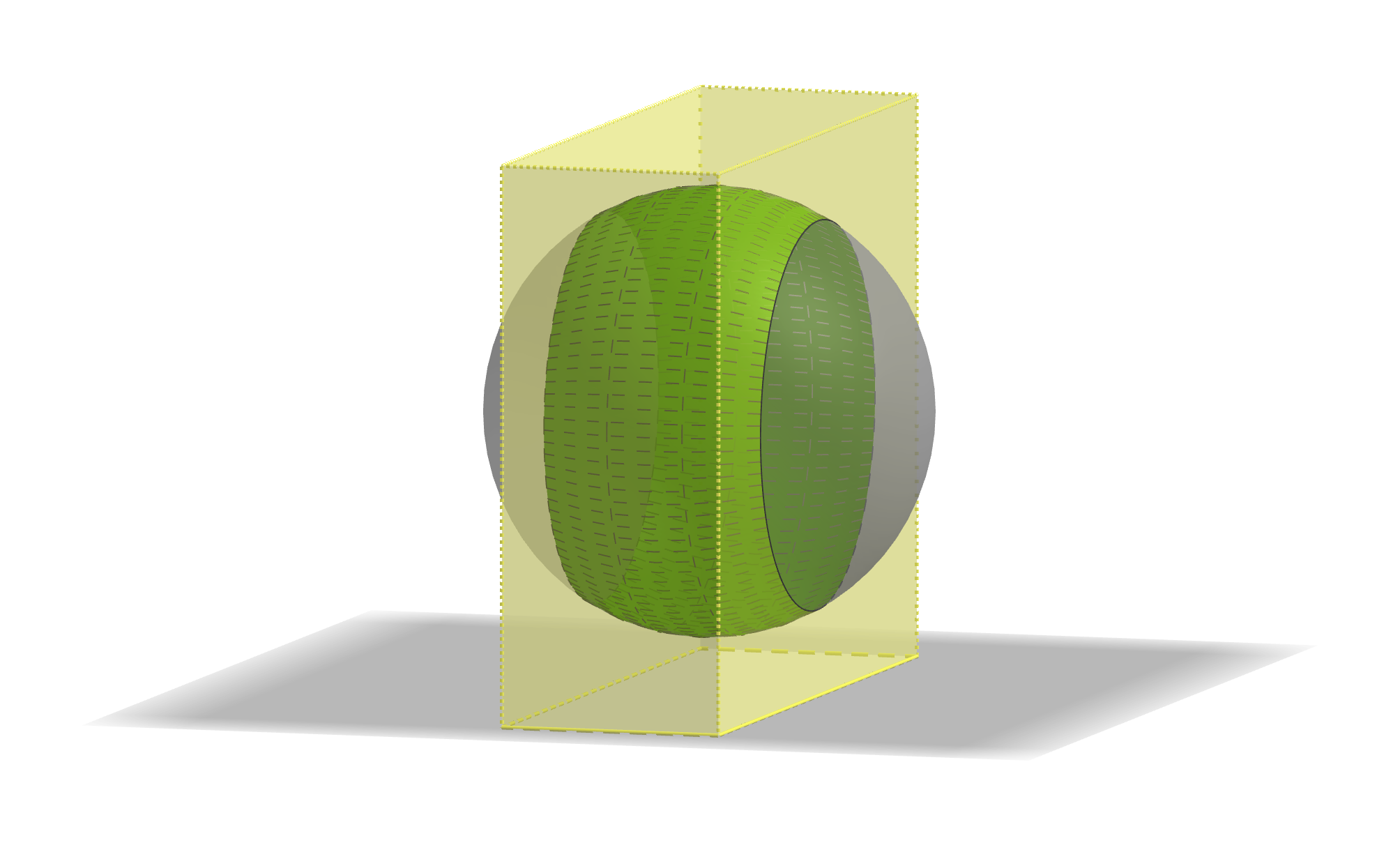}
\end{center}
\caption{The intersection of a symmetric slab with a sphere is an equatorial belt.}
\label{fig:obstruction2}
\end{figure}

\begin{remark}
Viana's classification also bears on a different strategy: rather than substituting the
spherical slices of a fixed $\R^n$ as above, one lifts the problem to spheres of growing
dimension, in the spirit of the classical spherical derivation of the Gaussian isoperimetric
inequality (see, e.g., \cite{Ledoux1994}). Let $\Ss^N_{\sqrt N}\subset\R^{N+1}$ be the sphere of radius $\sqrt N$
and let $\Pi_{N+1,n}\colon\R^{N+1}\to\R^n$ be the projection onto the first $n$
coordinates. An $n$-symmetric (hence centrally symmetric) set $E \subset \R^n$ lifts to the antipodally
invariant set $\Pi_{N+1,n}^{-1}(E)\cap\Ss^N_{\sqrt N}$, whose normalized measure converges
to $\gamma(E)$ as $N \to \infty$. Since the $n$-symmetric class lies inside the antipodally invariant one and
Viana's minimizers $\{x:\sum_{i\le r}x_i^2>a\}$ are themselves $n$-symmetric, one could hope to
recover the symmetric Gaussian minimizers in the Poincar\'e limit $N\to\infty$. 

A possible difficulty in this limiting argument is the lack of an a priori bound on the dimension needed to 
describe the spherical minimizers. Since complementation exchanges the ranks $r_N$ and $N+1-r_N$, the relevant quantity is
$\min\{r_N,N+1-r_N\}$.

For $\gamma(E)\in(0,1/2)$, boundedness of this quantity would be consistent with Heilman's conjectural picture of finite-dimensional attainment~\cite{He2}, which motivates the cylindrical hypothesis $E=E'\times\R$.

At mass $1/2$, the minimizing ranks themselves can always be chosen to diverge. Indeed, if $r$ is a minimizing rank on 
$\Ss_{\sqrt N}^{N}\subset\R^{N+1}$, then so is $N+1-r$, since the complementary region has the same mass and perimeter. 
Choosing the larger rank in each complementary pair gives $r_N\ge(N+1)/2$, and hence $r_N\to\infty$.

This observation alone does not establish an obstruction to the Gaussian limit: if $N+1-r_N$ remains bounded, these regions 
still admit descriptions involving a bounded number of coordinates. The possible obstruction is instead that both $r_N$ and 
$N+1-r_N$ might diverge. 
\end{remark}

%%%%%%%%%%%%%%%%%%%%%%%%%%%%%%%%%%%%%%%%%%%%%%%%%%%%%%%%%%%%%%%%%%%%%%%

\medskip

\section{Structural reduction in dimension two}\label{app C}

In this section, we address the symmetric Gaussian isoperimetric problem in the planar case $n=2$.
Unlike the one-dimensional setting, where the classification is straightforward, the two-dimensional problem presents a richer geometric structure
among the possible competitors. A set $E$ that is stationary for the Gaussian perimeter subject to a volume constraint must have a boundary $\partial E$ 
satisfying the constant mean curvature equation:
\begin{equation} \label{eq:GMC_intro}
\kappa(x) - x \cdot \nu^E(x) = \lambda,
\end{equation}
where $\nu^E$ is the exterior unit normal and
$\kappa=\diver_{\partial E}\nu^E$ is the corresponding signed curvature.
While circles centered at the origin and straight lines (forming strips) are the most immediate solutions to \eqref{eq:GMC_intro}, 
they are not the only ones. A notable class of non-trivial solutions is given by the so-called \emph{Chang curves}~\cite{Chang}.
These are compact curves that satisfy \eqref{eq:GMC_intro} with $\lambda < 0$ and exhibit non-trivial rotational 
symmetry (see Figure~\ref{fig:chang_curves}).
The support function $h$ satisfies the ODE 
\begin{equation} \label{eq:support_ode}
h'' + h = \frac{1}{h + \lambda}.
\end{equation}
Chang curves with $l$-fold symmetry arise as solutions bifurcating from the trivial circle solution $h \equiv r_0$ at critical values of the radius. Linearizing \eqref{eq:support_ode} leads to the equation $\phi'' + (1+r_0^2)\phi = 0$. The kernel of the linearized operator is non-trivial if and only if $\sqrt{1+r_0^2} = k$ 
for some integer $k \ge 2$; for $l$-fold symmetry, one considers the branch corresponding to the eigenfunction $\cos(l\theta)$.

It is known that Chang curves $\Gamma_l$ with $l$-fold symmetry, $l \ge 6$, are unstable critical points for 
the Gaussian isoperimetric problem. However, instability in the general case does not automatically rule them out 
for the \emph{symmetric} problem, as the directions of instability might break the symmetry constraint. 
We can prove via Theorem \ref{theorem ineq} that for the specific class of 2-symmetric sets, a Gaussian isoperimetric set 
cannot have a Chang curve of parameter $l \ge 4$ as boundary.

\begin{lemma}\label{lem:chang_circle_intersection}
Let $l\geq4$ be even, let $\Gamma_l\subset\R^2$ be a
noncircular Chang curve with $l$-fold rotational symmetry, and let
$E_l$ be the convex region enclosed by $\Gamma_l$, positioned so
that $E_l$ is $2$-symmetric. Set
\[
    r_{\min}\coloneqq\min_{x\in\Gamma_l}|x|,
    \qquad
    r_{\max}\coloneqq\max_{x\in\Gamma_l}|x|.
\]
For every $r\in(r_{\min},r_{\max})$, the circular section
$E_l\cap\partial B(r)$ has at least $l$ components of positive
length, separated by gaps of positive length. Since $l\geq4$,
these sections cannot coincide, even up to $\mathcal H^1$-null
sets, with two antipodal arcs. Hence the equality condition
\textup{(i)} in Theorem~\ref{theorem ineq} fails on an interval
of radii of positive length, and therefore
\[
    P_\gamma(F_\mu)<P_\gamma(E_l),
\]
where $\mu$ is the circular distribution of $E_l$.
In particular, $E_l$ cannot be a minimizer of the Gaussian
perimeter among $2$-symmetric sets with the same Gaussian measure.
\end{lemma}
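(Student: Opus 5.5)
The plan is to work in polar coordinates with the support-function description of $\Gamma_l$ and reduce everything to the behaviour of the radial function $\rho(\theta)$ of the convex curve. Since $E_l$ is convex and (being a Chang curve bifurcating from a circle about the origin) contains the origin in its interior, $\Gamma_l$ is a star-shaped graph $\{\rho(\theta)\omega_\theta\}$ with $\rho$ positive, continuous (indeed analytic) and $2\pi/l$-periodic. Then $E_l\cap\partial B(r)=\{r\omega_\theta:\rho(\theta)>r\}$ up to the finite set $\{\rho=r\}$. The first step is to describe $\rho$ on one period. I would use that $\Gamma_l$ is noncircular and $l$-fold symmetric, and that it also has the reflection symmetries inherited from the ODE \eqref{eq:support_ode} (even solutions $h$). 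My intended claim is that on a fundamental period $[0,2\pi/l]$, $\rho$ attains $r_{\max}$ at one angle and $r_{\min}$ at another, and is strictly monotone in between. Strict monotonicity is the claim I would try to get from the fact that $h$ solves a second-order autonomous ODE, so its critical points are isolated and alternate. This transfers to $\rho$ because, for a convex curve around the origin, $\rho'(\theta)=0$ exactly where the normal is radial, i.e.\ where $h=\rho$ and $h'=0$.

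Next I count components. Fix $r\in(r_{\min},r_{\max})$. On each period the continuous function $\rho-r$ is positive near the maximum angle and negative near the minimum angle. So the set $\{\rho>r\}$ contains an open interval around each of the $l$ maximum angles, and these intervals are separated by open intervals around the $l$ minimum angles, where $\rho<r$. Even without full monotonicity this gives at least $l$ components of positive length and $l$ gaps of positive length, which is all the statement needs. Two antipodal arcs have exactly two components, so for $l\ge4$ the section is not $\mathcal H^1$-equivalent to $\Ar_{\beta}(rp)\cup\Ar_\beta(-rp)$. In fact $\mathcal H^0(\partial^*(E_l)_{(r)})\ge 2l\ge8>4$, so the inequality of Lemma~\ref{lem:isop 2 symm} is strict for every such $r$.

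To conclude, I apply Theorem~\ref{theorem ineq} with $n=2$, where there is no $y$ variable and $B=(r_{\min},r_{\max})$, a set of positive $\mathcal H^1$-measure. If equality held in \eqref{per ineq} on $B$, condition (i) would have to hold for a.e.\ $r\in B$, which we have just excluded. So $P_\gamma(F_\mu;\Phi(B\times\Ss^1))<P_\gamma(E_l;\Phi(B\times\Ss^1))$. On the complement of $B$ the inequality \eqref{per ineq} still holds, and adding the two pieces gives $P_\gamma(F_\mu)<P_\gamma(E_l)$. The circle $|x|=r_{\min}$ or $r_{\max}$ and the origin carry no perimeter issue, because $\Gamma_l$ meets those circles only at finitely many points. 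Since $F_\mu$ is $2$-symmetric with the same Gaussian measure (it is $\mu$-distributed), $E_l$ cannot be a minimizer.

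The main obstacle is the first step: making sure the radial function really oscillates with exactly period $2\pi/l$, and that the extremal angles are positioned so that $E_l$ is $2$-symmetric (this needs $l$ even). I would lean on the known structure of Chang curves, namely $h$ even, $2\pi/l$-periodic and with one maximum and one minimum per period. I would also use the radial-normal characterization of critical points of $\rho$. Only the existence of $l$ separated maxima of $\rho$ exceeding $r$ and $l$ separated minima below $r$ is really needed, and that already follows from the periodicity together with $\rho\not\equiv$ const.
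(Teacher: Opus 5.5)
Your proposal is correct and follows essentially the same route as the paper, whose argument is contained in the statement itself. For $r\in(r_{\min},r_{\max})$ each section has at least $l$ components of positive length separated by gaps of positive length, so condition (i) of Theorem~\ref{theorem ineq} fails on a set of radii of positive measure, which gives the strict inequality. Your interlacing argument justifies the component count cleanly: $\rho$ is $2\pi/l$-periodic and nonconstant, so its maxima lie above $r$ and are separated by minima below $r$, and the finer monotonicity structure of Chang curves is not needed.
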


\begin{figure}
\begin{center}
\includegraphics[width=15.5cm]{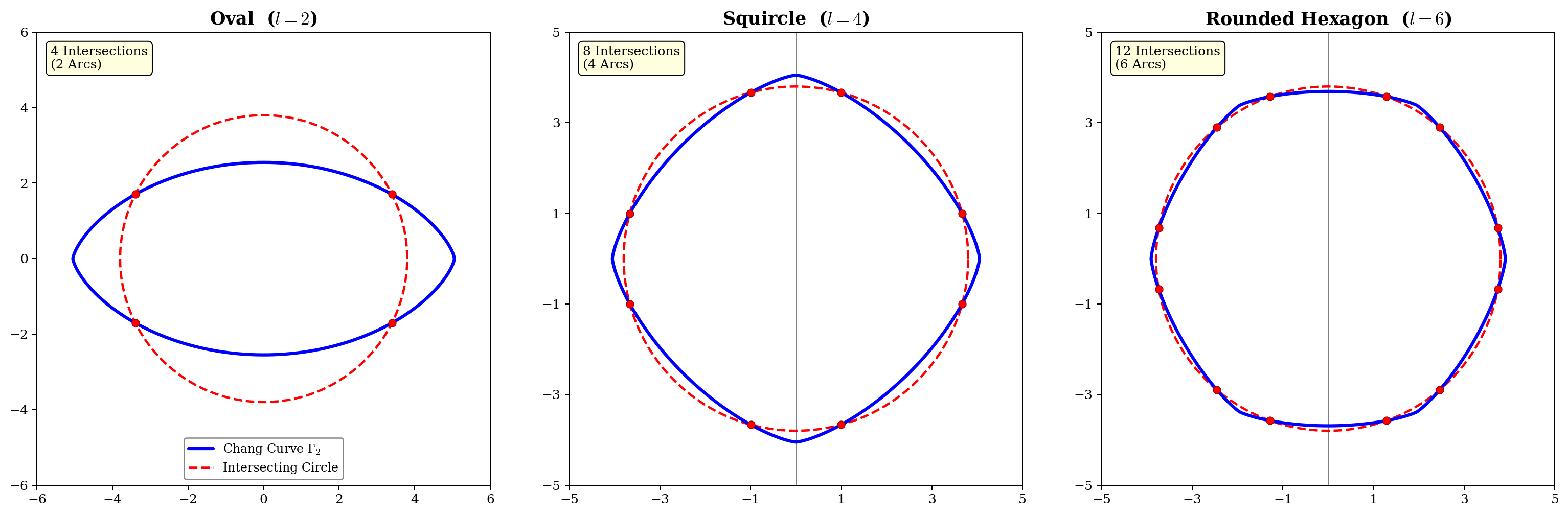}
\end{center}
\caption{A schematic illustration of Chang curves $\Gamma_l$ for $l = 2, 4, 6$. 
A generic circle $\partial B(r)$ with $r_{\min} < r < r_{\max}$ intersects $\Gamma_l$ 
in exactly $2l$ points (marked in red). For $l \geq 4$, this yields at least $8$ intersection points.}
\label{fig:chang_curves}
\end{figure}

\smallskip

\begin{proposition}
Let \(E\subset\R^2\) be a \(2\)-symmetric minimizer of the Gaussian
perimeter under fixed Gaussian area. Then, up to replacing \(E\) by \(E^c\), $E$ has one of the following configurations:
\begin{enumerate}[label=(\roman*)]
\item a symmetric strip; 
\item a disk centered at the origin;
\item a bounded open convex set such that $\nu^E(p)\neq p/|p|$ for any $p\in\partial E\cap\R^2_+$.
\end{enumerate}
\end{proposition}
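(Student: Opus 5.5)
The plan is to combine the structural results of Sections 4 and 5 in the case $n=2$, where there is no $y$-variable. By Theorem~\ref{theorem reg}, $\partial E$ is analytic outside the origin. We then apply Proposition~\ref{prop:components_structure}: either $\partial^*E$ consists of two parallel lines, which gives configuration (i), or $\partial^*E$ is connected.

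Assume $\partial^*E$ is connected. Apply Lemma~\ref{lem:graph} with $i=1,2$.
\begin{itemize}
\item If $\nu_1\equiv0$ on $\partial^*E$, then $E=\R\times E'$ with $E'\subset\R$ symmetric. Proposition~\ref{prop:line_sym} forces $E'$ to be a centered interval or symmetric tails, so $E$ is a symmetric strip or its complement, again case (i). The same holds if $\nu_2\equiv 0$.
\item Otherwise $\nu_1$ and $\nu_2$ have strict signs on $\partial^*E\cap\{x_1>0\}$ and $\partial^*E\cap\{x_2>0\}$ respectively.
\end{itemize}

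Next invoke the last part of Corollary~\ref{cor:no radial} with $n=2$. If some $p\in\partial E\cap\R^2_+$ has $\nu^E(p)=p/|p|$, then $E$ is a centered disk. If $\nu^E(p)=-p/|p|$, then $E^c$ is a centered disk. Either way we are in case (ii) up to complementation. So we may assume $\nu^E(p)\neq \pm p/|p|$ on $\partial E\cap\R^2_+$, and it remains to prove boundedness and convexity after possibly passing to $E^c$.

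For convexity I would use the sign structure. By Theorem~\ref{theorem rig}(ii) we have $\nu^E_{x\parallel}\neq0$ on $\partial E\cap \R^2_+$ (case (i) there gives a disk or an annulus-type set, which is excluded above). Hence $\Pi$ restricted to the quadrant arc is a local diffeomorphism onto $\Omega_\mu\subset(0,\infty)$, and by connectedness $\partial E\cap\R^2_+$ is a single arc along which $|x|$ is strictly monotone. Combined with the strict signs of $\nu_1$ and $\nu_2$, this arc is a graph over both axes, monotone in each.

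Now replace $E$ by $E^c$ if necessary so that $\nu_1,\nu_2>0$ on the respective half-planes. Then $E\cap\R^2_+$ is the subgraph of a nonincreasing function. I would then argue boundedness: if the arc went to infinity, $|x|\to\infty$ along it. An unbounded monotone graph solving the curve equation $\kappa - x\cdot\nu=\lambda$ must be asymptotically a line, since a bounded-curvature argument and the Gaussian weight give finite perimeter. That line would have to be parallel to an axis, and a phase-plane/ODE analysis of \eqref{eq:GMC_intro} shows such a curve is exactly a line, which makes $E$ a strip and contradicts the current case. I expect boundedness to be the main technical obstacle.

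Finally, for convexity of the bounded region I would follow the adaptation of Heilman's argument \cite{He2} already described in the proof of Theorem~\ref{theorem reg}. In dimension two, the destabilizing $2$-symmetric variations built from the curvature, i.e.\ test functions $\varphi=\kappa$-related combinations averaged over the reflection group, show $\kappa$ cannot change sign. Alternatively, $\kappa$ satisfies a Jacobi-type equation along the curve, and a nodal-domain argument as in Step 3 of Lemma~\ref{lem:graph} would contradict stability (Proposition~\ref{prop:second_var_geom}) if $\kappa$ changed sign. Then $\kappa\geq 0$ together with a closed simple curve gives a convex set. The orientation sign ensures $E$, rather than $E^c$, is the convex one, yielding (iii).
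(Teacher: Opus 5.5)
Your reduction to the connected case and your use of Corollary~\ref{cor:no radial} for alternative (ii) are fine. However, the two steps that carry the proposition, boundedness and convexity, are not actually proved.

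For boundedness, you only assert that an unbounded solution of $\kappa-x\cdot\nu^E=\lambda$ is asymptotically a line and then exactly a line. No mechanism is given for either claim.

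For convexity, Heilman's argument as used in the proof of Theorem~\ref{theorem reg} needs a cylindrical variable (the test functions $f(x')(y_1^2-1)$), which a non-cylindrical planar $E$ does not have. The nodal alternative also fails, because $\kappa$ is not a Jacobi field. With $L\varphi=\varphi''-(x\cdot\tau)\varphi'+(\kappa^2+1)\varphi$ one finds $L\kappa=2\kappa+\lambda\kappa^2$. Testing stability with the nodal parts of $\kappa$ therefore yields $-\int(2+\lambda\kappa)\kappa_\pm^2$, whose sign is not controlled, so Step~3 of Lemma~\ref{lem:graph} does not transfer.

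There is also an earlier slip. Complementation flips the signs of $\nu_1$ and $\nu_2$ simultaneously, so you cannot always arrange $\nu_1,\nu_2>0$. The opposite-sign case on the quadrant arc would have to be excluded separately.

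The missing idea is a first integral that exists only for $n=2$. Along an arclength parametrization, $\partial_s\nu^E=\kappa\tau$ and $\tau\perp\nu^E$. Differentiating the Euler--Lagrange equation then gives $\partial_s\kappa=\kappa\,x\cdot\tau$, i.e.\ $\partial_s\bigl(\kappa e^{-|x|^2/2}\bigr)=0$, so $\kappa e^{-|x|^2/2}\equiv c$ on the connected curve $M$.

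Central symmetry forces $c\neq0$: otherwise $\nu^E$ is constant, contradicting $\nu^E(-x)=-\nu^E(x)$. Then $|c|e^{|x|^2/2}=|x\cdot\nu^E+\lambda|\le|x|+|\lambda|$ bounds $M$. Since $\Sigma_E=\emptyset$ in the plane by Proposition~\ref{prop:partial-reg}, $M$ is a compact simple closed curve.

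Choose $E$ to be the bounded Jordan domain. The turning-number identity $2\pi=\int_M\kappa\,ds=c\int_Me^{|x|^2/2}\,ds$ gives $c>0$, hence $\kappa>0$ and strict convexity. Only then is Corollary~\ref{cor:no radial} needed, to split off the disk.

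This settles boundedness, convexity and the mixed-sign case in one stroke. It also makes the detour through Lemma~\ref{lem:graph} and Theorem~\ref{theorem rig} unnecessary.
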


\begin{proof}
Let \(M\coloneqq\partial E\). 
By Proposition~\ref{prop:components_structure}, if \(M\) is
disconnected, then \(E\) is a symmetric strip or its complement.
We may therefore assume that \(M\) is connected.

We first prove that \(M\) is compact. Let \(\nu^E\) be the exterior
unit normal and let \(\kappa=\operatorname{div}_M\nu^E\).
By \eqref{eq:GMC_intro},
\[
    \kappa-x\cdot\nu^E=\lambda
    \qquad\text{on }M.
\]
Along any local arclength parametrization \(x=x(s)\), with
\(\tau=\partial_sx\), we have
\[
    \partial_s\nu^E=\kappa\tau.
\]
Differentiating the Euler--Lagrange equation gives
\[
    \partial_s\kappa
    =
    \partial_s(x\cdot\nu^E)
    =
    \kappa\,x\cdot\tau,
\]
and hence
\[
    \partial_s\left(\kappa e^{-\frac{|x|^2}{2}}\right)
    =
    e^{-\frac{|x|^2}{2}}
    \left(\partial_s\kappa-\kappa x\cdot\tau\right)
    =
    0.
\]
Since \(M\) is connected, there exists \(c\in\R\) such that
\[
    \kappa e^{-\frac{|x|^2}{2}}=c
    \qquad\text{on }M.
\]

We claim that \(c\neq0\). Otherwise \(\kappa\equiv0\), so
\(\partial_s\nu^E=0\), and connectedness implies that \(\nu^E\)
is constant on \(M\). On the other hand, the central symmetry
\(E=-E\) gives
\[
    \nu^E(-x)=-\nu^E(x)
    \qquad\text{for every }x\in M,
\]
a contradiction.

Combining the first integral with the Euler--Lagrange equation,
we obtain
\[
    |c|e^{\frac{|x|^2}{2}}
    =
    |\kappa(x)|
    =
    |x\cdot\nu^E(x)+\lambda|
    \leq |x|+|\lambda|
    \qquad\text{for every }x\in M.
\]
Since \(c\neq0\), this inequality forces \(|x|\) to remain bounded
on \(M\). As \(M\) is closed, it is compact.

Thus \(M\) is a simple closed real analytic curve. By the Jordan
curve theorem, \(E\) is either the bounded domain enclosed by \(M\)
or its exterior. After replacing \(E\) by the density-one
representative of \(E^c\), if necessary, we may assume that \(E\)
is the bounded domain. Complementation preserves minimality for
the complementary Gaussian area and reverses the signs of
\(\nu^E,\kappa,\lambda\), and \(c\); we retain the same notation
for the resulting quantities.

Orient \(M\) counterclockwise. The turning-number theorem gives
\[
    2\pi
    =
    \int_M\kappa\,ds
    =
    c\int_M e^{\frac{|x|^2}{2}}\,ds,
\]
and hence \(c>0\). Therefore \(\kappa>0\) everywhere on \(M\),
so \(E\) is strictly convex, with compact closure.

Set \(M_+\coloneqq M\cap\R^2_+\). If there exists
\(p\in M_+\) such that
\[
    \nu^E(p)=\frac{p}{|p|},
\]
then Corollary~\ref{cor:no radial} implies that \(E\) is the
disk centered at the origin with radius \(|p|\), giving
alternative~\textup{(ii)}. Otherwise,
\[
    \nu^E(p)\neq\frac{p}{|p|}
    \qquad\text{for every }p\in M_+,
\]
and alternative~\textup{(iii)} holds.
\end{proof}

\smallskip

\begin{remark}
Let $\Gamma_2 \subset \R^2$ be the Chang oval, i.e., the Chang 
curve with $2$-fold rotational symmetry bifurcating from the 
circle of radius $r_0 = \sqrt{3}$.
In \cite{Chang}, the existence of the Chang oval as a solution of the constant mean 
curvature equation \eqref{eq:GMC_intro} has been established for every 
$\lambda\in(\lambda_{\mathrm{bif}},0)$, where $\lambda_{\mathrm{bif}} = -2/\sqrt{3}$.
As a possible minimizer, it falls in the third case of the previous proposition.
\end{remark}

%\section*{Declarations}

%\subsection*{Data availability}
%Data sharing is not applicable to this article as no datasets
%were generated or analysed during the current study.

%\subsection*{Conflict of interest}
%The authors have no relevant financial or non-financial
%interests to disclose.

%\clearpage

\medskip
% Include bibliography
\bibliographystyle{abbrv}
%\nocite{*}
\bibliography{biblio2arc}

@book{AFP,
  author    = {Ambrosio, Luigi and Fusco, Nicola and Pallara, Diego},
  title     = {Functions of Bounded Variation and Free Discontinuity Problems},
  series    = {Oxford Mathematical Monographs},
  publisher = {Oxford University Press},
  address   = {Oxford},
  year      = {2000}
}

@article{BCF,
  author  = {Barchiesi, Marco and Cagnetti, Filippo and Fusco, Nicola},
  title   = {Stability of the {Steiner} symmetrization of convex sets},
  journal = {J. Eur. Math. Soc. (JEMS)},
  volume  = {15},
  number  = {4},
  year    = {2013},
  pages   = {1245--1278},
  doi     = {10.4171/JEMS/391}
}

@article{BBJ,
  author  = {Barchiesi, Marco and Brancolini, Alessio and Julin, Vesa},
  title   = {Sharp dimension free quantitative estimates for the Gaussian isoperimetric inequality},
  journal = {Ann. Probab.},
  volume  = {45},
  number  = {2},
  year    = {2017},
  pages   = {668--697},
  doi     = {10.1214/15-AOP1072}
}

@article{BJ,
  author  = {Barchiesi, Marco and Julin, Vesa},
  title   = {Symmetry of minimizers of a Gaussian isoperimetric problem},
  journal = {Probab. Theory Related Fields},
  volume  = {177},
  year    = {2020},
  pages   = {217--256},
  doi     = {10.1007/s00440-019-00947-9}
}

@article{Barthe,
  author  = {Barthe, Franck},
  title   = {An isoperimetric result for the Gaussian measure and unconditional sets},
  journal = {Bull. Lond. Math. Soc.},
  volume  = {33},
  number  = {4},
  year    = {2001},
  pages   = {408--416},
  doi     = {10.1017/S0024609301008141}
}

@article{BDGM,
  author  = {Bombieri, E. and De Giorgi, E. and Miranda, M.},
  title   = {Una maggiorazione a priori relativa alle ipersuperfici minimali non parametriche},
  journal = {Archive for Rational Mechanics and Analysis},
  volume  = {32},
  year    = {1969},
  pages   = {255--267},
}

@book{Bon,
  author    = {Bonnesen, T.},
  title     = {Les Probl{\`e}mes des isop{\'e}rim{\`e}tres et des is{\'e}piphanes},
  publisher = {Gautier-Villars},
  address   = {Paris},
  year      = {1929}
}

@article {CCDPM,
    AUTHOR = {Cagnetti, Filippo and Colombo, Maria and De Philippis, Guido
              and Maggi, Francesco},
     TITLE = {Rigidity of equality cases in {S}teiner's perimeter
              inequality},
   JOURNAL = {Anal. PDE},
  FJOURNAL = {Analysis \& PDE},
    VOLUME = {7},
      YEAR = {2014},
    NUMBER = {7},
     PAGES = {1535--1593},
      ISSN = {2157-5045,1948-206X},
   MRCLASS = {49K21},
  MRNUMBER = {3293444},
MRREVIEWER = {Sandro\ Zagatti},
       DOI = {10.2140/apde.2014.7.1535},
       URL = {https://doi.org/10.2140/apde.2014.7.1535},
}

@article{CCDPM2,
  author  = {Cagnetti, Filippo and Colombo, Maria and De Philippis, Guido and Maggi, Francesco},
  title   = {Essential connectedness and the rigidity problem for Gaussian symmetrization},
  journal = {J. Eur. Math. Soc. (JEMS)},
  volume  = {19},
  number  = {2},
  year    = {2017},
  pages   = {395--439},
  doi     = {10.4171/JEMS/669}
}

@article{CPS,
  author    = {Cagnetti, Filippo and Perugini, Matteo and St{\"o}ger, Daniel},
  title     = {Rigidity for perimeter inequality under spherical symmetrisation},
  journal   = {Calc. Var. Partial Differential Equations},
  volume    = {59},
  number    = {4},
  year      = {2020},
  eid       = {139},
  numpages  = {53},
  doi       = {10.1007/s00526-020-01786-6}
}

@article{Chang,
  author  = {Chang, Jui-En},
  title   = {{$1$}-dimensional solutions of the {$\lambda$}-self shrinkers},
  journal = {Geometriae Dedicata},
  volume  = {189},
  year    = {2017},
  pages   = {97--112},
  doi     = {10.1007/s10711-017-0219-z}
}

@article{CCF,
  author  = {Chleb{\'\i}k, Miroslav and Cianchi, Andrea and Fusco, Nicola},
  title   = {The perimeter inequality under {Steiner} symmetrization: cases of equality},
  journal = {Ann. of Math. (2)},
  volume  = {162},
  number  = {1},
  year    = {2005},
  pages   = {525--555},
  doi     = {10.4007/annals.2005.162.525}
}

@article{CL,
  author  = {Cicalese, Marco and Leonardi, Gian Paolo},
  title   = {Best constants for the isoperimetric inequality in quantitative form},
  journal = {J. Eur. Math. Soc. (JEMS)},
  volume  = {15},
  number  = {3},
  year    = {2013},
  pages   = {1101--1129},
  doi     = {10.4171/JEMS/387}
}

@article{DelgadinoMaggi,
  author  = {Delgadino, Matias Gonzalo and Maggi, Francesco},
  title   = {Alexandrov's theorem revisited},
  journal = {Analysis \& PDE},
  volume  = {12},
  number  = {6},
  year    = {2019},
  pages   = {1613--1642},
  doi     = {10.2140/apde.2019.12.1613},
  mrnumber = {3921314}
}

@article{Gam,
  author  = {Gambicchia, Chiara},
  title   = {The double spherical cap rearrangement of planar sets},
  journal = {Rend. Sem. Mat. Univ. Padova},
  year    = {2026},
  note    = {Published online first},
  doi     = {10.4171/RSMUP/196}
}

@book{Giusti,
  author    = {Giusti, Enrico},
  title     = {Minimal Surfaces and Functions of Bounded Variation},
  series    = {Monographs in Mathematics},
  volume    = {80},
  publisher = {Birkh{\"a}user},
  address   = {Boston, MA},
  year      = {1984},
  doi       = {10.1007/978-1-4684-9486-0}
}

@article{HPRR,
  title   = {The periodic isoperimetric problem},
  author  = {Hauswirth, Laurent and P{\'e}rez, Joaqu{\'i}n and Romon, Pascal and Ros, Antonio},
  journal = {Transactions of the American Mathematical Society},
  volume  = {356},
  number  = {5},
  pages   = {2025--2047},
  year    = {2004},
  doi     = {10.1090/S0002-9947-03-03362-2}
}

@article{He1,
  author  = {Heilman, Steven},
  title   = {Symmetric convex sets with minimal Gaussian surface area},
  journal = {Amer. J. Math.},
  volume  = {143},
  number  = {1},
  year    = {2021},
  pages   = {53--94},
  doi     = {10.1353/ajm.2021.0000}
}

@unpublished{He2,
  author = {Heilman, Steven},
  title  = {Convex Cylinders and the Symmetric Gaussian Isoperimetric Problem},
  note   = {Preprint, arXiv:2204.12003},
  year   = {2022}
}

@incollection{Ledoux1994,
  author    = {Ledoux, Michel},
  title     = {Isoperimetry and {G}aussian Analysis},
  booktitle = {Lectures on Probability Theory and Statistics ({S}aint-{F}lour, 1994)},         
  series    = {Lecture Notes in Math.},     
  volume    = {1648},                       
  pages     = {165--294},                   
  publisher = {Springer, Berlin},           
  year      = {1996},                       
}

@book{Maggi,
  author    = {Maggi, Francesco},
  title     = {Sets of Finite Perimeter and Geometric Variational Problems},
  subtitle  = {An Introduction to Geometric Measure Theory},
  series    = {Cambridge Studies in Advanced Mathematics},
  volume    = {135},
  publisher = {Cambridge University Press},
  address   = {Cambridge},
  year      = {2012}
}

@article{Milman2025,
  author  = {Milman, Emanuel},
  title   = {Isoperimetric Inequalities on Slabs with Applications to Cubes and Gaussian Slabs},
  journal = {Communications on Pure and Applied Mathematics},
  year    = {2025},
  note    = {to appear},
}

@article{Morgan,
  author  = {Morgan, Frank},
  title   = {Regularity of area-minimizing surfaces in {3D} polytopes
             and of invariant hypersurfaces in {$\mathbb{R}^n$}},
  journal = {J. Geom. Anal.},
  volume  = {15},
  number  = {2},
  pages   = {321--341},
  year    = {2005},
  doi     = {10.1007/BF02922198},
}

@book{Morrey,
  author    = {Morrey, Charles B.},
  title     = {Multiple Integrals in the Calculus of Variations},
  series    = {Grundlehren der mathematischen Wissenschaften},
  volume    = {130},
  publisher = {Springer},
  address   = {Berlin and Heidelberg and New York},
  year      = {1966}
}

@article{Palais,
  author  = {Palais, Richard S.},
  title   = {The Principle of Symmetric Criticality},
  journal = {Communications in Mathematical Physics},
  volume  = {69},
  number  = {1},
  pages   = {19--30},
  year    = {1979},
  doi     = {10.1007/BF01941322}
}

@article{Viana,
  author  = {Viana, Celso},
  title   = {Isoperimetry and volume preserving stability in real projective spaces},
  journal = {Journal of Differential Geometry},
  volume  = {125},
  number  = {1},
  pages   = {187--205},
  year    = {2023}
}

\end{document}